\def\rit{\mathbb{R}}
\def\zit{\mathbb{Z}}
\def\nit{\mathbb{N}}
\def\ait{\mathbb{A}}
\def\ppit{\mathbb{P}}
\def\qit{\mathbb{Q}}
\def\cit{\mathbb{C}}
\def\fit{\mathbb{F}}
\def\gm{\mathfrak{m}}
\def\lcm{l.c.m}
\newcommand{\pf}{{\em Proof.~}}
\newcommand{\qed}{\hfill~~\mbox{$\Box$}}
\newenvironment{proof}{\smallskip \noindent \pf}{\qed \bigskip}
\newtheorem{theorem}{Theorem}[subsection]
\newtheorem{proposition}[theorem]{Proposition}
\newtheorem{definition}[theorem]{Definition}
\newtheorem{lemma}[theorem]{Lemma}
\newtheorem{corollary}[theorem]{Corollary}
\newtheorem{notation}[theorem]{Notation}
\newtheorem{remark}[theorem]{Remark}
\newtheorem{example}[theorem]{Example}
\begin{document}

\title{\bf Quantum differential systems and some applications to mirror symmetry }
\author{\sc Antoine Douai \thanks{Partially supported by the grants ANR-08-BLAN-0317-01 and ANR-13-IS01-0001-01 of the Agence nationale de la recherche.}\\
Laboratoire J.A Dieudonn\'e, UMR CNRS 7351, \\
Universit\'e de Nice, Parc Valrose, F-06108 Nice Cedex 2, France \\
Email address: Antoine.DOUAI@unice.fr}

\maketitle

%(23/10/14; section 11 (miroir de la surface de Hirzebruch F2) re\'ecrite, misprints corrected; la partie sur les %structures rationnelles est parue dans manuscripta math)

\begin{abstract}  We study mirror symmetry (A-side {\em vs} B-side) in the framework of quantum differential systems.
We focuse on the logarithmic non-resonant case, which describes the geometric situation and for which quantum differential systems are produced on the B-side 
by avatars of rescalings of regular tame functions.
We show that quantum differentials systems provide a good framework in order to 
generalize the construction of the rational structure given in \cite{KKP} for the complex projective space. 
As an application, we compute the rational structure obtained in this way on the orbifold cohomology of weighted projective spaces and 
on the flat sections of the Gauss-Manin connection associated with 
their Landau-Ginzburg models (suitable Laurent polynomials).
As an example, and in order to complete the panorama, we also calculate, in the setting of quantum differential systems, a mirror partner of the Hirzebruch surface $\fit_{2}$. 
\end{abstract}

\tableofcontents

\section{Introduction}

This paper deals with {\em quantum differential systems}, namely trivial bundles equipped with a flat meromorphic connection with prescribed poles
 together with a flat nondegenerate bilinear form. We will focuse mainly on their relation with mirror symmetry.

 Such systems already appear, more or less explicitely and under various names, in the work of a lot of people, essentially motivated by the construction 
of Frobenius manifolds, see {\em f.i} \cite{Dub1}, \cite{Dub2}, \cite{Her}, \cite{Mal2}, \cite{Manin}, \cite{SK}, \cite{Sab1}, \cite{DoSa1}, \cite{R}... 
They first arose in singularity theory ($B$-side, local version) thirty years ago in the work of K. Saito
about the primitive forms \cite{SK}, and took the form that we will use in the work of B. Malgrange \cite{Mal1}. 
In connection with the construction of Frobenius manifolds (for which another important ingredient are the {\em primitive forms}),
 a global version of these objects has been discussed in \cite{DoSa1} where it is explained how a regular tame function on an affine manifold yields a 
quantum differential system, naturally produced by solutions of the Birkhoff problem for its Brieskorn lattice (the construction is outlined in the Appendix). 
The tameness condition is required for finitness reasons.

As it follows from Dubrovin's formalism \cite{Dub0}, \cite{Dub1}, \cite{Dub2} (see also \cite{CK} and the references therein), 
quantum differential systems also appear in quantum cohomology theory ($A$-side), giving an extension of the Dubrovin connexion as an absolute flat connection 
and encoding the quantum product and 
its basic properties, taking into account a supplementary homogeneity condition. 

It is thus natural to investigate mirror symmetry through quantum differential systems 
(the step before Frobenius manifolds): two models will be mirror partners if their associated quantum differential systems are isomorphic, 
as bundles with connections. 
Notice that in this setting, 
Givental's quantum differential operators are interpreted as minimal polynomials of suitable (primitive) sections. 

Some motivations are in order:
\begin{itemize}
\item First, and the aim of this paper is to emphasize this point, such systems can be computed on the $B$-side, without any references 
to correlators (and hence to the quantum product), which are rather complicated objects. In this way, mirror symmetry can be useful in order to understand 
more clearly (and sometimes predicts) what happens on the A-side; in turn the A-side produces B-models that are interesting on their own.
A step in this direction can be found in \cite{DoMa},
which gives a counterpart of the computations carried in \cite{Coa} for the small quantum (orbifold) cohomology of weighted projective spaces.
A connected class of examples is given by rescalings of regular tame functions which, despite its trivial appearance, give
a quite good picture of the situation (see section \ref{sec:rescaling}). 
We also discuss the case of the Hirzebruch surface $\fit_{2}$ in section \ref{sec:hirzebruch} where 
Givental's mirror map \cite{Giv1} 
appears naturally as a function in flat coordinates where 
flatness has to be understood with respect to a flat residual connection, 
naturally produced by the quantum differential systems involved. This flat connection is a central object for our purpose 
because, in mirror symmetry, flat coordinates are: on the A-side, coordinates are indeed flat. More generally, these techniques could be used for instance 
in order to study hypersurfaces or complete intersections 
in weighted projective spaces, \cite{Giv}, \cite{HV}, \cite{VP}. 
\item Also, a quantum differential system is a very flexible object: for instance, and as emphasized in different papers \cite{Dub1}, \cite{Mal1}, \cite {HeMa},
it can be universally unfolded in some cases and we can modify accordingly its base space, which can be the affine space, a torus (algebraic setting)
 or a punctual germ (analytic setting). In other words, a whole quantum differential system can be, in some cases, 
determined by a restricted set of data, and this observation is very useful in order to simplify the computations on the B-side, see \cite{D1}, \cite{D2}. 
\item It fits very well with ``quantizations'' ({\em f.i} small quantum cohomology)
and it is a good setting in order to study ``large radius limits'', using the classical techniques in differential equation theory. Notice that these limits 
(these are of course quantum differential systems on a point) always produce meromorphic connections with regular singularities.
This is explained in section \ref{nonresonnant} and in section \ref{ex:CohQuant}.
\item Last, it is a natural framework in order to generalize the construction of the rational structure on the $A$-side given in \cite{KKP} for $\ppit^{n}$. 
\end{itemize}

Nevertheless, it should be noticed that, on the $B$-side, a given tame regular function can produce several quantum differential systems which can 
be difficult to compare. While the situation is clear on the $A$-side (the cohomology gives naturally a flat basis and flat coordinates),
 we have to fix, on the $B$-side, some choices: the general principle is that a canonical quantum differential system is built from 
the canonical solution of the Birkhoff problem given 
by M. Saito's method, for which a substantial tool is Hodge theory (see \cite{D2}, \cite[Appendix B]{DoSa1}, \cite{DoSa2} and Appendix). 
Anyway, the general place of this geometric solution in mirror symmetry has to be further explored.\\

Let us now discuss more precisely these motivations. An aspect of mirror symmetry is the following: given a projective manifold $X$, one can computes its Gromov-Witten invariants, or more generally 
its correlators, with the help of Picard-Fuchs equations associated with some mirror partner. 
This is classically used to express these correlators in terms of combinatorial data: this is {\em f.i} what gives Givental's ``I=J'' mirror theorem
(see \cite{CK} for an overview). We explain how this can be achieved using quantum differential systems, in particular  what should be
the correlators of a quantum differential system.
Another and connected goal is to define {\em the} 
$J$-function of a general quantum differential system. Again, we have to fix some choices:
we are led to define {\em canonical} fundamental solutions of the Dubrovin connection of a quantum differential system and this is done using Dubrovin's 
conformal and symmetric solutions. 
The situation is particularly nice when the quantum differential systems are {\em logarithmic} and {\em non-resonant}, see section 
\ref{nonresonnant}: prototypes of such systems 
are given by small quantum cohomology, thanks to the divisor axiom, see also \cite{R}, \cite{RS}. 
In this case, the canonical solutions have an explicit description: they are uniquely determined by a matrix of holomorphic functions, 
satisfying an initial condition. This matrix  can be computed algebrically, using a recursion relation (relation (\ref{eq:DiffxHd})).
We will call its coefficients the  {\em correlators} of the given quantum differential system because, in the case of the small quantum cohomology, 
they provide the usual correlators, see section \ref{sec:correlateurs}. 
In order to compute the correlators of a projective variety, it is thus enough to identify the canonical fundamental solutions of the mirror 
quantum differential system and this is reduced, on the $B$-side, to computations of algebra. This is emphasized in section \ref{sec:hirzebruch}.

Quantum differential systems provide also a good framework in order to generalize
the construction of the rational structure on the cohomology of the complex projective space $\ppit^{n}$ and their Landau-Ginzburg models given in \cite{KKP}. 
The strategy in {\em loc. cit.} is the following: the rational structure is first constructed on the $B$-side 
on the flat sections of the Gauss-Manin connection associated (after quantization) with a suitable regular function, the Landau-Ginzburg model.
At the beginning, this rational structure is provided by the Lefschetz thimbles and then transferred to the flat sections using oscillating integrals.  
In order to get first a precise formula for this rational structure on the $B$-side, one needs an explicit description of these flat sections and this is done 
using the quantum differential system produced 
by the Landau-Ginzburg model.
The rational structure that we get on the $B$-side is then shifted, taking the classical limit, on the
cohomology ($A$-side) using a mirror theorem which identifies the standard cohomology basis with suitable explicit differential forms. Our main purpose is to 
extend this method:
in order to do so, we first define quantum differential systems, their classical limits and their conformal Dubrovin's solutions (see section 3). 
Conformality is used here in order to get a precise description of the flat sections: 
this is discussed in section \ref{sec:RationalStructure} (see proposition \ref{prop:classescar}).   
For our geometric setting, it is enough to consider logarithmic quantum differential systems (see section \ref{nonresonnant}): 
their main properties are given by theorem \ref{cor:casvariete} and corollary \ref{coro:sechor}. It turns out that only {\em flat} 
(in the sense of definition \ref{eq:FlatQDS}) logarithmic quantum differential systems are only relevant. 

As an application of our method, we give in section \ref{subsec:RatWPS} a description of the rational structure obtained in this way on the orbifold 
cohomology of weighted projective spaces (see corollary \ref{coro:descriptionPsiconst}) and their Landau-Ginzburg models (see theorem \ref{theo:Rat}). More precisely,
we first give a closed formula on the $B$-side, for which the mirror quantum 
differential system is identified in \cite{DoMa}. This formula involves
various numbers (depending on the combinatorics), produced by the computation of some relevant oscillating integrals whose integral kernel depend on the choice 
of suitable bases of differential forms. 
This rational structure on the $B$-side is, 
after \cite[Theorem 4.10]{Sab4}, 
an ingredient of a (variation of a pure, rational) non-commutative Hodge structure in the sense of \cite[Definition 2.7]{KKP}, 
related with the ''$\qit$-structure axiom`` (the link 
between Hodge theory and Lefschetz thimbles is a quite old and long story: see for instance \cite[Chapitre III, paragraphes 12 et 14]{AVG} and the references 
therein and also, closer from our concern, \cite{DoSa1}, \cite[section 6]{DoSa2} and \cite{Sab2}).
In order to reach the $A$-side, we then use the explicit description of the mirror partner of the standard orbifold cohomology basis given 
in \cite[Theorem 5.1.1]{DoMa}. Notice that the construction of such structures on the $A$-side
is also considered in \cite{Ir} for toric orbifolds using a completely different approach (in particular we will not make use of equivariant perturbations 
and localization arguments in this paper): up to a ramification (due to the fact that we have to consider flat bases with respect to a residual connection), 
we get at the end Iritani's formula \cite[Theorem 4.11]{Ir} for weighted projective spaces. A striking fact is that a part of the constants in the formula 
for the rational structure that we get on the $B$-side miraculously disappear when we apply the mirror theorem and that, in the end, 
we get a very simple formula for 
the rational structure on the $A$-side, see corollary \ref{coro:descriptionPsiconst}.

Last, and in order to complete the panorama, we compute a mirror partner of the Hirzebruch surface $\fit_{2}$ 
(the classical non-Fano example) using quantum differential systems. This example is very interseting because it produces some new, but also in some sense 
intermediate (between the ones produced by projective space and the ones produced by weighted projective spaces; see for instance section \ref{sec:LogFrob} 
where the construction of a logarithmic Frobenius manifold is also discussed), phenomena. We show how our method allow to recover well-known results, 
see {\em f.i} \cite[Section 11.2]{CK} and \cite[Example 5.4]{Guest}.
In particular, it is readily seen that the change of variables considered there in order to get the ``correct'' quantum product (in other words, the mirror map) is naturally 
given by flat coordinates.
We also verify that the quantum differential system associated with the mirror partner of the projective space $\ppit (1,1,2)$ is 
obtained as a classical limit of the one associated with the mirror of $\fit_{2}$, as it has been first checked in \cite{CIT}.\\

By way of conclusion, let us emphasize the fact that the point of view developped here is in essence not so far from Givental's theory of mirror symmetry and  
``quantum differential equations'' \cite{Giv0}, \cite{Giv1} (roughly speaking, we consider
matrices rather than their characteristic polynomials)  but the technics in order to get a mirror theorem are somewhat different:
our main objective was to show how solutions of the Birkhoff problem for the Brieskorn lattice 
of a regular tame function as defined in \cite{DoSa1} should be naturally exploited in order to understand better (small) quantum cohomology. 
We where motivated by the lecture of \cite{KKP}, \cite{T} and \cite{Ir} about rational structures.\\

This paper is organized as follows: we define quantum differential systems in \ref{sec:SDQ}, and discuss their relationship with mirror symmetry. 
In sections \ref{sec:Dub}, \ref{sec:SolFonda} and \ref{sec:solfondacan}, we define the canonical fundamental solutions and the canonical $J$-functions 
of a quantum differentials system with the help of Dubrovin's conformal and symmetric solutions \cite{Dub2}.
The case of the non-resonant, logarithmic systems is handled in section \ref{nonresonnant} 
and we give some examples in section \ref{ex:CohQuant}. 
We apply the results obtained there in order to describe a rational structure on the orbifold cohomology of weighted projective spaces 
and to compute correlators in sections \ref{sec:RationalStructure} and \ref{sec:correlateurs}.
We explain in section \ref{sec:hirzebruch} how to get an explicit mirror quantum differential system to the small quantum cohomology of 
the Hirzebruch surface. Last, we recall in the Appendix how to construct a quantum differential system from a regular tame function.

Several results (including the discussion about rational structures) presented in these notes
are now published in \cite{D3}.

\section{Quantum differential systems}

\label{sec:SDQ}

We introduce here our main object, the {\em quantum differential systems}
\footnote{They are also called  ``tr.TLEP-structures'' in the work of C. Hertling \cite{HeMa}}. The basic definitions and properties are for instance compiled 
in C. Sabbah's book \cite{Sab1}, using B. Malgrange's setting \cite{Mal1}, \cite{Mal2}. We first list some of them.

\subsection{Definitions}

Let $M$ be a complex analytic manifold, equipped with coordinates $\underline{x}=(x_{0},\cdots ,x_{r})$. We will denote by $U_{0}$ ({\em resp.} $U_{\infty}$) the chart of $\ppit^{1}$ centered at $0$ ({\em resp.} $\infty$) and by $\theta$ ({\em resp.} $\tau:=\theta^{-1}$) the coordinate on $U_{0}$ ({\em resp.} $U_{\infty}$). Let $\pi$ be the projection $\pi :\ppit^{1}\times M\rightarrow M$.

\begin{definition}\label{def:SDQ}
A {\em quantum differential system} on $M$ is a tuple         
$${\cal Q}=(M, {\cal G}, \nabla ,S, d)$$
where
\begin{itemize}
\item $d$ is an integer,
\item ${\cal G}$ is a trivial\footnote{In ``the fibers'' of the projection $\pi$: ${\cal G}\simeq\pi^{*}\pi_{*}{\cal G}$} bundle on $\ppit^{1}\times M$,
\item $\nabla$ is a flat meromorphic connexion on ${\cal G}$, with poles of Poincar\'e rank less or equal to $2$ along $\{0\}\times M$, logarithmic along $\{\infty \}\times M$,
\item  $S$ is a $\nabla$-flat, non-degenerate bilinear form $S:{\cal O}({\cal G})\times j^{*}{\cal O}({\cal G})\rightarrow \theta^{d}{\cal O}_{\ppit^{1}\times M}$, where
$$j:\ppit^{1}\times M\rightarrow :\ppit^{1}\times M$$
is defined by $j(\theta ,\underline{x})=(-\theta ,\underline{x})$.
\end{itemize}
\end{definition}

\noindent In what follows, we will denote by\\

\noindent $\bullet$ $\mu$ the rank of the bundle ${\cal G}$,\\

\noindent $\bullet$ $G_{0}$ the restriction of ${\cal G}$ at $U_{0}$: this is a free ${\cal O}_{M}[\theta]$-module of rank $\mu$,\\

\noindent $\bullet$ $G_{\infty}$ the restriction of ${\cal G}$ at $U_{\infty}$: this is a free ${\cal O}_{M}[\tau]$-module of rank $\mu$.\\

Let ${\cal Q}=(M, {\cal G}, \nabla ,S, d)$ be a quantum differential system, $i_{\{\theta =0\}}$ ({\em resp.} $i_{\{\theta =\infty\}}$) be the zero 
({\em resp.} infinity) section of $M$ in $\ppit^{1}\times M$ and $E:=\pi_{*}{\cal G}$.

\begin{proposition}\label{keyproposition}
(1) We have the isomorphisms $i^{*}_{\{\theta =\infty\}}{\cal G}\simeq E\simeq i^{*}_{\{\theta =0\}}{\cal G}$.\\
(2) The connection $\nabla$ takes the form
$$\nabla =\bigtriangledown +\frac{\Phi}{\theta} +(\frac{V_{0}}{\theta}+V_{\infty})\frac{d\theta}{\theta}$$
where
\begin{itemize}
\item $\bigtriangledown$  is a connection on $E$,
\item $\Phi$ is a Higgs field, that is an ${\cal O}_{M}$-linear map $\Phi: E\rightarrow  E\otimes\Omega_{M}^{1}$, such that $\Phi\wedge\Phi=0$,
\item $V_{0}$ and $V_{\infty}$ are two ${\cal O}_{M}$-linear endomorphisms of $E$,
\end{itemize}
these objects satisfying
\begin{equation}\label{eq:integrabilite}
\bigtriangledown^{2}=0,\  \Phi\wedge\Phi =0,\  \bigtriangledown \Phi =0,\ \bigtriangledown V_{\infty}=0,\  [V_{0},\Phi ]=0\  \bigtriangledown (V_{0})+\Phi =[\Phi, V_{\infty}]
\end{equation}
In particular, the connection $\bigtriangledown$ is flat.
\end{proposition}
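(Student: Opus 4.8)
The strategy is to use the fiberwise triviality ${\cal G}\simeq\pi^{*}\pi_{*}{\cal G}=\pi^{*}E$ to trivialize ${\cal G}$ by a frame that is constant along $\ppit^{1}$, and then to read the normal form of $\nabla$ off the prescribed pole orders by a Liouville argument on $\ppit^{1}$.

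For (1), I would apply $i^{*}_{\{\theta=0\}}$ and $i^{*}_{\{\theta=\infty\}}$ to the isomorphism ${\cal G}\simeq\pi^{*}E$. Since $\pi\circ i_{\{\theta=0\}}=\mathrm{id}_{M}=\pi\circ i_{\{\theta=\infty\}}$, this gives $i^{*}_{\{\theta=0\}}{\cal G}\simeq(\pi\circ i_{\{\theta=0\}})^{*}E=E$ and likewise at infinity, which is (1). Equivalently, the restriction of ${\cal G}$ to each fiber is ${\cal O}_{\ppit^{1}}^{\mu}$, whose sections are recovered by evaluation at any point; in particular a local ${\cal O}_{M}$-frame of $E$ pulls back to a frame $(\epsilon_{1},\dots,\epsilon_{\mu})$ of ${\cal G}$ independent of $\theta$ and of $\tau$.

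For (2) I would write $\nabla=d+\omega$ in this frame and split $\omega=\omega_{M}+\omega_{\theta}$ into its $dx_{i}$- and $d\theta$-parts. In the standard conventions of \cite{Sab1}, the order-two pole along $\{0\}\times M$ means $\theta\nabla_{\partial_{x_{i}}}$ and $\theta^{2}\nabla_{\partial_{\theta}}$ preserve $G_{0}$, while the logarithmic pole along $\{\infty\}\times M$ means $\nabla_{\partial_{x_{i}}}$ and $\tau\nabla_{\partial_{\tau}}$ preserve $G_{\infty}$. Each entry of $\omega_{M}$ is therefore a rational function on $\ppit^{1}$ with at most a simple pole at $\theta=0$ and regular at $\theta=\infty$, hence of the form $A_{i}/\theta+B_{i}$ with $A_{i},B_{i}\in M_{\mu}({\cal O}_{M})$; this yields $\omega_{M}=\Gamma+\Phi/\theta$, where $\Gamma=\sum_{i}B_{i}\,dx_{i}$ is the connection form of a connection $\bigtriangledown$ on $E$ and $\Phi=\sum_{i}A_{i}\,dx_{i}$. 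Since $\nabla_{\partial_{\tau}}\epsilon_{j}=-\theta^{2}\nabla_{\partial_{\theta}}\epsilon_{j}$ for the $\theta$-constant frame, writing $\omega_{\theta}=W\,d\theta$ the two pole conditions force $\theta^{2}W$ regular at $0$ and $\theta W$ regular at $\infty$, so $W=V_{0}/\theta^{2}+V_{\infty}/\theta$, i.e. $\omega_{\theta}=(V_{0}/\theta+V_{\infty})\,d\theta/\theta$. Finally $\Phi$, $V_{0}$, $V_{\infty}$ are residues of $\nabla$ along $\{0\}\times M$ and $\{\infty\}\times M$, hence ${\cal O}_{M}$-linear; this is precisely why $\Phi$ is a Higgs field and $V_{0},V_{\infty}$ are endomorphisms of $E$.

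It then remains to derive (\ref{eq:integrabilite}) from flatness $\nabla^{2}=0$, i.e. $d\omega+\omega\wedge\omega=0$. I would expand the curvature and collect it by type of $2$-form and by power of $\theta$. The $dx_{i}\wedge dx_{j}$ component splits into the three homogeneous pieces of degrees $\theta^{0},\theta^{-1},\theta^{-2}$, giving respectively $\bigtriangledown^{2}=0$ (so $\bigtriangledown$ is flat), $\bigtriangledown\Phi=0$ and $\Phi\wedge\Phi=0$; the $dx_{i}\wedge d\theta$ component, after recognizing $\partial_{x_{i}}V_{\bullet}+[\Gamma_{i},V_{\bullet}]=\bigtriangledown_{\partial_{x_{i}}}V_{\bullet}$, splits into the pieces of degrees $\theta^{-3},\theta^{-2},\theta^{-1}$, giving $[V_{0},\Phi]=0$, $\bigtriangledown(V_{0})+\Phi=[\Phi,V_{\infty}]$ and $\bigtriangledown V_{\infty}=0$. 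The computational part is routine once the frame is fixed; the genuinely structural step, and the one I would treat most carefully, is the passage from the abstract pole hypotheses to this rigid normal form, because it is exactly the global triviality of ${\cal G}$ that allows one single $\theta$-independent frame over all of $\ppit^{1}$ and hence a clean Liouville classification of the connection entries. I would also fix the sign conventions for the commutators so that the last relation comes out as stated.
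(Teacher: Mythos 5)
Your proof is correct and follows essentially the same route as the paper: identify $E$ with the fibers at $0$ and $\infty$ via triviality, use the pole conditions to pin down the Laurent form of the connection matrix in a $\theta$-independent frame of global sections, and extract the relations (\ref{eq:integrabilite}) by sorting the curvature by $2$-form type and power of $\theta$. The paper merely states these steps more tersely (citing \cite{Sab1}), whereas you spell out the Liouville argument and the degree bookkeeping explicitly.
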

\begin{proof} Standard, see {\em f.i} \cite{Sab1}, but we outline it, due to its importance for what follows: the isomorphisms expected in (1) follow from the triviality of the bundle ${\cal G}$ (restriction of sections). 
The assumptions on the order of the poles show that, in a basis $\omega =(\omega_{0},\cdots ,\omega_{\mu -1})$ of $E$, the matrix of $\nabla$ is 
\begin{equation}\label{eq:baseglobale}
(\frac{A_{0}(\underline{x})}{\theta}+A_{\infty}(\underline{x}))\frac{d\theta}{\theta}+C(\underline{x})+\frac{D (\underline{x})}{\theta}
\end{equation}
where $\underline{x}=(x_{0},\cdots ,x_{\mu})\in M$,
$$C(\underline{x})=\sum_{i=1}^{r}C^{(i)}(\underline{x})dx_{i}\ \mbox{et}\ D(\underline{x})=\sum_{i=1}^{r}D^{(i)}(\underline{x})dx_{i}.$$
The connection $\bigtriangledown$ is first defined on $i^{*}_{\{\theta =\infty\}}{\cal G}$ as the restriction at $\tau =0$ of a flat connection. Its matrix in the basis $\omega$ is $C(\underline{x})$. The ${\cal O}_{M}$-linear homomorphism $\Phi$ is first defined on $i^{*}_{\{\theta =0\}}{\cal G}$. Its matrix in the basis $\omega$ is $D(\underline{x})$. Relations (\ref{eq:integrabilite}) follow from the flatness of $\nabla$. This shows (2).
\end{proof}

\noindent Notice that the flat residual connection $\bigtriangledown$ is not defined if we forget the ``logarithmic'' assumption on the poles at infinity.

\begin{definition}\label{def:eqcar}
We will call equation (\ref{eq:baseglobale}) {\em characteristic equation} of the quantum differential system ${\cal Q}$.
\end{definition}

\begin{remark}\label{rem:metglob} (1) $S$ induces bilinear forms (also denoted by $S$)
\begin{equation}\label{formeE}
S:E\times j^{*}E\rightarrow {\cal O}_{M}\theta^{d},
\end{equation}
\begin{equation}\label{formeG0}
S:G_{0}\times j^{*}G_{0}\rightarrow {\cal O}_{M}[\theta ]\theta^{d}
\end{equation}
\noindent and
\begin{equation}\label{formeGinfty}
S:G_{\infty}\times j^{*}G_{\infty}\rightarrow {\cal O}_{M}[\tau ]\tau^{-d}.
\end{equation}
\noindent If $\eta$ and $\nu$ are two global sections of ${\cal G}$, that is if $\eta ,\nu\in E$, 
will write  
\begin{equation}\label{formeg}
S(\eta ,\nu )=g(\eta ,\nu )\theta^{d}\in {\cal O}_{M}\theta^{d}
\end{equation}
where $g$ is a non-degenerate ${\cal O}_{M}$-bilinear form on $E$ and we have, because $S$ is flat, 
\begin{equation}\label{eq:sym}
V_{\infty}+V_{\infty}^{*}=dI,\ V_{0}^{*}=V_{0}\ \mbox{and}\ \Phi^{*}=\Phi
\end{equation}
\noindent where $^{*}$ denotes the adjoint with respect to $g$.\\
\noindent (2) If the basis $\omega$ of $E$ is $\bigtriangledown$-flat, that is if $C(\underline{x})\equiv 0$ in equation (\ref{eq:baseglobale}), we have $S(\omega_{i},\omega_{j})\in \cit\theta^{d}$
for all $i$ and for all $j$, because $S$ is $\nabla$-flat.
If moreover $S(\omega_{i},\omega_{j})=0$ except for a unique index $\overline{i}$, we will also put  $\omega^{i}:=\frac{1}{g(\omega_{i},\omega_{\overline{i}})} \omega_{\overline{i}}$ and
we will call $\omega^{i}$ the {\em dual} of $\omega_{i}$. We will also say that $\omega$ is {\em adapted} to $S$.
\end{remark}

\begin{remark}\label{rem.MiscSDQ} 
(1) If ${\cal Q}$ is a quantum differential system on a point, the connection $\nabla$ takes the form
$$\nabla =(\frac{V_{0}}{\theta}+V_{\infty})\frac{d\theta}{\theta}$$
where $V_{0}$ and $V_{\infty}$ are endomorphisms of the finite dimensional $\cit$-vector space $E$, and $g$ is a bilinear, symmetric and non-degenerate form on $E$ such that
\begin{center}
$V_{0}^{*}=V_{0}$, $V_{\infty}+V_{\infty}^{*}=dI$ 
\end{center}
where, as above, $^{*}$ denotes the adjoint with respect to $g$.\\
(2) We will also consider quantum differential systems on the affine space (resp. the torus...). In this algebraic setting, the objects $\bigtriangledown$ , $V_{0}$, $V_{\infty}$, $\Phi$ and $g$ are modified accordingly (replace ${\cal O}_{M}$-linear by $\cit [\underline{x}]$-linear, resp. $\cit [\underline{x}, \underline{x}^{-1}]$-linear...). In this situation, $E$ is a free $\cit [\underline{x}]$-module (resp. $\cit [\underline{x}, \underline{x}^{-1}]$-module...), see \cite{D1}, \cite{D2}.
\end{remark}

In some cases, we can refine the previous definitions and define the {\em logarithmic} quantum differential systems (see \cite{R}):

\begin{definition}\label{def:LogSDQ}
Let $D$ be a divisor in $M$. We will say that the quantum differential system ${\cal Q}$ has logarithmic poles along $D$ if moreover
$$\Phi: E\rightarrow  E\otimes\Omega_{M}^{1}(\log D)$$ 
and
$$\bigtriangledown: E\rightarrow  E\otimes\Omega_{M}^{1}(\log D)$$
where $\Omega_{M}^{1}(\log D)$ denotes the module of the differential forms with logarithmic poles along $D$.
\end{definition}

\noindent The following example shows that quantum cohomology produces naturally logarithmic quantum differential systems:

\begin{example} \label{exemplebasiqueA} (A-side)

\noindent One associates (canonically) a quantum differential system to the (small) quantum cohomology of a smooth projective variety $X$, with cohomology only in even degree, as follows
(see for instance \cite{CK} and the references therein; we assume here that the quantum product $\circ$ is everywhere convergent\footnote{Replace $M$ by $U$ in what follows if the quantum product converges only on $U\subset M$}):
the trivial bundle is the one whose fibers are $H^*(X,\cit )$, that is
$$\pi :\ppit^1\times M\times H^*(X,\cit )\rightarrow \ppit^1\times M$$
where $M=H^*(X,\cit )$.
Let $\{\phi_{k}\}_{k=0}^{\mu -1}$ be a (homogeneous) basis of $H^{*}(X)$ and $\{t_{k}\}_{k=0}^{\mu -1}$ be a dual coordinate system on $H^{*}(X, \cit )$. The connection $\nabla^{A}$ is defined by   
$$\nabla^{A}_{\partial_{t_{k}}}=\partial_{t_{k}}+\frac{1}{\theta}\phi_{k}\circ_{\xi}\ \mbox{and}\ \nabla^{A}_{\theta\partial_{\theta}}=\theta\partial_{\theta}+\frac{1}{\theta}E\circ_{\xi}+\mu$$
where $\circ$ denotes the quantum product, parametrized by $\xi\in M$, $E$ is the ``Euler vector field``
$$E:=c_{1}(TX)+\sum_{k=0}^{\mu -1}(1-\frac{1}{2}\deg \phi_{k})t^{k}\phi_{k}$$
and
$$\mu (\phi_{k}):=(\frac{1}{2}\deg \phi_{k}-\frac{n}{2})\phi_{k}.$$
\noindent Notice that, by definition, the sections $\phi_{k}$ are $\bigtriangledown$-flat. Flatness of $\nabla^{A}$ follows from the associativity and the commutativity of the quantum product. The
metric $S$ is built with the help of the form $(a,b)=\int_X a\cup b$,
where $a$ and $b$ cohomology classes. 
In the case of small quantum cohomology, we have
$$\nabla^{A}_{q_{k}\partial_{q_{k}}}=q_{k}\partial_{q_{k}}+\frac{1}{\theta}\phi_{k}\circ_{\xi}$$
where $q_{k}=e^{t_{k}}$ for $k=1,\cdots , r$ and $r=\dim_{\cit}H^2 (X,\cit )$ and $t_{k}\in H^2 (X,\cit )/2i\pi H^2 (X,\zit )$, thanks to the divisor axiom. 
Analogous construction for orbifolds (see f.i \cite{Ir}, \cite{DoMa}). 
\end{example}

\subsection{Quantum differential systems associated with regular functions}
\label{sec:QDSregular}
One can attach a quantum differential system to any regular tame function, see \cite{DoSa1}, \cite{DoSa2}, \cite{D1}, \cite{D2}: an overview of the 
construction is given in the Appendix. Let us emphasize the following facts:

\begin{enumerate}
\item a solution of the Birkhoff problem for the Brieskorn lattice of a regular tame function (see step 2 in the Appendix) produces a quantum 
differential system.  
\item Two different solutions of the Birkhoff problem yield {\em a priori} two different bundles (which can be difficult to compare) and, 
even if the maps $\Phi$ ({\em resp.} $V_{0}$) associated with two different quantum differential systems are conjugated, 
the endomorphisms $V_{\infty}$ ({\em resp.} the connection $\bigtriangledown$) will not in general. 
We thus have to pay attention to this crucial problem on the B-side: what is the/a  ``good'' choice? 
\item It turns out that the solutions of the Birkhoff problem are in one-to-one correspondence with the opposite filtrations, stable under the
 action of the monodromy, to the Hodge filtration defined on the nearby cycles, see \cite[Appendix B]{DoSa1}. In \cite{DoSa1}, 
canonical solutions of the Birkhoff problem (hence canonical quantum differential systems) are defined (we require in addition that $V_{\infty}$ is 
semi-simple, its eigenvalues running through the spectrum at infinity of the function, as defined in \cite{Sab3}): 
the opposite filtrations alluded to are constructed with the help of Deligne's $I^{pq}$, following an idea of M. Saito \cite{Sai}.   
\end{enumerate}
 
\noindent All these phenomena are also discussed in \cite{DoSa2}, \cite{D1} and \cite{D2}.

\begin{example}\label{exemplebasiqueB}  ($B$-side, after \cite{DoMa} and \cite{DoSa2})

\noindent Let $(w_{1},\cdots ,w_{n})$ be strictly positive integers,
$U=(\cit^{*})^{n}$ et $M^{B}=\cit^*$. We define $F:U\times M^{B}\rightarrow\cit$ by
$$F(u_{1},\cdots ,u_{n},x)=u_{1}+\cdots +u_{n}+\frac{x}{u_{1}^{w_{1}}\cdots u_{n}^{w_{n}}}.$$
A distinguished solution $\omega =(\omega_{0},\cdots ,\omega_{\mu -1})$ of the Birkhoff problem for the Brieskorn lattice $G_{0}$ of $F$ 
(see Appendix), which is a free $\cit [x, x^{-1},\theta ]$-module of rank $\mu :=1+w_1 +\cdots + w_n$, 
is described in \cite[Section 4]{DoMa} (see also section \ref{subsubsec:Bside} below). 
It yields an extension of this lattice as a trivial bundle  ${\cal G}^{B}$ on $\ppit^{1}\times M^{B}$, 
equipped with a connection $\nabla^{B}$ with the required poles: the matrix of the Gauss-Manin connection in the basis $\omega$ takes the form
\begin{equation}\label{eq:carorbB}
(\frac{A_{0}(x)}{\theta}+A_{\infty})\frac{d\theta}{\theta}+(R -\frac{A_{0}(x)}{\mu\theta})\frac{dx}{x}
\end{equation}
where\footnote{We put $w^{w}=w_{1}^{w_{1}}\cdots w_{n}^{w_{n}}$.}
$$A_{0}(x)=\mu \left ( \begin{array}{cccccc}
0  & 0  & 0 & 0  & 0 & \frac{x}{w^{w}}\\
1  & 0  & 0 & 0  & 0 & 0\\
0  & 1  & 0 & 0  & 0 & 0\\
0  & 0  & 1 & 0  & 0 & 0\\
0  & 0  & 0 & .. & 0 & 0\\
0  & 0  & 0 & 0  & 1 & 0
\end{array}
\right )$$
\noindent $R=diag (c_{0},\cdots ,c_{\mu -1})$, $c_{i}\in [0,1[$ (see formula (\ref{eq:ck}) in section \ref{subsubsec:Bside} for a definition of the $c_{i}$'s) 
and $A_{\infty}=diag(\alpha_0 ,\cdots , \alpha_{\mu -1} )$. The rational numbers $\alpha_{i}$ are defined by 
$\alpha_{i}=i-\mu c_{i}$ and run through the spectrum at infinity, as defined in \cite{Sab3} (see also \cite{DoSa2}), of the function $f:=F(\bullet , 1)$. 

Define
$$S^{B}(\omega_{k},\omega_{\ell})=\left\{ \begin{array}{ll}
\frac{1}{w_1 \cdots w_n} \theta^{n} &  \mbox{if $0\leq k\leq n$ and $k+\ell =n$,}\\
\frac{x}{w^{w}}\frac{1}{w_1 \cdots w_n} \theta^{n}  & \mbox{if  $n+1\leq k\leq \mu -1 $ and $k+\ell =\mu +n$,}\\
0 & \mbox{otherwise}
\end{array}
\right .$$
These formulas are extended to ${\cal G}^{B}$ and yield a bilinear form satisfying the properties of definition \ref{def:SDQ} with $d=n$.

Summarizing (\cite[Theorem 4.4.1]{DoMa}) the tuple 
$${\cal Q}^{B}=(M^{B},{\cal G}^{B}, \nabla^{B}, S, n)$$
is a quantum differential system on $M^{B}=\cit^{*}$. The restriction of this quantum differential system at $x=1$ is a quantum differential system on a point
which is precisely, after \cite{DoSa2}, the canonical quantum differential system associated with the function $F(\bullet ,1)$ by the construction above.
\end{example}

\begin{remark}\label{rem:FonctionsMirroir} 
Keeping mirror symmetry in mind (see section \ref{subsec:Mir} below), some generalizations of the previous example are expected. For instance:\\
(1) the ones considered in
\cite{Ir} (see also \cite{RS}), where the function $F$ is the function 
\begin{equation}\label{eq:gen}
u_{0}+\cdots +u_{n}
\end{equation}
defined on
$$U=\{(u_{0},\cdots ,u_{n})\in\cit^{n+1}|\ u_{0}^{w_{0}^{1}}\cdots u_{n}^{w_{n}^{1}}=x_{1},\ \cdots , u_{0}^{w_{0}^{r}}\cdots u_{n}^{w_{n}^{r}}=x_{r}\}$$
where $(x_{1},\cdots ,x_{r})\in (\cit^{*})^{r}$ and $(w_{0}^{i},\cdots , w_{n}^{i})\in\zit^{n}$ for $1\leq i\leq r$;\\
(2) the Hori-Vafa models (see \cite{HV}, \cite{Giv}, \cite{VP}...) where the function $F$ is the function 
\begin{equation}\label{eq:genIC}
u_{0}+\cdots +u_{n}
\end{equation}
defined on 
$$U=\{(u_{0},\cdots ,u_{n})\in\cit^{n+1}|\ u_{0}^{w_{0}}\cdots u_{n}^{w_{n}}=x,\ \sum_{j\in I_{i}}u_{j}=1\}.$$
Here $x\in \cit^{*}$, $(w_{0},\cdots , w_{n})\in (\nit^{*})^{n}$ and $I_{i}$, $i=1,\cdots ,k$, are $k$ non-intersecting subsets of $\{0,\cdots ,n\}$. 
The coordinate $u_{j}$ is assumed to have the degree $w_{j}$ and $\sum_{j\in I_{i}}u_{j}$ the degree $l_{i}:=\sum_{j\in I_{i}}w_{j}$. These functions are expected to be $B$-models for smooth hypersurfaces in projective spaces.
Whatever happens, it could be 
interesting to study the quantum differential system associated with $F$, see 
\cite{D4}, \cite{GS}.
\end{remark}

\noindent Another slightly different source of examples is given by  the {\em rescalings} of a tame regular function: we will come back to this in detail 
in section \ref{sec:rescaling}. In order to complete the panorama, see also section \ref{sec:hirzebruch} for the mirror of the Hirzebruch surface.

\subsection{Reconstruction theorem and (pre-)primitive forms}

An important point in the theory of quantum differential systems is that one can define unfoldings and even {\em universal} unfoldings of such objects, 
see \cite[Definition 2.3]{HeMa}.
In some cases, a finite set of initial data allows to construct a universal unfolding of a given quantum differential system (see \cite[Theorem 2.5]{HeMa} 
and the references to B. Malgrange and B. Dubrovin therein): this is the starting point in \cite{D1} and \cite{D2} in order to construct {\em canonical} 
quantum differential systems and canonical Frobenius manifolds associated with Laurent polynomials.

\begin{definition}
\label{def:periodmap}
Let ${\cal Q}=(M, {\cal G}, \nabla ,S, d)$ be a quantum differential system on $M$, $\omega$ be a global section of ${\cal G}$. 
The {\em period map} associated with $\omega$ is the map
$$\varphi_{\omega}:\Theta_{M}\rightarrow i^{*}_{\{\theta =0\}}{\cal G}$$ 
defined by $\varphi_{\omega}(\xi )=-\Phi_{\xi}(\omega)$ where $\Theta_{M}$ denotes the sheaf of vector fields on $M$.
\end{definition}

Let $M=(\cit^{r+1},0)$ and $\gm$ be the maximal ideal of ${\cal O}_{M}$. The index $^{o}$ will denote the operation ''modulo $\gm$''. 
Hertling and Manin's theorem is the following:

\begin{theorem}[Theorem 2.5 in \cite{HeMa}]
\label{theo:reconstruction}
Let ${\cal Q}=(M, {\cal G}, \nabla ,S, d)$ be a quantum differential system on $M$. 
Assume that there exists a $\triangledown$-flat section $\omega\in i^{*}_{\{\theta =0\}}{\cal G}$ such that
\begin{enumerate}
\item (GC) $\omega^{o}$ and its images under iteration of the maps $R_{0}^{o}$ and $\Phi_{\xi}^{o}$, for all $\xi\in\Theta_{M}^{o}$, 
generate $i^{*}_{\{\theta =0\}}{\cal G}^{o}$,
\item (IC) the period map $\varphi_{\omega^{o}}:\Theta_{M}^{o}\rightarrow i^{*}_{\{\theta =0\}}{\cal G}^{o}$ defined is injective.
\end{enumerate}
\noindent Then the quantum differential system ${\cal Q}$ has a universal deformation.
\end{theorem}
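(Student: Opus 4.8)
The plan is to reduce the statement to the abstract reconstruction mechanism of Hertling and Manin, whose hypotheses are precisely the two conditions (GC) and (IC). Since the theorem is quoted as ``Theorem 2.5 in \cite{HeMa}'', the real content on our side is to verify that the structure we carry, namely a quantum differential system $\mathcal{Q}=(M,\mathcal{G},\nabla,S,d)$, fits the input format of the Hertling--Manin framework, and then to invoke their result. So first I would unpack, via proposition \ref{keyproposition}, the connection $\nabla$ into its building blocks $\bigtriangledown$, $\Phi$, $V_{0}$, $V_{\infty}$ on $E=\pi_{*}\mathcal{G}$, identifying the restriction $i^{*}_{\{\theta=0\}}\mathcal{G}\simeq E$ together with the endomorphism $R_{0}$ (the pole part, essentially $V_{0}$, acting on $E$) and the Higgs field $\Phi$ as the data Hertling--Manin require. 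The nondegenerate flat pairing $S$ supplies the metric needed for their notion of a tr.TLEP-structure, and relations (\ref{eq:integrabilite}) and (\ref{eq:sym}) guarantee the compatibilities (flatness of $\bigtriangledown$, $\bigtriangledown\Phi=0$, self-adjointness of $\Phi$ and $V_{0}$) that make $\mathcal{Q}$ a legitimate object in their category.

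Next I would set up the germ situation $M=(\cit^{r+1},0)$ with maximal ideal $\gm$, and translate the two numbered hypotheses into the ``generation'' and ``injectivity'' conditions of \cite[Theorem 2.5]{HeMa}. Condition (GC) says that the reduced vector $\omega^{o}$ generates the fiber $i^{*}_{\{\theta=0\}}\mathcal{G}^{o}$ under repeated application of $R_{0}^{o}$ and of the $\Phi_{\xi}^{o}$; this is exactly the cyclic/generation requirement that lets one propagate the germ of $\omega$ to a full frame and thereby reconstruct the bundle. Condition (IC), the injectivity of the period map $\varphi_{\omega^{o}}=-\Phi_{(\cdot)}(\omega^{o})$, is what makes the infinitesimal period map an isomorphism onto its image of the right dimension, supplying the universality (the Kodaira--Spencer type bijection between tangent directions of the base and first-order deformations). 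With these identifications, the conclusion that $\mathcal{Q}$ admits a universal deformation is exactly the assertion of \cite[Theorem 2.5]{HeMa}, so the proof would close by citation once the dictionary is in place.

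The step I expect to be the genuine obstacle is the dictionary itself: making precise that our $R_{0}$, $\Phi$, $S$ and the $\bigtriangledown$-flatness of $\omega$ correspond verbatim to the hypotheses of the Hertling--Manin unfolding theorem, including the bookkeeping of which operator plays the role of their pole-part endomorphism and the compatibility of the metric $S$ with their self-duality axiom. In particular one must confirm that the $\bigtriangledown$-flatness assumed for $\omega\in i^{*}_{\{\theta=0\}}\mathcal{G}$ is the correct analogue of their ``flatness'' hypothesis, and that no additional nondegeneracy or homogeneity condition is silently needed. Once that translation is checked, the remainder is a direct appeal to the cited theorem, so I would not reprove the construction of the universal deformation from scratch but rather record carefully that all required inputs are present and refer to \cite{HeMa} for the reconstruction procedure.
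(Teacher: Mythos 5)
The paper gives no proof of this statement at all: it is quoted verbatim as Theorem 2.5 of \cite{HeMa} (the footnote in section \ref{sec:SDQ} already records that quantum differential systems are the ``tr.TLEP-structures'' of that paper), so the intended argument is exactly your reduction-plus-citation. Your proposal is therefore correct and matches the paper's approach; the dictionary you flag as the only real work (identifying $\bigtriangledown$, $\Phi$, $V_{0}$, $V_{\infty}$, $S$ via proposition \ref{keyproposition} and relations (\ref{eq:integrabilite}), (\ref{eq:sym}) with the Hertling--Manin input data) is indeed all that needs checking.
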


\begin{definition}\label{def:preprimcan}

\begin{enumerate}
\item A section $\omega$ satisfying the conditions (GC) and (IC) is called {\em pre-primitive}. 
If moreover the period map $\varphi_{\omega}$ is an isomorphism we will say that $\omega$ is {\em primitive}.
\item We will say that a pre-primitive section $\omega$ is {\em canonical} if it generates the eigenspace associated 
with the smallest eigenvalue of $V_{\infty}$.
\end{enumerate}
\end{definition}

\begin{remark} \label{remarkreg} 
(1) Condition (IC) is empty
if $M=\{point\}$. Assume moreover that $R_{0}$ is regular, i.e its characteristic polynomial is equal to its minimal polynomial: 
there exists $\omega$ such that 
$$\omega ,R_{0}(\omega ),\cdots , (R_{0})^{\mu -1}(\omega )$$
is a basis of $i^{*}_{\{\theta =0\}}{\cal G}$ over $\cit$ and $\omega$ is thus pre-primitive.\\
(2) Condition (GC) is satisfied if $\omega$ and its derivative $\theta \nabla_{X_{1}}\theta \nabla_{X_{2}}\cdots \theta \nabla_{X_{\ell}}\omega$ 
generate $i^{*}_{\{\theta =0\}}{\cal G}$.
\end{remark}

 Theorem \ref{theo:reconstruction} is originally stated for a punctual germ $M$. More convenient (for our purpose) global versions of this result 
can be found in \cite{D2}: for instance, if $M=\ait^{r+1}$, the period map attached to $\omega$ is now a $\cit [\underline{x}]$-linear map, defined on the 
Weyl algebra $\ait^{r}(\cit )=\cit [\underline{x}]<\partial_{\underline{x}}>$, by $\varphi_{\omega}(\xi)=-\Phi_{\xi}(\omega )$ 
(analogous construction if $M=(\cit^{*})^{r}$ is a torus).

\begin{example}
\label{exemple:PrePrimCan}
In the situation of example \ref{exemplebasiqueB}, the period map associated with the section $\omega_{0}$ 
is 
$$\varphi_{\omega_{0}}:\cit [x,x^{-1}]<\partial_{x}>\rightarrow G_{0}/\theta G_{0}$$
where $\varphi_{\omega_{0}}(x\partial_{x})=\omega_{1}$. The section $\omega_{0}$ is pre-primitive and 
canonical in the sense of definition \ref{def:preprimcan}.
We also define a connection $\bigtriangledown^{\omega_{0}}$ on $\cit [x,x^{-1}]<\partial_{x}>$ by
$$\bigtriangledown^{\omega_{0}}_{x\partial_{x}}(x\partial_{x})=\bigtriangledown_{x\partial_{x}}(\varphi_{\omega_{0}}(x\partial_{x}))=
\bigtriangledown_{x\partial_{x}}(\omega_{1})$$
The vector field $x\partial_{x}$ is $\bigtriangledown^{\omega_{0}}$-flat because 
$\bigtriangledown_{x\partial_{x}}(\omega_{1})=c_{1}\omega_{1}$
and $c_{1}=0$ for $n\geq 1$ (see section \ref{subsubsec:Bside} below): 
the coordinate $t$ defined by $x=e^{t}$ is thus flat. 
See also section \ref{subsub:FlatCoordinates} for another concrete computation of flat coordinates.
\end{example}

\subsection{A motivation: mirror symmetry via quantum differential systems}

\label{subsec:Mir}

\subsubsection{Mirror symmetry and quantum differential systems}
Another (connected) important point is that one can compare two quantum differential 
systems:

\begin{definition}\label{def:IsoSDQ}
The quantum differential systems 
$$(M^{A}, H^{A}, \nabla^{A}, S^{A} , n^{A})\ \mbox{and}\  (M^{B}, H^{B}, \nabla^{B}, S^{B} , n^{B})$$ 
are {\em isomorphic} if there exists an isomorphism 
$(id ,\nu): \ppit^1\times M^{A}\rightarrow \ppit^1\times M^{B}$ and an isomorphism of vector bundles 
$\gamma : H^{A}\rightarrow (id ,\nu )^*H^{B}$ compatible with the connections and the metrics.
\end{definition}

\begin{definition}\label{def:Mir}
Two models are mirror partners if their quantum differential systems are isomorphic.
\end{definition}

\noindent Notice that the map $\nu$ measures ``flatness'': 
if $(M^{A}, H^{A}, \nabla^{A}, S^{A} , n^{A})$ is the quantum differential system associated with the small quantum cohomology 
by example  \ref{exemplebasiqueA}, it is defined by flat coordinates on $M^{B}$, because the ones used on $M^{A}$ are so: 
see {\em f.i} theorem \ref{theo:miroir} and section \ref{sec:MirF2} below.

\subsubsection{Example: mirror symmetry for weighted projective spaces}
\label{sec:MirSymWPS}

A nice class of examples, which bring to light some unexpected phenomena (this is discussed with some details in \cite{DoMa}), 
is given by weighted projective spaces for which we have the following result:
let $p=c_1({\cal O}(1))\in H^2_{orb}(\ppit (w), \cit )$ and
$$(p^{\circ_{tp}})^i=p\circ_{tp}\cdots \circ_{tp}p,$$
where the quantum product $\circ$ is counted $i-1$ times. We keep the notations of example \ref{exemplebasiqueB}.

\begin{theorem}[Theorem 5.1.1 in \cite{DoMa}]
\label{theo:miroir}
The quantum differential system associated with the weighted projective spaces\footnote{The case $w_{1}=\cdots =\omega_{n}=1$ 
has been first considered in \cite{Bar1}}
$\ppit (1,w_{1},\cdots ,w_{n})$ by example \ref{exemplebasiqueA} is isomorphic to the one associated with the 
fonction $F$ by example \ref{exemplebasiqueB}. We have $M^{A}=M^{B}=\cit^{*}$ and
the isomorphism $\gamma$ (resp. $\nu$) sends $(p^{\circ_{tp}})^i$ onto $\omega_{i}$ (resp. is the identity).
\end{theorem}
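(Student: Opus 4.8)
By Definition \ref{def:IsoSDQ}, an isomorphism of the two quantum differential systems is a pair $(\nu ,\gamma )$, where $\nu :M^{A}\rightarrow M^{B}$ and $\gamma :H^{A}\rightarrow (id,\nu )^{*}H^{B}$ intertwines the connections and the metrics. Since $M^{A}=M^{B}=\cit^{*}$ and the claimed $\nu$ is the identity, the plan is to take $\nu =id$ and to define $\gamma$ on global sections by $\gamma ((p^{\circ_{tp}})^{i})=\omega_{i}$ for $i=0,\dots ,\mu -1$. The first thing I would check is that this is well defined and bijective, i.e. that the quantum powers $(p^{\circ_{tp}})^{i}$ form an ${\cal O}_{M^{A}}$-basis of $E^{A}=\pi_{*}H^{A}$, just as $\omega =(\omega_{0},\dots ,\omega_{\mu -1})$ is a basis of $E^{B}$ (Proposition \ref{keyproposition}(1)). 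It then remains to verify that $\gamma$ carries $\nabla^{A}$ onto $\nabla^{B}$ and $S^{A}$ onto $S^{B}$. Because the $B$-side characteristic equation (\ref{eq:carorbB}) and the form $S^{B}$ are given explicitly in Example \ref{exemplebasiqueB}, the whole proof reduces to computing the $A$-side connection and metric in the basis $\{(p^{\circ_{tp}})^{i}\}$ and comparing matrices.

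The core input is therefore the explicit small quantum orbifold cohomology of $\ppit (1,w_{1},\dots ,w_{n})$, which I would take from \cite{Coa}. Two features are decisive. First, the ring is generated by $p=c_{1}({\cal O}(1))$, so $\{(p^{\circ_{tp}})^{i}\}_{i=0}^{\mu -1}$ is indeed a basis and the single quantum relation expresses $(p^{\circ_{tp}})^{\mu}$ as a multiple of $q$ supported on the fundamental class. In this basis the Higgs operator $\Phi^{A}_{q\partial_{q}}=p\,\circ_{tp}$ is a companion matrix whose only nontrivial structure constant is the top one; matching it with $\Phi^{B}_{x\partial_{x}}=-A_{0}(x)/\mu$ (read off from (\ref{eq:carorbB}) via Proposition \ref{keyproposition}) fixes the normalisation $x\leftrightarrow q$ up to the combinatorial factor $w^{w}$ and the sign dictated by the period-map convention of Definition \ref{def:periodmap}. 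Second, the grading operator $\mu$ of Example \ref{exemplebasiqueA} and the classical multiplication by $c_{1}$ carried by the Euler field act diagonally on the orbifold classes; evaluating their eigenvalues on the twisted sectors supporting the $(p^{\circ_{tp}})^{i}$ should yield exactly the diagonal matrices $A_{\infty}=\mathrm{diag}(\alpha_{i})$ and $R=\mathrm{diag}(c_{i})$ with $\alpha_{i}=i-\mu c_{i}$. Assembling these two computations reproduces (\ref{eq:carorbB}), so that $\gamma$ intertwines $\nabla^{A}$ and $\nabla^{B}$.

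Next I would compute the orbifold Poincar\'e pairings $\big((p^{\circ_{tp}})^{k},(p^{\circ_{tp}})^{\ell}\big)$; by degree bookkeeping these vanish unless $k+\ell$ reaches one of the two resonant values, and they should produce precisely the constants $\frac{1}{w_{1}\cdots w_{n}}$ and $\frac{x}{w^{w}}\frac{1}{w_{1}\cdots w_{n}}$ appearing in $S^{B}$, so that $\gamma$ is an isometry and $d=n$ on both sides. At this stage the isomorphism is established directly. Alternatively, and more economically, one may invoke the reconstruction Theorem \ref{theo:reconstruction}: since $1=(p^{\circ_{tp}})^{0}$ and $\omega_{0}$ are canonical pre-primitive sections (Definition \ref{def:preprimcan} and Example \ref{exemple:PrePrimCan}) with matching initial data, and both systems are logarithmic and non-resonant along $\{0\}\times M$ in the sense of section \ref{nonresonnant}, the two quantum differential systems are restrictions of one and the same universal deformation, which gives the conclusion with less explicit matching.

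The routine part is the Higgs block: quantum multiplication by $p$ is manifestly a shift, so the companion structure is immediate. The genuine difficulty is the orbifold bookkeeping behind the diagonal matrices and the pairing: one must track how the $q$-dependent classes $(p^{\circ_{tp}})^{i}$ distribute among the twisted sectors, compute the corresponding ages, and verify that the induced eigenvalues of the grading operator and of classical $c_{1}$-multiplication reorganise into the clean pattern $\alpha_{i}=i-\mu c_{i}$ and into the stated pairing constants. This is where the combinatorics of the weights $w_{j}$ genuinely enters, and where the precise signs together with the factor $w^{w}$ in the identification $x\leftrightarrow q$ must be pinned down against the period-map convention of Definition \ref{def:periodmap}.
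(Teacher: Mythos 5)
Note first that this paper does not actually prove Theorem \ref{theo:miroir}: it is imported verbatim as Theorem 5.1.1 of \cite{DoMa}, so the only benchmark is the proof given there. Your primary route — take $\nu=\mathrm{id}$, send $(p^{\circ_{tp}})^{i}$ to $\omega_{i}$, use the explicit small quantum orbifold cohomology of $\ppit(1,w_{1},\dots,w_{n})$ from \cite{Coa} to write the $A$-side Higgs operator as a companion matrix, and match it, the grading/Euler data and the orbifold Poincar\'e pairing against the explicit $B$-side formulas of example \ref{exemplebasiqueB} — is exactly the strategy of the cited proof, with the genuinely delicate points (homogeneity of the quantum powers across twisted sectors, the identification $\alpha_{i}=i-\mu c_{i}$, the factor $w^{w}$ and the sign conventions) correctly identified as the places where the real work lies.
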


\noindent Notice that the coordinate $x$ is flat (see example \ref{exemple:PrePrimCan} above) and this explains why the map $\nu$ is equal to the identity.
Nevertheless, $(p^{\circ_{tp}})^i$ is {\em not} a flat section of the residual connection $\bigtriangledown^{A}$. 

\begin{remark} (1) As a consequence of the theorem, the matrix of the small quantum multiplication $p\circ_{tp}$ in the basis $((p^{\circ_{tp}})^i)$ is equal to
$A_0^F(e^t)$, which is the matrix of multiplication\footnote{$G_0^F/\theta G_0^F$ is naturally equipped with a structure of ring, {\em via}  $\omega_{0}$, a (pre-)primitive section.} by $\omega_0$ on $G_0^F/\theta G_0^F$ in the basis induced by $\omega$.
One gets in this way a correspondence between the products which allows to see the quantum product as a simple computation in algebra.\\ 
(2) The rational number 
$\alpha_{i}$ is equal to half of the orbifold degree of $(p^{\circ_{tp}})^i$: the latter are thus in correspondence with the 
``spectrum at infinity'' of the fonction $F(\bullet ,1)$.
\end{remark}

More generally, the function in remark \ref{rem:FonctionsMirroir} (1) should give the mirror (in the sense of definition \ref{def:Mir}) of toric orbifolds (see \cite{Ir}, \cite{RS})   
and the one in remark \ref{rem:FonctionsMirroir} (2) should give, after \cite{HV}, \cite{Giv}, \cite{VP}, 
the mirror of complete intersections in the weighted projective space $\ppit (w_{0},\cdots ,w_{n})$. The computation of our mirror partner of the 
Hirzebruch surface $\fit_{2}$ is done in section \ref{sec:hirzebruch} (see theorem \ref{theo:miroirF2}).

\section{The Dubrovin connection and the quantum product of a quantum differential system}
\label{sec:Dub}
Let ${\cal Q}=(M, {\cal G}, \nabla , S, n)$ be a quantum differential system, $\omega =(\omega_{0},\cdots ,\omega_{\mu -1})$ 
be an ${\cal O}_{M}$-basis\footnote{The algebraic version is straightforward.} of $E=\pi_{*}{\cal G}$. 
As above, we will denote by $\underline{x}=(x_{0},\cdots ,x_{r})$ the coordinates on $M$.

\subsection{Dubrovin connection of a quantum differential system}\label{sec:DubConnection}

An important object for our purpose is the {\em Dubrovin connection} of the quantum differential system ${\cal Q}$, 
which encodes the ``quantum product'', as defined in section \ref{ProduitQuantique} below. 
We rewrite here what is known in a slightly different settings, see \cite{Dub1} and \cite[section 8.4]{CK}.

\begin{definition}\label{def:connexionDubrovin}
The connection $\nabla^{d}$ defined by
\begin{equation}
\nabla^{d}:=\bigtriangledown +\frac{\Phi}{\theta},
\end{equation}
is called the {\em Dubrovin connexion} of the quantum differential system ${\cal Q}$.
\end{definition}

\begin{proposition}\label{prop:dubint}
The connection $\nabla^{d}$ is flat.
\end{proposition}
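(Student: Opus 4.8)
The plan is to compute the curvature $(\nabla^{d})^{2}$ directly and read off its vanishing from the integrability relations (\ref{eq:integrabilite}) already established in Proposition \ref{keyproposition}. Since $\nabla^{d}=\bigtriangledown+\Phi/\theta$ involves only the $M$-directions, with $\theta$ playing the role of a parameter, I would first regard it as a relative connection over $\ppit^{1}$ and write its connection matrix, in the basis $\omega$, as $C(\underline{x})+\theta^{-1}D(\underline{x})$, with the notation of the characteristic equation (\ref{eq:baseglobale}).

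Next I would expand the curvature as a $2$-form on $M$, treating $\theta$ as constant under the $M$-differential:
\begin{equation}
(\nabla^{d})^{2}=\left(dC+C\wedge C\right)+\frac{1}{\theta}\left(dD+C\wedge D+D\wedge C\right)+\frac{1}{\theta^{2}}\,D\wedge D.
\end{equation}
The three coefficients are, intrinsically, $\bigtriangledown^{2}$, the covariant derivative $\bigtriangledown\Phi$, and $\Phi\wedge\Phi$ respectively. Relations (\ref{eq:integrabilite}) assert that each of these vanishes, so every coefficient is zero and $(\nabla^{d})^{2}=0$.

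Conceptually, the same conclusion can be reached without any computation: $\nabla^{d}$ is precisely the component of the flat connection $\nabla$ in the $M$-directions, the remaining part being $(\theta^{-1}V_{0}+V_{\infty})\,d\theta/\theta$. In the expansion of $\nabla^{2}=0$ by type, every cross term between $\nabla^{d}$ and the $d\theta$-part carries a factor $d\theta$, while the square of the $d\theta$-part vanishes since $d\theta\wedge d\theta=0$; hence the pure $M$-directional part of $\nabla^{2}$ is exactly $(\nabla^{d})^{2}$, which must therefore vanish. The only point requiring care is the bookkeeping of signs in the graded commutator $C\wedge D+D\wedge C=\bigtriangledown\Phi$ for endomorphism-valued $1$-forms, but this is exactly the combination appearing in (\ref{eq:integrabilite}), so there is no genuine obstacle.
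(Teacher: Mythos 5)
Your proposal is correct and follows essentially the same route as the paper: the paper's proof simply observes that flatness of $\nabla^{d}$ is equivalent to $\bigtriangledown^{2}=0$, $\bigtriangledown\Phi=0$ and $\Phi\wedge\Phi=0$, which are among the relations (\ref{eq:integrabilite}) of Proposition \ref{keyproposition}. You merely make explicit the expansion of the curvature in powers of $\theta^{-1}$ that identifies these three coefficients, which is the computation the paper leaves implicit.
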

\begin{proof}
Flatness of $\nabla^{d}$ is equivalent to $\bigtriangledown^{2}=0$, $\Phi\wedge\Phi =0$ and $\bigtriangledown \Phi =0$. This is precisely what gives proposition \ref{keyproposition}.
\end{proof}

\noindent Let us emphasize once again that the flatness of $\bigtriangledown$, and hence the flatness of $\nabla^{d}$, is a characteristic property of quantum differential systems which is lost if we drop the assumption on the poles at infinity of the connection $\nabla$.\\

\begin{remark}
A quantum differential system on $M$ produces naturally a variation of semi-infinite Hodge structures on $M$, in the sense of Barannikov \cite{Bar2}. 
Recall that $G_{0}$ denotes the restriction of ${\cal G}$ at $U_{0}$: it is a free ${\cal O}_{M}[\theta ]$-module. 
We define an increasing filtration $F_{\bullet}$ of $G:=G_{0}[\theta^{-1}]$ by ${\cal O}_{M}[\theta]$-submodules, putting $F_{p}G:=\theta^{-p}G_{0}$:
we thus have $\nabla^{d}(F_{p})\subset F_{p+1}$ (Griffith's transversality condition).
Let us now define the ``$\theta$-connection''
$$\nabla^{\theta}:G_{0}\rightarrow \Omega^{1}_{M}\otimes G_{0}$$
by $\nabla^{\theta}:=\theta\nabla^{d}$. By definition it satisfies
$$\nabla^{\theta}_{X}(f(\underline{x}, \theta )\eta )=(\theta Xf(\underline{x},\theta ))\eta +f(\underline{x},\theta )\nabla^{\theta}_{X}\eta ,$$
$$[\nabla^{\theta}_{X} , \nabla^{\theta}_{Y}]=\theta \nabla^{\theta}_{[X,Y]}$$
and
$$\theta X S(\eta ,\nu )=S(\nabla^{\theta}_{X}\eta ,\nu )-S(\eta ,\nabla^{\theta}_{X}\nu )$$
where $\eta$ and $\nu$ ({\em resp.} $f(\underline{x},\theta)$) are sections of $G_{0}$ ({\em resp.} ${\cal O}_{M}[\theta]$) and $X$ and $Y$  are vector fields on $M$. Summarizing,
the tuple $VSHS_{\cal Q}:=(M, G_{0}, \nabla^{\theta}_{M}, S)$ is a variation of semi-infinite Hodge structures on $M$.    
\end{remark}

\subsection{Quantum product of a quantum differential system}
\label{ProduitQuantique}

In what follows, we will assume that $\Phi_{\partial_{x_{0}}}$ is the identity\footnote{On the $A$-side, that is in the setting of example \ref{exemplebasiqueA}, the coordinate $x_{0}$ is associated with the first cohomology goup $H^{0}$ while on the $B$-side $x_{0}$ is the constant term in the unfoldings of functions}. 

\begin{definition}
We define an ${\cal O}_{M}$-bilinear map $*$ on $E\simeq G_{0}/ \theta G_{0}$ by
$$\omega_{i}*\omega_{j} := [\Phi_{\partial_{x_{i}}}(\omega_{j})]$$
for all $0\leq i\leq r$ and $0\leq j\leq \mu -1$, where $[\ ]$ denotes the class in $E$.
\end{definition}

\noindent This map defines a product but it doesn't need to have any associativity and/or commutativity property and/or an identity. This is however sometimes the case: 

\begin{proposition}\label{prop:prodquant}
Assume that the section $\omega_{0}$ is such that $\omega_{i}=\Phi_{\partial_{x_{i}}}(\omega_{0})$ for all $0\leq i\leq r$. Then
\begin{enumerate}
\item $\omega_{i}*\omega_{0}=\omega_{i}$ for all $0\leq i\leq r$,
\item $\omega_{i}*\omega_{j}=\omega_{j}*\omega_{i}$ for all $0\leq i,j\leq r$,
\item $(\omega_{i}*\omega_{j})*\omega_{k}=\omega_{i}*(\omega_{j}*\omega_{k})$ for all $0\leq i,j,k\leq r$, under the assumption that $\omega_{i}*\omega_{j}\in \sum_{k=0}^{r}{\cal O}_{M}\omega_{k}$\footnote{This happens for instance if $r=\mu -1$. If not, the left hand is not well defined.},
\item $g(\omega_{i}*\omega_{j},\omega_{k})=g(\omega_{j},\omega_{i}*\omega_{k})$ for all $0\leq i\leq r$ and $0\leq j,k\leq \mu -1$
\end{enumerate}
where $g$ is the bilinear form defined by formula (\ref{formeg}).
\end{proposition}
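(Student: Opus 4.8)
The plan is to prove each of the four assertions by reducing everything to the definition $\omega_i * \omega_j = [\Phi_{\partial_{x_i}}(\omega_j)]$ together with the standing hypothesis $\omega_i = \Phi_{\partial_{x_i}}(\omega_0)$ and the structural relations from proposition \ref{keyproposition}, most importantly the commutativity of the Higgs field encoded in $\Phi\wedge\Phi=0$ and the symmetry $\Phi^*=\Phi$ from relation (\ref{eq:sym}). Throughout I will work modulo $\theta G_0$, so that $[\ ]$ denotes the class in $E\simeq G_0/\theta G_0$, and I will use the fact that $\Phi_{\partial_{x_i}}$ is ${\cal O}_M$-linear.

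First I would treat (1). By definition $\omega_i * \omega_0 = [\Phi_{\partial_{x_i}}(\omega_0)] = [\omega_i] = \omega_i$, using the hypothesis directly; this is immediate and requires no computation. For (2), the key input is that $\Phi\wedge\Phi=0$ means the endomorphisms $\Phi_{\partial_{x_i}}$ and $\Phi_{\partial_{x_j}}$ commute, i.e. $\Phi_{\partial_{x_i}}\Phi_{\partial_{x_j}}=\Phi_{\partial_{x_j}}\Phi_{\partial_{x_i}}$. Then for $0\le i,j\le r$ I compute
\begin{equation*}
\omega_i * \omega_j = [\Phi_{\partial_{x_i}}(\omega_j)] = [\Phi_{\partial_{x_i}}\Phi_{\partial_{x_j}}(\omega_0)] = [\Phi_{\partial_{x_j}}\Phi_{\partial_{x_i}}(\omega_0)] = [\Phi_{\partial_{x_j}}(\omega_i)] = \omega_j * \omega_i,
\end{equation*}
where I have used the hypothesis $\omega_j=\Phi_{\partial_{x_j}}(\omega_0)$ to rewrite the second term and $\omega_i=\Phi_{\partial_{x_i}}(\omega_0)$ in reverse for the fourth. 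Commutativity thus falls out of the integrability condition $\Phi\wedge\Phi=0$.

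For the associativity statement (3), under the stated assumption that $\omega_i*\omega_j=\sum_k a_k\omega_k$ lies in $\sum_{k=0}^r {\cal O}_M\omega_k$, I would expand both sides and reduce to the same iterated application of three commuting Higgs operators. The subtle point is that the product $*$ is a priori only ${\cal O}_M$-bilinear on $E$ and need not land in the span of $\omega_0,\dots,\omega_r$, which is exactly why the hypothesis in the footnote is needed to make the outer product well defined; once it holds, ${\cal O}_M$-linearity of $\Phi$ lets me pass the coefficients through, and commutativity of the $\Phi_{\partial_{x_k}}$ shows both $(\omega_i*\omega_j)*\omega_k$ and $\omega_i*(\omega_j*\omega_k)$ equal the class $[\Phi_{\partial_{x_i}}\Phi_{\partial_{x_j}}\Phi_{\partial_{x_k}}(\omega_0)]$. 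I expect this bookkeeping, in particular keeping careful track of the well-definedness constraint, to be the main obstacle.

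Finally, for the Frobenius-type compatibility (4), the essential ingredient is the self-adjointness $\Phi^*=\Phi$ with respect to $g$ from relation (\ref{eq:sym}), which at the level of each $\partial_{x_i}$ reads $g(\Phi_{\partial_{x_i}}\eta,\nu)=g(\eta,\Phi_{\partial_{x_i}}\nu)$. Applying this with $\eta=\omega_j$ and $\nu=\omega_k$ gives
\begin{equation*}
g(\omega_i*\omega_j,\omega_k)=g(\Phi_{\partial_{x_i}}(\omega_j),\omega_k)=g(\omega_j,\Phi_{\partial_{x_i}}(\omega_k))=g(\omega_j,\omega_i*\omega_k),
\end{equation*}
which is exactly the claim; note this identity is valid for all $0\le j,k\le\mu-1$ precisely because self-adjointness of $\Phi$ holds on all of $E$, not merely on the span of the first $r+1$ sections. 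I should check that $g$ here, which is the ${\cal O}_M$-bilinear form of formula (\ref{formeg}) induced by $S$ via $S(\eta,\nu)=g(\eta,\nu)\theta^d$, is the one with respect to which the adjoint in (\ref{eq:sym}) is taken, so that the self-adjointness of $\Phi$ applies verbatim; this consistency is guaranteed by remark \ref{rem:metglob}.
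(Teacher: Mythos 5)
Your proposal is correct and follows essentially the same route as the paper: (1) is immediate from the hypothesis, (2) and (3) rest on the commutativity $\Phi_{\partial_{x_i}}\Phi_{\partial_{x_j}}=\Phi_{\partial_{x_j}}\Phi_{\partial_{x_i}}$ coming from $\Phi\wedge\Phi=0$, and (4) on the self-adjointness $\Phi^{*}=\Phi$ from remark \ref{rem:metglob}. The only cosmetic difference is in (3), where the paper first flips $(\omega_{i}*\omega_{j})*\omega_{k}=\omega_{k}*(\omega_{i}*\omega_{j})$ using assertion (2) before expanding, whereas you expand both sides directly via ${\cal O}_{M}$-linearity; both reduce to the same iterated class $[\Phi_{\partial_{x_i}}\Phi_{\partial_{x_j}}\Phi_{\partial_{x_k}}(\omega_{0})]$.
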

\begin{proof} The first point is clear, thanks to the assumption on $\omega_{0}$. The second one follows
from the formula $\Phi\wedge\Phi =0$ (see proposition \ref{keyproposition}) from which we get $\Phi_{\partial_{x_{i}}}\Phi_{\partial_{x_{j}}}=\Phi_{\partial_{x_j}}\Phi_{\partial_{x_i}}$. For the third one, one writes 
$$\omega_{i}*(\omega_{j}*\omega_{k})=\Phi_{\partial_{x_{i}}}(\omega_{j}*\omega_{k})=
\Phi_{\partial_{x_{i}}}(\Phi_{\partial_{x_{j}}}(\Phi_{\partial_{x_{k}}}(\omega_{0})))$$
and, using the moreover the second assertion,  
$$(\omega_{i}* \omega_{j})*\omega_{k}=\omega_{k}* (\omega_{i}* \omega_{j})=\Phi_{\partial_{x_{k}}}(\Phi_{\partial_{x_{i}}}(\Phi_{\partial_{x_{j}}}(\omega_{0})))$$
We get the desired formula because $\Phi_{\partial_{x_{l}}}$ and $\Phi_{\partial_{x_{r}}}$ commute.
Last, we have
$$g(\omega_{i}*\omega_{j},\omega_{k})=g(\Phi_{i}(\omega_{j}),\omega_{k})=g(\omega_{j}, \Phi_{i}(\omega_{k}))
=g(\omega_{j}, \omega_{i}*\omega_{k}),$$
where the second equality follows from $\Phi^{*}=\Phi$ (see remark \ref{rem:metglob}).
\end{proof}

\noindent Notice that a section $\omega_0$ as in proposition \ref{prop:prodquant}
defines an injective period map, see definition \ref{def:preprimcan}. 
On the $B$-side it happens that there exists such sections, for instance if the quantum differential 
system is associated with a subdiagram deformation of a convenient and non-degenerate Laurent polynomial, see \cite{D2}. See also 
example \ref{exemple:PrePrimCan} and section \ref{sec:hirzebruch}.

\begin{definition}
In the situation of proposition \ref{prop:prodquant}, we will say that the map $*$ is the {\em quantum product} and that $E$ is the  {\em quantum algebra} of the quantum differential system ${\cal Q}$.
\end{definition}

\begin{remark}
(1) The variation of semi-infinite Hodge structure $VSHS_{\cal Q}$, and hence the quantum differential system ${\cal Q}$,
yields a ``quantization'' on the $\theta$-axis
of the quantum algebra $E$ (on the B-side, $E$ is a Jacobian ring): $G_{0}$ is a ${\cal O}_{M}[\theta]$-free module and
$E\simeq i^{*}_{\{\theta =0\}}G_{0}$.\\
(2) 
Assume that $\omega_{0}$ is as in proposition \ref{prop:prodquant}.
Assume moreover that we have an isomorphism of ${\cal O}_{M}$-modules $\delta : E\rightarrow H$ and let $\eta_{i}= \delta (\omega_{i})$. 
This isomorphism yields a product
$\circ$ on $H$ by
\begin{equation}\label{ProdQuant}
\eta_{k}\circ\eta_{\ell}:=\delta (\delta^{-1}(\eta_{k})*\delta^{-1}(\eta_{\ell})),
\end{equation}
a connection $\bigtriangledown^{\delta}$ on $H$ by 
\begin{equation}\label{ConQuant}
\bigtriangledown^{\delta}(\eta_{j})=\delta\bigtriangledown (\delta^{-1}(\eta_{j}))
\end{equation}
and a ``metric'' $g^{\delta}$ on $H$ by 
\begin{equation}\label{MetQuant}
g^{\delta}(\eta_{i},\eta_{j})=g(\delta^{-1}(\eta_{i}),\delta^{-1}(\eta_{j})),
\end{equation}
these objects being extended on $H$ by linearity.
By definition, the product $\circ$ inherits all the properties of $*$ and
the connection $\nabla^{\delta}$ defined on $H$ by 
$$\nabla^{\delta}_{\partial_{x_{i}}}(\sum_{k}a_{k}(\underline{x})\eta_{k})=
\bigtriangledown^{\delta}_{\partial_{x_{i}}}(\sum_{k}a_{k}(\underline{x})\eta_{k}) +\tau\sum_{k}a_{k}(\underline{x})\eta_{i}\circ\eta_{k}$$
is flat. If $H=\Theta_{M}$ and if the period map $\varphi_{\omega_{0}}$ (see theorem \ref{theo:reconstruction}) is an isomorphism, 
we get in this way a Frobenius manifold. 
\end{remark}

\section{Fundamental solutions of a quantum differential system}
\label{sec:SolFonda}

Let ${\cal Q}=(M, {\cal G}, \nabla , S, n)$ be a quantum differential system $\dim_{\cit}M=r+1$. We have
$$\nabla =\bigtriangledown +\frac{\Phi}{\theta} +(\frac{V_{0}}{\theta}+V_{\infty})\frac{d\theta}{\theta}$$
by proposition \ref{keyproposition}. Until the end of this paper, we will assume that $V_{\infty}$ is {\em semi-simple}
as it will be the case in our favorite situations ($A$-side and $B$-side). 
In what follows, $\omega =(\omega_{0},\cdots ,\omega_{\mu -1})$ will denote a basis of $\pi_{*}{\cal G}$ over ${\cal O}_{M}$, fixed once for all. In the basis $1\otimes \omega$ de ${\cal G}$, the matrix of $\nabla$ is thus
\begin{equation}
(\frac{A_{0}(\underline{x})}{\theta}+A_{\infty}(\underline{x})\frac{d\theta}{\theta}+C(\underline{x})+\frac{D (\underline{x})}{\theta}
\end{equation}
where $\underline{x}=(x_{0},\cdots ,x_{r})\in M$. Recall that $\tau :=\theta^{-1}$.

\subsection{The fundamental solutions of the Dubrovin connection}
\label{subsec:Dub}
Recall the flat Dubrovin connection $\nabla^{d}$ of definition \ref{def:connexionDubrovin}. It has flat sections. Let us precise this point. Again, 
the results of this section are in essence classical, see {\em f.i} \cite{Dub2}, \cite{CK}, and we detail them in our situation 
mainly to establish notations and to set the different objects that we will use later.\\

\begin{lemma} There exists a (non necessarily unique) formal power series (in $\tau$)
$$Q(\underline{x},\tau )=I+\sum_{i\geq 1}Q_{i}(\underline{x})\tau^{i}$$
where $I\in {\cal M}_{\mu\times\mu}(\cit)$ is the identity matrix and $Q_{i}(\underline{x})\in {\cal M}_{\mu\times\mu}({\cal O}(M))$ such that
\begin{equation}\label{eq:Qtau}
\nabla^{d}(\omega Q)=(\bigtriangledown \omega)Q
\end{equation}
\end{lemma}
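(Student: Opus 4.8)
The plan is to determine the coefficients $Q_{i}(\underline{x})$ one order at a time by expanding the defining identity in powers of $\tau$. First I would substitute $\nabla^{d}=\bigtriangledown+\tau\Phi$ (recall $\tau=\theta^{-1}$) together with the ansatz $Q=I+\sum_{i\geq 1}Q_{i}\tau^{i}$ into the equation $\nabla^{d}(\omega Q)=(\bigtriangledown\omega)Q$. Using that $\Phi$ is ${\cal O}_{M}$-linear and the Leibniz rule for $\bigtriangledown$, the equation turns into an identity of formal power series in $\tau$ whose coefficients are $1$-forms valued in $E$. Extracting the coefficient of $\tau^{i}$ gives, for each $i\geq 1$, the recursion
\[
\bigtriangledown Q_{i}=-\Phi\,Q_{i-1},\qquad Q_{0}=I,
\]
where $\bigtriangledown$ is understood as the (flat) residual connection acting on $Q_{i}$ regarded as a section of $\mathrm{End}(E)$. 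The coefficient of $\tau^{0}$ is satisfied automatically by $Q_{0}=I$, so the whole content of the lemma is the solvability of this recursion.

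Next I would solve the recursion by induction on $i$. Assuming $Q_{0},\dots,Q_{i-1}$ have been constructed so that the relations hold up to order $i-1$, I must produce $Q_{i}$ with $\bigtriangledown Q_{i}=-\Phi\,Q_{i-1}$. Solving such an inhomogeneous covariant equation requires the right-hand side $\beta_{i}:=-\Phi\,Q_{i-1}$, a $1$-form with values in $\mathrm{End}(E)$, to be $\bigtriangledown$-closed; once this is verified, flatness of $\bigtriangledown$ (that is $\bigtriangledown^{2}=0$, from proposition \ref{keyproposition}) and the covariant Poincar\'e lemma furnish a primitive $Q_{i}$ with entries in ${\cal O}(M)$ (working on a contractible base, or formally, so that $\bigtriangledown$-closed forms are $\bigtriangledown$-exact).

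The hard part, and the heart of the proof, is precisely this integrability check, and it is here that the defining relations of a quantum differential system are used. Applying the inductive hypothesis $\bigtriangledown Q_{i-1}=-\Phi\,Q_{i-2}$ and the Leibniz rule,
\[
\bigtriangledown\beta_{i}=-(\bigtriangledown\Phi)\,Q_{i-1}+\Phi\wedge\bigtriangledown Q_{i-1}=-(\bigtriangledown\Phi)\,Q_{i-1}-(\Phi\wedge\Phi)\,Q_{i-2},
\]
which vanishes because $\bigtriangledown\Phi=0$ and $\Phi\wedge\Phi=0$ by proposition \ref{keyproposition} (equivalently, because $\nabla^{d}$ is flat, proposition \ref{prop:dubint}); the initial case $i=1$ reduces to $\bigtriangledown\beta_{1}=-\bigtriangledown\Phi=0$. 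This is exactly the step that would break down if we dropped the logarithmic assumption at infinity, since then the residual connection $\bigtriangledown$ would not even be defined. Finally I would record the non-uniqueness asserted in the statement: at each stage $Q_{i}$ is determined only up to addition of a $\bigtriangledown$-flat matrix, which is the source of the freedom in $Q$.
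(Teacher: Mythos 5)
Your proof is correct and follows essentially the same route as the paper: expanding $\nabla^{d}(\omega Q)=(\bigtriangledown\omega)Q$ in powers of $\tau$ yields the recursion $d_{M}Q_{i}=-D\,Q_{i-1}$ (the paper's equation in a $\bigtriangledown$-flat frame, which is the setting in which the lemma is applied), and the solvability at each step is exactly the closedness of $-\Phi Q_{i-1}$, checked inductively from $\bigtriangledown\Phi=0$ and $\Phi\wedge\Phi=0$. Your explicit integrability computation and the remark on non-uniqueness (adding $\bigtriangledown$-flat matrices) are just a slightly more careful write-up of what the paper does.
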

\begin{proof}
We have to show that there exists a matrix $Q$ such that
$$d_M Q(\underline{x},\tau )=-\tau D(\underline{x})Q(\underline{x},\tau )$$
\noindent where $d_M$ denotes the differential on $M$. The independant term of $\tau$ in this equality shows that $d_M Q_{0}(\underline{x})=0$ thus $Q_{0}(\underline{x})$ is constant: we choose it equal to the identity $I$.
The term of degree $r\geq  1$ in $\tau$ yields 
$$d_M Q_{r}(\underline{x})=-D(\underline{x})Q_{r-1}(\underline{x}).$$
\noindent For $r=1$, this equation has a solution because $d_M D(\underline{x})=0$ (this is what gives equation $\bigtriangledown\Phi =0$) and $d_M Q_{0}(\underline{x})=0$. It has also a solution for $r\geq 2$ because
$d_M (D(\underline{x})Q_{r-1}(\underline{x}))=0$, which is equivalent to
$$\partial_{x_{j}}(D^{(i)}(\underline{x}) Q_{r-1}(\underline{x}) )=\partial_{x_{i}}(D^{(j)}(\underline{x}) Q_{r-1}(\underline{x}))$$
for all $i$ and for all $j$. These equalities are shown by induction, using moreover the fact that $\Phi\wedge\Phi =0$.
\end{proof}

\begin{corollary}\label{prop:fonda}
\begin{enumerate}
\item There exists a matrix 
$$P(\underline{x},\tau )=P_{0}(\underline{x})+\sum_{i\geq 1}P_{i}(\underline{x})\tau^{i}\in {\cal M}_{\mu\times\mu}({\cal O}(\widetilde{M})[[\tau ]])$$
such that $\nabla^{d}((\omega_{0},\cdots ,\omega_{\mu -1})P)=0$.
\item After the base change of matrix $P$, the matrix of $\nabla$ takes the form $R(\tau )\frac{d\tau}{\tau}$ 
where 
$$R(\tau )=\sum_{k\geq 0}R_{k}\tau^{k}$$ is a formal power series in $\tau$ and the matrices $R_{k}$ are constant\footnote{In particular, $R_{0}=-P_0^{-1}A_{\infty}P_0$.}.
\end{enumerate}
\end{corollary}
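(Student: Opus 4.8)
The plan is to build $P$ in two stages: first flatten the residual connection $\bigtriangledown$ by passing to a frame of its flat sections, and then kill the remaining $\Phi/\theta$-term using the $\tau$-series provided by the preceding lemma; part (2) will follow by reading off the $\frac{d\tau}{\tau}$-component of $\nabla$ in the resulting frame and invoking flatness of $\nabla$.

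For (1), I would start from the flatness of $\bigtriangledown$ (proposition \ref{keyproposition}). Since $\widetilde{M}$ is simply connected, the system $d_{M}P_{0}+C(\underline{x})P_{0}=0$ has an invertible solution $P_{0}(\underline{x})\in{\cal M}_{\mu\times\mu}({\cal O}(\widetilde{M}))$, i.e. a fundamental matrix with $\bigtriangledown(\omega P_{0})=0$. In the $\bigtriangledown$-flat frame $\omega P_{0}$ the residual connection is trivial and $\Phi$ acquires the matrix $\widetilde{D}:=P_{0}^{-1}DP_{0}$; by (\ref{eq:integrabilite}) the relations $\bigtriangledown\Phi=0$ and $\Phi\wedge\Phi=0$ translate into $\widetilde{D}$ having constant coefficients and $\widetilde{D}\wedge\widetilde{D}=0$, which are precisely the hypotheses used in the preceding lemma. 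Its construction, carried out in the frame $\omega P_{0}$, then produces a series $\widetilde{Q}(\underline{x},\tau)=I+\sum_{i\geq1}\widetilde{Q}_{i}(\underline{x})\tau^{i}$ with $d_{M}\widetilde{Q}=-\tau\widetilde{D}\widetilde{Q}$, hence $\nabla^{d}(\omega P_{0}\widetilde{Q})=(\bigtriangledown(\omega P_{0}))\widetilde{Q}=0$. Setting $P:=P_{0}\widetilde{Q}=P_{0}+\sum_{i\geq1}(P_{0}\widetilde{Q}_{i})\tau^{i}$ gives a matrix of the announced shape (with $P_{i}=P_{0}\widetilde{Q}_{i}$), invertible as a formal series since $P_{0}$ is, and satisfying $\nabla^{d}((\omega_{0},\cdots,\omega_{\mu-1})P)=0$. (Note that the naive composition $Q P_0$ with the lemma's $Q$ fails, since it leaves a commutator $[C,Q]$; one must flatten $\bigtriangledown$ first.)

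For (2), I would perform the base change by $P$ on the full characteristic matrix (\ref{eq:baseglobale}): the new connection form is $\widetilde{\Omega}=P^{-1}\Omega P+P^{-1}dP$, where $d$ is the total differential on $\ppit^{1}\times M$. Its $dx_{i}$-components form the matrix of $\nabla^{d}$ in the frame $\omega P$, which vanishes by (1); hence only the $\frac{d\tau}{\tau}$-component survives. Using $\frac{d\theta}{\theta}=-\frac{d\tau}{\tau}$ together with the $\theta$-part $A_{0}\tau+A_{\infty}$ of $\Omega$, a direct computation gives $\widetilde{\Omega}=R(\tau)\frac{d\tau}{\tau}$ with $R(\tau)=-P^{-1}(A_{0}\tau+A_{\infty})P+\tau P^{-1}\partial_{\tau}P$. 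This is a formal power series $\sum_{k\geq0}R_{k}\tau^{k}$, and since the factor $\tau$ kills the last term at $\tau=0$ one reads off $R_{0}=-P_{0}^{-1}A_{\infty}P_{0}$.

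The one step that is genuinely the crux, and not bookkeeping, is the constancy of the $R_{k}$ in $\underline{x}$. Here I would use flatness of $\nabla$: the transform $\widetilde{\Omega}$ is again integrable, and because it only involves $d\tau$ one has $\widetilde{\Omega}\wedge\widetilde{\Omega}=0$ automatically, so integrability reduces to $d\widetilde{\Omega}=0$. As $\frac{d\tau}{\tau}$ is closed, this reads $d_{M}R(\tau)\wedge\frac{d\tau}{\tau}=0$, i.e. $\partial_{x_{i}}R(\tau)=0$ for every $i$; therefore $R(\tau)$, and with it each coefficient $R_{k}$, is independent of $\underline{x}$. Equivalently, the vanishing of the mixed $dx_{i}\wedge d\tau$ curvature components in the flat frame forces the logarithmic $\frac{d\tau}{\tau}$-direction to decouple from the base, which is exactly the content of (2).
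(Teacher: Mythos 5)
Your proof is correct and follows the paper's own (much terser) argument exactly: flatten $\bigtriangledown$ with a fundamental matrix $P_0$, apply the preceding lemma in the $\bigtriangledown$-flat frame $\omega P_0$, and deduce (2) from the flatness of $\nabla$ by reading off the surviving $\frac{d\tau}{\tau}$-component. One small imprecision: in a $\bigtriangledown$-flat frame the relation $\bigtriangledown\Phi=0$ says the matrix-valued $1$-form $\widetilde{D}$ is \emph{closed} ($d_M\widetilde{D}=0$), not that it has constant coefficients (on the $A$-side it is the $\underline{x}$-dependent quantum multiplication), but closedness is all the lemma's recursion uses, so nothing in your argument breaks.
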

\begin{proof} (1) Let $P_{0}(x)$ such that $\bigtriangledown ((\omega_{0},\cdots ,\omega_{\mu -1}) P_{0}(x))=0$. 
Apply the previous lemma to the $\bigtriangledown$-flat 
basis $(\omega_{0},\cdots ,\omega_{\mu -1})P_{0}(x)$ (if the basis $\omega$ is $\bigtriangledown$-flat from the beginning, 
$P_{0}(x)=I$ and $P=Q$). 
(2) Follows from the flatness of $\nabla$.
\end{proof}

\begin{definition} The matrix 
$P:=P(x,\tau )$ is called {\em fundamental solution} of the Dubrovin 
connection $\nabla^{d}$.
\end{definition}

\begin{remark}\label{rem:compsolfond} Let $P(\underline{x},\tau )$ be a fundamental solution.
Then $\tilde{P}(\underline{x},\tau )$ is a fundamental solution if and only if there exists an invertible matrix
$$\alpha (\tau )=\alpha_{0}+\sum_{k\geq 1}\alpha_{k}\tau^{k}$$
\noindent where $\alpha_i\in {\cal M}_{\mu\times\mu}(\cit)$, such that $\tilde{P}(\underline{x},\tau )=P(\underline{x},\tau )\alpha (\tau )$, as it follows 
from the base change formula for a connection.
\end{remark}

\subsection{The $J$-functions of a quantum differential system} 
We define here the $J$-functions of a general quantum differential system. In the situation of example \ref{exemplebasiqueA}, 
our definition agrees with the usual object considered in classical mirror symmetry.\\

Let $P$ be a fundamental solution and $(e_{0},\cdots ,e_{\mu -1}):=(\omega_{0},\cdots ,\omega_{\mu -1})P$. We
define 
$${\cal H}_{P}=\{ (e_{0},\cdots ,e_{\mu -1})
\left ( \begin{array}{c}
y_{0}\\
y_{1}\\
\cdots \\
y_{\mu -1}
\end{array}
\right )\in \Gamma(U_{\infty}\times M, {\cal G})|\ 
P\left ( \begin{array}{c}
y_{0}\\
y_{1}\\
\cdots \\
y_{\mu -1}
\end{array}
\right )\in ({\cal O}_{M})^{\mu}\}$$

\noindent If $\eta =\sum_{i=0}^{\mu -1}s_{i}\omega_{i}\in\Gamma (\ppit^{1}\times M,{\cal G})$, we will write
\begin{equation}\label{formula:J}
J_{{\cal Q}}^{P, \eta}:= P^{-1}\eta =(e_{0},\cdots ,e_{\mu -1})
\left ( \begin{array}{c}
y_{0}\\
y_{1}\\
\cdots \\
y_{\mu -1}
\end{array}
\right )\ \mbox{where}\ P
\left ( \begin{array}{c}
y_{0}\\
y_{1}\\
\cdots \\
y_{\mu -1}
\end{array}
\right )
=
\left ( \begin{array}{c}
s_{0}\\
s_{1}\\
\cdots \\
s_{\mu -1}
\end{array}
\right ).
\end{equation}
\noindent By definition, the function $J_{{\cal Q}}^{P, \eta}$ is thus the section $\eta$ expressed in the frame 
$(e_{0},\cdots ,e_{\mu -1}).$ \\

\begin{lemma}\label{lemma:ConnHP} 
\begin{enumerate}
 \item 
Let $\widetilde{\nabla}^{d}$ be the connection induced by $\nabla^{d}$ on ${\cal H}_{P}$. Then
\begin{equation}\label{eq:ConnHP}
\widetilde{\nabla}^{d}_{X}(y_{0}e_{0}+\cdots +y_{\mu -1}e_{\mu -1})=X(y_{0})e_{0}+\cdots +X(y_{\mu -1})e_{\mu -1}
\end{equation}
for any vector field $X$ on $M$.
\item We have 
\begin{equation}\label{eq:XJ}
\widetilde{\nabla}^{d}_{X}(J^{P,\eta}_{{\cal Q}} )=P^{-1}\nabla^{d}_{X}\eta
\end{equation}
for any vector field $X$ on $M$ and $\eta\in\Gamma (\ppit^{1}\times M, {\cal G})$. 
\end{enumerate}
\end{lemma}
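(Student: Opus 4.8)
The plan is to read off both assertions from the single structural fact that the frame $(e_{0},\cdots ,e_{\mu -1})=(\omega_{0},\cdots ,\omega_{\mu -1})P$ is, by the very definition of a fundamental solution (corollary \ref{prop:fonda} (1)), a $\nabla^{d}$-flat frame, i.e. $\nabla^{d}(e_{i})=0$ for every $i$. Once this is granted, assertion (1) is a direct application of the Leibniz rule: for a section $\sum_{i}y_{i}e_{i}$ of ${\cal H}_{P}$ and a vector field $X$ on $M$,
$$\nabla^{d}_{X}\Big(\sum_{i}y_{i}e_{i}\Big)=\sum_{i}X(y_{i})\,e_{i}+\sum_{i}y_{i}\nabla^{d}_{X}(e_{i})=\sum_{i}X(y_{i})\,e_{i},$$
which is exactly the formula (\ref{eq:ConnHP}) defining the induced connection $\widetilde{\nabla}^{d}$ in the frame $e$.

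For (2) I would pass to coordinates in the original frame $\omega$. Writing $\Gamma_{X}=C_{X}+\tau D_{X}$ for the matrix of $\nabla^{d}_{X}$ in the basis $\omega$ (with $C$, $D$ the matrices of $\bigtriangledown$ and $\Phi$ of proposition \ref{keyproposition}, so that $\nabla^{d}=\bigtriangledown+\Phi/\theta$ has matrix $C+\tau D$), the defining relation $\nabla^{d}((\omega)P)=0$ reads $X(P)+\Gamma_{X}P=0$. Differentiating $PP^{-1}=I$ then yields the key identity $X(P^{-1})=P^{-1}\Gamma_{X}$. Now writing $\eta=\sum_{i}s_{i}\omega_{i}$, the section $J^{P,\eta}_{{\cal Q}}$ has coordinate vector $y=P^{-1}s$ in the frame $e$ by (\ref{formula:J}); applying (1) together with the above identity,
$$\widetilde{\nabla}^{d}_{X}(J^{P,\eta}_{{\cal Q}})=X(y)=X(P^{-1})s+P^{-1}X(s)=P^{-1}\big(\Gamma_{X}s+X(s)\big)=P^{-1}\nabla^{d}_{X}\eta,$$
the last equality holding because $\Gamma_{X}s+X(s)$ is precisely the coordinate vector of $\nabla^{d}_{X}\eta$ in the basis $\omega$. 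This is (\ref{eq:XJ}).

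The computation is routine; the only points requiring care are bookkeeping ones. First, I would keep track of the $\tau$-dependence: the matrix $\Gamma_{X}$, and hence $P$ and $P^{-1}$, are formal power series in $\tau$, so all manipulations take place over ${\cal O}(M)[[\tau]]$ and not over ${\cal O}_{M}$. Second, I would make explicit that $P^{-1}\eta$ in (\ref{formula:J}) denotes the expression of $\eta$ in the flat frame $e$, so that assertion (2) is in essence the naturality of $\nabla^{d}$ under the change of frame $P$: this change transports $\nabla^{d}$ to the trivial connection $\widetilde{\nabla}^{d}$ of (1), which is the conceptual content behind the two formulas. There is no genuine obstacle here beyond being careful about what ``the connection induced on ${\cal H}_{P}$'' means, namely $\nabla^{d}$ read off in the flat frame $e$.
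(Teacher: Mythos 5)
Your proposal is correct and follows essentially the same route as the paper: part (1) from the $\nabla^{d}$-flatness of the frame $e=\omega P$, and part (2) by differentiating the coordinate vector $y=P^{-1}s$ using the equation satisfied by $P$. Your version is if anything slightly more careful, since it records the identity $X(P^{-1})=P^{-1}\Gamma_{X}$ explicitly and keeps the $X(s)$ term, which the paper's displayed formula for part (2) leaves implicit.
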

\begin{proof}
1. Let $X$ be a vector field on $M$. We have
$$\widetilde{\nabla}^{d}_{X}(
(e_{0},\cdots ,e_{\mu -1})\left ( \begin{array}{c}
y_{0}\\
y_{1}\\
\cdots \\
y_{\mu -1}
\end{array}
\right ))
=
\nabla^{d}_{X}(
(\omega_{0},\cdots ,\omega_{\mu -1})P
\left ( \begin{array}{c}
y_{0}\\
y_{1}\\
\cdots \\
y_{\mu -1}
\end{array}
\right ))
$$
$$=
(\omega_{0},\cdots ,\omega_{\mu -1})P
\left ( \begin{array}{c}
X(y_{0})\\
X(y_{1})\\
\cdots \\
X(y_{\mu -1})
\end{array}
\right )
=
(e_{0},\cdots ,e_{\mu -1})
\left ( \begin{array}{c}
X(y_{0})\\
X(y_{1})\\
\cdots \\
X(y_{\mu -1})
\end{array}
\right )$$
where the second equality follows from the fact that $P$ is a fundamental solution.\\
2. We now have, keeping the previous notations,
$$\left ( \begin{array}{c}
X(y_{0})\\
X(y_{1})\\
\cdots \\
X(y_{\mu -1})
\end{array}
\right )
=P^{-1}i_{X}\Omega
\left ( \begin{array}{c}
s_{0}\\
s_{1}\\
\cdots \\
s_{\mu -1}
\end{array}
\right )$$
where $\Omega$ is the matrix of $\nabla^{d}$ in the basis $\omega$. 
\end{proof}

\noindent As suggested by formula (\ref{eq:ConnHP}), we will write $X(J^{P,\eta}_{{\cal Q}} )$ 
instead of $\widetilde{\nabla}^{d}_{X}(J^{P,\eta}_{{\cal Q}} )$. Notice that $YX(J^{P,\eta}_{{\cal Q}})=P^{-1}\nabla^{d}_{Y}\nabla^{d}_{X}\eta$ {\em etc}...\\

Keeping quantum product in mind, see section \ref{ProduitQuantique}, it is natural to consider the section
$J_{{\cal Q}}^{P, \omega_{0}}$ where $\omega_{0}$ satisfies the condition of proposition \ref{prop:prodquant}. 
If it happens to be the case, we have $\theta\partial_{j}J_{{\cal Q}}^{P, \omega_{0}}=P^{-1}\omega_{j}+\theta P^{-1}\bigtriangledown_{\partial_{j}}\omega_{0}$. 
In particular,
$$ \theta\partial_{j}J_{{\cal Q}}^{P, \omega_{0}}=P^{-1}\omega_{j}$$
if $\bigtriangledown \omega_{0}=0$.

\begin{definition}\label{def:fonctionJ}
Let $P$ be a fundamental solution of the Dubrovin connection and assume that the section $\omega_{0}$ is as in proposition \ref{prop:prodquant}.
We will call 
$J_{{\cal Q}}^{P, \omega_{0}}$ a
{\em $J$-function} of the quantum differential system ${\cal Q}$.
\end{definition}

%\noindent On the $B$-side, 
%the $\theta\partial_{j}J_{{\cal Q}}^{P, \omega_{0}}$'s give thus the ``opposite'' filtration to the Hodge filtration we are looking for 
%in order to solve the Birkhoff problem, see step 2 in Appendix \ref{sub:SDQcoteB}.\\

 The following result explains the link with the product $*$ defined in section \ref{ProduitQuantique}.
We will write, for $H$ a polynomial function of $2r+3$ variables,  $H(\theta\partial_{x}, \underline{x} ,\theta )$ instead of  $H(\theta\partial_{x_{0}},\cdots , \theta\partial_{x_{r}}, x_{0},\cdots ,x_{r} ,\theta )$ and, for simplicity, $J$ instead of $J_{{\cal Q}}^{P, \omega_{0}}$.

\begin{proposition}\label{HJHcirc} Let $J$ be a $J$-function of the quantum differential system ${\cal Q}$.
\begin{enumerate}
\item We have $H(* , \underline{x}, 0)=0$ if $H(\theta\partial_{x}, \underline{x}, \theta ) J=0$.
\item Assume moreover that the basis $\omega$ is $\bigtriangledown$-flat. We have (operators of order $2$),
$$\theta\partial_{x_{j}}\theta\partial_{x_{i}}J=\sum_{k}a^{k}_{ji}(\underline{x})\theta\partial_{x_{k}}J$$
if and only if
$\omega_{j}*\omega_{i}=\sum_{k}a^{k}_{ji}(\underline{x})\omega_{k}$.
\end{enumerate}
\end{proposition}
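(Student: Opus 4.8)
The plan is to compute $\theta\partial_{x_i} J$ and then $\theta\partial_{x_j}\theta\partial_{x_i} J$ explicitly, using Lemma \ref{lemma:ConnHP} and the hypothesis that $\omega_0$ is as in Proposition \ref{prop:prodquant} so that $\omega_i = \Phi_{\partial_{x_i}}(\omega_0)$. The key observation is formula (\ref{eq:XJ}), which says $\widetilde{\nabla}^d_X(J) = P^{-1}\nabla^d_X \eta$, together with the remark following it that iterated derivatives give $\widetilde{\nabla}^d_Y\widetilde{\nabla}^d_X(J) = P^{-1}\nabla^d_Y\nabla^d_X\eta$. So the whole statement reduces to translating the second-order differential relation on $J$ into an algebraic relation among the $\nabla^d_{\partial_{x_i}}\omega_0$, and then reading off what that means at $\theta = 0$, i.e. in $E \simeq G_0/\theta G_0$.

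First I would establish part (1). Since $J = P^{-1}\omega_0$ and $H$ is a polynomial in the $\theta\partial_{x_i}$ with coefficients polynomial in $\underline{x},\theta$, applying the iterated-derivative remark gives $H(\theta\partial_x,\underline{x},\theta)J = P^{-1}\,H(\theta\nabla^d_{\partial_x},\underline{x},\theta)\,\omega_0$. Here $\theta\nabla^d_{\partial_{x_i}} = \theta\bigtriangledown_{\partial_{x_i}} + \Phi_{\partial_{x_i}}$, so modulo $\theta$ the operator $\theta\nabla^d_{\partial_{x_i}}$ acts as $\Phi_{\partial_{x_i}}$, which is exactly multiplication by $\omega_i$ under the product $*$ (because $\Phi_{\partial_{x_i}}(\eta) = \omega_i * \eta$ in $E$). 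Thus setting $\theta = 0$ and using that $P$ is invertible, $H(\theta\partial_x,\underline{x},\theta)J = 0$ forces $H(*,\underline{x},0)(\omega_0) = 0$ in $E$; since $\omega_0$ is the unit for $*$ (Proposition \ref{prop:prodquant}(1)), this gives $H(*,\underline{x},0) = 0$ as an operator, which is the claim. I should be careful that ``$H(*,\underline{x},0)$'' is interpreted as the corresponding element of the quantum algebra built from the $*$-products of the $\omega_i$'s.

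For part (2), with $\omega$ now $\bigtriangledown$-flat, I would compute directly. Because $\bigtriangledown\omega_0 = 0$, the remark before Definition \ref{def:fonctionJ} gives $\theta\partial_{x_i}J = P^{-1}\omega_i$. Applying $\theta\partial_{x_j}$ again and using $P^{-1}\nabla^d_{\partial_{x_j}}\omega_i = P^{-1}(\bigtriangledown_{\partial_{x_j}}\omega_i + \tfrac1\theta\Phi_{\partial_{x_j}}(\omega_i))$, together with $\bigtriangledown\omega_i = 0$ (flatness of the whole basis), leaves $\theta\partial_{x_j}\theta\partial_{x_i}J = P^{-1}\Phi_{\partial_{x_j}}(\omega_i)$. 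The class of $\Phi_{\partial_{x_j}}(\omega_i)$ in $E$ is precisely $\omega_j * \omega_i$. Comparing with $\sum_k a^k_{ji}(\underline{x})\,\theta\partial_{x_k}J = \sum_k a^k_{ji}(\underline{x})\,P^{-1}\omega_k$ and using that $P^{-1}$ is invertible and the $\omega_k$ are a basis, the asserted differential relation holds exactly when $\omega_j * \omega_i = \sum_k a^k_{ji}(\underline{x})\,\omega_k$, as desired.

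The main obstacle is bookkeeping the $\theta$-dependence carefully: the operator $\theta\nabla^d_{\partial_{x_i}}$ mixes a $\bigtriangledown$-part (with a factor of $\theta$) and a $\Phi$-part (without), so the reduction $\theta\to 0$ has to be justified at each order, and in part (2) the cancellation of the $\bigtriangledown$-terms genuinely uses the $\bigtriangledown$-flatness of the entire basis $\omega$, not just of $\omega_0$. I would make sure the identification of $\Phi_{\partial_{x_i}}$ with $*$-multiplication and the passage to $E = G_0/\theta G_0$ are stated cleanly, since that is where the algebraic content lives; the rest is a formal manipulation of the fundamental solution $P$.
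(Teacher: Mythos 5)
Your proposal is correct and follows essentially the same route as the paper: both reduce via formula (\ref{eq:XJ}) to the identity $\theta\nabla^{d}_{\partial_{i_{k}}}\cdots\theta\nabla^{d}_{\partial_{i_{1}}}\omega_{0}=\omega_{i_{k}}*\cdots*\omega_{i_{1}}+\theta A(\theta)$ for part (1), and both use the $\bigtriangledown$-flatness of $\omega$ to get $\theta\partial_{x_{k}}J=P^{-1}\omega_{k}$ and $\theta^{2}\partial_{x_{j}}\partial_{x_{i}}J=P^{-1}(\omega_{j}*\omega_{i})$ for part (2). Your extra care about the $\theta$-bookkeeping and the invertibility of $P$ only makes explicit what the paper leaves implicit.
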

\begin{proof} 
1. Let us observe that $H(\theta\partial_{x}, \underline{x} ,\theta )J=0$ if and only if 
$H(\theta\nabla^{d}_{\partial_{x}} , \underline{x}, \theta )\omega_{0} =0$,
as it follows from formula (\ref{eq:XJ}).
Now, using the definition of the quantum product,
$$\theta\nabla^{d}_{\partial_{i_{k}}}\cdots \theta\nabla^{d}_{\partial_{i_{1}}}\omega_{0} =\omega_{i_{k}}*\cdots *\omega_{i_{1}}+\theta A (\theta )$$
for a suitable formal power series $A$.
For 2., we have, thanks to the flatness of $\omega$,
$\theta^{2}\partial_{x_{j}}\partial_{x_{i}}J=P^{-1}\omega_{j}* \omega_{i}$
and we use again $\theta\partial_{x_{k}}J=P^{-1}\omega_{k}$.
\end{proof}

Let us emphasize the fact that the function $J^{P, \omega_{0}}_{{\cal Q}}$ depends on the pre-primitive section and on the fundamental solution $P$. 
The aim of the next section is to define {\em canonical} $J$-functions. In the situation of example \ref{exemplebasiqueA}, 
we will see that Givental's $J$ function is such a canonical function, obtained by taking $\omega_{0}=1$ together with a canonical fundamental solution $P$.
In this case, we have
$$J^{P, \omega_{0}}_{{\cal Q}}=e^{\tau Q\ln x}(\omega_{0}+O(\tau))$$
for a suitable constant matrix $Q$. See corollary \ref{coro:DLJ}. We will see also that, for suitable fundamental solutions $P$, 
$J^{P, \omega_{0}}_{{\cal Q}}$ can be expressed with the help of the bilinear form of the quantum differential system
${\cal Q}$, see proposition \ref{prop:fonctionJg}.

\section{Canonical fundamental solutions of a quantum differential system}

\label{sec:solfondacan}

The aim of this section is to define {\em canonical} fundamental solutions, and hence canonical $J$-functions (see section \ref{sec:Jcan}): this is done with the help of Dubrovin's symmetric and conformal fundamental solutions, see \cite[Lecture 2]{Dub2}. We keep the situation of the beginning of section \ref{sec:SolFonda}.

\subsection{A class of convergent fundamental solutions: conformal solutions (after \cite{Dub2})}

We define here some convergent (in $\tau$) fundamental solutions. 
First, one can precise corollary \ref{prop:fonda} with the help of the following well-known lemma:

\begin{lemma}\label{lemma:levelt} There exist a fundamental solution $\tilde{P}(\underline{x},\tau )$ such that the matrix of the connection is,
after the base change of matrix $\tilde{P}(\underline{x},\tau )$, 
\begin{equation}\label{matconconf}
(\tilde{R}_{0}+\tilde{R}_{1}\tau +\cdots +\tilde{R}_{\delta}\tau^{\delta})\frac{d\tau }{\tau}
\end{equation}
\noindent the matrices $\tilde{R}_{k}$ satisfying $[\tilde{R}_{0},\tilde{R}_{k}]=-k\tilde{R}_{k}$ for all $k\geq 1$
and $\tilde{R}_{0}=R_{0}$ where $R_{0}$ is defined in corollary \ref{prop:fonda}.
\end{lemma}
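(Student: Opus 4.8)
The plan is to start from the conclusion of Corollary \ref{prop:fonda}, which already produces a fundamental solution $P$ after which the connection matrix is $R(\tau)\frac{d\tau}{\tau}$ with $R(\tau)=\sum_{k\geq 0}R_k\tau^k$ and constant $R_k$. The strategy is then to normalize this $R(\tau)$ by a further \emph{constant} gauge transformation $\alpha(\tau)=\alpha_0+\sum_{k\geq 1}\alpha_k\tau^k$, as allowed by Remark \ref{rem:compsolfond}, so that the new residue matrices satisfy the commutation relations $[\tilde R_0,\tilde R_k]=-k\tilde R_k$. Setting $\tilde P:=P\,\alpha(\tau)$, the base change formula for a connection of the form $R(\tau)\frac{d\tau}{\tau}$ (pure in $\frac{d\tau}{\tau}$, with $x$ now a silent parameter) gives the transformed matrix $\tilde R(\tau)\frac{d\tau}{\tau}$ where $\tilde R(\tau)=\alpha(\tau)^{-1}R(\tau)\alpha(\tau)-\tau\alpha(\tau)^{-1}\alpha'(\tau)$. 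First I would take $\alpha_0=I$ so that $\tilde R_0=R_0$ is forced, matching the required normalization $\tilde R_0=R_0$.

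Next I would determine the $\alpha_k$ recursively. The idea is the classical Levelt/Turrittin normalization of a meromorphic connection with a regular singular point: one uses the freedom in $\alpha_k$ to kill the ``non-resonant'' part of each $R_k$. Collecting the coefficient of $\tau^k$ in the identity $\alpha(\tau)\tilde R(\tau)+\tau\alpha'(\tau)=R(\tau)\alpha(\tau)$, one obtains at each order a relation of the shape
\begin{equation*}
[\,R_0,\alpha_k\,]+k\,\alpha_k = (\text{known terms in } R_1,\dots,R_k,\alpha_1,\dots,\alpha_{k-1}) - \tilde R_k .
\end{equation*}
The operator $\mathrm{ad}_{R_0}+k\,\mathrm{Id}$ acting on matrices has eigenvalues $\lambda_i-\lambda_j+k$, where $\lambda_i$ are the eigenvalues of $R_0$; here one uses that $R_0=-P_0^{-1}A_\infty P_0$ is (up to sign and conjugation) $A_\infty$, whose eigenvalues are the spectrum at infinity, so they are constants. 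One splits each order into the image and cokernel of $\mathrm{ad}_{R_0}+k$: the part in the image is removed by a suitable choice of $\alpha_k$, and the residual part, which lies in the kernel of $\mathrm{ad}_{R_0}+k$, becomes $\tilde R_k$ and by construction satisfies $(\mathrm{ad}_{R_0}+k)\tilde R_k=0$, i.e.\ exactly $[\tilde R_0,\tilde R_k]=-k\tilde R_k$. This is precisely the eigenvector/homogeneity condition required in \eqref{matconconf}.

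The main obstacle is the \emph{resonance} phenomenon: whenever $\lambda_i-\lambda_j+k=0$ for some $i,j$ and some $k\geq 1$, the operator $\mathrm{ad}_{R_0}+k$ is not invertible, so one cannot freely eliminate that component, and it is forced to survive into $\tilde R_k$. This is exactly why the statement allows a \emph{finite} sum up to $\tilde R_\delta$ rather than asserting $\tilde R(\tau)=R_0$: the surviving higher $\tilde R_k$ are supported only on the finitely many resonant integer differences of eigenvalues of $R_0$, which bounds $\delta$ by the largest integer difference in the spectrum. I would therefore organize the induction carefully around the kernel/image decomposition of $\mathrm{ad}_{R_0}+k$, verify that the construction terminates because only finitely many $k$ are resonant, and check that the resulting $\alpha(\tau)$ is invertible (immediate since $\alpha_0=I$). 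Confirming that the residual matrices genuinely land in the kernel of $\mathrm{ad}_{R_0}+k$, and hence satisfy the stated bracket relation, is the key verification; the rest is the routine bookkeeping of the recursion, which I would not grind through here.
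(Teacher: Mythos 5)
Your proposal is correct and is essentially the argument the paper itself invokes: the paper's proof is a one-line citation of the Levelt normal form (Sabbah, Exercice II.2.20; Dubrovin, Lemma 2.5), which is exactly the recursive gauge transformation you describe, with the resonant components surviving in $\ker(\mathrm{ad}_{R_0}+k)$ and hence satisfying $[\tilde R_0,\tilde R_k]=-k\tilde R_k$, and $\delta$ bounded by the largest integer difference of eigenvalues of $R_0$ (cf.\ Remark \ref{rem:nonreso}).
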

\begin{proof} It is essentially the one giving the Levelt normal form 
(see for instance \cite[Exercice II.2.20]{Sab1}, \cite[Lemma 2.5]{Dub2} and the references therein). 
\end{proof}

\begin{remark}
\label{rem:nonreso}
One has $\delta\leq max |\lambda_i -\lambda_j |$,
where the maximum is taken over the differences of the eigenvalues of $R_{0}$. 
In particular, $\tilde{R}_{k}=0$ for all $k\geq 1$ if $R_{0}$ is non-resonant. On the $B$-side (with the previous notations, $R_{0}=-A_{\infty}$), 
we thus have $\delta\leq \dim_{\cit}U$ 
if the quantum differential system is associated with a regular tame function $f:U\rightarrow\cit$, see Appendix, because the 
eigenvalues of $A_{\infty}$ run through the spectrum of $f$ at infinity. 
\end{remark}

\begin{definition}
A fundamental solution which has the properties of lemma \ref{lemma:levelt} is called {\em conformal}.
\end{definition}

\noindent The following two results motivate the definition of conformal solutions:

\begin{proposition} A conformal fundamental solution is convergent (in $\tau$).
\end{proposition}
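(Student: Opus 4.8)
The plan is to show that the conformal solution $\tilde{P}(\underline{x},\tau)=P_{0}(\underline{x})+\sum_{i\geq 1}P_{i}(\underline{x})\tau^{i}$ furnished by Lemma \ref{lemma:levelt}, which is a priori only a formal power series, has a positive radius of convergence in $\tau$. The idea is to recast the two defining conditions of $\tilde{P}$ (it is a fundamental solution of $\nabla^{d}$, and it puts the $\theta$-part of $\nabla$ in the polynomial normal form $\tilde{R}(\tau)\frac{d\tau}{\tau}$) as a single linear ordinary differential equation in $\tau$ with a regular singular point at $\tau=0$, and then to exploit conformality — which forces $\tilde{R}(\tau)=\tilde{R}_{0}+\cdots+\tilde{R}_{\delta}\tau^{\delta}$ to be a \emph{polynomial} — to run an elementary majorant estimate on the recursion satisfied by the $P_{i}$.

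First I would write the equation satisfied by $\tilde{P}$ in the $\tau$-direction. Since $\tilde{P}$ is a fundamental solution of $\nabla^{d}$, the base change by $\tilde{P}$ kills the $M$-part of the connection, while by Lemma \ref{lemma:levelt} it turns the $\theta$-part into $\tilde{R}(\tau)\frac{d\tau}{\tau}$. Passing to the coordinate $\tau=\theta^{-1}$ (so that $\frac{d\theta}{\theta}=-\frac{d\tau}{\tau}$ and the coefficient of $\frac{d\tau}{\tau}$ is $-(A_{0}\tau+A_{\infty})$), the base-change formula $\Omega'=\tilde{P}^{-1}\Omega\tilde{P}+\tilde{P}^{-1}d\tilde{P}$ for the connection matrix $\Omega$ gives, in the $\tau$-direction,
\begin{equation}
\tau\partial_{\tau}\tilde{P}=(A_{0}(\underline{x})\tau+A_{\infty}(\underline{x}))\tilde{P}+\tilde{P}\,\tilde{R}(\tau).
\end{equation}
This is a linear system with coefficients polynomial in $\tau$ and a simple pole at $\tau=0$. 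Vectorizing via $\mathrm{vec}(AXB)=(B^{t}\otimes A)\mathrm{vec}(X)$ and setting $\mathbf{P}=\mathrm{vec}(\tilde{P})$, I obtain $\tau\partial_{\tau}\mathbf{P}=\mathcal{M}(\tau)\mathbf{P}$ with $\mathcal{M}(\tau)=I\otimes(A_{0}\tau+A_{\infty})+\tilde{R}(\tau)^{t}\otimes I=\sum_{k=0}^{d_{0}}\mathcal{M}_{k}\tau^{k}$, a \emph{finite} sum ($d_{0}=\max(1,\delta)$) precisely because the solution is conformal. Note that $\mathcal{M}_{0}=I\otimes A_{\infty}+R_{0}^{t}\otimes I$, whose eigenvalues are the differences $\alpha_{i}-\alpha_{j}$ of the eigenvalues of $A_{\infty}$, since $R_{0}=-P_{0}^{-1}A_{\infty}P_{0}$.

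The coefficients then satisfy $(nI-\mathcal{M}_{0})\mathbf{P}_{n}=\sum_{k=1}^{d_{0}}\mathcal{M}_{k}\mathbf{P}_{n-k}$. For $n\geq N_{0}:=2\|\mathcal{M}_{0}\|$ the operator $nI-\mathcal{M}_{0}$ is invertible with $\|(nI-\mathcal{M}_{0})^{-1}\|\leq 2/n$, so with $K:=\max_{k}\|\mathcal{M}_{k}\|$ one gets $\|\mathbf{P}_{n}\|\leq\frac{2K}{n}\sum_{k=1}^{d_{0}}\|\mathbf{P}_{n-k}\|$. Comparing with the entire function solving $\tau y'=2K(\tau+\cdots+\tau^{d_{0}})y$ shows that $\sum\|\mathbf{P}_{n}\|\tau^{n}$ converges — indeed the majorant may be taken entire — uniformly for $\underline{x}$ in compact subsets of $M$, since $K$ and $N_{0}$ can be chosen locally uniform. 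Hence $\tilde{P}$ is convergent. The only delicate point is a bookkeeping one: the recursion is solvable only when $n$ is not an eigenvalue of $\mathcal{M}_{0}$, i.e. for $n>\max_{i,j}|\alpha_{i}-\alpha_{j}|$ (the very bound of Remark \ref{rem:nonreso}); the finitely many low-order coefficients $P_{i}$ are those fixed by the Levelt reduction and are irrelevant to convergence, which concerns only the tail. Conceptually the statement is just the fact that a formal gauge transformation reducing a logarithmic (regular singular) connection to its polynomial normal form is automatically convergent, the semi-simplicity of $V_{\infty}$ (hence of $A_{\infty}$ and $R_{0}$) keeping $\mathcal{M}_{0}$ under control, although the $O(1/n)$ bound above in fact uses only $\|\mathcal{M}_{0}\|<\infty$.
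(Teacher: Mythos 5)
Your proof is correct and follows essentially the same route as the paper: you derive the equation $\tau\partial_{\tau}\tilde{P}=(\tau A_{0}+A_{\infty})\tilde{P}+\tilde{P}\tilde{R}(\tau)$, observe that it is a system with a regular singular point at $\tau=0$ and convergent (here polynomial) coefficients, and conclude that the formal solution converges. The only difference is that you spell out the classical majorant argument (vectorization, invertibility of $nI-\mathcal{M}_{0}$ for large $n$, comparison with an entire majorant) where the paper simply cites \cite[Proposition II.2.18]{Sab1}.
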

\begin{proof} Let $\tilde{P}(\underline{x},\tau )$ be such a solution. then it satisfies
\begin{equation}\label{eq:PFondConf}
\tau\partial_{\tau}\tilde{P}(\underline{x},\tau )=\tilde{P}(\underline{x},\tau )\tilde{R}(\tau )+(\tau A_{0}(\underline{x})+A_{\infty})\tilde{P}(\underline{x},\tau )
\end{equation}
The matrices $\tilde{R}(\tau )$ and $\tau A_{0}(\underline{x})+A_{\infty}$ are convergent, and we can conclude using a classical 
argument of regular singularity (see {\em f.i} \cite[Proposition II.2.18]{Sab1}).
\end{proof}

\begin{proposition}\label{proposition:sectionshorizontales} Let $\tilde{P}(\underline{x},\tau )$ be a conformal fundamental solution  and 
$\Lambda$ be the diagonal matrix whose eigenvalues are the integer parts of those of $\tilde{R}_{0}$. Assume moreover that $\tilde{R}_{0}$ is block diagonal,
each block corresponding to an eigenvalue.
\begin{enumerate}
\item After the base change, meromorphic in $\tau$, of matrix $\tilde{P}(\underline{x},\tau )\tau^{-\Lambda}$, the matrix of $\nabla$ takes the form
$$(\tilde{R}_{0}-\Lambda +\tilde{R}_{1}+\cdots +\tilde{R}_{\delta})\frac{d\tau}{\tau}.$$
\item A basis of flat sections is
$(\omega_{0},\cdots ,\omega_{\mu -1})\tilde{P}(\underline{x},\tau )\tau^{-\tilde{R}_{0}}\tau^{- (\tilde{R}_{1}+\cdots +\tilde{R}_{\delta})}$.
\end{enumerate}
\end{proposition}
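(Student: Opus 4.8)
The plan is to prove both parts by tracking how the matrix of $\nabla$ transforms under the two successive base changes, first by $\tilde{P}(\underline{x},\tau)$ and then by $\tau^{-\Lambda}$. Recall from Lemma \ref{lemma:levelt} that after the base change of matrix $\tilde{P}(\underline{x},\tau)$, the matrix of $\nabla$ is $\tilde{R}(\tau)\frac{d\tau}{\tau}$ with $\tilde{R}(\tau)=\tilde{R}_{0}+\tilde{R}_{1}\tau+\cdots+\tilde{R}_{\delta}\tau^{\delta}$, and crucially the only nonzero component of the connection matrix is in the $\tau$-direction (the $dx_{i}$-components vanish because $\tilde{P}$ is a fundamental solution of the Dubrovin connection, so $\nabla^{d}$ is killed). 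This means that for part (1) I only need to compute the effect of the further base change by $\tau^{-\Lambda}$ on this single $\frac{d\tau}{\tau}$ block.

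For part (1), I would apply the standard base-change formula for a connection: if the matrix of $\nabla$ is $\tilde{R}(\tau)\frac{d\tau}{\tau}$ in some frame and we change the frame by an invertible matrix $B(\tau)$, the new matrix becomes $B^{-1}\tilde{R}(\tau)B\,\frac{d\tau}{\tau}+B^{-1}(\tau\partial_{\tau}B)\frac{d\tau}{\tau}$. Taking $B=\tau^{-\Lambda}$, the second term contributes $B^{-1}(\tau\partial_{\tau}\tau^{-\Lambda})=\tau^{\Lambda}(-\Lambda)\tau^{-\Lambda}=-\Lambda$ since $\Lambda$ is a fixed diagonal (hence constant) matrix commuting with its own powers. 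For the conjugation term $\tau^{\Lambda}\tilde{R}_{k}\tau^{-\Lambda}$, I would use the commutation relation $[\tilde{R}_{0},\tilde{R}_{k}]=-k\tilde{R}_{k}$ from Lemma \ref{lemma:levelt}: this relation together with the block-diagonal assumption on $\tilde{R}_{0}$ forces $\tilde{R}_{k}$ to be homogeneous of weight $k$ with respect to the grading by integer parts of eigenvalues, so that $\tau^{\Lambda}\tilde{R}_{k}\tau^{-\Lambda}=\tau^{k}\tilde{R}_{k}\cdot\tau^{-k}=\tilde{R}_{k}$ precisely cancels the factor $\tau^{k}$ attached to $\tilde{R}_{k}\tau^{k}$ in $\tilde{R}(\tau)$. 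Summing over $k$ yields the matrix $(\tilde{R}_{0}-\Lambda+\tilde{R}_{1}+\cdots+\tilde{R}_{\delta})\frac{d\tau}{\tau}$, as claimed.

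For part (2), once the connection matrix has the constant (i.e.\ $\tau$-independent) form $N\frac{d\tau}{\tau}$ with $N=\tilde{R}_{0}-\Lambda+\tilde{R}_{1}+\cdots+\tilde{R}_{\delta}$, a standard fact is that $\tau^{-N}$ is a matrix of flat sections, since $\nabla_{\tau\partial_{\tau}}(\tau^{-N})=\tau\partial_{\tau}(\tau^{-N})+N\tau^{-N}=-N\tau^{-N}+N\tau^{-N}=0$. Unwinding the composition of base changes, a flat frame in the original basis $(\omega_{0},\cdots,\omega_{\mu-1})$ is obtained by multiplying by $\tilde{P}(\underline{x},\tau)\,\tau^{-\Lambda}\,\tau^{-N}$, and I would simplify $\tau^{-\Lambda}\tau^{-N}$ using the exponent structure. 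Here the delicate bookkeeping is that $\Lambda$ and $\tilde{R}_{0}$ commute (both are block-diagonal for the same eigenvalue decomposition) while $\tilde{R}_{0}-\Lambda$ has the eigenvalue fractional parts, and one recombines $\tau^{-\Lambda}\tau^{-(\tilde{R}_{0}-\Lambda)}\tau^{-(\tilde{R}_{1}+\cdots+\tilde{R}_{\delta})}=\tau^{-\tilde{R}_{0}}\tau^{-(\tilde{R}_{1}+\cdots+\tilde{R}_{\delta})}$, which is exactly the asserted frame.

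The main obstacle I anticipate is justifying the homogeneity step in part (1): making rigorous the claim that $[\tilde{R}_{0},\tilde{R}_{k}]=-k\tilde{R}_{k}$ together with block-diagonality of $\tilde{R}_{0}$ implies $\tau^{\Lambda}\tilde{R}_{k}\tau^{-\Lambda}=\tilde{R}_{k}$. The point is that $\Lambda$ records only the \emph{integer parts} of the eigenvalues, so decomposing $\tilde{R}_{0}=\Lambda+(\tilde{R}_{0}-\Lambda)$ one must check that conjugation by $\tau^{\Lambda}$ alone (not by $\tau^{\tilde{R}_{0}}$) produces the factor $\tau^{k}$; this requires the entries of $\tilde{R}_{k}$ linking the eigenvalue-block of $\lambda_{i}$ to that of $\lambda_{j}$ to be nonzero only when $\lambda_{i}-\lambda_{j}=k$, whence $\lfloor\lambda_{i}\rfloor-\lfloor\lambda_{j}\rfloor=k$ as well because within a single eigenvalue-block the integer parts agree. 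Handling this compatibility between the integer-part grading $\Lambda$ and the full grading $\tilde{R}_{0}$ carefully is the technical heart of the argument, and everything else reduces to the routine base-change formula and the flatness of $\tau^{-N}$.
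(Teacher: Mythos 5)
Your proposal is correct and follows essentially the same route as the paper: the whole argument rests on upgrading $[\tilde{R}_{0},\tilde{R}_{k}]=-k\tilde{R}_{k}$ to $[\Lambda,\tilde{R}_{k}]=-k\tilde{R}_{k}$ via the block structure and the fact that integer parts within a block agree, and part (2) then follows by recombining exponents using that $\tilde{R}_{0}-\Lambda$ commutes with both $\Lambda$ and $\tilde{R}_{1}+\cdots+\tilde{R}_{\delta}$. Only note that your displayed identity should read $\tau^{\Lambda}\tilde{R}_{k}\tau^{-\Lambda}=\tau^{-k}\tilde{R}_{k}$ (so that conjugating $\tilde{R}_{k}\tau^{k}$ yields $\tilde{R}_{k}$), a sign slip that your final paragraph's entrywise argument already repairs.
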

\begin{proof} 1.  Indeed, $[\Lambda ,\tilde{R}_{k}]=-k\tilde{R}_{k}$ if $[\tilde{R}_{0},\tilde{R}_{k}]=-k\tilde{R}_{k}$ and
2. follows because $\tilde{R}_{0}-\Lambda$ commutes with $\tilde{R}_{1}+\cdots +\tilde{R}_{\delta}$ and $\Lambda$.
\end{proof}

\begin{remark} \label{rem:conforme}
We can compare conformal fundamental solutions, as in remark \ref{rem:compsolfond}.
We will say that the matrix 
$\alpha (\tau )$ is {\em homogeneous} if
$$\alpha (\tau )=\sum_{\ell\geq 0}\alpha_{\ell}^{(-\ell )}\tau^{\ell}$$
where $\alpha_{\ell}^{(-\ell )}\in \ker (\mbox{Ad} R_{0}+\ell I)$ for all $\ell\in\zit$.
Let $P$ and $\tilde{P}$ be two fundamental solutions and assume that $P$ is conformal. Then $\tilde{P}$ is conformal if and only if there exists  a homogeneous matrix
$\alpha (\tau )$ such that $\tilde{P}=P\alpha (\tau )$, see also \cite{Dub2}.
\end{remark}

\subsection{Symmetric solutions (after \cite{Dub2})}
\label{SolSym}

Let $P$ be a fundamental solution and
$$e=(e_{0},\cdots , e_{\mu -1}):=(\omega_{0},\cdots ,\omega_{\mu -1})P$$
Let us analyze the behaviour of $e$ with respect to $S$. By the proof of corollary \ref{prop:fonda} (1), we may assume that the basis $\omega =(\omega_{0},\cdots ,\omega_{\mu -1})$ is $\triangledown$-flat. 

\begin{lemma}\label{lemme:Sconst}
We have $S(e_i ,e_j )\in\cit [\tau ]\tau^{-n}$ for all $i$ and for all $j$.
\end{lemma}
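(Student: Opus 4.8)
The plan is to compute the Gram matrix of the frame $e=(e_{0},\cdots ,e_{\mu -1})=(\omega_{0},\cdots ,\omega_{\mu -1})P$ with respect to $S$ and to read off from the flatness of $S$ the two features encoded in the statement: constancy of the entries along $M$ (this is the ``$\cit$'') and the bound on the order in $\tau$ (this is the ``$[\tau ]\tau^{-n}$''). As permitted by the proof of corollary \ref{prop:fonda}(1), I would assume from the outset that $\omega$ is $\bigtriangledown$-flat, so that by remark \ref{rem:metglob}(2) the numbers $S(\omega_{k},\omega_{\ell})=:g_{0}^{k\ell}\theta^{n}$ are constant, i.e. the Gram matrix of $\omega$ equals $g_{0}\theta^{n}$ with $g_{0}\in {\cal M}_{\mu\times\mu}(\cit )$ the (constant) matrix of $g$. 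Since the second slot of $S$ carries the involution $j:\theta\mapsto -\theta$ (that is $\tau\mapsto -\tau$) and the $\omega_{k}$ are $\theta$-independent, the Gram matrix of $e$ is then $G(\tau):=\theta^{n}\,{}^{t}P(\underline{x},\tau )\,g_{0}\,P(\underline{x},-\tau )$, which a priori lies in $\theta^{n}{\cal M}_{\mu\times\mu}({\cal O}(M)[[\tau ]])$.

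First I would establish constancy along $M$. Since $e$ is $\nabla^{d}$-flat by construction and $S$ is $\nabla$-flat, for any vector field $X$ on $M$ one has $X\cdot S(e_{i},e_{j})=S(\nabla^{d}_{X}e_{i},e_{j})+S(e_{i},\nabla^{d}_{X}e_{j})=0$ (the involution $j$ acts trivially in the $M$-directions, so no sign intervenes). Hence every $S(e_{i},e_{j})$ is independent of $\underline{x}$. Equivalently, differentiating $G(\tau )$ along $M$ and using the defining equation $d_{M}P=-\tau DP$ of the fundamental solution together with $\Phi^{*}=\Phi$, i.e. ${}^{t}D\,g_{0}=g_{0}D$ from (\ref{eq:sym}), the two terms of $d_{M}({}^{t}P\,g_{0}\,P(-\tau ))$ cancel. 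This already gives $S(e_{i},e_{j})\in \cit [[\tau ]]\theta^{n}$, with constant coefficients.

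It then remains to see that the $\tau$-expansion is in fact a polynomial with no pole worse than $\tau^{-n}$, and here I would use the $\theta\partial_{\theta}$-direction. After the base change $P$ the matrix of $\nabla$ is $R(\tau )\frac{d\tau }{\tau}$ with $R(\tau )=\sum_{k\geq 0}R_{k}\tau^{k}$ and $R_{0}=-P_{0}^{-1}A_{\infty}P_{0}$ (corollary \ref{prop:fonda}(2)), so in the frame $e$ one has $\nabla_{\tau\partial_{\tau}}e=eR(\tau )$. Feeding this, together with the adjoint relation $V_{\infty}+V_{\infty}^{*}=nI$, i.e. ${}^{t}A_{\infty}g_{0}+g_{0}A_{\infty}=ng_{0}$ from (\ref{eq:sym}), into the $\nabla$-flatness of $S$ produces, after the cancellations forced by (\ref{eq:sym}), the differential equation
$$\tau\partial_{\tau}\,G(\tau )={}^{t}R(\tau )\,G(\tau )+G(\tau )\,R(-\tau ).$$
Writing $G(\tau )=\tau^{-n}\sum_{m\geq 0}G_{m}\tau^{m}$ and reading off the coefficient of $\tau^{m-n}$ yields $\big((m-n)I-{\cal A}\big)(G_{m})=\sum_{a\geq 1}\big[{}^{t}R_{a}G_{m-a}+(-1)^{a}G_{m-a}R_{a}\big]$, where ${\cal A}(X)={}^{t}R_{0}X+XR_{0}$ has for eigenvalues the sums $\lambda_{i}+\lambda_{j}$ of pairs of eigenvalues of $R_{0}=-A_{\infty}$ (the opposites of the spectrum at infinity). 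In the non-resonant situation $R(\tau )=R_{0}$ (remark \ref{rem:nonreso}), so the right-hand side vanishes and $G_{m}\neq 0$ forces $m-n=\lambda_{i}+\lambda_{j}$ for some $i,j$; as there are finitely many such pairs, $G$ has finite $\tau$-support, and since its coefficients are constant and its leading term is $g_{0}\theta^{n}$ we conclude $S(e_{i},e_{j})\in \cit [\tau ]\tau^{-n}$. Alternatively, once the $e_{i}$ are regarded as sections of $G_{\infty}$ the degree bound is immediate from property (\ref{formeGinfty}).

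The main obstacle is precisely this last finiteness in $\tau$: the constancy along $M$ is formal and robust, whereas the \emph{polynomial} (rather than merely power-series, or convergent) character of the $\tau$-expansion is where the spectral structure of $R_{0}$ has to be invoked — equivalently, the non-resonance assumption, or the pole order of $S$ on $G_{\infty}$ recorded in (\ref{formeGinfty}). I would therefore make sure to state explicitly which of these two inputs is being used, since for a general resonant fundamental solution the series need only be convergent.
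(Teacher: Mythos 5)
Your proposal is correct, and its essential content coincides with the paper's (very terse) proof: the paper likewise first observes that $S(e_i,e_j)$ depends only on $\tau$ because $P$ is a fundamental solution and $S$ is $\nabla$-flat, and then invokes formula (\ref{formeGinfty}) for the bound on the pole order in $\tau$ — exactly the two steps you isolate. What you add is a second, self-contained route to the degree bound: writing the Gram matrix $G(\tau)$ of the frame $e$, deriving the equation $\tau\partial_{\tau}G={}^{t}R(\tau)G+G\,R(-\tau)$ from flatness in the $\theta$-direction together with (\ref{eq:sym}), and using the spectrum of $X\mapsto {}^{t}R_{0}X+XR_{0}$ to force finite $\tau$-support. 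This is more work than the paper's one-line appeal to (\ref{formeGinfty}), but it is also more honest about where polynomiality (as opposed to a mere formal power series in $\tau$ with constant coefficients) actually comes from: the $e_i$ are a priori only formal sections, so (\ref{formeGinfty}) does not literally apply to them, and your spectral argument closes that gap in the non-resonant case while your final remark correctly identifies that some such input — non-resonance of $R_0$, or the lattice property of $S$ on $G_\infty$ — is genuinely needed. The one thing to watch is that your recursion argument, as you note yourself, only concludes in the non-resonant situation where $R(\tau)=R_{0}$; since the lemma is stated for an arbitrary fundamental solution, you should either keep the fallback to (\ref{formeGinfty}) as the official argument or restrict the spectral route to the conformal/non-resonant setting in which it is later used.
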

\begin{proof}
$S(e_i ,e_j )$ depends only on $\tau$, because $P$ is a fundamental solution and because $S$ is $\nabla$-flat, and the result follows from formula (\ref{formeGinfty}).
\end{proof}

\noindent The best that we can expect is   
$S(e_{i},e_{j})\in\cit\tau^{-n}$ and this happens for instance if $S(e_{i},e_{j})=S(\omega_{i},\omega_{j})$, 
because $S(\omega_{i},\omega_{j})\in\cit\tau^{-n}$ by remark \ref{rem:metglob}. 

\begin{definition} Let $P$ be a fundamental solution. We will say that it is  {\em symmetric} if
$$P^{*}(\underline{x},-\tau )P(\underline{x}, \tau )=I$$
where $P^{*}(\underline{x},-\tau )= \eta^{-1}P(\underline{x},-\tau )^{T}\eta$,
$\eta$ being the Gram matrix of $S$ in the basis $\omega$  and $T$ denoting the transpose matrix.
\end{definition}

We will consider the following situation in section \ref{nonresonnant} (see theorem \ref{cor:casvariete}):

\begin{proposition}\label{ex:solSymfondPCQO}
Let us assume that $M=\cit^{*}$ and consider the fundamental solution $P(x_{1},\tau )=H(x_{1},\tau )e^{\tau D\ln x_{1}}$ where
$H$ is a matrix of holomorphic functions on  $\cit\times\cit$ such that $H(0,\tau )=I$ and $D$ is a constant matrix. Then $P$ is symmetric. 
\end{proposition}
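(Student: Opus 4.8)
The plan is to unwind the definition directly: set $N(x_1,\tau):=P^{*}(x_1,-\tau)P(x_1,\tau)$, where $*$ is the matrix adjoint with respect to the (constant, by remark \ref{rem:metglob}(2)) Gram matrix $\Sigma=(g(\omega_i,\omega_j))$, and show $N=I$. I would first prove that $N$ does not depend on $x_1$. For this I reduce, as in the beginning of section \ref{SolSym} via corollary \ref{prop:fonda}(1), to a $\bigtriangledown$-flat basis $\omega$, so that the fundamental solution equation $\nabla^{d}(\omega P)=0$ reads $\partial_{x_1}P=-\tau\, D^{(1)}(x_1)\,P$, where $D^{(1)}$ is the matrix of $\Phi_{\partial_{x_1}}$. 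Differentiating and using $(D^{(1)})^{*}=D^{(1)}$ (this is exactly $\Phi^{*}=\Phi$ from relation (\ref{eq:sym})) gives
$$\partial_{x_1}N=\tau\,P^{*}(-\tau)\,D^{(1)}\,P(\tau)-\tau\,P^{*}(-\tau)\,D^{(1)}\,P(\tau)=0.$$
Thus $N=N(\tau)$ depends only on $\tau$; equivalently, this is the statement of lemma \ref{lemme:Sconst} applied to the frame $e=\omega P$.

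Next I substitute the prescribed form $P(x_1,\tau)=H(x_1,\tau)e^{\tau D\ln x_1}$. Since $(e^{\tau D\ln x_1})^{*}=e^{\tau D^{*}\ln x_1}$ and the adjoint reverses products, I get $P^{*}(x_1,-\tau)=e^{-\tau D^{*}\ln x_1}H(x_1,-\tau)^{*}$, whence
$$e^{\tau D^{*}\ln x_1}\,N(\tau)\,e^{-\tau D\ln x_1}=H(x_1,-\tau)^{*}H(x_1,\tau)=:\Theta(x_1,\tau).$$
Because $H$ is holomorphic on $\cit\times\cit$, the matrix $\Theta$ is, for fixed $\tau$, a single-valued holomorphic (entire) function of $x_1$ near $0$, and the normalization $H(0,\tau)=I$ gives $\Theta(0,\tau)=I$.

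The heart of the argument, and the step I expect to be the main obstacle, is to extract $N=I$ from this identity despite the multivaluedness of $e^{\tau D\ln x_1}$. Writing $\xi=\ln x_1$ and expanding $N(\tau)=\sum_p N_p\tau^p$, the coefficient of $\tau^{m}$ in $e^{\tau D^{*}\xi}N(\tau)e^{-\tau D\xi}$ is the polynomial $\sum_{a+p+b=m}\tfrac{\xi^{a+b}}{a!\,b!}(D^{*})^{a}N_p(-D)^{b}$ in $\xi$, of degree at most $m$. It must coincide with the $\tau^{m}$-coefficient $\Theta_m(e^{\xi})$ of $\Theta$, which is bounded as $\Re\xi\to-\infty$ (i.e.\ $x_1\to 0$) since $\Theta_m$ is holomorphic at $0$. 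A polynomial in $\xi$ bounded on a ray to $-\infty$ is constant, so each $\tau^{m}$-coefficient of $e^{\tau D^{*}\xi}N(\tau)e^{-\tau D\xi}$ is independent of $\xi$; hence the whole expression is independent of $\xi$, equals its value $N(\tau)$ at $\xi=0$, and equals its limit $\Theta(0,\tau)=I$ as $\Re\xi\to-\infty$. Therefore $N(\tau)=I$, i.e.\ $P$ is symmetric. (As a byproduct the vanishing of the degree-one term forces $D^{*}N=ND$, consistently with $D=-\Phi_{x_1\partial_{x_1}}|_{x_1=0}$ and $\Phi^{*}=\Phi$, so no separate hypothesis on $D$ is needed.)
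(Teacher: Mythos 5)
Your proof is correct and follows essentially the same route as the paper's: both rest on the $x_1$-independence of the Gram matrix $S(e_i,e_j)$ (lemma \ref{lemme:Sconst}) combined with the normalization $H(0,\tau)=I$ and a passage to the limit $x_1\to 0$. Your coefficient-by-coefficient treatment of $e^{\tau D\ln x_{1}}$ as polynomials in $\ln x_{1}$, bounded along a ray and hence constant, is simply a more careful execution of the ``$+\,o(1)$'' step that the paper leaves informal.
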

\begin{proof}
Recall that $S(\omega_{i},\omega_{j})\in\cit\tau^{-n}$ for all $i$ and for all $j$.
We have
$$S(e_{i}, e_{j})=S(H(0,\tau )e^{\tau D\ln x}\omega_{i}, H(0,\tau )e^{\tau D\ln x}\omega_{j})+o(1)$$
$$=S(e^{\tau D\ln x}\omega_{i}, e^{\tau D\ln x}\omega_{j})+o(1)= S(e^{-\tau D^{*}\ln x}e^{\tau D\ln x}\omega_{i}, \omega_{j})+o(1)$$
as $x\rightarrow 0$. By lemma \ref{lemme:Sconst}, we must have $e^{\tau D^{*}\ln x}=e^{\tau D\ln x}$ 
and $S(e_{i}, e_{j})=S(\omega_{i}, \omega_{j})$: $P$ is thus symmetric.
\end{proof}

\noindent More generally, but we won't directly use this result, one can show, as in \cite[Lemma 2.5]{Dub2}, 
that there exists conformal and symmetric fundamental solutions.

\begin{remark}  
It follows from remark \ref{rem:conforme} that a conformal, symmetric fundamental solution is unique up to right multiplication by homogeneous and symmetric (i.e satisfying $\alpha (-\tau )^{*}\alpha (\tau )=I$) sections $\alpha (\tau )$.
\end{remark}

Here are two consequences of the symmetry:  

\begin{proposition} Let $P$ be a conformal, symmetric fundamental solution. Then we have 
$\tilde{R}^{*}_{k}=(-1)^{k+1}\tilde{R}_{k}$
pour $k\geq 1$ and $R_{0}^{*}=-nId-R_{0}$ in formula (\ref{matconconf}).
\end{proposition}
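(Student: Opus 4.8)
The plan is to exploit the defining relation of a symmetric fundamental solution together with the conformality structure. Let me set up notation: $P$ is conformal, so after the base change of matrix $P$ the connection matrix becomes $(\tilde R_{0}+\tilde R_{1}\tau+\cdots+\tilde R_{\delta}\tau^{\delta})\frac{d\tau}{\tau}$, and symmetry means $P^{*}(\underline{x},-\tau)P(\underline{x},\tau)=I$, i.e. $P^{*}(\underline{x},-\tau)=P(\underline{x},\tau)^{-1}$, where $*$ is the adjoint with respect to $S$.

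First I would write down how the connection transforms under the adjoint. Since $S$ is $\nabla$-flat and takes values in $\tau^{-n}{\cal O}_{M}[\tau]$ (formula \ref{formeGinfty}), flatness gives the identity $\tau\partial_{\tau}S(\eta,\nu)=S(\nabla_{\tau\partial_{\tau}}\eta,\nu)+S(\eta,\nabla_{\tau\partial_{\tau}}\nu)$ adjusted by the factor $\tau^{-n}$. Concretely, for the frame $e=\omega P$ the connection in the $\tau$-direction is $\tilde R(\tau):=\tilde R_{0}+\tilde R_{1}\tau+\cdots+\tilde R_{\delta}\tau^{\delta}$. Applying $\tau\partial_{\tau}$ to $S(e_{i},e_{j})$ and using $S(e_{i},e_{j})=S(\omega_{i},\omega_{j})\in\cit\tau^{-n}$ (this is exactly the content of symmetry forcing the best-case behaviour, as in the proof of Proposition \ref{ex:solSymfondPCQO}), I obtain a matrix identity. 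Writing $G$ for the constant matrix $g(\omega_{i},\omega_{j})$ and using $S(e_{i},e_{j})=\tau^{-n}G_{ij}$, the flatness relation in the $\tau\partial_\tau$ direction reads
\begin{equation}
-n\tau^{-n}G=\tau^{-n}\bigl(\tilde R(-\tau)^{t}G+G\,\tilde R(\tau)\bigr),
\end{equation}
where the sign flip $\tau\mapsto-\tau$ in the first slot comes from the map $j$ in Definition \ref{def:SDQ}. Rearranging, $\tilde R(-\tau)^{t}G+G\,\tilde R(\tau)=-nG$, i.e. $G^{-1}\tilde R(-\tau)^{t}G=-nI-\tilde R(\tau)$, which says $\tilde R(-\tau)^{*}=-nI-\tilde R(\tau)$ where $*$ now denotes the $g$-adjoint.

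Next I would extract the coefficients degree by degree in $\tau$. The left side is $\sum_{k}(-1)^{k}\tilde R_{k}^{*}\tau^{k}$ and the right side is $-nI-\sum_{k}\tilde R_{k}\tau^{k}$. Matching the constant term ($k=0$) gives $\tilde R_{0}^{*}=-nI-\tilde R_{0}$, i.e. $R_{0}^{*}=-nId-R_{0}$ since $\tilde R_{0}=R_{0}$. Matching the coefficient of $\tau^{k}$ for $k\geq 1$ gives $(-1)^{k}\tilde R_{k}^{*}=-\tilde R_{k}$, hence $\tilde R_{k}^{*}=(-1)^{k+1}\tilde R_{k}$, which is exactly the claimed relation.

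The step I expect to be the main obstacle is pinning down the precise sign conventions and the correct form of the flatness identity for $S$ in the $\tau\partial_\tau$ direction, making sure the factor $\tau^{-n}$ and the action of $j$ (the $\theta\mapsto-\theta$ involution, equivalently $\tau\mapsto-\tau$) are tracked consistently, so that the adjoint $*$ with respect to $S$ translates correctly into the $g$-adjoint appearing in the statement. Once that bookkeeping is fixed, the rest is a clean comparison of power-series coefficients; the relation $\tilde R_{0}=R_{0}$ from Lemma \ref{lemma:levelt} is used to identify the leading term, and no further input beyond flatness of $S$ and conformality is needed.
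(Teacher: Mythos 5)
Your proof is correct and is essentially the paper's argument in expanded form: the paper simply transposes the differential equation (\ref{eq:PFondConf}) using the self-adjointness relations (\ref{eq:sym}), which is exactly the matrix identity you obtain by applying $\nabla$-flatness of $S$ to the frame $e=\omega P$ together with $S(e_i,e_j)=S(\omega_i,\omega_j)\in\cit\tau^{-n}$. The sign bookkeeping with $j$ and the coefficient extraction are handled correctly.
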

\begin{proof}
Transpose equality (\ref{eq:PFondConf}) taking into account equations (\ref{eq:sym}).
\end{proof}

\noindent Last, if if $P$ is symmetric and if the basis $\omega$ is orthogonal with respect to $S$, the $J$-functions of definition \ref{def:fonctionJ} are expressed as follows (the dual $\omega^{i}$ of $\omega_{i}$ is defined in remark \ref{rem:metglob}):

\begin{proposition}\label{prop:fonctionJg}
Let us assume that the fundamental solution $P$ is symmetric. Then, the functions $J^{P, \omega_{0}}$ is defined by the formula
$$J^{P, \omega_{0}}_{{\cal Q}}=\sum_{j=0}^{\mu -1}g(P(\omega_{j}), \omega_{0} )\omega^{j}$$
where $g$ is given by formula (\ref{formeg}) and we have 
$$\partial_{x_{i}}J^{P, \omega_{0}}_{{\cal Q}}=\sum_{j=0}^{\mu -1}\partial_{x_{i}}(g(P(\omega_{j}), \omega_{0} ))\omega^{j}$$
for all $i=1,\cdots ,r$.
\end{proposition}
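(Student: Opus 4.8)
The plan is to combine three ingredients: the definition $J_{\cal Q}^{P,\omega_0}=P^{-1}\omega_0$ from formula (\ref{formula:J}), the expansion of a section in the dual basis $(\omega^j)$, and the fact that symmetry of $P$ makes it an isometry for $g$. First I recall that, since $\omega$ is orthogonal, remark \ref{rem:metglob}(2) provides a dual basis $(\omega^j)$ characterised by $g(\omega_i,\omega^j)=\delta_i^j$; hence, $g$ being symmetric and non-degenerate (see (\ref{eq:sym})), every section $v$ of $E$, extended $\tau$-linearly, satisfies the expansion $v=\sum_{j=0}^{\mu-1}g(v,\omega_j)\,\omega^j$, as one checks by pairing both sides with $\omega_i$.

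Applying this to $v=J:=J_{\cal Q}^{P,\omega_0}=P^{-1}\omega_0$ gives $J=\sum_j g(P^{-1}\omega_0,\omega_j)\,\omega^j$. Now I use that $P$ is symmetric. Translating $P^{*}(\underline{x},-\tau)P(\underline{x},\tau)=I$ (adjoint with respect to $S$) through the identification $S(\cdot,\cdot)=g(\cdot,\cdot)\theta^{d}$ of (\ref{formeg}), this amounts to saying that $P$ preserves $g$, i.e. $g(P^{-1}a,b)=g(a,Pb)$. Once this is granted, symmetry of $g$ yields $g(P^{-1}\omega_0,\omega_j)=g(\omega_0,P\omega_j)=g(P(\omega_j),\omega_0)$, which is the first formula.

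For the derivative I would not differentiate the closed formula directly but go back to the connection. By formula (\ref{eq:XJ}) of lemma \ref{lemma:ConnHP} and the convention $\partial_{x_i}J=\widetilde{\nabla}^{d}_{\partial_{x_i}}J$, one has $\partial_{x_i}J=P^{-1}\nabla^{d}_{\partial_{x_i}}\omega_0=J_{\cal Q}^{P,\eta_i}$ with $\eta_i:=\nabla^{d}_{\partial_{x_i}}\omega_0$. The first formula, applied now to $\eta_i$ in place of $\omega_0$, gives $\partial_{x_i}J=\sum_j g(P(\omega_j),\eta_i)\,\omega^j$. It then remains to recognise $g(P(\omega_j),\eta_i)=\partial_{x_i}\bigl(g(P(\omega_j),\omega_0)\bigr)$; this follows from $\nabla$-flatness of $S$ together with the fact that the frame $e=(e_0,\dots,e_{\mu-1})=(\omega_0,\dots,\omega_{\mu-1})P$ is $\nabla^{d}$-flat (so $\nabla^{d}_{\partial_{x_i}}e_j=0$ and only the $\omega_0$-slot contributes when one differentiates $g(e_j,\omega_0)$).

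The main obstacle is bookkeeping the involution $\theta\mapsto-\theta$ defining $S$, which enters both the conversion of the symmetry of $P$ into the isometry identity $g(P^{-1}a,b)=g(a,Pb)$ and the Leibniz rule $\partial_{x_i}S(e_j,\omega_0)=S(\nabla^{d}_{\partial_{x_i}}e_j,\omega_0)+S(e_j,(j^{*}\nabla)_{\partial_{x_i}}\omega_0)$; this is exactly the kind of sign tracking that produced the factor $e^{-\tau D^{*}\ln x}$ in the proof of proposition \ref{ex:solSymfondPCQO}. Everything else (the dual-basis expansion and the reduction of the derivative to the case already treated) is formal, the standing assumption being that $\omega$ is $\bigtriangledown$-flat as in the proof of corollary \ref{prop:fonda}. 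I expect the twist contributions to cancel so that the two displayed formulas hold verbatim.
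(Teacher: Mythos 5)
Your argument is correct and follows essentially the same route as the paper: the first formula comes from the symmetry of $P$ (read as $g(P^{-1}a,b)=g(Pa,b)$ up to the $\tau\mapsto-\tau$ twist) combined with the dual-basis expansion, and the derivative formula comes from $\partial_{x_i}J=P^{-1}\nabla^{d}_{\partial_{x_i}}\omega_0$, the same symmetry step, and the flatness of $g$ together with the $\nabla^{d}$-flatness of $e_j=P\omega_j$. You merely spell out the dual-basis expansion and the twist bookkeeping that the paper leaves implicit.
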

\begin{proof}
The first equality follows from symmetry. For the second one, we have, using respectively the definition, the symmetry and the flatness of $g$, 
together with the fact that $P$ is a fundamental solution,
$$g(\omega_{j}, \partial_{x_{i}}J^{P, \omega_{0}}_{{\cal Q}})= 
g(\omega_{j}, P^{-1}\nabla^{d}_{\partial_{x_{i}}}\omega_{0})=g(P\omega_{j}, \nabla^{d}_{\partial_{x_{i}}}\omega_{0})=
\partial_{x_{i}}(g(P(\omega_{j}), \omega_{0} ))$$
and this gives the expected result.
\end{proof}

\subsection{Canonical fundamental solutions and canonical $J$-functions}
\label{sec:Jcan}

\begin{definition} We will say that the fundamental solution $P$ is {\em canonical} if it is conformal, symmetric and if moreover 
\begin{equation}\label{eq:Leveltcan}
R(\tau )=R_{0}+R_{1}\tau
\end{equation}
$R(\tau )\frac{d\tau}{\tau}$ denoting the matrix of $\nabla$ after the base change of matrix $P$.
\end{definition}

\begin{remark}
\label{rem:sectionshorizontales}
Let $P$ be a canonical fundamental solution. It follows from proposition \ref{proposition:sectionshorizontales} that a basis of flat sections takes the form $(\omega_{0},\cdots ,\omega_{\mu -1})\Psi$ where
$$\Psi (\underline{x},\tau )=P(\underline{x},\tau )\tau^{-R_{0}}\tau^{-R_{1}}\Psi^{const},$$
$\Psi^{const}$ denoting a constant matrix. 
%Notice that $\Psi^{*} (\underline{x},-\tau )\Psi (\underline{x},\tau )\tau^{-n}$ is constant because $S$ is $\nabla$-flat. 
\end{remark}

\begin{lemma}\label{lemma:compareCan}
Let $P$ be a conformal, symmetric solution and
$$R(\tau )=R_{0}+R_{1}\tau +\cdots +R_{\delta}\tau^{\delta}$$
the matrix associated with it by lemma \ref{lemma:levelt}. 
Then there exists a canonical fundamental solution if and only if there exists a homogeneous matrix
$\alpha (\tau )$ (see remark \ref{rem:conforme}) 
$$\alpha (\tau)=I+\sum_{k\geq 1}\alpha_{k}^{(-k)}\tau^{k}$$
such that
\begin{itemize}
\item $[\alpha_{k}^{(-k)},R_{1}]=R_{k+1}$ for all $k\geq 1$,
\item $\alpha^{*}(-\tau)\alpha (\tau )=I$.
\end{itemize}
In this case, $\tilde{P}=P\alpha (\tau )$  is a canonical solution and the matrix $ \tilde{R}(\tau )$ associated with $\tilde{P}$ by lemma \ref{lemma:levelt} is 
$R_{0}+R_{1}\tau$.
\end{lemma}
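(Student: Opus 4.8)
The plan is to reduce the statement to a single gauge computation between conformal solutions, and then to read off the two conditions by matching powers of $\tau$. The whole argument rests on the comparison principle of remark \ref{rem:conforme}: since $P$ is conformal, every conformal fundamental solution is of the form $\tilde{P}=P\alpha(\tau)$ with $\alpha(\tau)=I+\sum_{k\geq 1}\alpha_{k}^{(-k)}\tau^{k}$ homogeneous, that is $[R_{0},\alpha_{k}^{(-k)}]=-k\,\alpha_{k}^{(-k)}$. Writing $\beta_{k}:=\alpha_{k}^{(-k)}$ and using that $P$ is symmetric, I would first dispose of condition~2 by the direct computation $\tilde{P}^{*}(\underline{x},-\tau)\tilde{P}(\underline{x},\tau)=\alpha^{*}(-\tau)\,P^{*}(-\tau)P(\tau)\,\alpha(\tau)=\alpha^{*}(-\tau)\alpha(\tau)$, so that $\tilde{P}$ is symmetric exactly when $\alpha^{*}(-\tau)\alpha(\tau)=I$. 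Thus a canonical solution exists if and only if there is a homogeneous $\alpha$ with $\alpha^{*}(-\tau)\alpha(\tau)=I$ whose gauge-transformed matrix equals $R_{0}+R_{1}\tau$, and it remains to show that this last equality is equivalent to condition~1.

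The heart of the proof is this last equivalence. The base change by $\alpha$ turns $R(\tau)\frac{d\tau}{\tau}$ into $\tilde{R}(\tau)\frac{d\tau}{\tau}$ with $\tilde{R}(\tau)=\alpha^{-1}R(\tau)\alpha+\alpha^{-1}\tau\alpha'$, i.e. $\alpha\tilde{R}=R\alpha+\tau\alpha'$. The key observation I would use is that homogeneity yields the identity $\tau\alpha'=[\alpha,R_{0}]$: both sides equal $\sum_{k\geq 1}k\beta_{k}\tau^{k}$, the left one by differentiation and the right one because $[\beta_{k},R_{0}]=-[R_{0},\beta_{k}]=k\beta_{k}$. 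Substituting, the gauge equation becomes $\alpha(\tilde{R}-R_{0})=(R-R_{0})\alpha$; imposing $\tilde{R}=R_{0}+R_{1}\tau$, writing $R-R_{0}=\tau\sum_{k\geq 1}R_{k}\tau^{k-1}$ and cancelling $\tau$, this is equivalent to the clean identity
$$\alpha(\tau)\,R_{1}=\Big(\sum_{k\geq 1}R_{k}\tau^{k-1}\Big)\alpha(\tau).$$
Conversely this identity forces $\alpha(\tilde{R}-R_{0})=\tau\,\alpha R_{1}$, hence $\tilde{R}=R_{0}+R_{1}\tau$ since $\alpha$ is invertible, so the equivalence is genuine. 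Expanding the displayed identity in powers of $\tau$, the constant term reads $R_{1}=R_{1}$, and the coefficient of $\tau^{k}$ gives, after moving $R_{1}\beta_{k}$ to the left, $[\beta_{k},R_{1}]=R_{k+1}+\sum_{j=1}^{k-1}R_{k+1-j}\beta_{j}$, whose leading part is precisely the stated relation $[\alpha_{k}^{(-k)},R_{1}]=R_{k+1}$, the lower-order corrections being determined by the already-fixed $\beta_{j}$ with $j<k$.

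Finally I would assemble the two directions. For the ``if'' part, given a homogeneous $\alpha$ satisfying condition~1 and $\alpha^{*}(-\tau)\alpha(\tau)=I$, the solution $\tilde{P}=P\alpha$ is conformal (remark \ref{rem:conforme}) and symmetric (by the computation above), and the displayed identity forces the matrix associated with $\tilde{P}$ by lemma \ref{lemma:levelt} to be $R_{0}+R_{1}\tau$, so $\tilde{P}$ is canonical with associated matrix $R_{0}+R_{1}\tau$; for the ``only if'' part, a canonical $\tilde{P}$ is in particular conformal and symmetric, hence $\tilde{P}=P\alpha$ with $\alpha$ homogeneous and $\alpha^{*}(-\tau)\alpha(\tau)=I$, and its Levelt matrix being $R_{0}+R_{1}\tau$ returns condition~1 through the same expansion. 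The step I expect to be the most delicate is keeping the bookkeeping honest: the bracket relations of condition~1 are only the leading terms of a triangular recursion, so one must verify that the homogeneous pieces can be solved degree by degree while simultaneously respecting $\alpha^{*}(-\tau)\alpha(\tau)=I$, for which the adjoint relations $R_{0}^{*}=-nId-R_{0}$ and $\tilde{R}_{k}^{*}=(-1)^{k+1}\tilde{R}_{k}$ established in the preceding proposition are the relevant input; convergence of the resulting $\alpha$ then follows from the regular-singularity estimate already used for conformal solutions.
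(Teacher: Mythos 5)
Your proof is correct and follows essentially the same route as the paper: both reduce to comparing $\tilde P = P\alpha(\tau)$ with $\alpha$ homogeneous via remark \ref{rem:conforme}, write the gauge equation $\tau\alpha' = \alpha\tilde R - R\alpha$, and use $[R_0,\alpha_k^{(-k)}]=-k\,\alpha_k^{(-k)}$ to annihilate the $(\mathrm{Ad}\,R_0+kI)(\alpha_k^{(-k)})$ terms, with the symmetry reduction $\tilde P^{*}(-\tau)\tilde P(\tau)=\alpha^{*}(-\tau)\alpha(\tau)$ (which the paper dismisses as ``clear'') made explicit on your side. Your bookkeeping is in fact slightly more faithful than the paper's: the coefficient of $\tau^{k}$ really yields $[\alpha_k^{(-k)},R_1]=R_{k+1}+\sum_{j=1}^{k-1}R_{k+1-j}\alpha_j^{(-j)}$, whereas the paper's displayed recursion (and the bracket condition as stated in the lemma) records only the leading term $R_{k+1}$ and silently drops the lower-order corrections, so your remark that these are determined degree by degree from the already-fixed $\alpha_j^{(-j)}$ is exactly the right way to keep the argument honest.
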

\begin{proof}
By remark $\tilde{P}=P\alpha (\tau )$ is a conformal and fundamental solution if and only if 
$\alpha (\tau )=\sum_{i\geq 0}\alpha_{i}^{(-i)}\tau^{i}$ is a homogeneous matrix. 
Without loss of generality (replace $\alpha$ by $\alpha_0^{-1}\alpha$ if necessary) we can assume that $\alpha_0 =I$.
The matrix $\tilde{R}(\tau)$ attached to $\tilde{P}$ by lemma \ref{lemma:levelt} is
$\tilde{R}_{0}+\tilde{R}_{1}\tau$ if and only if
\begin{equation}
\tau\alpha '=\alpha (\tilde{R}_{0}+\tilde{R}_{1}\tau )-(R_{0}+R_{1}\tau +\cdots +R_{\delta}\tau^{\delta} )\alpha
\end{equation}
The constant term gives $\tilde{R}_{0}=R_{0}$, the one of degree $1$ (in $\tau$) yields  
$$(\mbox{Ad} R_{0}+I)(\alpha_{1}^{(-1)})=\tilde{R}_{1}-R_{1}$$
and more generally the one of degree $k$ in $\tau$ ($k\geq 2$)
$$(\mbox{Ad} R_{0}+kI)(\alpha_{k}^{(-k)})=[\alpha_{k-1}, \tilde{R}_{1}]-R_{k}.$$
The first assertion follows because $\alpha$ is homogeneous. follows that $\alpha_{1}\in V^{(-1)}$ and then $\tilde{R}_{1}=R_{1}$.
The last assertion about symmetry is clear.
\end{proof}

\noindent A canonical solution is thus unique up to multiplication by constant homogeneous matrices. More precisely,

\begin{corollary}
Assume that $\tilde{P}$ and $\tilde{Q}$ are two canonical solutions. The matrices of the connection $\nabla$ in the bases $\omega\tilde{P}$ and $\omega\tilde{Q}$
are respectively $(R_{0}+R_{1}\tau )\frac{d\tau}{\tau}$ and $\alpha_0^{-1}(R_{0}+R_{1}\tau )\alpha_0 \frac{d\tau}{\tau}$ where $\alpha_0$ is a homogeneous matrix of degree $0$. 
\end{corollary}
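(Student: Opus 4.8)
The plan is to derive the result directly from the uniqueness statement established in Lemma \ref{lemma:compareCan} together with the comparison principle for fundamental solutions in Remark \ref{rem:compsolfond}. First I would observe that both $\tilde{P}$ and $\tilde{Q}$ are canonical, hence in particular both are conformal and symmetric fundamental solutions; by the comparison of fundamental solutions (Remark \ref{rem:compsolfond}) there exists an invertible matrix $\alpha(\tau)=\alpha_0+\sum_{k\geq 1}\alpha_k\tau^k$ with constant coefficients such that $\tilde{Q}=\tilde{P}\alpha(\tau)$. Because both solutions are conformal, Remark \ref{rem:conforme} forces $\alpha(\tau)$ to be homogeneous, i.e. $\alpha(\tau)=\sum_{\ell\geq 0}\alpha_\ell^{(-\ell)}\tau^\ell$ with $\alpha_\ell^{(-\ell)}\in\ker(\mathrm{Ad}\,R_0+\ell I)$.

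Next I would exploit the fact that both $\tilde{P}$ and $\tilde{Q}$ are \emph{canonical}, so by the definition of canonical solution the matrix $R(\tau)$ after the base change of matrix $\tilde{P}$ (resp. $\tilde{Q}$) is truncated at degree one, namely $R_0+R_1\tau$ for $\tilde P$ and some $R_0'+R_1'\tau$ for $\tilde Q$. Running the degree-by-degree computation of Lemma \ref{lemma:compareCan} with the right-hand solution required to have its $R(\tau)$ also truncated at degree one, the constant term of the base-change equation $\tau\alpha'=\alpha(R_0'+R_1'\tau)-(R_0+R_1\tau)\alpha$ yields $R_0'=\alpha_0^{-1}R_0\alpha_0$, and since $\alpha_0$ is homogeneous of degree $0$ it commutes with $R_0$, giving $R_0'=R_0$. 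Comparing the degree-one terms, and then inductively the higher-degree terms, I expect to conclude that $\alpha_k^{(-k)}=0$ for all $k\geq 1$, so that $\alpha(\tau)=\alpha_0$ is a constant homogeneous matrix of degree $0$; the relation $R_1'=\alpha_0^{-1}R_1\alpha_0$ then follows from the degree-one term. This identifies the matrix of $\nabla$ in the basis $\omega\tilde{Q}$ as $\alpha_0^{-1}(R_0+R_1\tau)\alpha_0\,\frac{d\tau}{\tau}$, which is precisely the claim.

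The main obstacle I anticipate is justifying the vanishing $\alpha_k^{(-k)}=0$ for $k\geq 1$, rather than merely bounding the result up to the homogeneous commutator relations. In the proof of Lemma \ref{lemma:compareCan} the higher coefficients satisfied $[\alpha_k^{(-k)},R_1]=R_{k+1}$, but here both sides are already truncated (all $R_{k+1}=0$ for $k\geq 1$), so one must argue that the homogeneity constraint $\alpha_k^{(-k)}\in\ker(\mathrm{Ad}\,R_0+kI)$ combined with $[\alpha_k^{(-k)},R_1]=0$ actually forces the higher $\alpha_k^{(-k)}$ to vanish, or at least to lie in the stabilizer so that the conjugated connection matrix remains $\alpha_0^{-1}(R_0+R_1\tau)\alpha_0\,\frac{d\tau}{\tau}$. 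In fact the cleanest route, which I would take to sidestep the subtlety, is simply to invoke the uniqueness already proved: a canonical solution is unique up to right multiplication by a constant homogeneous matrix $\alpha_0$, and then to read off the transformation law of the connection matrix under $\omega\tilde P\mapsto\omega\tilde P\alpha_0=\omega\tilde Q$ via the standard base-change formula for a connection, which conjugates $R(\tau)\frac{d\tau}{\tau}$ by the constant matrix $\alpha_0$ and leaves the form $\frac{d\tau}{\tau}$ unchanged.
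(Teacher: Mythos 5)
Your proposal is correct, and your fallback route is exactly the paper's: the corollary carries no separate argument there, being read off from Lemma \ref{lemma:compareCan} (uniqueness up to a constant homogeneous factor) together with the standard base-change formula for the connection. As for the obstacle you flag about forcing $\alpha_k^{(-k)}=0$ for $k\geq 1$: it is immaterial for the statement as written, because the corollary only asserts the \emph{form of the connection matrices}, and since $\tilde{Q}$ is canonical its matrix is by definition truncated at degree one, so only the degree-$0$ and degree-$1$ terms of the comparison equation $\tau\alpha'=\alpha\tilde{R}(\tau)-R(\tau)\alpha$ are needed — these give $\tilde{R}_0=\alpha_0^{-1}R_0\alpha_0=R_0$ (using $[\alpha_0,R_0]=0$) and $\tilde{R}_1=\alpha_0^{-1}R_1\alpha_0$, which is the claim; your degree-by-degree computation therefore already suffices without settling whether the higher $\alpha_k^{(-k)}$ vanish.
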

In particular, formula (\ref{eq:Leveltcan}) is unique up to conjugation by a constant 
homogeneous matrix.\\

The following definition is now natural (recall the pre-primitive sections defined in \ref{def:preprimcan}) :

\begin{definition}\label{def:fonctionsJcan} The {\em canonical} $J$-functions are
$J_{{\cal Q}}^{P, \omega_{0}}=P^{-1}\omega_{0}$
where $P$ is a canonical fundamental solution and $\omega_{0}$ is a canonical pre-primitive section of the quantum differential system ${\cal Q}$.
\end{definition}

\section{Non-resonant logarithmic quantum differential systems}

\label{nonresonnant}
The goal of this section is to show that there exist explicit canonical fundamental solutions under the assumption that the quantum differential systems are {\em logarithmic} and {\em non-resonant}. It happens to be the case for systems are associated with the small quantum cohomology described in example \ref{exemplebasiqueA}, thanks to the divisor axiom. Notice that logarithmic Frobenius manifolds have been defined by T. Reichelt \cite{R}.

\subsection{Logarithmic quantum differential systems}
\label{sec:SDQL}

Let ${\cal Q}=(M, {\cal G}, \nabla ,S, d)$ be a quantum differential system. We will use the following version of definition \ref{def:LogSDQ}.

\begin{definition}\label{def:SDQL}
We will say that ${\cal Q}$ is {\em logarithmic} if $M=(\cit )^{r+1}$ and if its characteristic equation
(definition \ref{def:eqcar}) has the form
\begin{equation}\label{matCon}
\sum_{i=0}^{r}M^{(i)}(\underline{x},\tau )\frac{dx_i}{x_i}+N(\underline{x},\tau )\frac{d\tau}{\tau}
\end{equation}
where the matrices $M^{(i)}(\underline{x}, \tau )$ and $N(\underline{x},\tau )$ are matrices of holomorphic
fonctions on $\cit^{r+1}\times\cit$ and $\underline{x}=(x_0 ,\cdots ,x_r )\in M$.
\end{definition}

\noindent One can of course replace $M$ by an open neighbourhood of the origin in $(\cit )^{r+1}$.\\

It follows from proposition \ref{keyproposition} that
\begin{equation}\label{eq:Mi}
M^{(i)}(\underline{x},\tau )=M_{0}^{(i)}(\underline{x})+M_{1}^{(i)}(\underline{x})\tau \ \mbox{and}\ N(\underline{x},\tau )=N_{0}(\underline{x})+N_{1}(\underline{x})\tau
\end{equation}
if the quantum differential system ${\cal Q}$ is logarithmic.

\begin{example}
The quantum differential systems considered in examples \ref{exemplebasiqueB} and \ref{exemplebasiqueA} are logarithmic. Rescalings 
(see section \ref{sec:rescaling}) provide other examples of such systems.
\end{example}

\begin{remark}\label{rem:classlim}
The classical limit (as $x\rightarrow 0$) of a logarithmic differential system ${\cal Q}$ on $M=\cit^*$ is a quantum differential system on a point, that is a tuple
$${\cal Q}^{cl}=({\cal G}^{cl}, \nabla^{cl} ,S^{cl}, d^{c})$$
satisfying the conditions of definition \ref{def:SDQ}.
See section \ref{sec:limite} below for a discussion about this. 
\end{remark}

\subsection{Non-resonant logarithmic quantum differential systems on curves}
\label{sec:NonResCurves}
Let ${\cal Q}=(M, {\cal G}, \nabla ,S, d)$ be a logarithmic quantum differential system on $M=\cit$ (definition \ref{def:SDQL} with $r=0$) with pole at the origin: its characteristic equation is, in the basis $\omega =(\omega_{0},\cdots ,\omega_{\mu -1})$,
\begin{equation}
M(x,\tau )\frac{dx}{x}+N(x,\tau )\frac{d\tau}{\tau}
\end{equation}
where $M(x, \tau )=M_0 (x)+M_1(x)\tau$ and $N(x,\tau )=N_0 (x)+N_1 (x)\tau$ are matrices of holomorphic functions
on $\cit\times\cit$ (it could also be on $(\cit ,0)\times\cit$, see above). We will also write
$$N(x,\tau )=-(A_{0}(x)\tau +A_{\infty}(x))$$
to match with the notations of section \ref{sec:SDQ}.

\begin{definition}
The logarithmic quantum differential system ${\cal Q}$ is {\em non-resonant} at $\tau_{0}$ if the eigenvalues of $M(0,\tau_{0})$ do not differ from a non-zero integer.
\end{definition}

\noindent  Note that the quantum differential system is non-resonant at $\tau_{0}=0$ if $M_{0}(0)=0$, in particular if $M_{0}(x)=0$ for all $x$. This will be our favorite situation:

\begin{definition}\label{eq:FlatQDS}
We will say that the logarithmic quantum differential system ${\cal Q}$ is {\em flat} if $M_{0}(x)=0$ for all $x$.
\end{definition}

\noindent The word ``flat'' recalls the flatness with respect to the residual connection $\bigtriangledown$, see section \ref{sec:SDQ}.

\subsection{Fundamental solutions of a non-resonant logarithmic quantum differential system}

\label{sec:principale}
The main result of this section (theorem \ref{cor:casvariete} below) is a variation of \cite[Isomonodromicity Theorem]{Dub2}.
Let ${\cal Q}$ be a logarithmic quantum differential system.

\begin{lemma}\label{lemma:vpconst}
The eigenvalues of $M(0,\tau )$ do not depend on $\tau$.
\end{lemma}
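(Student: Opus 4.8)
The plan is to exploit the flatness of the connection $\nabla$, which is the structural hypothesis packaged in the definition of a quantum differential system. The statement asserts that the eigenvalues of $M(0,\tau)$ are independent of $\tau$, and the natural mechanism behind such an invariance is an \emph{isomonodromy}-type argument: since the full connection on $\ppit^{1}\times M$ is flat, the two differential operators obtained from the $\frac{dx}{x}$-part and the $\frac{d\tau}{\tau}$-part must commute, and this commutation forces the characteristic data of $M$ in the $x$-direction to be transported by the $\tau$-flow. Concretely, I would first write out the flatness condition $d\Omega = \Omega\wedge\Omega$ for the matrix one-form
$$\Omega = M(x,\tau)\frac{dx}{x}+N(x,\tau)\frac{d\tau}{\tau},$$
whose $\frac{dx}{x}\wedge\frac{d\tau}{\tau}$ component reads
$$\tau\partial_{\tau}M - x\partial_{x}N = [N,M].$$

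Next I would restrict this integrability relation to $x=0$. Using the logarithmic form (\ref{eq:Mi}), namely $M(x,\tau)=M_{0}(x)+M_{1}(x)\tau$ and $N(x,\tau)=N_{0}(x)+N_{1}(x)\tau$, I would evaluate at $x=0$ to obtain a linear ordinary differential equation in $\tau$ for the matrix $M(0,\tau)$ of the form
$$\tau\frac{d}{d\tau}M(0,\tau) = \bigl[N(0,\tau),\,M(0,\tau)\bigr] + \bigl(x\partial_{x}N\bigr)\big|_{x=0}.$$
The crux is to show that the inhomogeneous term $(x\partial_{x}N)|_{x=0}$ either vanishes or is itself a commutator with $M(0,\tau)$, so that the evolution of $M(0,\tau)$ in $\tau$ is purely by conjugation (an \emph{isospectral} flow). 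An evolution $\tau\frac{d}{d\tau}M = [B,M]$ is solved by $M(0,\tau) = g(\tau)\,M(0,\tau_{0})\,g(\tau)^{-1}$ for an invertible $g(\tau)$, whence the eigenvalues are constant in $\tau$. This is the standard Lax-pair conclusion and is where the essential content lies.

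The step I expect to be the main obstacle is controlling that inhomogeneous term and justifying that the flow is genuinely isospectral at $x=0$. One cannot simply discard $(x\partial_{x}N)|_{x=0}$; I would argue that, because $N$ is holomorphic in $x$ and the pole is logarithmic, the expression $x\partial_{x}N$ vanishes at $x=0$ precisely when the residue along $\{x=0\}$ is constant, or more robustly that any surviving piece is absorbed into the commutator by rewriting the equation in terms of the residue matrices $N_{0}(0)$ and $M_{0}(0), M_{1}(0)$. Expanding the commutator relation degree by degree in $\tau$ (separating the $\tau^{0}$ and $\tau^{1}$ coefficients, since $M$ and $N$ are affine in $\tau$) reduces the claim to a finite system of matrix identities among $M_{0}(0), M_{1}(0), N_{0}(0), N_{1}(0)$; these are exactly the integrability relations (\ref{eq:integrabilite}) evaluated at the origin. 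I would then verify that these identities imply $M(0,\tau)$ evolves by conjugation, giving the constancy of its eigenvalues.

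An alternative and perhaps cleaner route, which I would keep in reserve, is to invoke the fundamental solution machinery already developed: after the base change by a fundamental solution $P$ of the Dubrovin connection (corollary \ref{prop:fonda}), the connection in the $\tau$-direction takes the normalized form $R(\tau)\frac{d\tau}{\tau}$ with \emph{constant} matrices $R_{k}$, and the residual part in the $x$-direction is simultaneously trivialized; comparing $M(0,\tau)$ with the transported constant data exhibits it as conjugate to a $\tau$-independent matrix. Either way, the eigenvalues are seen to be constant, which proves the lemma.
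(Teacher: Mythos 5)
Your proof is correct, and at bottom it is the same flatness/isomonodromy mechanism the paper invokes, but carried out at the infinitesimal level: the paper disposes of the lemma in one line by observing that the conjugacy class of the monodromy around $x=0$ cannot depend on $\tau$, whereas you derive the Lax equation $\tau\frac{d}{d\tau}M(0,\tau)=[N(0,\tau),M(0,\tau)]$ directly from $d\Omega=\Omega\wedge\Omega$ and conclude isospectrality. Two remarks. First, the step you single out as ``the main obstacle'' is not one: since $N(x,\tau)$ is a matrix of functions holomorphic at $x=0$ (definition \ref{def:SDQL}), one has $(x\partial_{x}N)\big|_{x=0}=0$ identically, so the restricted equation is already a pure commutator flow and no absorption argument is needed; the identities among $M_{0}(0),M_{1}(0),N_{0}(0),N_{1}(0)$ you would extract degree by degree in $\tau$ are exactly the relations (\ref{RelInt}) recorded later in the paper. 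Second, your route has a small technical advantage over the monodromy phrasing: the monodromy eigenvalues $e^{2i\pi\lambda}$ only determine the eigenvalues of $M(0,\tau)$ modulo $\zit$, and one must then appeal to continuity in $\tau$ to upgrade to genuine constancy, while the Lax equation gives $\frac{d}{d\tau}\,\mathrm{tr}\,M(0,\tau)^{k}=0$ for all $k$ on the nose for $\tau\neq 0$, hence everywhere by continuity. The reserve argument via corollary \ref{prop:fonda} is the weakest of your three options: the fundamental solution $P$ trivializes the Dubrovin connection on $M$ but does not obviously control the residue along $x=0$, so you would need extra work there; fortunately you do not need it.
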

\begin{proof}
Indeed, by isomonodromy (the connection is flat), the eigenvalues of the monodromy around $x=0$ do not depend on $\tau$. 
\end{proof}

\noindent In particular, it follows that $M_{1}(0)$ is nilpotent if $M_{0}(0)=0$.

\begin{lemma}\label{lemma:nonres} Let us assume that ${\cal Q}$ is non-resonant at $\tau_{0}$.
\begin{enumerate}
\item The matrix $M(0,\tau )$ is non-resonant for all $\tau\in U:=\cit$.
\item There exists a matrix $H(x,\tau )$ of holomorphic function on $(\cit ,0)\times U$, uniquely determined by the initial condition $H(0,\tau )=I$, such that, after the base change of matrix $H$, the matrix of the connection $\nabla$ takes the form
\begin{equation}\label{matConBis}
M(0,\tau )\frac{dx}{x}+V(\tau )\frac{d\tau}{\tau}
\end{equation}
$V(\tau )$ being a matrix of holomorphic functions on $U$. If moreover $0\in U$ then 
$$V(\tau )=N(0, \tau)=N_{0}(0)+N_{1}(0)\tau.$$
\item The matrix 
$P(x,\tau )=H(x,\tau )e^{-M(0,\tau )\ln x}$
is a fundamental solution of the Dubrovin connection.
\end{enumerate}
\end{lemma}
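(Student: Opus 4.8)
The plan is to prove the three assertions of Lemma \ref{lemma:nonres} in the order they are stated, each one feeding into the next.

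\medskip

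For assertion (1), I would argue that non-resonance is preserved as $\tau$ varies over $U=\cit$. By Lemma \ref{lemma:vpconst}, the eigenvalues of $M(0,\tau)$ do not depend on $\tau$. Hence if the eigenvalues fail to differ by a nonzero integer at the single value $\tau_{0}$, the same holds for every $\tau\in\cit$, since the eigenvalues (as a set with multiplicity) are constant in $\tau$. This is immediate, so it is really just an observation rather than a genuine step.

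\medskip

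The heart of the matter is assertion (2), which is a parametrised (holomorphic in $\tau$) version of the classical reduction of a meromorphic connection with a logarithmic pole to its residue normal form. I would set up the equation that the gauge matrix $H(x,\tau)$ must satisfy: writing the connection matrix as $M(x,\tau)\frac{dx}{x}+N(x,\tau)\frac{d\tau}{\tau}$ and seeking $H$ with $H(0,\tau)=I$ so that the $\frac{dx}{x}$-part becomes the constant-in-$x$ matrix $M(0,\tau)$, the condition is the differential equation
$$x\partial_{x}H(x,\tau)=M(0,\tau)H(x,\tau)-H(x,\tau)M(x,\tau).$$
Expanding $H(x,\tau)=I+\sum_{k\geq 1}H_{k}(\tau)x^{k}$ and $M(x,\tau)=\sum_{k\geq 0}M_{k}(\tau)x^{k}$ (here I keep the dependence on $\tau$ in the coefficients, with $M_{0}(\tau)=M(0,\tau)$), matching powers of $x$ produces, at order $k\geq 1$, a recursion of the shape
$$\bigl(\mathrm{Ad}\,M(0,\tau)-kI\bigr)(H_{k}(\tau))=\text{(a polynomial expression in }M_{1},\dots,M_{k}\text{ and }H_{1},\dots,H_{k-1}).$$
This is solvable for $H_{k}(\tau)$ precisely because $k$ is not an eigenvalue of the operator $\mathrm{Ad}\,M(0,\tau)$; and that is exactly what non-resonance of $M(0,\tau)$ guarantees, for every $k\geq 1$ and every $\tau\in\cit$ by assertion (1). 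This gives existence and uniqueness of $H$ as a formal series in $x$ with holomorphic coefficients in $\tau$; convergence follows from the regular-singularity theory already invoked in the proof of the convergence of conformal solutions (the same classical estimate, cf. \cite[Proposition II.2.18]{Sab1}). The main obstacle is bookkeeping: one must check that the $\frac{d\tau}{\tau}$-component transforms into a matrix $V(\tau)$ that depends only on $\tau$, and this is where flatness of $\nabla$ enters. I would use the integrability (flatness) of the connection to see that the mixed compatibility of the two gauge-transformed components forces the $\frac{d\tau}{\tau}$-part to be independent of $x$; restricting to $x\to 0$, where $H(0,\tau)=I$, then identifies it as $V(\tau)=N(0,\tau)=N_{0}(0)+N_{1}(0)\tau$, using the expansion (\ref{eq:Mi}) when $0\in U$.

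\medskip

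Finally, assertion (3) is a direct verification once (2) is in hand. After the gauge transformation by $H$, the Dubrovin part of the connection (the $\frac{dx}{x}$-component) is $M(0,\tau)\frac{dx}{x}$, so a flat frame for the $x$-direction is obtained by the further base change $e^{-M(0,\tau)\ln x}$, which manifestly kills the residue term: $x\partial_{x}\bigl(e^{-M(0,\tau)\ln x}\bigr)=-M(0,\tau)e^{-M(0,\tau)\ln x}$. Composing, I would check that $P(x,\tau)=H(x,\tau)e^{-M(0,\tau)\ln x}$ flattens the Dubrovin connection $\nabla^{d}=\bigtriangledown+\frac{\Phi}{\theta}$ in the $M$-directions, i.e. that it is a fundamental solution in the sense of Corollary \ref{prop:fonda}, and that its leading ($x\to 0$) behaviour is compatible with the normalisation $H(0,\tau)=I$. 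Since $P$ has exactly the form $H(x,\tau)e^{\tau D\ln x}$ featured in Proposition \ref{ex:solSymfondPCQO} (with $D=-M(0,\tau)/\tau$ when $M(0,\tau)$ is linear in $\tau$), this also sets up the symmetry and conformality needed downstream. I expect no real difficulty here beyond confirming that the convergence and the normalisation carry over, the genuine work having been done in assertion (2).
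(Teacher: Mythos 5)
Your proposal follows essentially the same route as the paper: the recursion $(\mathrm{Ad}\,M(0,\tau)\pm kI)(H_{k})=\cdots$ solved term by term using non-resonance, a regular-singularity argument for convergence, flatness of $\nabla$ to control the $\frac{d\tau}{\tau}$-component, and evaluation at $x=0$ (where $H(0,\tau)=I$) to identify $V(\tau)=N(0,\tau)$. The one place you are looser than the paper is the claim that flatness alone forces the $\frac{d\tau}{\tau}$-part to be independent of $x$: flatness only yields $\tau\partial_{\tau}M(0,\tau)-x\partial_{x}V(x,\tau)=[M(0,\tau),V(x,\tau)]$, and one must invoke non-resonance a second time (invertibility of $\mathrm{Ad}\,M(0,\tau)+kI$ for $k\geq 1$) to conclude that the coefficients $V^{k}(\tau)$ vanish for $k\geq 1$.
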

\begin{proof} 1. follows from lemma \ref{lemma:vpconst}. The proof of 2. is classical, but we give some details in order to set the notations and to write 
down explicitely the equations that we will use later  (mainly equations (\ref{eq:DiffxH}), (\ref{eq:DiffxHd}) and (\ref{eq:commutation})): 
since the matrix $M(0,\tau )$ is non-resonant for all $\tau\in U$, there exists a unique matrix
$$H(x,\tau )=I+\sum_{i\geq 1}H^{i}(\tau )x^{i},$$
defined on $\cit\times U$, such that  
\begin{equation}\label{eq:DiffxH}
x\frac{\partial H}{\partial x}(x,\tau )=H(x,\tau )M(0,\tau )-M(x,\tau )H(x,\tau )
\end{equation}
This can be shown for instance as in \cite[Proposition 2.11]{Sab1}, separating the degrees in $x$: equation  (\ref{eq:DiffxH}) is then equivalent to
\begin{equation}\label{eq:DiffxHd}
dH^{d}(\tau )=H^{d}(\tau )M(0,\tau )-M(0,\tau )H^{d}(\tau )-\sum_{i=1}^{d}M^{i}(\tau)H^{d-i}(\tau)
\end{equation}
for $d\geq 1$ and $M(x,\tau)=M(0,\tau)+\sum_{i\geq 1}M^{i}(\tau)x^{i}$. 
The non-resonant assumption shows that these equations are solved in an unique way because $\mbox{Ad} M(0,\tau) +dI$ is invertible for all integer $d$.
After the base change of matrix $H$, the matrix of the connection is 
$$M(0,\tau )\frac{dx}{x}+V(x,\tau)\frac{d\tau}{\tau}.$$
Equation (\ref{eq:DiffxH}), together with a classical argument of regularity, shows that $H(x,\tau )$ holomorphic on $(\cit ,0)\times U$, 
because $M(x,\tau )$ is holomorphic on $\cit\times U$. In particular, $V(x,\tau )$ is also holomorphic on $(\cit ,0)\times U$.
%The matrices $H(x,\tau )$ and $V(x,\tau )$ are holomorphic on $(\cit ,0)\times U$, because $M(x,\tau )$ is holomorphic on $\cit\times U$. 
Now, it follows from the flatness of the connection that
$$\tau\frac{\partial M}{\partial\tau}(0,\tau )-x\frac{\partial V}{\partial x}(x,\tau )=[M(0,\tau ), V(x,\tau )].$$
This gives, putting $V(x,\tau )=V(\tau )+\sum_{k\geq 1}V^{k}(\tau )x^{k}$ and comparing the degrees in $x$,
\begin{equation}\label{eq:commutation}
\tau M_{1}(0)=[M_{0}(0)+\tau M_{1}(0), V(\tau )]
\end{equation}
for $k=0$
and
$$(\mbox{Ad}  M(0,\tau )+kI)V^{k}(\tau )=0$$
for $k\geq 1$ and
for all $\tau\in U$.  By assumption $\mbox{Ad} M(0,\tau )+kI$ is invertible for $k\geq 1$, and we finally get
$V^{k}(\tau )=0$ for all $\tau\in U$ and all $k\geq 1$. This shows that $V(x,\tau )=V(\tau)$ and (\ref{matConBis}) follows.

Let us show the last assertion and let us assume that $\tau =0\in U$.
We also have
\begin{equation}\label{eq:DiffTauH}
\tau\frac{\partial H}{\partial \tau}(x,\tau )=H(x,\tau )V(\tau )-N(x,\tau )H(x,\tau ).
\end{equation}
Writing $V(\tau)=\sum_{k\geq 0}V_{k}\tau^{k}$ and $H(x,\tau )=\sum_{i\geq 0}H_{i}(x)\tau^{i}$ (this is possible because $\tau =0\in U$),
this equation gives, separating now the degrees in $\tau$,
\begin{equation}\label{eq:xH}
kH_{k}(x)=H_{0}(x)V_{k}-N_{k}(x)H_{0}(x)+\Psi_{k}(x)
\end{equation}
for all $k\geq 0$ (recall that $N_{k}(x)=0$ for $k\geq 2$) with $\Psi_{0}(x)=0$ and
$$\Psi_{k}(x)=\sum_{i=1}^{k}(H_{i}(x)V_{k-i}-N_{k-i}(x)H_{i}(x))$$
 for $k\geq 1$. It follows that 
$V_{k}=N_{k}(0)$ for all $k\geq 0$ because $H_{i}(0)=0$ for all $i\geq 1$ and $H_{0}(0)=I$, thanks to the initial condition $H(0,\tau )=I$. 
This completes the proof of 2. and
3. follows.
\end{proof}

\begin{corollary}\label{cor:casorbifolde} Assume that the eigenvalues of $M_{0}(0)$ are contained in an interval of length strictly smaller than  $1$.
\begin{enumerate}
\item There exists a matrix $H(x,\tau )$ of holomorphic functions on $(\cit ,0)\times\cit$, uniquely determined by the initial condition $H(0,\tau )=I$, such that the matrix of  $\nabla$ takes the form
$$(M_{0}(0)+M_{1}(0)\tau )\frac{dx}{x}+(N_{0}(0)+N_{1}(0)\tau )\frac{d\tau}{\tau}$$
after the base change of matrix $H$. The matrix
$P(x,\tau )=H(x,\tau )e^{-(M_{0}(0)+\tau M_{1}(0))\ln x}$
is a fundamental solution of the Dubrovin connection.
\item We have the relations
\begin{equation}\label{RelInt}
[N_{0}(0), M_{0}(0)]=[N_{1}(0), M_{1}(0)]=0\ \mbox{and}\ [ N_{0}(0), M_{1}(0)]+[N_{1}(0), M_{0}(0)]=-M_{1}(0)
\end{equation}
\end{enumerate}
\end{corollary}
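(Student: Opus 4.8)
The plan is to treat this statement as a direct specialization of Lemma~\ref{lemma:nonres}: part (1) should follow once I check that the eigenvalue hypothesis forces non-resonance at $\tau_{0}=0$, after which the lemma applies verbatim; and part (2) should be read off the integrability relation~(\ref{eq:commutation}) that was already isolated inside the proof of that lemma. So there is really no new machinery to build, only a verification of hypotheses and a bookkeeping of coefficients.

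For part (1), the first thing I would do is establish non-resonance. By Lemma~\ref{lemma:vpconst} the eigenvalues of $M(0,\tau)=M_{0}(0)+\tau M_{1}(0)$ are independent of $\tau$, hence coincide with those of $M(0,0)=M_{0}(0)$. Since by hypothesis these lie in an interval of length strictly smaller than $1$, any two of them differ by less than $1$ in absolute value and so cannot differ by a non-zero integer; this is exactly the non-resonance of ${\cal Q}$ at $\tau_{0}=0$. Because $0\in U:=\cit$, I may then apply Lemma~\ref{lemma:nonres} as it stands: it produces the matrix $H(x,\tau)$ of holomorphic functions on $(\cit,0)\times\cit$, uniquely pinned down by $H(0,\tau)=I$, reduces the connection to $M(0,\tau)\frac{dx}{x}+V(\tau)\frac{d\tau}{\tau}$ with $V(\tau)=N(0,\tau)=N_{0}(0)+N_{1}(0)\tau$ (this is where $0\in U$ is used), and gives the fundamental solution $P(x,\tau)=H(x,\tau)e^{-(M_{0}(0)+\tau M_{1}(0))\ln x}$. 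This is precisely the assertion of part (1).

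For part (2), I would substitute the explicit $V(\tau)=N_{0}(0)+N_{1}(0)\tau$ obtained above into equation~(\ref{eq:commutation}), which then reads
$$\tau M_{1}(0)=[M_{0}(0)+\tau M_{1}(0),\,N_{0}(0)+\tau N_{1}(0)].$$
Expanding the bracket and comparing the coefficients of $\tau^{0}$, $\tau^{1}$ and $\tau^{2}$ gives $[M_{0}(0),N_{0}(0)]=0$, then $[M_{0}(0),N_{1}(0)]+[M_{1}(0),N_{0}(0)]=M_{1}(0)$, and finally $[M_{1}(0),N_{1}(0)]=0$. Rewriting each commutator with the opposite order of arguments turns these into $[N_{0}(0),M_{0}(0)]=0$, $[N_{1}(0),M_{1}(0)]=0$ and $[N_{0}(0),M_{1}(0)]+[N_{1}(0),M_{0}(0)]=-M_{1}(0)$, which are exactly the relations~(\ref{RelInt}). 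If I preferred not to quote~(\ref{eq:commutation}), I would instead compute the curvature of the reduced connection matrix $\Omega=(M_{0}(0)+\tau M_{1}(0))\frac{dx}{x}+(N_{0}(0)+\tau N_{1}(0))\frac{d\tau}{\tau}$ directly: since its coefficients depend only on $\tau$, the term $d\Omega$ contributes $-\tau M_{1}(0)\frac{dx}{x}\wedge\frac{d\tau}{\tau}$ and $\Omega\wedge\Omega$ contributes $[M_{0}(0)+\tau M_{1}(0),N_{0}(0)+\tau N_{1}(0)]\frac{dx}{x}\wedge\frac{d\tau}{\tau}$, and vanishing of the curvature recovers the same identity.

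I do not expect any serious obstacle, since the corollary is essentially Lemma~\ref{lemma:nonres} applied under a hypothesis that guarantees non-resonance. The only two points requiring genuine care are the reduction of non-resonance to the interval condition, which relies crucially on the isomonodromy statement of Lemma~\ref{lemma:vpconst} (that the eigenvalues of $M(0,\tau)$ are constant in $\tau$), and the sign bookkeeping in the degree-one coefficient, where one must flip the commutators to arrive at the stated form $[N_{0}(0),M_{1}(0)]+[N_{1}(0),M_{0}(0)]=-M_{1}(0)$ rather than its negative.
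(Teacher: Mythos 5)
Your proposal is correct and follows exactly the paper's route: the paper's own proof is the one-line observation that the interval hypothesis on the eigenvalues of $M_{0}(0)$ gives non-resonance at $\tau_{0}=0$ so that Lemma \ref{lemma:nonres} applies, and that the relations (\ref{RelInt}) come from formula (\ref{eq:commutation}). Your expansion in powers of $\tau$ and the sign bookkeeping are the details the paper leaves implicit, and they check out.
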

\begin{proof}Follows from lemma \ref{lemma:nonres} because
the assumption on $M_{0}(0)$ shows that the quantum differential system is non-resonant at $\tau_{0}=0$.
The commutation relations follows from formula (\ref{eq:commutation}).
\end{proof}

Of course, the fundamental solution $P$ in corollary \ref{cor:casorbifolde} does not need to be conformal, neither symmetric. 
If $M_{0}(x)=0$ for all $x$\footnote{In this case $M_{1}(0)$ is nilpotent, see lemma \ref{lemma:vpconst}.}, the situation becomes better:

\begin{theorem}\label{cor:casvariete} Let ${\cal Q}$ be a flat logarithmic quantum differential system.
\begin{enumerate}
\item There exists a matrix $H(x,\tau )$ of holomorphic functions on $(\cit ,0)\times\cit$, uniquely determined by the initial condition $H(0,\tau )=I$, such that the matrix
$P(x,\tau )=H(x,\tau )e^{-\tau M_{1}(0)\ln x}$
is a fundamental solution of the Dubrovin connection.
\item Any fundamental solution takes the form $H(x,\tau )e^{-\tau M_{1}(0)\ln x}P_{cl}(\tau )$ where $P_{cl}(\tau )$ is a matrix depending only on  $\tau$. 
\item The matrix of the connection takes the form
\begin{equation}\label{eq:MatSolFonda}
(N_{0}(0)+N_{1}(0)\tau )\frac{d\tau}{\tau}
\end{equation}
after the base change of matrix $P$.
\item The fundamental solution $P$ is symmetric and we have
$$\tau\partial_{\tau}S(e_{i},e_{j})=S(\tau\nabla_{\partial_{\tau}}e_{i},e_{j})+S(e_{i},\tau\nabla_{\partial_{\tau}}e_{j})$$
if $(e_{0},\cdots ,e_{\mu -1})=(\omega_{0},\cdots ,\omega_{\mu -1})P$.
\item Assume that $M_{1}(0)=cN_{1}(0)$ for some non-zero constant $c$. Then the fundamental solution $P$ is conformal.  
\end{enumerate}
\end{theorem}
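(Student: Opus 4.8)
The plan is to read off all five assertions from the general curve results already established---Lemma~\ref{lemma:nonres}, Corollary~\ref{cor:casorbifolde} and Proposition~\ref{ex:solSymfondPCQO}---after specialising to the flat hypothesis $M_{0}(x)=0$. The first thing I would record is that flatness forces $M(x,\tau)=M_{1}(x)\tau$, hence $M(0,\tau)=\tau M_{1}(0)$ with $M_{1}(0)$ nilpotent by Lemma~\ref{lemma:vpconst}; all eigenvalues of $M(0,\tau)$ therefore vanish identically in $\tau$, the system is non-resonant at every $\tau_{0}$, and one may take $U=\cit\ni 0$ everywhere in Lemma~\ref{lemma:nonres}. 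Part~(2) of that lemma then yields the unique $H(x,\tau)$ with $H(0,\tau)=I$ reducing the connection matrix to $\tau M_{1}(0)\frac{dx}{x}+(N_{0}(0)+N_{1}(0)\tau)\frac{d\tau}{\tau}$, and part~(3) gives that $P=He^{-\tau M_{1}(0)\ln x}$ is a fundamental solution; this is assertion~(1).

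For~(3) I would carry out the residual base change by $G:=e^{-\tau M_{1}(0)\ln x}$. Since $M_{1}(0)$ commutes with $G$, the $\frac{dx}{x}$-block cancels on the nose (confirming that $P$ is a fundamental solution), while the $\frac{d\tau}{\tau}$-block becomes $G^{-1}(N_{0}(0)+N_{1}(0)\tau)G+G^{-1}(\tau\partial_{\tau}G)$. Here I would use the commutation relations~(\ref{RelInt}), which in the flat case read $[M_{1}(0),N_{1}(0)]=0$ and $[M_{1}(0),N_{0}(0)]=M_{1}(0)$: the first makes $N_{1}(0)$ invariant under conjugation by $G$, and the second produces from $N_{0}(0)$ a single extra term $\tau(\ln x)M_{1}(0)$ (the adjoint expansion terminating because $M_{1}(0)$ is nilpotent), which is cancelled exactly by the $\ln x$ contribution $-\tau(\ln x)M_{1}(0)$ of $G^{-1}(\tau\partial_{\tau}G)$. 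What survives is $(N_{0}(0)+N_{1}(0)\tau)\frac{d\tau}{\tau}$, which is~(3).

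Assertion~(2) is immediate from Remark~\ref{rem:compsolfond}: any fundamental solution is $P\alpha(\tau)$ for an invertible $\alpha$ depending only on $\tau$, and I would set $P_{cl}(\tau):=\alpha(\tau)$. For~(4), symmetry follows from Proposition~\ref{ex:solSymfondPCQO} applied with $D=-M_{1}(0)$, since $P=He^{\tau D\ln x}$ with $H(0,\tau)=I$; the displayed $\tau$-compatibility is nothing but the $\nabla$-flatness of $S$ from Definition~\ref{def:SDQ} written in the direction $\tau\partial_{\tau}$, and because $\tau\partial_{\tau}$ is fixed by the involution $j\colon\theta\mapsto-\theta$ the two terms occur with the same sign. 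Finally for~(5), assertion~(3) identifies the normal form after base change by $P$ as $R_{0}+R_{1}\tau$ with $R_{0}=N_{0}(0)$ and $R_{1}=N_{1}(0)$, so conformality (Lemma~\ref{lemma:levelt}) reduces to $[R_{0},R_{1}]=-R_{1}$; substituting $M_{1}(0)=cN_{1}(0)$ with $c\neq 0$ into $[N_{0}(0),M_{1}(0)]=-M_{1}(0)$ gives precisely $[N_{0}(0),N_{1}(0)]=-N_{1}(0)$.

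The one genuinely delicate step is the base change in~(3): one must verify that the $\ln x$-terms created by conjugating $N_{0}(0)$ by $e^{\pm\tau M_{1}(0)\ln x}$ are annihilated by the $\ln x$ contribution of $G^{-1}(\tau\partial_{\tau}G)$. This is exactly where both the nilpotency of $M_{1}(0)$ (truncating the adjoint series after one bracket) and the sharp commutator $[M_{1}(0),N_{0}(0)]=M_{1}(0)$ are indispensable; matching signs and the two $\ln x$ contributions is the only point requiring care, every other assertion being a direct citation of an earlier lemma.
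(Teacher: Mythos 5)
Your proposal is correct and, for items 1, 2, 3 and 5, it reproduces the paper's own proof: (1) is Corollary \ref{cor:casorbifolde} specialised to $M_{0}(0)=0$, (2) is Remark \ref{rem:compsolfond}, (3) is the conjugation computation driven by $[N_{0}(0),M_{1}(0)]=-M_{1}(0)$ and $[N_{1}(0),M_{1}(0)]=0$ from (\ref{RelInt}), and (5) is the same commutator read as the conformality condition of Lemma \ref{lemma:levelt}. Two remarks. First, in (3) the adjoint series for $e^{\tau M_{1}(0)\ln x}N_{0}(0)e^{-\tau M_{1}(0)\ln x}$ terminates after one bracket because $[M_{1}(0),[M_{1}(0),N_{0}(0)]]=[M_{1}(0),M_{1}(0)]=0$, not because $M_{1}(0)$ is nilpotent; the conclusion is unaffected. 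Second, for the Leibniz identity in (4) you take a genuinely different route: you read it off from the $\nabla$-flatness of $S$ written directly in the frame $e=\omega P$, the $j$-invariance of $\tau\partial_{\tau}$ ensuring that both terms carry a plus sign. This is legitimate, but it is precisely the point where the sign conventions are delicate: in the directions $\theta\partial_{\theta}$ and $X\in\Theta_{M}$ the analogous relation acquires a relative minus sign (this is what produces $V_{0}^{*}=V_{0}$ in (\ref{eq:sym})), so the displayed identity would be false without the implicit twist of the second argument by $j$. The paper avoids having to track the involution by a limit argument instead: it uses the symmetry $S(e_{i},e_{j})=S(\omega_{i},\omega_{j})$, writes the known flatness relation in the $\omega$-basis, compares with the $e$-basis expression up to $o(1)$ as $x\to 0$, and concludes from the $x$-independence of $S(e_{i},e_{j})$ given by Lemma \ref{lemme:Sconst}. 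Your argument is shorter; the paper's is more robust against the sign issue. Neither difference is a gap.
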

\begin{proof} 1. Follows from corollary \ref{cor:casorbifolde} because $M_0 (0)=0$ and
2. then follows from remark \ref{rem:compsolfond}. For
3. we can proceed as follows: after the base change of matrix $P(x,\tau )$, the matrix of the connection takes the form
$$[e^{\tau M_{1}(0)\ln x}(N_{0}(0)+\tau N_{1}(0))e^{-\tau M_{1}(0)\ln x}-\tau M_{1}(0)\ln x ]\frac{d\tau}{\tau}.$$
Now, we have $[N_{0}(0), M_{1}(0)]=-M_{1}(0)$ by relations (\ref{RelInt}) because $M_{0}(0)=0$ and this yields 
$$e^{\tau M_{1}(0)\ln x}N_{0}(0)=N_{0}(0)e^{\tau M_{1}(0)\ln x}+\tau M_{1}(0)e^{\tau M_{1}(0)\ln x}\ln x $$
and we get the expected formula using then relation $[N_{1}(0), M_{1}(0)]=0$.\\
4. The first assertion thus follows from proposition \ref{ex:solSymfondPCQO} because $S(\omega_{i},\omega_{j})\in\cit\tau^{-n}$ for all $i$ and for all $j$ if $M_{0}(x)=0$ (see remark \ref{rem:metglob}).
Since $P$ is symmetric, we have 
$$\tau\partial_{\tau}S(e_{i}, e_{j})=\tau\partial_{\tau}S(\omega_{i}, \omega_{j})=S(\tau\nabla_{\partial_{\tau}}\omega_{i}, \omega_{j})
+S(\omega_{i}, \tau\nabla_{\partial_{\tau}}\omega_{j})$$
and, by formula (\ref{eq:MatSolFonda}), the right hand side is equal to $S(\tau\nabla_{\partial_{\tau}}e_{i}, e_{j})
+S(e_{i}, \tau\nabla_{\partial_{\tau}}e_{j})+o(1)$
as $x$ tends to $0$. We conclude using the fact that $S(e_{i},e_{j})$ does not depend on $x$ (see lemma \ref{lemme:Sconst}).\\
5. Indeed, we have $[N_{0}(0),M_{1}(0)]=-M_{1}(0)$ by relations (\ref{RelInt}) because $M_{0}(0)=0$. 
\end{proof}

\begin{remark} The assumption in item 5 will be satisfied in the geometric situations considered below (small quantum cohomology and/or its mirror partner 
on the $B$-side), see section \ref{ex:CohQuant} below. 
\end{remark}

We will use mainly the following corollary:

\begin{corollary}\label{coro:sechor}Let ${\cal Q}$ be a flat quantum differential system.
\begin{enumerate}
\item Any basis of flat sections of  $\nabla$ takes the form $(\omega_{0},\cdots ,\omega_{\mu -1})\Psi$ where
\begin{equation}\label{sectionhorizontalevariete}
\Psi (x,\tau)=H(x,\tau)e^{-\tau M_{1}(0)\ln x}\Psi^{cl}(\tau) ,
\end{equation}
the matrix $\Psi^{cl}(\tau)$ satisfying 
$$\tau\partial_{\tau}\Psi^{cl}(\tau)=-(N_{0}(0)+\tau N_{1}(0))\Psi^{cl}(\tau).$$
\item If moreover $P(x,\tau )=H(x,\tau)e^{-\tau M_{1}(0)\ln x}$ is conformal, we have
$$\Psi^{cl}(\tau)=\tau^{-N_{0}(0)}\tau^{-N_{1}(0)}\Psi^{const}$$
where $\Psi^{const}$ is a constant matrix.
\end{enumerate}
\end{corollary}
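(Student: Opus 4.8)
The plan is to leverage Theorem \ref{cor:casvariete}, which already supplies the fundamental solution $P(x,\tau)=H(x,\tau)e^{-\tau M_{1}(0)\ln x}$ and, more importantly, tells us that after the base change of matrix $P$ the whole connection $\nabla$ reduces to the $\tau$-only form (\ref{eq:MatSolFonda}). The key point is that trivializing the $M$-direction via $P$ turns the flatness condition for $\nabla$ into a pure ordinary differential equation in $\tau$, whose solutions can then be read off directly.

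For item 1, I would set $e=(e_{0},\cdots,e_{\mu-1}):=(\omega_{0},\cdots,\omega_{\mu-1})P$. A basis of flat sections of $\nabla$ written in the original frame as $(\omega_{0},\cdots,\omega_{\mu-1})\Psi$ corresponds, in the frame $e$, to the matrix $\Psi^{cl}:=P^{-1}\Psi$, and flatness of $(\omega_{0},\cdots,\omega_{\mu-1})\Psi=e\Psi^{cl}$ is equivalent to $d\Psi^{cl}=-\Omega\Psi^{cl}$, where $\Omega$ is the connection matrix in the frame $e$. By Theorem \ref{cor:casvariete}(3) we have $\Omega=(N_{0}(0)+N_{1}(0)\tau)\frac{d\tau}{\tau}$. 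Since $\Omega$ carries no $dx$-component, the $dx$-part of the flatness equation forces $\partial_{x}\Psi^{cl}=0$, so $\Psi^{cl}=\Psi^{cl}(\tau)$ depends only on $\tau$; the $\frac{d\tau}{\tau}$-part then reads $\tau\partial_{\tau}\Psi^{cl}(\tau)=-(N_{0}(0)+N_{1}(0)\tau)\Psi^{cl}(\tau)$. Undoing the base change gives $\Psi=P\Psi^{cl}=H(x,\tau)e^{-\tau M_{1}(0)\ln x}\Psi^{cl}(\tau)$, which is precisely (\ref{sectionhorizontalevariete}).

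For item 2, I assume in addition that $P$ is conformal. Comparing Lemma \ref{lemma:levelt} with Theorem \ref{cor:casvariete}(3), the Levelt form of $\nabla$ in the frame $e$ is already $(N_{0}(0)+N_{1}(0)\tau)\frac{d\tau}{\tau}$, so $\tilde{R}_{0}=N_{0}(0)$, $\tilde{R}_{1}=N_{1}(0)$, $\delta\leq 1$, and the defining relation $[\tilde{R}_{0},\tilde{R}_{1}]=-\tilde{R}_{1}$ of a conformal solution yields $[N_{0}(0),N_{1}(0)]=-N_{1}(0)$ (equivalently, this follows from relations (\ref{RelInt}) together with $M_{1}(0)=cN_{1}(0)$ as in Theorem \ref{cor:casvariete}(5)). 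I would then apply Proposition \ref{proposition:sectionshorizontales}(2) with these data to obtain the basis of flat sections $(\omega_{0},\cdots,\omega_{\mu-1})P\,\tau^{-N_{0}(0)}\tau^{-N_{1}(0)}$, i.e. $\Psi^{cl}(\tau)=\tau^{-N_{0}(0)}\tau^{-N_{1}(0)}\Psi^{const}$. To keep the argument self-contained I can instead verify directly that this $\Psi^{cl}$ solves the ODE of item 1: one computes $\tau\partial_{\tau}(\tau^{-N_{0}(0)}\tau^{-N_{1}(0)})=-N_{0}(0)\Psi^{cl}-(\tau^{-N_{0}(0)}N_{1}(0)\tau^{N_{0}(0)})\Psi^{cl}$, and using $[N_{0}(0),N_{1}(0)]=-N_{1}(0)$ one finds $\tau^{-N_{0}(0)}N_{1}(0)\tau^{N_{0}(0)}=e^{-(\ln\tau)\,\mathrm{ad}\,N_{0}(0)}(N_{1}(0))=\tau N_{1}(0)$, which gives back $-(N_{0}(0)+\tau N_{1}(0))\Psi^{cl}$.

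This argument is essentially bookkeeping of base changes, so there is no deep obstacle; the points demanding care are the sign and convention matching in the flatness equation $d\Psi^{cl}=-\Omega\Psi^{cl}$, and checking that the conformality hypothesis really produces the clean relation $[N_{0}(0),N_{1}(0)]=-N_{1}(0)$, since relations (\ref{RelInt}) are stated in terms of $M_{1}(0)$ rather than $N_{1}(0)$. Keeping track of the fact that $\Psi$ is a full invertible matrix (a basis, not a single section) and that $\Psi^{const}$ absorbs the residual freedom in choosing the basis is what ensures the statement holds as written.
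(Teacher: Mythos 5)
Your proof is correct and follows essentially the same route as the paper: item 1 is deduced from Theorem \ref{cor:casvariete} (items 2 and 3), and item 2 from Proposition \ref{proposition:sectionshorizontales} once conformality is translated into $[N_{0}(0),N_{1}(0)]=-N_{1}(0)$. Your added direct verification that $\tau^{-N_{0}(0)}\tau^{-N_{1}(0)}$ solves the ODE is a correct and welcome expansion of the paper's one-line argument.
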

\begin{proof}
1. Follows from
theorem \ref{cor:casvariete} (items 2. and 3). We then get 2. using proposition \ref{proposition:sectionshorizontales}. 
\end{proof}

\begin{corollary}\label{coro:DLJ}Let ${\cal Q}$ be a flat quantum differential system and $P$ be a fundamental solution as in theorem \ref{cor:casvariete} 1. 
Then
$$J^{P, \omega_{0}}_{{\cal Q}}=e^{\tau M_{1}(0)\ln x}(\omega_{0}+O(\tau))$$
\end{corollary}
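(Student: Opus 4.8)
The plan is to recall from Definition~\ref{def:fonctionsJcan} and the discussion in Section~\ref{sec:SolFonda} that $J^{P,\omega_{0}}_{{\cal Q}}=P^{-1}\omega_{0}$, where $\omega_{0}$ is expressed in the basis $\omega=(\omega_{0},\cdots,\omega_{\mu-1})$ by the coordinate vector $(1,0,\cdots,0)^{t}$. Since by theorem~\ref{cor:casvariete}~(1) the fundamental solution is $P(x,\tau)=H(x,\tau)e^{-\tau M_{1}(0)\ln x}$, I would simply invert this explicit factorization: $P^{-1}=e^{\tau M_{1}(0)\ln x}H(x,\tau)^{-1}$. The whole computation therefore reduces to understanding the two factors $e^{\tau M_{1}(0)\ln x}$ and $H(x,\tau)^{-1}$ when applied to $\omega_{0}$.

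First I would treat the factor $H(x,\tau)^{-1}$. By construction (lemma~\ref{lemma:nonres}~(2) and its use in theorem~\ref{cor:casvariete}) the matrix $H(x,\tau)=I+\sum_{i\geq 1}H^{i}(\tau)x^{i}$ satisfies the initial condition $H(0,\tau)=I$ and is a power series in $x$ with holomorphic coefficients in $\tau$. The key structural observation is that $H$ is holomorphic at $\tau=0$, so that $H(x,\tau)=H_{0}(x)+O(\tau)$ in the $\tau$-expansion (as in the expansion $H(x,\tau)=\sum_{i\geq 0}H_{i}(x)\tau^{i}$ used in the proof of lemma~\ref{lemma:nonres}). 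Consequently $H(x,\tau)^{-1}$ is likewise of the form $I+O(\tau)$ once we absorb the $x$-dependence, and more precisely $H(x,\tau)^{-1}\omega_{0}=\omega_{0}+O(\tau)$. This is exactly the content of the claimed ``$\omega_{0}+O(\tau)$'' remainder term.

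Next I would handle the prefactor. Expanding $e^{\tau M_{1}(0)\ln x}$ and multiplying gives
$$J^{P,\omega_{0}}_{{\cal Q}}=e^{\tau M_{1}(0)\ln x}H(x,\tau)^{-1}\omega_{0}=e^{\tau M_{1}(0)\ln x}\bigl(\omega_{0}+O(\tau)\bigr),$$
which is precisely the asserted formula. The only point requiring a little care is the bookkeeping of the $O(\tau)$ term: since $e^{\tau M_{1}(0)\ln x}=I+\tau M_{1}(0)\ln x+O(\tau^{2})$, the product of the two expansions stays of the form $e^{\tau M_{1}(0)\ln x}(\omega_{0}+O(\tau))$ because any correction coming from the exponential factor is again $O(\tau)$ and may be reabsorbed into the exponential prefactor times an $O(\tau)$ tail.

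The main (and essentially only) obstacle is verifying that $H(x,\tau)$ is genuinely holomorphic in $\tau$ near $\tau=0$ so that the expansion $H=I+O(\tau)$ is legitimate; but this is already established in the proof of lemma~\ref{lemma:nonres}, where the recursion~(\ref{eq:xH}) for the coefficients $H_{i}(x)$ in the $\tau$-expansion is set up under the hypothesis $0\in U=\cit$, and in theorem~\ref{cor:casvariete} we are precisely in the flat case with $M_{0}(x)=0$, where $H$ is holomorphic on $(\cit,0)\times\cit$. Given this, the proof is a short direct substitution with no further analytic input. The resulting expression should be recorded as the leading term of the $J$-function, consistent with the expansion $J^{P,\omega_{0}}_{{\cal Q}}=e^{\tau Q\ln x}(\omega_{0}+O(\tau))$ announced after proposition~\ref{HJHcirc}, upon identifying $Q$ with $M_{1}(0)$.
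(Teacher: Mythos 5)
Your overall strategy is the right one and, as far as one can tell, the only one intended: the paper states this corollary without proof, as an immediate consequence of the factorization $P(x,\tau)=H(x,\tau)e^{-\tau M_{1}(0)\ln x}$ of Theorem \ref{cor:casvariete} (1), so inverting $P$ and writing $J^{P,\omega_{0}}_{{\cal Q}}=e^{\tau M_{1}(0)\ln x}H(x,\tau)^{-1}\omega_{0}$ is exactly what is expected. However, the one substantive step is justified incorrectly in your write-up. You argue: $H$ is holomorphic at $\tau=0$, hence $H(x,\tau)=H_{0}(x)+O(\tau)$, and ``consequently'' $H(x,\tau)^{-1}=I+O(\tau)$ ``once we absorb the $x$-dependence.'' Holomorphy only gives $H^{-1}=H_{0}(x)^{-1}+O(\tau)$; to conclude $H^{-1}\omega_{0}=\omega_{0}+O(\tau)$ you need $H_{0}(x)=I$ for \emph{all} $x$, not merely the initial condition $H(0,\tau)=I$, and this does not follow from holomorphy. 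It is precisely here that the flatness hypothesis $M_{0}(x)\equiv 0$ does its work, and you invoke flatness only to justify holomorphy, not this identity.

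The gap closes in one line, but it must be closed. In the flat case $M(x,\tau)=\tau M_{1}(x)$, so evaluating equation (\ref{eq:DiffxH}) at $\tau=0$ gives $x\,\partial_{x}H(x,0)=H(x,0)M(0,0)-M(x,0)H(x,0)=0$, whence $H(x,0)$ is constant in $x$ and equal to $H(0,0)=I$. Equivalently, in the recursion (\ref{eq:DiffxHd}) every term on the right-hand side carries a factor of $\tau$ when $M_{0}\equiv 0$, so $dH^{d}(0)=0$ and $H^{d}(0)=0$ for all $d\geq 1$, i.e. $H(x,\tau)=I+O(\tau)$. With this inserted, the rest of your computation (the bookkeeping of the exponential prefactor against the $O(\tau)$ tail) is fine.
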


On the A-side (example \ref{exemplebasiqueA}) we even have the asymptotic expansion, for small quantum cohomology 
(and we assume here that the Picard group of $X$ is of rank one),
$$J_{X}=e^{\tau \cup_{p} \ln q}(1+o(\tau))$$
where $\cup_{p}$ stands for the cup-product by the generator of $H^{2}(X,\cit )$ and $q=e^{t}$.
The previous formula is important in order to show for instance Givental's mirror formula $I_{X}=J_{X}$ for $X$ a 
hypersurface of degree $\ell <n$ in $\ppit^{n}$, see \cite[Section 11.2]{CK}..

\subsection{Classical limit of a non-resonant quantum differential system}
\label{sec:limite}

The classical limit (as $x\rightarrow 0$) of a logarithmic differential system ${\cal Q}$ on $M=\cit$ is a quantum differential system on a point, 
that is a tuple
$${\cal Q}^{cl}=({\cal G}^{cl}, \nabla^{cl} ,S^{cl},d)$$
satisfying the conditions of definition \ref{def:SDQ}, see also remark \ref{rem.MiscSDQ}. We explain here why such a limit exists, how to compute it and why 
it produces a meromorphic connection with regular singularity at $\theta =0$ in the geometric situation.

\subsubsection{Flat case}\label{sec:flatlimite}
The classical limit of a {\em flat} quantum differentiel system ${\cal Q}$ is the tuple
$${\cal Q}^{cl}=({\cal G}^{cl}, \nabla^{cl} ,S^{cl}, d)$$
where\\

$\bullet$ ${\cal G}^{cl}$ is the ${\cal O}_{\ppit^{1}}$-free module 
associated with the lattice $(G_{0}^{cl},G_{\infty}^{cl})$ (see \cite[I, proposition 4.15]{Sab1}) where $G_{0}^{cl}$ ({\em resp.} $G_{\infty}^{cl}$) is the 
$\cit [\theta]$ ({\em resp.}\footnote{As usual, $\tau :=\theta^{-1}$.} $\cit[\tau]$) free-module
generated by the 
classes $[\![\omega_{i}]\!]$ of the sections $\omega_{i}$ in ${\cal L}/x{\cal L}$, ${\cal L}$ being the $\cit \{x\}[\theta ,\theta^{-1}]$-module generated by $\omega$,\\

$\bullet$ $\nabla^{cl}$ is the connection whose matrix is 
\begin{equation}\label{eq:ConnexionLimite}
(\frac{A_{0}(0)}{\theta}+A_{\infty}(0))\frac{d\theta}{\theta}
\end{equation} 
in the basis $[\![\omega]\!]$ of ${\cal G}^{cl}$,\\

$\bullet$ $S^{cl}([\![\omega_{i}]\!],[\![\omega_{j}]\!])=S(\omega_{i},\omega_{j})$ for all $i$ and for all $j$.\\

\noindent The tuple ${\cal Q}^{cl}$ satisfies the conditions of definition \ref{def:SDQ}: $S^{cl}$ is $\nabla^{cl}$-flat because of theorem \ref{cor:casvariete} (4). In the setting of example \ref{exemplebasiqueA}, ${\cal G}^{cl}$ is the trivial bundle on $\ppit^{1}$ whose fibers are $H^{*}(X,\cit )$
and the limit metric $S^{cl}$ is the usual cup-product.

Assume moreover that $[A_{\infty}(0),A_{0}(0)]=A_{0}(0)$ (conformality). Then, and after the base change of matrix $\theta^{-D}$, where $D$ is the diagonal matrix 
whose eigenvalues are the integral part of the ones of $A_{\infty}$, system  (\ref{eq:ConnexionLimite})
becomes
$$(A_{\infty}(0)-D +A_{0}(0))\frac{d\theta}{\theta}$$
In particular, the meromorphic connection $(G_{0}^{cl}[\theta^{-1}],\nabla^{cl})$ has a regular singularity at the origin.

\subsubsection{General case}
Assume now that ${\cal Q}$ is non-resonant at $\tau_{0}$ but $M_{0}(0)\neq 0$. In this case, the definition of the limiting quantum differential system 
is more complicated: one has to take into account a monodromy phenomenon and to work in a graded module with respect to the $V$-filtration in order to 
reduce to the previous situation (the difficult point is to get a ``limit'' bilinear form), and this is in fact what we do in example 
\ref{ex:PetiteCohQuantOrbi}. 

In order to make the link with section \ref{sec:flatlimite}, assume that $M_{0}(0)= 0$.
Let us first notice that, by lemma \ref{lemma:vpconst}, the eigenvalues of $M_{1}(0)$ are all equal to zero.
Let $V^{\bullet}$ be the Malgrange-Kashiwara $V$-filtration of the Gauss-Manin system $G$ at $x=0$ (see Appendix): we thus have $V^{\alpha}=V^{0}$ 
(which is, by definition the $\cit \{x\}[\theta ,\theta^{-1}]$-module generated by $\omega$) for $\alpha\leq 0$ and $V^{\alpha}=xV^{\alpha -1}$ for $\alpha>0$.
Finally, ${\cal L}/x{\cal L}=V^{0}G\cap G_{0}/V^{1}G\cap G_{0}$.

This construction is explained in detail in the case of the rescalings in section \ref{sec:rescaling}, and this will be after all the model for small quantum
cohomology (see also \cite{DoMa} in the case of the small quantum cohomology of weighted projective spaces).

\subsection{The $J$-functions of a non-resonant quantum differential system}

We are now able to define the $J$-functions of a non-resonant quantum differential system, removing the ambiguity on the choice of the fundamental solution:

\begin{definition}\label{def:can} Let ${\cal Q}$ be a non-resonant quantum differential system $\tau_{0}$ and $\omega_{0}$ be a pre-primitive section.
The $J$-function $J^{\omega_{0}}_{\cal Q}$ of ${\cal Q}$ is the function defined on $\cit\times U$ by
\begin{equation}\label{fonctionJ}
J^{\omega_{0}}_{\cal Q}=P^{-1}\omega_{0}
\end{equation}
where $P=H(x,\tau )e^{-M(0,\tau )\ln x}$ is the fundamental solution given by lemma \ref{lemma:nonres}.
\end{definition}

\noindent The function $J^{\omega_{0}}$ is characterized by the initial condition $H(0,\tau )=I$ (notice then that $J^{\omega_{0}}_{\cal Q}\sim e^{M(0,\tau )\ln x}\omega_{0}$ as $x\rightarrow 0$) and depends only on the choosen pre-primitive section. It other words, under the assumptions of theorem \ref{cor:casvariete}, $J^{\omega_{0}}$ will be canonical if $\omega_{0}$ is so. This is what happens in examples \ref{exemplebasiqueB} and \ref{exemplebasiqueA}: in the situation
of example \ref{exemplebasiqueA}, Givental's $J$-function is the canonical $J$-function in the previous sense, taking the section $1$ as canonical pre-primitive section (see example \ref{exemple:PrePrimCan}).

\subsection{Higher dimensional case}
\label{sub:geq2}

If $M=\cit^{r+1}$ with $r$ greater or equal to $2$ (in the geometric situations alluded above, this happens if the rank of the cohomology group $H^{2}$ is greater or equal to $2$), one has analogous results in the case of a logarithmic quantum differential system ${\cal Q}$ having a characretistic equation of the form
\begin{equation}
M^{(0)}(\underline{x}, \tau )\frac{dx_{0}}{x_{0}}+\cdots +M^{(r)}(\underline{x},\tau )\frac{dx_{r}}{x_{r}}+N(\underline{x},\tau )\frac{d\tau}{\tau}
\end{equation}
where we put as above $\underline{x}=(x_{1},\cdots ,x_{r})$ (this kind of quantum differential system is produced by the functions considered in \ref{rem:FonctionsMirroir} (1); see also section \ref{sec:hirzebruch} below). It follows from equations (\ref{eq:Mi}) that
\begin{equation}
M^{(i)}(\underline{x},\tau )=M_{0}^{(i)}(\underline{x})+M_{1}^{(i)}(\underline{x})\tau \ \mbox{et}\ N(\underline{x},\tau )=N_{0}(\underline{x})+N_{1}(\underline{x})\tau
\end{equation}
where the matrices involved are matrices of holomorphic functions. We will say that the quantum logarithmic differential system ${\cal Q}$ is {\em flat} if $M_{0}^{(i)}(\underline{x})$ is identically equal to $0$ for all $i=0,\cdots ,r$.\\

\begin{lemma} Assume that the quantum logarithmic differential system ${\cal Q}$ is flat\footnote{The condition $M_{0}^{(i)}(0)=0$ for all $1\leq i\leq r$ would be enough}.
\begin{enumerate}
\item There exists an invertible matrix $H(\underline{x},\tau )$, characterized by the initial condition $H(0,\tau )=I$, such that the matrix of $\nabla$ takes the form, after the base change of matrix $H(\underline{x},\tau )$,
\begin{equation}\label{matConBisgeq2}
\tau M^{(1)}_{1}(0) \frac{dx_{1}}{x_{1}}+\cdots +\tau M^{(r)}_{1}(0)\frac{dx_{r}}{x_{r}}+V(\tau )\frac{d\tau}{\tau}
\end{equation}
where $V(\tau )=N_{0}(0)+N_{1}(0)\tau$.
\item Every fundamental solution of the Dubrovin connection takes the form
$$H(\underline{x},\tau )e^{-\tau \sum_{i}M^{(i)}_{1}(0)\ln x_{i}}P_{cl}(\tau )$$
where $P_{cl}(\tau )$ is a matrix depending only on $\tau$.
\end{enumerate}
\end{lemma}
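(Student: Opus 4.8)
The plan is to repeat, now in the $r$ variables $x_{1},\cdots ,x_{r}$ simultaneously, the construction carried out in the one-variable case in Lemma \ref{lemma:nonres} and Theorem \ref{cor:casvariete}. First I would look for an invertible $H(\underline{x},\tau )$ with $H(0,\tau )=I$ realizing the gauge transformation that makes each logarithmic coefficient in the $M$-directions constant in $\underline{x}$; concretely $H$ should solve, for every $i=1,\cdots ,r$, the system
\begin{equation}
x_{i}\frac{\partial H}{\partial x_{i}}(\underline{x},\tau )=H(\underline{x},\tau )M^{(i)}(0,\tau )-M^{(i)}(\underline{x},\tau )H(\underline{x},\tau ),
\end{equation}
the exact analogue of (\ref{eq:DiffxH}), with $M^{(i)}(0,\tau )=\tau M^{(i)}_{1}(0)$ since the system is flat. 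The crucial preliminary observation is that each $M^{(i)}_{1}(0)$ is nilpotent: applying Lemma \ref{lemma:vpconst} in the direction $x_{i}$ shows that the eigenvalues of $M^{(i)}(0,\tau )$ are independent of $\tau$, hence all zero because $M^{(i)}_{0}(0)=0$. Consequently $\mbox{Ad}\, M^{(i)}(0,\tau )$ is nilpotent and $d_{i}I+\mbox{Ad}\, M^{(i)}(0,\tau )$ is invertible for every $d_{i}\geq 1$, which is the non-resonance needed to run the recursion.

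Writing $H=\sum_{\underline{d}\geq 0}H_{\underline{d}}(\tau )\underline{x}^{\underline{d}}$ with $H_{0}=I$, writing $M^{(i)}(\underline{x},\tau )=\sum_{\underline{e}}M^{(i)}_{\underline{e}}(\tau )\underline{x}^{\underline{e}}$, and separating multidegrees as in (\ref{eq:DiffxHd}), each equation of the system becomes, at multidegree $\underline{d}\neq 0$,
\begin{equation}
(d_{i}I+\mbox{Ad}\, M^{(i)}(0,\tau ))H_{\underline{d}}(\tau )=-\sum_{0\neq\underline{e}\leq\underline{d}}M^{(i)}_{\underline{e}}(\tau )H_{\underline{d}-\underline{e}}(\tau ),
\end{equation}
which can be solved for $H_{\underline{d}}$ by picking any index $i$ with $d_{i}\geq 1$ and inverting the operator on the left. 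The one genuinely new difficulty, and the step I expect to be the main obstacle, is to check that the $H_{\underline{d}}$ so produced is independent of the chosen index $i$, i.e.\ that the overdetermined system is compatible. This I would establish by induction on $|\underline{d}|$, feeding in the integrability relations (\ref{eq:integrabilite}) coming from the flatness of $\nabla$ (equivalently, from Proposition \ref{prop:dubint}): the conditions $\bigtriangledown \Phi =0$ and $\Phi\wedge\Phi =0$, read off in the $\bigtriangledown$-flat basis $\omega$ (available because the system is flat, so $M^{(i)}_{0}=0$), are exactly what forces the two candidate values of $H_{\underline{d}}$ to agree. Holomorphy of $H$ on $(\cit ,0)^{r}\times\cit$ then follows from a regularity argument as in Lemma \ref{lemma:nonres}.

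It remains to identify the $d\tau$-component and to deduce the shape of the fundamental solutions. After the gauge $H$, the mixed $dx_{i}\wedge d\tau$ part of the zero-curvature condition reads, exactly as (\ref{eq:commutation}) in each direction, $\tau\partial_{\tau}M^{(i)}(0,\tau )-x_{i}\partial_{x_{i}}V(\underline{x},\tau )=[M^{(i)}(0,\tau ),V(\underline{x},\tau )]$; expanding $V$ in $\underline{x}$ and using non-resonance kills every positive-degree coefficient, so $V=V(\tau )$, and matching at $\underline{x}=0$ as in (\ref{eq:xH}) gives $V(\tau )=N(0,\tau )=N_{0}(0)+N_{1}(0)\tau$, which is (\ref{matConBisgeq2}) and establishes item 1. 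For item 2, after the base change $H$ the $M$-directions of the Dubrovin connection have the constant-coefficient matrix $\tau\sum_{i}M^{(i)}_{1}(0)\frac{dx_{i}}{x_{i}}$; since the residues $M^{(i)}_{1}(0)$ pairwise commute, which is again $\Phi\wedge\Phi =0$ evaluated at the origin, the single-valued matrix $e^{-\tau\sum_{i}M^{(i)}_{1}(0)\ln x_{i}}$ is a well-defined flat frame, so $H(\underline{x},\tau )e^{-\tau\sum_{i}M^{(i)}_{1}(0)\ln x_{i}}$ is a fundamental solution of the Dubrovin connection. Every other one differs from it by right multiplication by a matrix $P_{cl}(\tau )$ depending only on $\tau$, by the base-change formula of Remark \ref{rem:compsolfond} applied in the $\underline{x}$-directions, which is the asserted form.
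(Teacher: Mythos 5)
Your argument is correct and follows essentially the same route as the paper, which disposes of the lemma in two sentences: the residue eigenvalues along each $\{x_{i}=0\}$ are constant by isomonodromy and vanish by flatness (hence non-resonance), and the $M^{(i)}_{1}(0)$ commute by flatness of $\nabla$. You have simply supplied the details the paper leaves implicit — the multidegree recursion, the compatibility of the overdetermined system, and the identification of the $d\tau$-component — all of which check out.
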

\begin{proof} The eigenvalues of the residue matrices along $\{x_{i}=0\}$, $i=0,\cdots ,r$, are constant and the assumption of the lemma shows that they are all equal to $0$.
In particular they do not differ from a non-zero integer. Notice that, due to the flatness of the connection, the matrices $M^{(i)}_{1}(0)$ and $M^{(j)}_{1}(0)$ commute.
\end{proof}

\begin{definition} 
\begin{enumerate}
\item The fundamental solution $H(\underline{x},\tau )e^{-\tau\sum_{i}M^{(i)}_{1}(0)\ln x_{i}}$ is called a {\em canonical fundamental solution} of the Dubrovin connection of the quantum differential system ${\cal Q}$.
\item Let $\omega_{0}$ be pre-primitive. The $J$-functions of the quantum differential system ${\cal Q}$ are the sections
$$J^{\omega_{0}}_{\cal Q}=P^{-1}\omega_{0}$$
where $P$ is a canonical fundamental solution of the Dubrovin connection.
\end{enumerate}
\end{definition}

\section{Examples of non-resonant quantum differential systems and canonical fundamental solutions}\label{ex:CohQuant}
We discuss here the existence of quantum differential systems and canonical fundamental solutions in the geometric setting.

\subsection{Rescalings} 
\label{sec:rescaling}
Mirrors models for small quantum cohomology are in essence produced (at least in the toric case) by avatars of {\em rescalings}, 
see remark \ref{rem:RescvsBmodels}. Also, these rescalings give a quite good picture of what happens in general and this is why we first focuse on them.

\subsubsection{Definitions}
We will use notations and definitions of the Appendix. Let $f:U\rightarrow\cit$ be a regular tame function on the affine manifold $U$, 
equipped with the coordinates $\underline{u}$,
$G_{0}^{o}$ ({\em resp.} $G^{o}$) be (the Laplace transform of) its Brieskorn lattice ({\em resp.} (the Laplace transform of) its Gauss-Manin system). 
The Gauss-manin system $G^{o}$ is equipped with a flat meromorphic connection $\nabla^{o}$ and $G_{0}^{o}$ is stable under 
$\theta^{2}\nabla^{o}_{\partial_{\theta}}$.\\

Let us define 
$$F:U\times M\rightarrow\cit$$
where $M=\cit^{*}$ and $F(\underline{u},x)=xf(\underline{u})$. This is a {\em rescaling} of $f$. Let $G_{0}$ ({\em resp.} $G$) be the Brieskorn lattice 
({\em resp.} the Gauss-Manin system) of $F$:
$$G_{0}:=\frac{\Omega^{n}(U)[x, x^{-1},\theta ]}{(\theta d_{u}-d_{u}F)\wedge \Omega^{n-1}(U)[x, x^{-1},\theta]}\ 
\mbox{and}\ G:=\frac{\Omega^{n}(U)[x, x^{-1},\theta ,\theta^{-1}]}{(\theta d_{u}-d_{u}F)\wedge \Omega^{n-1}(U)[x, x^{-1},\theta ,\theta^{-1}]}$$
where $d_{u}$ means that the differential is taken with respect to $\underline{u}$ only.
We have \footnote{$G^{o}$ is associated with the kernel $e^{-f/\theta}$ while $G$ is associated with the kernel $e^{-xf/\theta}$}
$$G_{0}=\pi^{+}G_{0}^{o}=\cit [x,x^{-1},\theta ]\otimes G_{0}^{o}.$$
\noindent where $\pi :(\theta ,x)\mapsto \frac{\theta}{x}$. The Gauss-Manin system $G$ is a free $\cit [x,x^{-1},\theta ,\theta^{-1}]$-module, equipped with a meromorphic 
flat connection $\nabla$: if $\Omega^{o}$ denotes the matrix of $\nabla^{o}$ in the basis $\omega^{o}$ then $\pi^{*}\Omega^{o}$ will be the one of $\nabla$ in 
the basis $\omega:=1\otimes \omega^{o}$. In general, and because $\theta^{2}\nabla^{o}_{\partial_{\theta}}G_{0}^{o}\subset G_{0}^{o}$, 
the matrix of $\nabla^{o}$ in the basis $\omega^{o}$ is
$$ \Omega^{o}=(\frac{A_{0}}{\theta}+A_{\infty}+A_{1}\theta +\cdots +A_{r}\theta^{r})\frac{d\theta}{\theta}$$
for some $r\geq 0$ and
the matrix of $\nabla$ in the basis
$\omega$ will be 
$$ \Omega=(\frac{xA_{0}}{\theta}+A_{\infty}+A_{1}\frac{\theta}{x} +\cdots +A_{r}\frac{\theta^{r}}{x^{r}})(\frac{d\theta}{\theta}-\frac{dx}{x}).$$
Of course, the case $r=0$ will give the connection of a logarithmic quantum differential system.

\begin{remark}\label{rem:RescvsBmodels}
Given positive integers $d_{1},\cdots ,d_{n}$ and $(i_{1},\cdots ,i_{n})\in\zit^{n}$, consider the function
$$F(\underline{u},x )=u_{1}^{d_{1}}+\cdots +u_{n}^{d_{n}}+x u_{1}^{i_{1}}\cdots u_{n}^{i_{n}}$$
where $x$ is a non-zero complex parameter.
Assume  that $\frac{i_{1}}{d_{1}}+\cdots + \frac{i_{n}}{d_{n}}\neq 1$ and  consider the change of variables
$$\underline{v}=(v_{1},\cdots ,v_{n})=(x^{-r/d_{1}}u_{1},\cdots , x^{-r/d_{n}}u_{n})$$
where $r=\frac{1}{1-(\frac{i_{1}}{d_{1}}+\cdots + \frac{i_{n}}{d_{n}})}$.
Then, 
$$F(\underline{v},x)=x^{r}f(\underline{v})$$
with $f(\underline{v})=v_{1}^{d_{1}}+\cdots +v_{n}^{d_{n}}+v_{1}^{i_{1}}\cdots v_{n}^{i_{n}}$. In other words, $F$ can be expressed as a rescaling of $f$.
This applies in the situation of example \ref{exemplebasiqueB} where $F(\underline{u},x )=u_{1}+\cdots +u_{n}+\frac{x}{u_{1}^{w_{1}}\cdots u_{n}^{w_{n}}}$: 
we have $F(\underline{v},\lambda )=x^{\frac{1}{\mu}}f(\underline{v})$ where $f(\underline{v})=v_{1}+\cdots +v_{n}+\frac{1}{v_{1}^{w_{1}}\cdots v_{n}^{w_{n}}}$,
$\underline{v}=(v_{1},\cdots ,v_{n})=x^{-1/\mu}(u_{1},\cdots ,u_{n})$ and $\mu =1+w_{1}+\cdots +w_{n}$.
\end{remark}

\subsubsection{The quantum differential system associated with a rescaling}

Let $\omega^{o}=(\omega^{o}_{0},\cdots ,\omega^{o}_{\mu -1})$ be the canonical solution of the Birkhoff problem for the Brieskorn lattice of $f$ 
defined in the Appendix. Let us recall the two main properties of this solution: the matrix of $\nabla^{o}$ takes the form  
\begin{equation}\label{Bir}
(\frac{A_{0}^{o}}{\theta}+A_{\infty})\frac{d\theta }{\theta }.
\end{equation}
in this basis and 
\begin{equation}\label{eq:So}
S^{o}(\omega^{o}_{i},\omega^{o}_{j})\in\cit\theta^{n} 
\end{equation}
for all $i,j$ where
$S^{o}:G_{0}^{o}\times j^{*}G_{0}^{o}\rightarrow\cit[\theta ]\theta^{n}$
is the non-degenerate, $\nabla^{o}$-flat, symmetric bilinear
form defined in step 3 of the Appendix (the involution $j$ is defined in definition \ref{def:SDQ}).\\

The matrix $A_{\infty}$ is diagonal,
$$A_{\infty}=diag (\alpha_{0},\cdots ,\alpha_{\mu -1})$$ 
where $\alpha_{0}\leq\cdots \leq\alpha_{\mu -1}$ is the ordered (unless specified) spectrum at infinity (the spectrum of a limit mixed Hodge structure,
see \cite{Sab3}) of the function $f$. Due to the $\nabla^{o}$-flatness of $S^{o}$ and formula (\ref{eq:So}), we can arrange the $\alpha_{i}$´s in such a way 
that
\begin{equation}\label{eq:RangeAlpha}
\alpha_{\ell}+\alpha_{\mu -1-\ell}=n 
\end{equation}
for $\ell =0,\cdots ,\mu -1$.
It should be emphasized that the matrix $A_{0}^{o}$ is not nilpotent in general: this happens for instance if $f$ has $\mu$ distinct critical values and
$\nabla^{o}$ has then an {\em irregular} singularity at $\theta =0$.\\

By construction, the basis $\omega^{o}$ is adapted to the Kashiwara-Malgrange $V$-filtration $V^{\tau}_{\bullet}$ at $\tau =0$, that is
$$V_{\alpha}^{\tau}G^{o}=\sum_{k_{i}\in\zit | \alpha_{i}-k_{i}\leq\alpha}\tau^{k_{i}}\omega_{i}^{o}$$
for all $\alpha\in\qit$. 
In particular, we have, and this is a key observation,
\begin{equation}\label{adaptedtoV}
A_{0}^{o}(\omega_{i}^{o})\in\sum_{\alpha_{j}\leq \alpha_{i}+1}\cit\omega_{j}^{o} 
\end{equation}
where $\alpha_{k}$ denotes the $V$-order of $\omega_{k}^{o}$, because $\partial_{\tau}V_{\alpha}^{\tau}G^{o}\subset V_{\alpha +1}^{\tau}G^{o}$.\\
 
Let $D=(d_{0},\cdots ,d_{\mu -1})$ be the diagonal matrix whose entries are the integral part of the eigenvalues of $A_{\infty}$ 
and
$$\omega^{Del}=(\omega_{1}^{Del},\cdots ,\omega_{\mu}^{Del})=(\omega_{1},\cdots ,\omega_{\mu})x^{D}$$
where, as above, $\omega =(\omega_{1},\cdots ,\omega_{\mu})$ is the basis of $G_{0}$ induced by $\omega^{o}$.
Then:

\begin{lemma} \label{lemma:CoRescDel} The matrix of the connection $\nabla$ in the basis 
$\omega^{Del}$ is
\begin{equation}\label{Del2}
(\frac{\sum_{i=0}^{n+1}x^{i}A_{i}^{Del}}{\theta}+A_{\infty})\frac{d\theta }{\theta }-
( \frac{\sum_{i=0}^{n+1}x^{i}A_{i}^{Del}}{\theta}+ A_{\infty}-D) \frac{dx}{x}    
\end{equation}
where the constant matrices $A_{i}^{Del}$ satisfy
$[D, A_{i}^{Del} ]=-(i-1)A_{i}^{Del}$
for $i=0\cdots n+1$.
\end{lemma}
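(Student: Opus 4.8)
The plan is to read off (\ref{Del2}) directly from the gauge-transformation formula applied to the matrix of $\nabla$ in the basis $\omega$. For the canonical Birkhoff solution we are in the case $r=0$ of the matrix $\Omega$ computed in the preceding paragraph, i.e.
$$\Omega=\left(\frac{xA_{0}^{o}}{\theta}+A_{\infty}\right)\left(\frac{d\theta}{\theta}-\frac{dx}{x}\right).$$
Passing to $\omega^{Del}=\omega\,x^{D}$ means performing the base change of matrix $P=x^{D}$, after which the connection matrix becomes $\tilde{\Omega}=P^{-1}\Omega P+P^{-1}dP$. First I would compute the two pieces separately. Since $D$ is diagonal, $x^{D}=\mathrm{diag}(x^{d_{0}},\ldots ,x^{d_{\mu -1}})$ and $P^{-1}dP=D\,\frac{dx}{x}$. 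For the conjugation part, $A_{\infty}$ and $x^{D}$ are both diagonal, hence commute, so $x^{-D}A_{\infty}x^{D}=A_{\infty}$; only $A_{0}^{o}$ is genuinely transformed.

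The heart of the computation is then entrywise: the $(j,k)$ entry of $x^{-D}(xA_{0}^{o})x^{D}$ is $x^{\,1+d_{k}-d_{j}}(A_{0}^{o})_{jk}$. I would \emph{define} $A_{i}^{Del}$ as the matrix collecting exactly those entries $(A_{0}^{o})_{jk}$ with $1+d_{k}-d_{j}=i$, so that $x^{-D}(xA_{0}^{o})x^{D}=\sum_{i}x^{i}A_{i}^{Del}$. With this definition the commutation relation is immediate: for a nonzero entry of $A_{i}^{Del}$ one has $d_{j}-d_{k}=-(i-1)$, whence $[D,A_{i}^{Del}]_{jk}=(d_{j}-d_{k})(A_{i}^{Del})_{jk}=-(i-1)(A_{i}^{Del})_{jk}$, i.e. $[D,A_{i}^{Del}]=-(i-1)A_{i}^{Del}$. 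Substituting into $\tilde{\Omega}=P^{-1}\Omega P+P^{-1}dP$ and regrouping the $\frac{dx}{x}$ terms (the $-A_{\infty}\frac{dx}{x}$ produced by conjugation together with the $+D\frac{dx}{x}$ coming from $P^{-1}dP$) gives precisely (\ref{Del2}).

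The remaining and genuinely substantive point is to justify the index range $0\le i\le n+1$, i.e. that $A_{i}^{Del}=0$ for $i<0$ and $i>n+1$. The upper bound is the easy half: the spectrum at infinity is contained in $[0,n]$ (by the symmetry (\ref{eq:RangeAlpha}) together with the nonnegativity of the smallest spectral number), so $d_{i}=\lfloor\alpha_{i}\rfloor\in\{0,\ldots ,n\}$ and therefore $1+d_{k}-d_{j}\le 1+n$. The lower bound is where the geometry enters, and is the step I expect to be the real obstacle: it rests on the key observation (\ref{adaptedtoV}), namely that $A_{0}^{o}$ raises the $V$-order by at most one. Indeed $(A_{0}^{o})_{jk}\neq 0$ forces $\alpha_{j}\le\alpha_{k}+1$, hence $d_{j}\le d_{k}+1$, hence $1+d_{k}-d_{j}\ge 0$. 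Combining the two bounds yields $0\le i\le n+1$, completing the argument. Here I would be careful to apply (\ref{adaptedtoV}) in the correct index direction and to make sure the containment of the spectrum in $[0,n]$ used for the upper bound is exactly what (\ref{eq:RangeAlpha}) and the positivity of the spectrum provide.
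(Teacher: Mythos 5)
Your proposal is correct and follows essentially the same route as the paper: compute $x^{-D}(xA_{0}^{o})x^{D}$ entrywise, use (\ref{adaptedtoV}) for the lower bound on the exponent and the containment of the spectrum in $[0,n]$ for the upper bound, and read off the commutation relation from the definition of $A_{i}^{Del}$. The only difference is that you spell out the gauge-transformation bookkeeping and the entrywise verification of $[D,A_{i}^{Del}]=-(i-1)A_{i}^{Del}$, which the paper leaves implicit.
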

\begin{proof} (1) The matrix of
$x\nabla_{\partial_{x}}$ in the basis $\omega^{Del}$ is
$$-\frac{x^{-D}(xA_{0}^{o})x^{D}}{\theta}-(A_{\infty}-D)$$
and we have $(x^{-D}(xA_{0}^{o})x^{D})_{ij}=x^{d_{j}-d_{i}+1}(A_{0}^{o})_{ij}$.
By condition $(\ref{adaptedtoV})$, $(A_{0}^{o})_{ij}\neq 0$ implies $\alpha_{i}\leq \alpha_{j}+1$ hence $d_{i}\leq d_{j}+1$. 
Since the $d_{i}$'s are contained in $[0,n]$ (because the $\alpha_{i}$'s are so) we get
$$x^{-D}(xA_{0}^{o})x^{D}=\sum_{i=0}^{n+1}x^{i}A_{i}^{Del}$$
with $[D, A_{i}^{Del}]=-(i-1)A_{i}^{Del}$.  
\end{proof}

Define $S:=\pi^{*}S^{o}$, where $S^{o}$ is defined in formula (\ref{eq:So}): $S$ is a non-degenerate, $\nabla$-flat, symmetric bilinear
form
\begin{equation}\label{eq:SRescaling}
S:G_{0}\times j^{*}G_{0}\rightarrow\cit[x,x^{-1},\theta ]\theta^{n}
\end{equation}
By definition we have $S(\omega_{i},\omega_{j})\in\cit x^{-n}\theta^{n}$ and thus
\begin{equation}\label{eq:DualiteRescaling}
 S(\omega_{i}^{Del},\omega_{j}^{Del}) =
    \begin{cases}
      x^{-1}S^{o}(\omega_{i}^{o},\omega_{j}^{o}) & \mbox{ if  } \alpha_{i}+\alpha_{j}=n \mbox{ and } \alpha_{i} \mbox{ is not an integer }\\
      S^{o}(\omega_{i}^{o},\omega_{j}^{o}) & \mbox{ if  } \alpha_{i}+\alpha_{j}=n \mbox{ and } \alpha_{i} \mbox{ is an integer }\\
      0 & \mbox{ otherwise }
     \end{cases}
\end{equation}

We are now ready to describe the expected quantum differential system.
Let $M=\cit^{*}$ and ${\cal G}$ be the trivial bundle on $\ppit^{1}\times M$ defined by the 
lattice $(G_{0},G_{\infty})$ where $G_{0}=\sum_{i}\cit [x,x^{-1},\theta ]\omega_{i}^{Del}$ and 
 $G_{\infty}=\sum_{i}\cit [x,x^{-1},\theta^{-1}]\omega_{i}^{Del}$.

\begin{proposition}
The tuple 
${\cal Q}=(M, {\cal G}, \nabla ,S, n)$
is a quantum differential system on $M$. 
\end{proposition}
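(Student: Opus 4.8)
The plan is to verify, one by one, the four requirements of Definition \ref{def:SDQ} with $d=n$, using that the analytic substance has already been extracted in Lemma \ref{lemma:CoRescDel} and in formulas (\ref{eq:SRescaling}) and (\ref{eq:DualiteRescaling}); the proposition is in essence an assembly statement. First I would record the triviality of $\mathcal{G}$: by construction it is the bundle glued from the two lattices $G_{0}=\sum_{i}\cit [x,x^{-1},\theta ]\omega_{i}^{Del}$ and $G_{\infty}=\sum_{i}\cit [x,x^{-1},\theta^{-1}]\omega_{i}^{Del}$, which share the common frame $\omega^{Del}$. Hence, by the gluing construction \cite[I, Proposition 4.15]{Sab1}, $\mathcal{G}$ is trivial, with $\omega^{Del}$ as a global frame.

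Next, the pole structure of $\nabla$ is read off directly from Lemma \ref{lemma:CoRescDel}. In the chart $U_{0}$ the coefficient of $d\theta$ in (\ref{Del2}) equals $\frac{1}{\theta}(\frac{\sum_{i}x^{i}A_{i}^{Del}}{\theta}+A_{\infty})$, which has a pole of order $2$ along $\{\theta =0\}$, while the coefficient of $\frac{dx}{x}$ has there only a simple pole in $\theta$; thus the poles along $\{0\}\times M$ are of order $\leq 2$. Rewriting (\ref{Del2}) in the chart $U_{\infty}$ by means of $\frac{d\theta}{\theta}=-\frac{d\tau}{\tau}$ and $\theta^{-1}=\tau$, every coefficient becomes holomorphic in $\tau$ times either $\frac{d\tau}{\tau}$ or $\frac{dx}{x}$, so $\nabla$ is logarithmic along $\{\infty\}\times M$, as required. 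Flatness is then inherited: $\nabla=\pi^{*}\nabla^{o}$ is the pullback of the flat Gauss-Manin connection $\nabla^{o}$ of $f$ under $\pi$, hence flat, and the change of frame $\omega\mapsto\omega^{Del}=\omega x^{D}$ does not affect flatness.

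For the bilinear form, $S=\pi^{*}S^{o}$ inherits from $S^{o}$ non-degeneracy, $\nabla$-flatness and symmetry, and non-degeneracy is preserved under the twist by $x^{D}$ since $x\in\cit^{*}$. It then remains to check that $S$ lands in the correct module on both charts: formula (\ref{eq:SRescaling}) gives $S:G_{0}\times j^{*}G_{0}\rightarrow\cit [x,x^{-1},\theta ]\theta^{n}$, and by (\ref{eq:DualiteRescaling}) one has $S(\omega_{i}^{Del},\omega_{j}^{Del})\in\cit [x,x^{-1}]\theta^{n}=\cit [x,x^{-1}]\tau^{-n}$, so the pairing also takes values in $\cit [x,x^{-1},\tau ]\tau^{-n}$ on $U_{\infty}$. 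Taken together these produce the form $S:\mathcal{O}(\mathcal{G})\times j^{*}\mathcal{O}(\mathcal{G})\rightarrow\theta^{n}\mathcal{O}_{\ppit^{1}\times M}$ of Definition \ref{def:SDQ} with $d=n$, which closes the verification.

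The step requiring the most care — the real content behind ``these formulas are extended'' — is precisely this last compatibility of $S$ with the Deligne-type twist $x^{D}$: passing from $\omega$ to $\omega^{Del}=\omega x^{D}$ rescales the pairing by a power of $x$, and one must check that this is exactly compensated so that the values stay in $\theta^{n}\cit [x,x^{-1}]$ and remain symmetric under $j$. This is governed by the integer/non-integer spectrum dichotomy recorded in (\ref{eq:DualiteRescaling}), which itself rests on $\alpha_{i}+\alpha_{j}=n$ and on the symmetry relation (\ref{eq:RangeAlpha}); this is the one place where the argument genuinely has to be checked rather than merely quoted.
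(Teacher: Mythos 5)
Your verification is correct and follows exactly the route the paper intends: the paper states this proposition without a written proof, relying on Lemma \ref{lemma:CoRescDel} and formulas (\ref{eq:SRescaling})--(\ref{eq:DualiteRescaling}) to carry the content, and your write-up simply makes that assembly explicit. You also correctly isolate the only genuinely delicate point, namely that the twist by $x^{D}$ keeps the pairing in $\theta^{n}{\cal O}_{\ppit^{1}\times M}$ via the integer/non-integer dichotomy of (\ref{eq:DualiteRescaling}), which rests on $\alpha_{i}+\alpha_{j}=n$.
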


\noindent In some cases, we get also a non-resonant logarithmic quantum differential system on $N=\cit$ (see section \ref{sec:SDQL}): indeed, let
${\cal G}^{log}$ be the trivial bundle on $\ppit^{1}\times N$ defined by the lattice $(G_{0}^{log},G_{\infty}^{log})$ 
where $G_{0}^{log}=\sum_{i}\cit [x,\theta ]\omega_{i}^{Del}$ and 
 $G_{\infty}^{log}=\sum_{i}\cit [x,\theta^{-1}]\omega_{i}^{Del}$. By equation (\ref{eq:DualiteRescaling}), we get

\begin{corollary} Assume that $\alpha_{j}$ is an integer for all $j$. The tuple 
${\cal Q}^{log}=(N, {\cal G}^{log}, \nabla ,S, n)$
is a non-resonant logarithmic quantum differential system on $N$, with pole at the origin of $N$.
\end{corollary}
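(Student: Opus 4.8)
The plan is to read everything off Lemma \ref{lemma:CoRescDel}, using the integrality hypothesis to collapse the combinatorial data, and then to match the result against the definitions \ref{def:SDQ} and \ref{def:SDQL}. First I would observe that if every $\alpha_{j}$ is an integer, then the diagonal matrix $D$ of integral parts coincides with $A_{\infty}$, so that $A_{\infty}-D=0$. Writing $B(x):=\sum_{i=0}^{n+1}x^{i}A_{i}^{Del}$, which by the commutation relation $[D,A_{i}^{Del}]=-(i-1)A_{i}^{Del}$ together with (\ref{adaptedtoV}) has genuinely polynomial (in particular non-negative power) entries in $x$, formula (\ref{Del2}) specializes to
\[
\Omega=\left(\frac{B(x)}{\theta}+A_{\infty}\right)\frac{d\theta}{\theta}-\frac{B(x)}{\theta}\frac{dx}{x}.
\]
Passing to $\tau=\theta^{-1}$ and using $\frac{d\theta}{\theta}=-\frac{d\tau}{\tau}$, this reads $\Omega=-(A_{\infty}+\tau B(x))\frac{d\tau}{\tau}-\tau B(x)\frac{dx}{x}$, so the characteristic equation has exactly the shape (\ref{matCon}) of definition \ref{def:SDQL} with $r=0$, $M(x,\tau)=-\tau B(x)$ and $N(x,\tau)=-(A_{\infty}+\tau B(x))$, both polynomial, hence holomorphic on $\cit\times\cit$. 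This already shows that ${\cal Q}^{log}$ is logarithmic with a pole along $\{x=0\}$.

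Non-resonance at the origin would then be immediate: since $M(x,\tau)=-\tau B(x)$ has no term of degree zero in $\tau$, we have $M_{0}(x)\equiv 0$, so ${\cal Q}^{log}$ is in fact flat in the sense of definition \ref{eq:FlatQDS}; in particular $M(0,0)=0$ has all eigenvalues zero, which cannot differ by a non-zero integer.

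It then remains to check the conditions of definition \ref{def:SDQ} over the smaller coefficient ring $\cit[x,\theta]$. Flatness of $\nabla$ and the pole orders (order $\leq 2$ along $\theta=0$, logarithmic along $\tau=0$) are read off from $\Omega$ exactly as already verified for the quantum differential system ${\cal Q}$ on $M=\cit^{*}$, and the bundle ${\cal G}^{log}$ is trivial because the two lattices $G_{0}^{log}$ and $G_{\infty}^{log}$ are generated over $\cit[x,\theta]$ and $\cit[x,\theta^{-1}]$ by the single common frame $\omega^{Del}$. The one point that genuinely uses integrality is the bilinear form: by (\ref{eq:DualiteRescaling}) the case in which $\alpha_{i}$ is not an integer never occurs, so $S(\omega_{i}^{Del},\omega_{j}^{Del})=S^{o}(\omega_{i}^{o},\omega_{j}^{o})\in\cit\theta^{n}$ whenever $\alpha_{i}+\alpha_{j}=n$ and vanishes otherwise, with no factor $x^{-1}$. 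Hence $S$ takes values in $\cit[x,\theta]\theta^{n}$ and is defined over $N=\cit$; its non-degeneracy follows from the pairing (\ref{eq:RangeAlpha}) of the spectrum together with the non-degeneracy of $S^{o}$ and formula (\ref{eq:So}), and its $\nabla$-flatness is inherited from $S=\pi^{*}S^{o}$.

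The main, and essentially only, obstacle is bookkeeping: one must make sure that no negative power of $x$ slips in anywhere, so that the objects live over $\cit[x,\theta]$ and not merely over $\cit[x,x^{-1},\theta]$. For the connection this is guaranteed by (\ref{adaptedtoV}), which forces $i\geq 0$ in $B(x)$, together with the vanishing $A_{\infty}-D=0$; for the metric it is precisely the content of the integrality hypothesis via (\ref{eq:DualiteRescaling}). Everything else is a direct specialization of the already established rescaling case.
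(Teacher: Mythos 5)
Your proposal is correct and follows essentially the same route as the paper, which disposes of the corollary by simply citing equation (\ref{eq:DualiteRescaling}): the only point where integrality is genuinely needed is that the metric acquires no $x^{-1}$ factor, so everything restricts to the lattices over $\cit[x,\theta]$ and $\cit[x,\theta^{-1}]$. Your additional observations ($D=A_{\infty}$ kills the residual term in (\ref{Del2}), $M_{0}\equiv 0$ gives flatness and hence non-resonance at $\tau_{0}=0$) are exactly the routine verifications the paper leaves implicit.
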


\subsubsection{Classical limit}
\label{extensions}

We explain here how to construct the classical limit of ${\cal Q}$ at $x=0$ using the theory of the Malgrange-Kashiwara $V$-filtration.\\

\noindent {\bf {\em The $V$-filtration at $x=0$.}} For $k=0,\cdots ,\mu -1$, put $v(\omega^{Del}_{k})=d_{k}-\alpha_{k}\in ]-1,0]$.
Define, for $-1<\alpha\leq 0$,
$$V^{\alpha}G=\sum_{\alpha\leq v(\omega^{Del}_{k})}\cit [x][\theta ,\theta^{-1}]\omega^{Del}_{k}+
x\sum_{\alpha > v(\omega^{Del}_{k})}\cit [x][\theta ,\theta^{-1}]\omega^{Del}_{k},$$
$$V^{>\alpha}G=\sum_{\alpha < v(\omega^{Del}_{k})}\cit [x][\theta ,\theta^{-1}]\omega^{Del}_{k}+
x\sum_{\alpha \geq v(\omega^{Del}_{k})}\cit [x][\theta ,\theta^{-1}]\omega^{Del}_{k}$$
and $V^{\alpha +p}G=x^{p}V^{\alpha}G$ for $p\in\zit$ and $\alpha\in ]-1,0]$. This defines a decreasing filtration $V^{\bullet}$ of $G$ 
by $\cit [x][\theta ,\theta^{-1}]$-submodules. 
We will put $G_{\alpha}:=V^{\alpha}G/V^{>\alpha}G$ and $\psi_{x}G:=\oplus_{\alpha\in ]-1,0]}G_{\alpha}$.
Notice that 
$$V^{>-1}G=\sum_{k}\cit [x][\theta ,\theta^{-1}]\omega^{Del}_{k}$$
By lemma \ref{lemma:CoRescDel} and the definition of the matrix $D$, it follows that $V^{>-1}G$ ({\em resp.} $V^{>k}G$, $k\in\zit$) is 
Deligne's canonical extension of $G$ at $x=0$ such that the eigenvalues of the residue are contained in $]-1,0]$ ({\em resp.} $]k,k+1]$).

\begin{lemma}\label{lemma:VfiltrResc}
 The filtration $V^{\bullet}$ is the Kashiwara-Malgrange filtration at $x=0$.
\end{lemma}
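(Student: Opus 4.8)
The plan is to identify $V^{\bullet}$ with the Kashiwara--Malgrange filtration by uniqueness: it suffices to check that the explicitly defined $V^{\bullet}$ is a good $V$-filtration of $G$ along $x=0$ (exhaustive, each $V^{\alpha}G$ a coherent $\cit [x]\langle x\partial_{x}\rangle [\theta ,\theta^{-1}]$-module), satisfying $xV^{\alpha}G\subset V^{\alpha +1}G$ and $\partial_{x}V^{\alpha}G\subset V^{\alpha -1}G$, and such that on each graded piece $G_{\alpha}=V^{\alpha}G/V^{>\alpha}G$ the operator $x\nabla_{\partial_{x}}-\alpha$ is nilpotent (see \cite{Sab1} for this characterization). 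Coherence and exhaustiveness are immediate from the definition, and $xV^{\alpha}G=V^{\alpha +1}G$ holds by construction since $V^{\alpha +p}G=x^{p}V^{\alpha}G$; the fact that $V^{>-1}G$ is Deligne's canonical extension (already noted before the statement, using lemma \ref{lemma:CoRescDel}) takes care of the lattice structure at integer steps. So the content lies in the two compatibility statements and, above all, in the nilpotency on graded pieces.

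For stability I would read off from the characteristic matrix (\ref{Del2}) that the residue of $x\nabla_{\partial_{x}}$ along $x=0$ is $-(A_{0}^{Del}/\theta +A_{\infty}-D)$. Since $v(\omega^{Del}_{k})=d_{k}-\alpha_{k}$ and $A_{\infty}-D=\mathrm{diag}(\alpha_{k}-d_{k})$, the diagonal part $-(A_{\infty}-D)$ acts on $\omega^{Del}_{k}$ by exactly $v(\omega^{Del}_{k})$. To get $x\nabla_{\partial_{x}}V^{\alpha}G\subset V^{\alpha}G$ (equivalently $\partial_{x}V^{\alpha}G\subset V^{\alpha -1}G$, dividing by $x$) I would verify that each $x^{i}A_{i}^{Del}/\theta$ does not lower the $v$-order: combining the shift $[D,A_{i}^{Del}]=-(i-1)A_{i}^{Del}$ from lemma \ref{lemma:CoRescDel} with the ``adapted to $V$'' property (\ref{adaptedtoV}) of $A_{0}^{o}$, the $(\ell ,m)$-entry of $A_{i}^{Del}$ can be nonzero only when $d_{\ell}=d_{m}+1-i$ and $\alpha_{\ell}\le\alpha_{m}+1$, whence $v(\omega^{Del}_{\ell})+i\ge v(\omega^{Del}_{m})$; multiplying by $x^{i}$ then places $\omega^{Del}_{m}$ inside $V^{\ge v(\omega^{Del}_{m})}G$, which is what is needed on each generator of $V^{\alpha}G$.

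The main step, and the main obstacle, is the nilpotency of $x\nabla_{\partial_{x}}-\alpha$ on $G_{\alpha}$. Here $G_{\alpha}$ is the finite-dimensional $\cit [\theta ,\theta^{-1}]$-span of the classes of those $\omega^{Del}_{k}$ with $v(\omega^{Del}_{k})=\alpha$, i.e. with the fractional part of $\alpha_{k}$ fixed but the integers $d_{k}$ (hence $\alpha_{k}=d_{k}-\alpha$) ranging over a finite set. On $G_{\alpha}$ the diagonal part $-(A_{\infty}-D)$ is the scalar $\alpha$, so $(x\nabla_{\partial_{x}}-\alpha)|_{G_{\alpha}}=-A_{0}^{Del}/\theta$; the relation $[D,A_{0}^{Del}]=A_{0}^{Del}$ shows that $A_{0}^{Del}$ strictly raises the $D$-grading, sending the $d_{k}$-block to the $(d_{k}+1)$-block inside $G_{\alpha}$, so a sufficiently high power vanishes and $-A_{0}^{Del}/\theta$ is nilpotent. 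This is precisely the required property, and uniqueness of the Kashiwara--Malgrange filtration then finishes the proof. The delicate point to watch throughout is the bookkeeping of the two gradings (the $V$-order $v(\omega^{Del}_{k})$ and the integral shift $D$), together with keeping the $\theta$-dependent term $A_{0}^{Del}/\theta$ in the nilpotent rather than the semisimple part of the residue; everything else is formal.
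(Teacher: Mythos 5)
Your proposal is correct and takes essentially the same route as the paper: verify the characteristic properties of the Kashiwara--Malgrange filtration, the only non-obvious one being the nilpotency of $x\nabla_{\partial_{x}}-\alpha$ on $G_{\alpha}$, which both you and the paper extract from lemma \ref{lemma:CoRescDel} via the implication $\alpha_{i}\leq\alpha_{j}+1$, $d_{i}=d_{j}+1\Rightarrow d_{i}-\alpha_{i}\geq d_{j}-\alpha_{j}$ and the fact that $A_{0}^{Del}$ strictly raises the $D$-grading. You merely spell out in more detail the stability check and the bookkeeping that the paper leaves as ``directly checked''.
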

\begin{proof}
It is directly checked that the filtration $V^{\bullet}$ satisfies all the characteristic properties of the Kashiwara-Malgrange filtration: the only point which is not completely obvious is the fact that $(x\nabla_{\partial_{x}}-\alpha )$ is nilpotent on  
$G_{\alpha}$, but this follows from lemma \ref{lemma:CoRescDel} because $\alpha_{i}\leq \alpha_{j}+1$ and $d_{i}=d_{j}+1$ imply $d_{i}-\alpha_{i}\geq d_{j}-\alpha_{j}$.
\end{proof}

\noindent Filtration $V^{\bullet}$ yields also an decreasing filtration $V^{\bullet}$ of $G_{0}$ by $\cit [x][\theta ]$-submodules by
$$V^{\alpha}G_{0}=V^{\alpha}G\cap G_{0}\ \mbox{and}\ V^{>\alpha}G_{0}=V^{>\alpha}G\cap G_{0}.$$
We will write $G_{0,\alpha}=V^{\alpha}G_{0}/V^{>\alpha}G_{0}$ and $\psi_{x}G_{0}:=\oplus_{\alpha\in ]-1,0]}G_{0,\alpha}$.
The metric $S$ in formula (\ref{eq:SRescaling})
induces, on each $V^{>k-1}G_{0}$, a bilinear form
$$S:V^{>k-1}G_{0}\times V^{>k-1}G_{0}\rightarrow x^{2k-1}\theta^{n}\cit [x,\theta ].$$
We have moreover
$$S:V^{>k-1}G_{0}\times V^{>k-1}G_{0}\rightarrow x^{2k}\theta^{n}\cit [x,\theta ]$$
if all the $\alpha_{i}$'s are integers.\\

\noindent {\bf {\em The limit.}} We now construct a (the) limit of system (\ref{Del2}) at $x=0$, using the $V$-filtration at $x=0$, and more precisely the 
nearby cycles. Let us define   
$$\psi_{x}G_{0}:=\oplus_{\alpha\in ]-1,0]}V^{\alpha}G_{0}/V^{>\alpha}G_{0}.$$ 
It is a free $\cit [\theta ]$-module, equipped with a connection $\psi_{x}\nabla$.
We will denote by $e^{\psi}$ the basis of $\psi_{x}G_{0}$ induced by $\omega^{Del}$.

\begin{lemma} \label{lemma:FibreLimit}
$x\nabla_{\partial_{x}}$ induces a map on $\psi_{x}G_{0}$
whose matrix, in the basis $e^{\psi}$, is
$$-\frac{\psi_{x}A_{0}}{\theta}-(A_{\infty}-D)$$
and
the matrix of $\psi_{x}\nabla_{\partial_{\theta}}$ takes the form, in the same basis,
\begin{equation}\label{eq:LimiteCoResc}
\frac{\psi_{x}A_{0}}{\theta^{2}}+\frac{A_{\infty}}{\theta}.
\end{equation}
We have moreover $[A_{\infty}, \psi_{x}A_{0}]=\psi_{x}A_{0}$.
\end{lemma}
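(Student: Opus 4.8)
The plan is to read off the two operators from the connection matrix computed in Lemma \ref{lemma:CoRescDel} and then pass to the associated graded with respect to the $V$-filtration, which by Lemma \ref{lemma:VfiltrResc} is the Kashiwara--Malgrange filtration at $x=0$. From equation (\ref{Del2}), extracting the coefficient of $\frac{dx}{x}$ shows that the matrix of $x\nabla_{\partial_x}$ in the basis $\omega^{Del}$ is $-\big(\frac{\sum_{i=0}^{n+1}x^{i}A_{i}^{Del}}{\theta}+A_{\infty}-D\big)$, while dividing the coefficient of $\frac{d\theta}{\theta}$ by $\theta$ gives the matrix $\frac{\sum_{i=0}^{n+1}x^{i}A_{i}^{Del}}{\theta^{2}}+\frac{A_{\infty}}{\theta}$ of $\nabla_{\partial_\theta}$. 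Since $V^{\bullet}$ is the Kashiwara--Malgrange filtration, $x\nabla_{\partial_x}$ preserves each $V^{\alpha}G_{0}$ and each $V^{>\alpha}G_{0}$, hence descends to the nearby cycles $\psi_{x}G_{0}=\oplus_{\alpha\in]-1,0]}V^{\alpha}G_{0}/V^{>\alpha}G_{0}$; I would check the analogous statement for $\nabla_{\partial_\theta}$, which follows because $\theta^{\pm1}$ is a unit for $V^{\bullet}$ (the filtration is by $\cit[x][\theta,\theta^{-1}]$-submodules) and thus preserves the $V$-order.

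The heart of the argument is to identify the graded part of $\frac{\sum_{i}x^{i}A_{i}^{Del}}{\theta}$ on $\psi_{x}G_{0}$, working in the basis $e^{\psi}$ of classes of $\omega^{Del}$. Recall from the proof of Lemma \ref{lemma:CoRescDel} that $(A_{i}^{Del})_{kl}=(A_{0}^{o})_{kl}$ when $i=d_{l}-d_{k}+1$ and vanishes otherwise, so the $(k,l)$ entry of $\sum_{i}x^{i}A_{i}^{Del}$ equals $x^{d_{l}-d_{k}+1}(A_{0}^{o})_{kl}$. I would then compute the $V$-order of the contribution $\frac{x^{d_{l}-d_{k}+1}(A_{0}^{o})_{kl}}{\theta}\,\omega^{Del}_{k}$: since $v(\omega^{Del}_{k})=d_{k}-\alpha_{k}$, multiplication by $\theta^{\pm1}$ preserves the $V$-order and multiplication by $x^{m}$ raises it by $m$, this contribution has $V$-order $(d_{k}-\alpha_{k})+(d_{l}-d_{k}+1)=d_{l}-\alpha_{k}+1$, to be compared with $v(\omega^{Del}_{l})=d_{l}-\alpha_{l}$. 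By condition (\ref{adaptedtoV}) one has $\alpha_{k}\leq\alpha_{l}+1$ whenever $(A_{0}^{o})_{kl}\neq0$, so $d_{l}-\alpha_{k}+1\geq d_{l}-\alpha_{l}$; the contribution therefore lies in $V^{\geq v(\omega^{Del}_{l})}$ and survives in the associated graded exactly when $\alpha_{k}=\alpha_{l}+1$. In that case $d_{k}=\lfloor\alpha_{k}\rfloor=d_{l}+1$, so $x^{d_{l}-d_{k}+1}=x^{0}$ and the surviving term comes precisely from the $x^{0}$-block $A_{0}^{Del}$. This defines $\psi_{x}A_{0}$ unambiguously by $(\psi_{x}A_{0})_{kl}=(A_{0}^{o})_{kl}$ if $\alpha_{k}=\alpha_{l}+1$ and $0$ otherwise, and shows that $\frac{\sum_{i}x^{i}A_{i}^{Del}}{\theta}$ induces $\frac{\psi_{x}A_{0}}{\theta}$ on $\psi_{x}G_{0}$ (and likewise $\frac{\sum_{i}x^{i}A_{i}^{Del}}{\theta^{2}}$ induces $\frac{\psi_{x}A_{0}}{\theta^{2}}$).

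Granting this, the two matrix formulas follow at once: the diagonal matrices $A_{\infty}-D$ and $A_{\infty}$ preserve the $V$-order and descend unchanged, so $x\nabla_{\partial_x}$ induces $-\frac{\psi_{x}A_{0}}{\theta}-(A_{\infty}-D)$ on $\psi_{x}G_{0}$ and $\nabla_{\partial_\theta}$ induces $\frac{\psi_{x}A_{0}}{\theta^{2}}+\frac{A_{\infty}}{\theta}$, which is exactly (\ref{eq:LimiteCoResc}). For the commutation relation I use that $A_{\infty}=diag(\alpha_{0},\cdots,\alpha_{\mu-1})$, so the $(k,l)$ entry of $[A_{\infty},\psi_{x}A_{0}]$ is $(\alpha_{k}-\alpha_{l})(\psi_{x}A_{0})_{kl}$; since $(\psi_{x}A_{0})_{kl}$ is nonzero only when $\alpha_{k}-\alpha_{l}=1$, this equals $(\psi_{x}A_{0})_{kl}$, giving $[A_{\infty},\psi_{x}A_{0}]=\psi_{x}A_{0}$.

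I expect the main obstacle to be the bookkeeping in the $\mathrm{gr}^{V}$ computation, specifically verifying that the off-diagonal entries with $\alpha_{k}<\alpha_{l}+1$ (for which $(A_{0}^{o})_{kl}$, and hence $(A_{0}^{Del})_{kl}$, may still be nonzero even though $d_{k}=d_{l}+1$) genuinely fall into $V^{>v(\omega^{Del}_{l})}$ and thus vanish in the nearby cycles. This is what forces the clean surviving condition $\alpha_{k}=\alpha_{l}+1$, and it relies on combining condition (\ref{adaptedtoV}) with the precise relation $d_{k}=\lfloor\alpha_{k}\rfloor$ between the integral parts and the spectral numbers.
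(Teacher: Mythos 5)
Your proposal is correct and follows the same route as the paper: the paper's proof likewise reads the operators off Lemma \ref{lemma:CoRescDel}, passes to $\mathrm{gr}_V$ using Lemma \ref{lemma:VfiltrResc}, and records that $(\psi_{x}A_{0})_{kl}=(A_{0}^{o})_{kl}$ exactly when $\alpha_{k}=\alpha_{l}+1$ (and $0$ otherwise), from which $[A_{\infty},\psi_{x}A_{0}]=\psi_{x}A_{0}$ is immediate. Your explicit $V$-order bookkeeping, in particular the observation that entries with $d_{k}=d_{l}+1$ but $\alpha_{k}<\alpha_{l}+1$ fall into $V^{>v(\omega^{Del}_{l})}$ and die in the nearby cycles, is precisely the point the paper leaves implicit (and is what distinguishes $\psi_{x}A_{0}$ from $\overline{A}_{0}^{Del}$ in the subsequent remark).
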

\begin{proof} Follows from lemma \ref{lemma:VfiltrResc}: 
we have $(\psi_{x}A_{0})_{ij}=(A_{0}^{o})_{ij}$ if $\alpha_{i}= \alpha_{j}+1$, $(\psi_{x}A_{0})_{ij}=0$ otherwise: in other words
$[A_{\infty}, \psi_{x}A_{0}]=\psi_{x}A_{0}$. 
\end{proof}

\begin{remark} 
As already quoted, the connection $\nabla^{o}$ has in general an irregular singularity at $\theta =0$ (see formula (\ref{Bir})) while our limit,
that is system (\ref{eq:LimiteCoResc}) has a regular singularity at $\theta =0$: indeed,
after a base change of matrix $\theta^{-D}$, it becomes   
$$\frac{1}{\theta}(A_{\infty}-D+\psi_{x}A_{0}).$$
The construction of our limit thus yields a canonical ``regularization'' of system (\ref{Bir}).
\end{remark}

We define now the limit metric on the $\cit [\theta ]$-free module $\psi_{x}G_{0}$.
Let $\alpha \in ]-1,0[$. We have
$$S(V^{\alpha}G_{0}, V^{-1-\alpha}G_{0})\subset x^{-1}\cit [x, \theta ]\theta^{n}$$
and this yields (compose the previous one with the residue at $x=0$) a non-degenerate bilinear form 
$$\psi_{x}S_{\alpha}: gr_{V}^{\alpha}G_{0}\times j^{*}gr_{V}^{-1-\alpha}G_{0}\rightarrow \cit [\theta ]\theta^{n}$$
where $\psi_{x}S_{\alpha}(e^{\psi}_{i}, e^{\psi}_{j})=S^{o}(\omega_{i}^{o}, \omega_{j}^{o})$.
In the same way,
$$S(V^{0}G_{0}, V^{0}G_{0})\subset \cit [x, \theta ]\theta^{n}$$
induces
$$\psi_{x}S_{0}: gr_{V}^{0}G_{0}\times gr_{V}^{0}G_{0}\rightarrow \cit [\theta ]\theta^{n}$$
where $\psi_{x}S_{0}(e^{\psi}_{i}, e^{\psi}_{j})=S^{o}(\omega_{i}^{o}, \omega_{j}^{o})$. All this gives the expected limit metric 
$\psi_{x}S=\oplus_{\alpha\in ]0,-1]}\psi_{x}S_{\alpha}$ on $\psi_{x}G_{0}$.

\begin{lemma}\label{LimitSflat}
The form $\psi_{x}S$ is 
$\psi_{x}\nabla$-flat.
\end{lemma}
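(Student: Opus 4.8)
The plan is to reduce the $\psi_{x}\nabla$-flatness of $\psi_{x}S$ to the two adjointness relations of (\ref{eq:sym}) for the limit data, and then to deduce those relations from the ones already satisfied by the Brieskorn lattice of $f$. Since $(\psi_{x}G_{0},\psi_{x}\nabla)$ is a quantum differential system on a point, with connection $\psi_{x}\nabla=(\frac{\psi_{x}A_{0}}{\theta}+A_{\infty})\frac{d\theta}{\theta}$ by lemma \ref{lemma:FibreLimit}, remark \ref{rem.MiscSDQ} (1) tells us that $\psi_{x}\nabla$-flatness of $\psi_{x}S$ is equivalent to
$$(\psi_{x}A_{0})^{*}=\psi_{x}A_{0}\quad\mbox{and}\quad A_{\infty}+A_{\infty}^{*}=nI,$$
where $^{*}$ is the adjoint with respect to the form $g^{\psi}$ defined by $\psi_{x}S=g^{\psi}\theta^{n}$. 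Writing $P^{\psi}_{ij}:=g^{\psi}(e^{\psi}_{i},e^{\psi}_{j})$ for the Gram matrix in the basis $e^{\psi}$, the two conditions become the matrix identities $(\psi_{x}A_{0})^{T}P^{\psi}=P^{\psi}(\psi_{x}A_{0})$ and $A_{\infty}^{T}P^{\psi}+P^{\psi}A_{\infty}=nP^{\psi}$, so it suffices to work at the level of these matrices.

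First I would identify $g^{\psi}$ with $g^{o}$ on the chosen bases. By the very definition of $\psi_{x}S$ one has $g^{\psi}(e^{\psi}_{i},e^{\psi}_{j})=S^{o}(\omega^{o}_{i},\omega^{o}_{j})/\theta^{n}=g^{o}(\omega^{o}_{i},\omega^{o}_{j})$, and by (\ref{eq:So}) together with (\ref{eq:RangeAlpha}) this is nonzero only when $\alpha_{i}+\alpha_{j}=n$. The compatibility to check is that each such pair is actually matched by the limiting pairing: since $v(\omega^{Del}_{k})=d_{k}-\alpha_{k}$, one computes $v(\omega^{Del}_{i})+v(\omega^{Del}_{j})=(d_{i}+d_{j})-n$, which equals $0$ when $\alpha_{i}$ (hence $\alpha_{j}$) is an integer and $-1$ otherwise; these are exactly the two cases $\psi_{x}S_{0}$ and $\psi_{x}S_{\alpha}$ entering the definition, as already reflected in (\ref{eq:DualiteRescaling}). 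Thus $P^{\psi}$ coincides with the Gram matrix $P$ of $S^{o}$, and in particular $\psi_{x}S$ is symmetric.

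Next I would obtain the two conditions by extracting homogeneous components with respect to the grading $\deg\omega^{o}_{i}=\alpha_{i}$. Because $S^{o}$ is $\nabla^{o}$-flat with connection matrix $(\frac{A^{o}_{0}}{\theta}+A_{\infty})\frac{d\theta}{\theta}$ (formula (\ref{Bir})), we already know $(A^{o}_{0})^{T}P=PA^{o}_{0}$ and $A_{\infty}^{T}P+PA_{\infty}=nP$. The condition on $A_{\infty}$ passes to the limit verbatim, since neither $A_{\infty}$ nor $P^{\psi}=P$ changes. For $A_{0}$, recall from lemma \ref{lemma:FibreLimit} that $\psi_{x}A_{0}$ is precisely the homogeneous degree-$(+1)$ part of $A^{o}_{0}$ (the entries with $\alpha_{i}=\alpha_{j}+1$, encoded by $[A_{\infty},\psi_{x}A_{0}]=\psi_{x}A_{0}$), this being the only degree surviving in the graded limit by the $V$-adaptedness (\ref{adaptedtoV}). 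Restricting the identity $(A^{o}_{0})^{T}P=PA^{o}_{0}$ to the entries $(i,j)$ with $\alpha_{i}+\alpha_{j}=n-1$ isolates exactly this degree-$(+1)$ part on both sides — on the left, $\sum_{k}(A^{o}_{0})_{ki}P_{kj}$ is nonzero only for $\alpha_{k}=n-\alpha_{j}=\alpha_{i}+1$, and symmetrically on the right — and therefore yields $(\psi_{x}A_{0})^{T}P=P(\psi_{x}A_{0})$, i.e. $(\psi_{x}A_{0})^{*}=\psi_{x}A_{0}$. This gives both relations and hence the flatness of $\psi_{x}S$.

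The main obstacle, and the only genuinely delicate point, is the bookkeeping in the identification step: one must verify that the residue construction defining $\psi_{x}S$ on the graded pieces reproduces $S^{o}$ exactly (with the powers of $x$ tracked as in (\ref{eq:DualiteRescaling})), and that the homogeneous degrees on the two sides of $(A^{o}_{0})^{T}P=PA^{o}_{0}$ match so that only the entries with $\alpha_{i}+\alpha_{j}=n-1$ contribute to the degree-$(+1)$ part. Both facts rest on the $V$-adaptedness of the basis (\ref{adaptedtoV}) and the spectral symmetry (\ref{eq:RangeAlpha}); once these compatibilities are in place, the remaining computation is the routine homogeneous-component extraction sketched above.
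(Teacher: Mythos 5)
Your proposal is correct and follows essentially the same route as the paper: the paper's proof likewise reduces flatness to the self-adjointness of $\psi_{x}A_{0}$ with respect to $\psi_{x}S$ (the $A_{\infty}$ relation being unchanged in the limit), and deduces it from the self-adjointness of $A_{0}^{o}$ with respect to $S^{o}$ together with the spectral symmetry (\ref{eq:RangeAlpha}), which is exactly your degree-$(+1)$ component extraction. Your identification of the Gram matrices and the bookkeeping of the $v$-values is a slightly more explicit write-up of what the paper treats as definitional, but the argument is the same.
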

\begin{proof} It is enough to show that $\psi_{x}A_{0}$ is self-dual with respect to $\psi_{x}S$.
Because $(\psi_{x}A_{0})_{ij}=(A_{0}^{o})_{ij}$ if $\alpha_{i}=\alpha_{j}+1$ and $(\psi_{x}A_{0})_{ij}=0$ otherwise, this follows from the following two facts
: $A_{0}^{o}$ is self-adjoint with respect $S^{o}$ and $\alpha_{\mu -k}=\alpha_{i}+1$ if and only if $\alpha_{\mu -i}=\alpha_{k}+1$ (by formula (\ref{eq:RangeAlpha})).
\end{proof}

\noindent {\bf {\em R\'esum\'e (classical limit)}}:
By lemma \ref{lemma:FibreLimit}, the basis $e^{\psi}$ gives an extension ${\cal G}^{cl}$ of $\psi_{x}G_{0}$ as a
trivial bundle on $\ppit^{1}$, equipped with a meromorphic connection with poles of rank less or equal to 
$1$ at $\theta =0$ and with logarithmic pole at $\theta =\infty$ (see formula (\ref{eq:LimiteCoResc})). 

\begin{proposition}
The triple 
$${\cal Q}^{cl}=({\cal G}^{cl}, \psi_{x}\nabla , \psi_{x}S)$$
 is a quantum differential system. 
This is the classical limit of the quantum differential system ${\cal Q}$.
\end{proposition}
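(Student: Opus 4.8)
The plan is to verify, piece by piece, that the triple ${\cal Q}^{cl}$ satisfies the conditions of Definition \ref{def:SDQ} for a quantum differential system on a point, as reformulated in Remark \ref{rem.MiscSDQ} (1), with $d=n$. Most of the required input has already been assembled in the preceding lemmas, so the proof is essentially a matter of collecting it. First, the triviality of ${\cal G}^{cl}$ as a bundle on $\ppit^{1}$ together with the structure of $\psi_{x}\nabla$ is exactly the content of Lemma \ref{lemma:FibreLimit} and the r\'esum\'e preceding the statement: in the basis $e^{\psi}$ the matrix of $\psi_{x}\nabla_{\partial_{\theta}}$ is $\frac{\psi_{x}A_{0}}{\theta^{2}}+\frac{A_{\infty}}{\theta}$ (formula (\ref{eq:LimiteCoResc})), so the connection has a pole of order at most $2$ at $\theta=0$ and a logarithmic pole at $\theta=\infty$. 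Flatness of $\psi_{x}\nabla$ is automatic here, since the base is a point and the only variable is $\theta$, so that $\nabla^{2}=0$ trivially.

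Next I would treat the bilinear form $\psi_{x}S=\oplus_{\alpha\in ]-1,0]}\psi_{x}S_{\alpha}$. Its $\psi_{x}\nabla$-flatness is precisely Lemma \ref{LimitSflat}. Non-degeneracy follows from the construction: each $\psi_{x}S_{\alpha}$ is obtained from the non-degenerate form $S^{o}$ by the residue pairing between $gr_{V}^{\alpha}G_{0}$ and $j^{*}gr_{V}^{-1-\alpha}G_{0}$, and the formulas $\psi_{x}S_{\alpha}(e^{\psi}_{i},e^{\psi}_{j})=S^{o}(\omega^{o}_{i},\omega^{o}_{j})$ show that it takes values in $\cit [\theta ]\theta^{n}$. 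Thus $\psi_{x}S$ maps into $\theta^{n}{\cal O}_{\ppit^{1}}$, as required by Definition \ref{def:SDQ} with $d=n$.

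Finally, writing $V_{0}=\psi_{x}A_{0}$ and $V_{\infty}=A_{\infty}$ for the endomorphisms appearing in $\psi_{x}\nabla$, it remains to check the symmetry relations $V_{0}^{*}=V_{0}$ and $V_{\infty}+V_{\infty}^{*}=nI$ of Remark \ref{rem.MiscSDQ} (1), the adjoints being taken with respect to $\psi_{x}S$. The self-adjointness $V_{0}^{*}=V_{0}$ is the self-duality of $\psi_{x}A_{0}$ already established inside the proof of Lemma \ref{LimitSflat}. For $V_{\infty}$, the matrix $A_{\infty}=\mathrm{diag}(\alpha_{0},\cdots ,\alpha_{\mu -1})$ acts by $\alpha_{i}$ on $e^{\psi}_{i}$, while by formula (\ref{eq:So}) the form $\psi_{x}S$ pairs $e^{\psi}_{i}$ with $e^{\psi}_{j}$ only when $\alpha_{i}+\alpha_{j}=n$; together with the ordering (\ref{eq:RangeAlpha}) this gives $A_{\infty}^{*}$ acting by $n-\alpha_{i}$ on $e^{\psi}_{i}$, whence $V_{\infty}+V_{\infty}^{*}=nI$. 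The main (and only genuinely substantial) obstacle lies in the earlier steps — namely the well-definedness and non-degeneracy of the limit form $\psi_{x}S$ coming from the residue construction, and Lemma \ref{LimitSflat}; once these are granted, the verification of the axioms reduces to the bookkeeping above. The identification of ${\cal Q}^{cl}$ with the classical limit of ${\cal Q}$ is then immediate from the construction, since it was built from the nearby cycles of $G_{0}$ along the $V$-filtration at $x=0$.
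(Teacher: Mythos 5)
Your proof is correct and follows essentially the same route as the paper, which states this proposition without a separate proof precisely because it is a r\'esum\'e of Lemma \ref{lemma:FibreLimit} (trivial bundle and pole structure of $\psi_{x}\nabla$) and Lemma \ref{LimitSflat} (flatness of $\psi_{x}S$), exactly the ingredients you assemble. Your additional explicit verification of $V_{0}^{*}=V_{0}$ and $V_{\infty}+V_{\infty}^{*}=nI$ is consistent with formulas (\ref{eq:RangeAlpha}) and (\ref{eq:DualiteRescaling}) and only makes the bookkeeping more complete.
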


\noindent The quantum differential system ${\cal Q}^{cl}$ is {\em the} classical limit of the quantum differential system ${\cal Q}$.

\begin{remark}
One could also consider the free $\cit [\theta]$-module of rank $\mu$ 
$${\cal L}:=V^{>-1}G_{0}/V^{>0}G_{0}=V^{>-1}G_{0}/xV^{>-1}G_{0}$$
It is naturally equipped with a connection $\overline{\nabla}$ induced by $\nabla$ whose matrix in the basis $e$ induced by $\omega^{Del}$ is
$$(\frac{\overline{A}_{0}^{Del}}{\theta}+A_{\infty})\frac{d\theta }{\theta }$$
We have also $S(V^{>-1}G_{0}, V^{>-1}G_{0})\subset x^{-1}\cit [x, \theta ]\theta^{n}$ and we thus get (composing with the residue at $x=0$)
$$\overline{S}: {\cal L}\times {\cal L}\rightarrow \cit [\theta ]\theta^{n}$$
These data could also define a limit: the point is that $\overline{S}$ is not always $\overline{\nabla}$-flat. This is what happens for instance for 
$f(u_{1},u_{2})=u_{1}+u_{2}+\frac{1}{u_{1}^{2}u_{2}^{5}}$, in which case the matrix $\overline{A}_{0}^{Del}$ is not self-dual. Indeed, by example 
\ref{exemplebasiqueB}, we have in this situation $\mu =8$ and 
$$(\alpha_{0}, \alpha_{1}, \alpha_{2}, \alpha_{3}, \alpha_{4}, \alpha_{5}, \alpha_{6}, \alpha_{7})=(0,1,2,7/5, 4/5, 1, 6/5, 3/5).$$
The matrix $\overline{A}_{0}^{Del}$ is defined by 
\begin{equation}
\overline{A}_{0}^{Del}(e_{i}) =
    \begin{cases}
      8e_{i+1} & \mbox{ if  } i=0,1 \mbox{ and } 4\\
      0 & \mbox{ otherwise}
     \end{cases}
\end{equation}
while the matrix $\psi_{x}A_{0}$ is defined by 
\begin{equation}
\psi_{x}A_{0}(e_{i}) =
    \begin{cases}
      8e_{i+1} & \mbox{ if  } i=0 \mbox{ and } 1\\
      0 & \mbox{ otherwise}
     \end{cases}
\end{equation} 
On the other hand, we have
$$\overline{S}(\overline{A}_{0}^{Del}(e_{4}), e_{5})=8/w^{w}\ \mbox{and}\ 
\overline{S}(e_{4},\overline{A}_{0}^{Del}(e_{5}))=0.$$
In particular, $\overline{A}_{0}^{Del}$ is not self-dual.  
\end{remark}

\subsection{The small quantum cohomology of manifolds}\label{ex:PetiteCohQuant}
Let $X$ be a projective manifold with cohomology only in even degree. Let us assume that the rank of  $H^{2}(X,\zit )$ is equal to  $1$. 
Let ${\cal Q}$ be the logarithmic quantum differential system 
on $M=\cit$
associated with the small quantum cohomology of $X$ by example \ref{exemplebasiqueA}.

\begin{proposition} 
\begin{enumerate}
\item The quantum differentiel system ${\cal Q}$ is flat and logarithmic.
\item There exists a canonical fundamental solution $P(x,\tau )=H(x,\tau )e^{-\tau M_{1}(0)\ln x}$ of the Dubrovin connection of the quantum differential system ${\cal Q}$, uniquely characterized by the
initial condition $H(0,\tau )=I$. 
\end{enumerate}

\end{proposition}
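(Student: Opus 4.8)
The plan is to deduce both assertions from Theorem \ref{cor:casvariete}, so that the proof is essentially a matter of checking its hypotheses; the only place where the geometry of $X$ (and in particular $\mathrm{rank}\,H^2(X,\zit)=1$) is really used is in securing conformality.

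First I would establish (1). Since $\mathrm{rank}\,H^2(X,\zit)=1$ we have $M=\cit$ with a single coordinate $x=q$, and by Example \ref{exemplebasiqueA} the divisor axiom gives $\nabla_{x\partial_x}=x\partial_x+\tau\,(p\circ_x)$; the pole along $\{x=0\}$ is therefore logarithmic and the characteristic equation has the form of Definition \ref{def:SDQL} with $r=0$, so ${\cal Q}$ is logarithmic. Writing $M(x,\tau)=M_0(x)+M_1(x)\tau$ for the coefficient of $\frac{dx}{x}$, the part $M_0(x)$ is the matrix of the residual connection $\bigtriangledown$ in the basis $(\phi_k)$; because the $\phi_k$ are $\bigtriangledown$-flat by construction, $M_0(x)\equiv 0$, so ${\cal Q}$ is flat in the sense of Definition \ref{eq:FlatQDS}, with $M_1(x)=p\circ_x$ and $M_1(0)$ the (nilpotent) classical cup product by $p$, consistently with Lemma \ref{lemma:vpconst}.

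For (2), the existence and uniqueness of $P(x,\tau)=H(x,\tau)e^{-\tau M_1(0)\ln x}$ with $H(0,\tau)=I$ is then immediate from Theorem \ref{cor:casvariete}(1). By the same theorem, $P$ is symmetric (item 4) and, after the base change by $P$, the connection matrix acquires the canonical shape $(N_0(0)+N_1(0)\tau)\frac{d\tau}{\tau}=R_0+R_1\tau$ (item 3). What is left, and what I expect to be the main obstacle, is conformality.

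The hard part will be to verify the hypothesis of Theorem \ref{cor:casvariete}(5), namely $M_1(0)=c\,N_1(0)$ for some nonzero $c$. Here I would read the $\frac{d\tau}{\tau}$-coefficient $N(x,\tau)=N_0(x)+N_1(x)\tau$ off of $\nabla_{\theta\partial_\theta}=\theta\partial_\theta+\frac{1}{\theta}E\circ_\xi+\mu$ in Example \ref{exemplebasiqueA}: the grading $\mu$ contributes to $N_0$, while $N_1$ is the classical limit of multiplication by the Euler field $E$. In small quantum cohomology the coordinates dual to classes outside $H^2$ are set to zero and the $H^2$-coefficient $1-\tfrac12\deg p$ vanishes, so $E$ reduces to $c_1(TX)$; thus $N_1(0)$ is (up to sign) the classical cup product by $c_1(TX)$. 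Since $\mathrm{rank}\,H^2(X,\zit)=1$, we have $c_1(TX)=r\,p$ for an integer $r$, whence $N_1(0)=\pm r\,(p\cup)=\pm r\,M_1(0)$ and the relation holds with $c=\pm 1/r$. Theorem \ref{cor:casvariete}(5) then gives that $P$ is conformal, so $P$ is canonical and the proof is complete. The two delicate points to handle carefully are the precise identification of $N_1(0)$ with $c_1(TX)\cup$ (isolating the grading part into $N_0$) and the standing requirement $c_1(TX)\neq 0$, needed to keep $c$ finite; this last point restricts the statement to the non-Calabi--Yau case, as already signalled in the remark following Theorem \ref{cor:casvariete}.
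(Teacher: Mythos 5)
Your proof is correct and follows essentially the same route as the paper: part (1) from the definitions (flatness of the $\phi_k$ and the divisor axiom), and part (2) by invoking Theorem \ref{cor:casvariete}, with conformality reduced to identifying $M_1(0)$ as $p\cup$ and $N_1(0)$ as $c_1(TX)\cup = r\,(p\cup)$. Your extra remark that one needs $c_1(TX)\neq 0$ (i.e. $r\neq 0$) to apply item 5 is a legitimate point that the paper's own one-line argument glosses over.
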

\begin{proof}
1. Follows from the definitions. 2. By 1. and theorem \ref{cor:casvariete}, the solution $P$ is fundamental and symmetric. It remains to show ``conformality'',
and we keep the notations of section \ref{sec:NonResCurves}: the matrix $M_{1}(0)$ is by definition the matrix of $p\cup$ in a suitable basis of the cohomology algebra, while the matrix $N_{1}(0)$ represents the multiplication (with respect to the cup-product) by $E_{|x=0}$ in the same basis. Now, $E_{|x=0}=c_{1}(TX)$   
so that $N_{1}(0)=rM_{1}(0)$ for some $r\in\zit$ and we get the assertion using theorem \ref{cor:casvariete} 5. 
\end{proof}

\begin{remark}
Denote by $p^{i}:=p\cup\cdots\cup p$
the iteration ($i$-time) by the usual cup-product $\cup$.
We will say that $H^{*}(X)$ is $H^{2}$-generated if $p$ and its iterations $p^{i}$ are a basis of it.
In this case, the matrix  $M_{1}(0)$ is regular and the condition (GC) in theorem \ref{theo:reconstruction} is satisfied.
\end{remark}

\subsection{The small quantum orbifold cohomology of weighted projective spaces}
\label{ex:PetiteCohQuantOrbi1}
We now come back to the quantum differential system of example \ref{exemplebasiqueB}. We thus have a basis 
(see section \ref{ex:PetiteCohQuantOrbi} below for a precise definition of this basis) of the Brieskorn lattice in which the matrix 
of the Gauss-Manin connection takes the form
\begin{equation}\label{eq:CarEPP}
(M_0 (x)+M_1(x)\tau )\frac{dx}{x}-(A_{0}(x)\tau +A_{\infty})\frac{d\tau}{\tau}
\end{equation}
where 
$$M_{1}(x)=-\left ( \begin{array}{cccccc}
0  & 0  & 0 & 0  & 0 & x\\
1  & 0  & 0 & 0  & 0 & 0\\
0  & 1  & 0 & 0  & 0 & 0\\
0  & 0  & 1 & 0  & 0 & 0\\
0  & 0  & 0 & .. & 0 & 0\\
0  & 0  & 0 & 0  & 1 & 0
\end{array}
\right )$$
\noindent which is a $\mu\times\mu$ matrix with $\mu =1+w_{1}+\cdots +w_{n}$,
\begin{equation}
M_{0}(x)=diag (c_{0}, c_{2},\cdots ,c_{\mu -1}),\ A_{0}(x)=-\mu M_{1}(x),\ \mbox{and}\ A_{\infty}=diag (\alpha_{0}, \alpha_{1},\cdots , \alpha_{\mu -1}),
\end{equation}
the $c_{i}$'s being rational numbers contained in $[0,1[$. This quantum differential system is thus non-resonant at $\tau_0 =0$ but does not yield 
directly a canonical fundamental solution, for a conformality reason: indeed, $[A_{\infty}, A_{0}(0)]\neq A_{0}(0)$ in general.
Nevertheless, one can get a flat quantum differential system as follows:
let us put 
$$r= \frac{1}{\lcm (w_{0},\cdots ,w_{n})}$$
and $\zeta = x^{r}$; the characteristic equation (\ref{eq:CarEPP}) takes the form, 
in the basis $\widetilde{\omega}:=\omega x^{-R}$ of $G_{0}[x^{r}]$,
\begin{equation}\label{eq:carorb}
\tau \tilde{M}_{1}(\zeta)\frac{d\zeta}{\zeta}-(\tilde{A}_{0}({\zeta})\tau +A_{\infty})\frac{d\tau}{\tau}
\end{equation}
where 
$$\tilde{A}_{0}(\zeta)=\mu\left ( \begin{array}{cccccc}
0   & 0   & 0 & \cdots & 0   &  \frac{\zeta^{(1-c_{\mu -1})/r}}{w^{w}}\\
\zeta^{(c_{1}-c_{0})/r}   & 0   & 0 & \cdots & 0   & 0\\
0   &  \zeta^{(c_{2}-c_{1})/r} & 0 & \cdots & 0   & 0\\
..  & ... & . & \cdots & .   & .\\
..  & ... & . & \cdots & .   & .\\
0   & 0   & . & \cdots &  \zeta^{(c_{\mu -1}-c_{\mu -2})/r}  & 0
\end{array} \right )$$
and $\tilde{M}_{1}(\zeta)=-\frac{1}{r\mu}\tilde{A}_{0}(\zeta)$. The matrix $\tilde{A}_{0}(\zeta)$ has polynomial coefficients, see formula (\ref{eq:ck}) below.
The metric $S^{B}$ is defined, in the basis $\widetilde{\omega}$, by
\footnote{Keeping in mind orbifold Poincar\'e duality, we choose the
normalization
$S^{B}(\omega_{0},\omega_{n})=\frac{1}{w_{0}\cdots w_{n}}\theta^{n}$.}
\begin{equation}\label{eq:Sflat}
S^{B}(\widetilde{\omega}_{k},\widetilde{\omega}_{\ell})=\left\{ \begin{array}{ll}
\frac{1}{w_{0}\cdots w_{n}}\theta^{n} &  \mbox{if $0\leq k\leq n$ and $k+\ell =n$,}\\
\frac{1}{w^{w}}\frac{1}{w_{0}\cdots w_{n}}\theta^{n} & \mbox{if  $n+1\leq k\leq \mu -1$ and $k+\ell =\mu +n$,}\\
0 & \mbox{otherwise}
\end{array}
\right .
\end{equation}

\begin{proposition}\label{prop:sechorEPP}
The matrix
$P(\zeta ,\tau )=H(\zeta ,\tau )e^{-\tau \tilde{M}_{1}(0)\ln \zeta }$,
where $H(\zeta ,\tau )$ is the matrix associated with the characteristic equation (\ref{eq:carorb}) by lemma \ref{lemma:nonres},
is a canonical fundamental solution of the Dubrovin connection.
\end{proposition}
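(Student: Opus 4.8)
The plan is to recognize that the characteristic equation (\ref{eq:carorb}) places us exactly in the framework of a \emph{flat} logarithmic quantum differential system on $N=\cit$ with coordinate $\zeta$, so that the proposition becomes a direct application of theorem \ref{cor:casvariete}. First I would read off, by comparing (\ref{eq:carorb}) with the normal form of section \ref{sec:NonResCurves}, the matrices $M_{0}(\zeta)=0$, $M_{1}(\zeta)=\tilde{M}_{1}(\zeta)$, $N_{0}(\zeta)=-A_{\infty}$ and $N_{1}(\zeta)=-\tilde{A}_{0}(\zeta)$. Since $M_{0}(\zeta)\equiv 0$, the system is flat in the sense of definition \ref{eq:FlatQDS} (equivalently, the residue $R=\operatorname{diag}(c_i)$ of (\ref{eq:CarEPP}) has been absorbed by the gauge $\widetilde{\omega}=\omega\,x^{-R}$), and in particular it is non-resonant at $\tau_{0}=0$.

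Theorem \ref{cor:casvariete} (1) then produces a unique matrix $H(\zeta,\tau)$ of holomorphic functions on $(\cit,0)\times\cit$ with $H(0,\tau)=I$ such that $P(\zeta,\tau)=H(\zeta,\tau)e^{-\tau \tilde{M}_{1}(0)\ln\zeta}$ is a fundamental solution of the Dubrovin connection, which is precisely the matrix in the statement (here $M_{1}(0)=\tilde{M}_{1}(0)$). For symmetry I would invoke theorem \ref{cor:casvariete} (4): its hypothesis is that $S(\widetilde{\omega}_{k},\widetilde{\omega}_{\ell})\in\cit\theta^{n}$ for all $k,\ell$, and this is exactly what formula (\ref{eq:Sflat}) guarantees after the normalizing twist. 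Hence $P$ is symmetric.

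For conformality I would verify the hypothesis of theorem \ref{cor:casvariete} (5), namely that $M_{1}(0)=c\,N_{1}(0)$ for some non-zero constant $c$. This is immediate from the relation $\tilde{M}_{1}(\zeta)=-\frac{1}{r\mu}\tilde{A}_{0}(\zeta)$: evaluating at $\zeta=0$ gives $M_{1}(0)=\tilde{M}_{1}(0)=-\frac{1}{r\mu}\tilde{A}_{0}(0)=\frac{1}{r\mu}N_{1}(0)$, so $c=\frac{1}{r\mu}\neq 0$. Theorem \ref{cor:casvariete} (5) then yields conformality. Finally, theorem \ref{cor:casvariete} (3) shows that after the base change of matrix $P$ the connection matrix is $(N_{0}(0)+N_{1}(0)\tau)\frac{d\tau}{\tau}=(-A_{\infty}-\tilde{A}_{0}(0)\tau)\frac{d\tau}{\tau}$, so that $R(\tau)=R_{0}+R_{1}\tau$ is of degree one in $\tau$, which is the remaining defining property of a canonical solution. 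Combining conformality, symmetry and the degree-one form of $R(\tau)$, the solution $P$ is canonical by definition.

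The only genuinely non-formal point, which is however already settled in the passage from (\ref{eq:CarEPP}) to (\ref{eq:carorb}), is that the ramified substitution $\zeta=x^{r}$ together with the gauge $\widetilde{\omega}=\omega\,x^{-R}$ really produces a single-valued flat system over $\cit$, i.e.\ that the exponents appearing in $x^{-R}=\zeta^{-R/r}$ and in the entries of $\tilde{A}_{0}(\zeta)$ are non-negative integers (so $\tilde{A}_{0}$ has polynomial coefficients, as asserted there) and that the metric stays valued in $\cit\theta^{n}$. Granting this, I expect no serious obstacle: the whole content of the proposition is a corollary of theorem \ref{cor:casvariete}, and the main care required is simply to match the sign conventions of (\ref{eq:carorb}) with those of section \ref{sec:NonResCurves} so that the identities $N_{0}(0)=-A_{\infty}$, $N_{1}(0)=-\tilde{A}_{0}(0)$ and $M_{1}=-\frac{1}{r\mu}\tilde{A}_{0}$ are applied correctly.
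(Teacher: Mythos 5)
Your proof is correct and follows essentially the same route as the paper: both reduce the statement to theorem \ref{cor:casvariete} applied to the flat logarithmic system (\ref{eq:carorb}), obtaining the fundamental solution and the degree-one form of $R(\tau)$ from items (1)--(3), symmetry from item (4) (i.e.\ proposition \ref{ex:solSymfondPCQO}, using that $S^{B}(\widetilde{\omega}_{k},\widetilde{\omega}_{\ell})\in\cit\theta^{n}$), and conformality from item (5) via $\tilde{M}_{1}(0)=-\frac{1}{r\mu}\tilde{A}_{0}(0)$. The sign bookkeeping $N_{0}(0)=-A_{\infty}$, $N_{1}(0)=-\tilde{A}_{0}(0)$ is handled correctly.
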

\begin{proof}
One uses assertions (1), (2) et (3) of theorem
\ref{cor:casvariete}. Conformality follows from formulas $[A_{\infty},\tilde{M}_{1}(0)]=\tilde{M}_{1}(0)$
and $\tilde{M}_{1}(0)=-\frac{1}{r\mu}\tilde{A}_{0}(0)$ (see theorem \ref{cor:casvariete} 5.) and the symmetry follows from proposition \ref{ex:solSymfondPCQO}.
\end{proof}

\section{Rational structures {\em via} quantum differential systems}

\label{sec:RationalStructure}
We apply here the previous results in order 
to construct first a distinguished rational structure on the $B$-side. Then, we derive from this one a rational structure on the $A$-side. This provides a
generalization of the method exposed in \cite[Proposition 3.1]{KKP}  
(see also \cite{T}).

\subsection{Preamble: sketch of the method}
\label{sec:Preamble}
Let ${\cal Q}=(M, {\cal G}, \nabla ,S, d)$ be a flat logarithmic quantum differential system on $M=\cit$. We will denote 
by
\begin{equation}\label{eq:CarRat}
\tau M_1(x) \frac{dx}{x}-(A_{\infty}+\tau A_0 (x))\frac{d\tau}{\tau}
\end{equation}
the matrix 
\footnote{The matrix $A_{\infty}$ is constant because the quantum differential system is flat.} of the connection $\nabla$ 
in the basis $\omega =(\omega_{0},\cdots ,\omega_{\mu -1})$. Let us summarize the previous results:
starting from the trivial bundle ${\cal G}$ on $\ppit^{1}\times M$, 
one constructs a trivial bundle ${\cal G}^{cl}$ on $\ppit^{1}$ (the limiting bundle, see section \ref{sec:limite}) 
whose fiber ${\cal G}^{const}:={\cal G}^{cl}_{\{\tau=-1\}}$ at $\tau =-1$ is a finite dimensional vector space.
%\footnote{Recall for instance that if ${\cal Q}$ is associated with the small quantum cohomology of a manifold $X$, ${\cal G}^{cl}$ is the trivial bundle 
%on $\ppit^{1}$ whose fibers are $H^{*}(X,\cit )$ and ${\cal G}^{const}$ is the vector space $H^{*}(X,\cit )$.} 

\begin{proposition}\label{prop:classescar} Let us denote ${\cal G}^{\nabla}=\ker\nabla$ and ${\cal G}^{cl, \nabla^{cl}}=\ker\nabla^{cl}$.
Let us assume that the fundamental solution $P(\tau ,x )=H(\tau ,x)e^{-\tau M_{1}(0)\ln x}$ is canonical with $H(0,\tau)=I$.
Then we have isomorphisms
$$\begin{array}{ccccc}
{\cal G}^{\nabla}  & \longrightarrow  & {\cal G}^{cl, \nabla^{cl}} & \longrightarrow & {\cal G}^{const}\\
\Psi (\tau ,x ) & \mapsto  & \Psi^{cl}(\tau) & \mapsto  & \Psi^{const}
\end{array}$$
where 
\begin{itemize} 
\item $\Psi (\tau ,x)=H(\tau ,x)e^{-\tau M_{1}(0)\ln x}\Psi^{cl}(\tau)$,
\item $\Psi^{cl}(\tau)=(-\tau)^{A_{\infty}}(-\tau)^{A_{0}(0)}\Psi^{const}$,
\end{itemize}
$\Psi^{const}$ being a constant vector.
\end{proposition}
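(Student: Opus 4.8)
The plan is to recognize the statement as a repackaging of Corollary \ref{coro:sechor} into the language of horizontal sections, so that the two isomorphisms are obtained by peeling off explicit invertible factors and then evaluating at $\tau=-1$. First I would record the dictionary between the notation of \ref{sec:Preamble} and that of section \ref{sec:NonResCurves}: writing the $\frac{d\tau}{\tau}$-part of (\ref{eq:CarRat}) as $N(x,\tau)\frac{d\tau}{\tau}$ with $N(x,\tau)=-(A_{\infty}+\tau A_{0}(x))$ gives $N_{0}(0)=-A_{\infty}$ and $N_{1}(0)=-A_{0}(0)$. Since $P=H(\tau,x)e^{-\tau M_{1}(0)\ln x}$ is canonical it is in particular conformal, and the conformality relation (item 5 of theorem \ref{cor:casvariete}, equivalently $[R_{0},R_{1}]=-R_{1}$) reads here $[A_{\infty},A_{0}(0)]=A_{0}(0)$; this is the only algebraic identity I will need downstream.

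For the first arrow I would apply Corollary \ref{coro:sechor}(1): a section $\Psi$ lies in ${\cal G}^{\nabla}=\ker\nabla$ if and only if it has the form $\Psi(\tau,x)=H(\tau,x)e^{-\tau M_{1}(0)\ln x}\Psi^{cl}(\tau)$ with $\Psi^{cl}$ satisfying $\tau\partial_{\tau}\Psi^{cl}=(A_{\infty}+\tau A_{0}(0))\Psi^{cl}$, which is exactly the equation $\nabla^{cl}\Psi^{cl}=0$ characterizing ${\cal G}^{cl,\nabla^{cl}}=\ker\nabla^{cl}$. The assignment $\Psi\mapsto\Psi^{cl}$ is ${\cal O}_{M}$-linear, and it is bijective because the factor $H(\tau,x)e^{-\tau M_{1}(0)\ln x}$ is invertible (here $H(0,\tau)=I$ forces $\det H\neq 0$ near $x=0$, and the exponential is always invertible), the inverse sending $\Psi^{cl}$ to $H e^{-\tau M_{1}(0)\ln x}\Psi^{cl}$. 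This yields the first isomorphism.

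For the second arrow I would invoke Corollary \ref{coro:sechor}(2), valid since $P$ is conformal, to write every $\nabla^{cl}$-flat section as $\Psi^{cl}(\tau)=(-\tau)^{A_{\infty}}(-\tau)^{A_{0}(0)}\Psi^{const}$ with $\Psi^{const}$ constant. One checks directly, using $\tau\partial_{\tau}(-\tau)^{A}=A(-\tau)^{A}$ together with the identity $(-\tau)^{A_{\infty}}A_{0}(0)(-\tau)^{-A_{\infty}}=\tau\,A_{0}(0)$ (a one-line consequence of $[A_{\infty},A_{0}(0)]=A_{0}(0)$ via $e^{\operatorname{ad}_{A_{\infty}}\ln(-\tau)}$), that this form indeed solves the limiting equation, so the $(-\tau)$-normalization is legitimate and merely fixes the constant. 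The map $\Psi^{cl}\mapsto\Psi^{const}$ is then evaluation at $\tau=-1$: since $(-\tau)^{A_{\infty}}=(-\tau)^{A_{0}(0)}=I$ there, $\Psi^{const}=\Psi^{cl}(-1)$ lies in ${\cal G}^{const}={\cal G}^{cl}_{\{\tau=-1\}}$, and existence and uniqueness for the linear system with initial datum at $\tau=-1$ make this a linear bijection.

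The steps are essentially assembly, so the delicate part is bookkeeping: matching the two notational conventions, confirming the conformality identity $[A_{\infty},A_{0}(0)]=A_{0}(0)$ so that the explicit solution form is correct, and checking the invertibility of $H$ and the uniqueness needed for each arrow to be an isomorphism rather than merely a map. I expect the verification that the stated $(-\tau)^{A_{\infty}}(-\tau)^{A_{0}(0)}\Psi^{const}$ really solves $\nabla^{cl}\Psi^{cl}=0$ (rather than $\tau^{A_{\infty}}\tau^{A_{0}(0)}\Psi^{const}$) to be the single spot needing an honest, if short, computation; everything else follows from theorem \ref{cor:casvariete} and Corollary \ref{coro:sechor}.
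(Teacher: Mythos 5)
Your route is the paper's route: the paper's entire proof of this proposition is the single line ``See corollary \ref{coro:sechor}'', and your two arrows are exactly Corollary \ref{coro:sechor}(1) and (2) after the dictionary $N_{0}(0)=-A_{\infty}$, $N_{1}(0)=-A_{0}(0)$, with the invertibility of $H(x,\tau)e^{-\tau M_{1}(0)\ln x}$ and evaluation at $\tau=-1$ supplying the bijectivity. One caveat, and it sits precisely at the spot you yourself flagged as the only place needing an honest computation: the conjugation identity is $(-\tau)^{A_{\infty}}A_{0}(0)(-\tau)^{-A_{\infty}}=e^{\ln(-\tau)\,\mathrm{ad}_{A_{\infty}}}A_{0}(0)=(-\tau)A_{0}(0)$, not $\tau A_{0}(0)$ as you wrote; with the correct sign, $(-\tau)^{A_{\infty}}(-\tau)^{A_{0}(0)}$ solves $\tau\partial_{\tau}\Psi=(A_{\infty}-\tau A_{0}(0))\Psi$, whereas what Corollary \ref{coro:sechor}(2) literally delivers is $\tau^{-N_{0}(0)}\tau^{-N_{1}(0)}=\tau^{A_{\infty}}\tau^{A_{0}(0)}$, the fundamental matrix of $\tau\partial_{\tau}\Psi=(A_{\infty}+\tau A_{0}(0))\Psi$. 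So your sign slip happens to reproduce the proposition's $(-\tau)$-normalization, but it does not actually verify it; the discrepancy between $(-\tau)^{A_{0}(0)}$ and $\tau^{A_{0}(0)}$ is a genuine change of the ODE being solved (not just a constant absorbed into $\Psi^{const}$), and reconciling it requires either correcting the exponent to $-A_{0}(0)$ or adopting the sign conventions the paper uses later for the $e^{\tau F}$ kernel. Everything else in your argument is sound and matches the intended proof.
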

\begin{proof}
See corollary \ref{coro:sechor}.
\end{proof}

\noindent Using proposition \ref{prop:classescar}, one can thus shift on ${\cal G}^{const}$ 
the natural structures of ${\cal G}^{\nabla}$ (and vice versa): this is one of the interest of the quantization. 
Assume for instance that one has a distinguished rational structure $\Sigma^{quant}_{\qit}$ on ${\cal G}^{quant}$:
this structure shifts to a rational structure $\Sigma^{const}_{\qit}$ on  ${\cal G}^{const}$, but also on its mirror partners (if any).

\subsection{Rational structures {\em via} mirror symmetry and quantization: from the B-side to the A-side}

\label{sec:RationalStructuresviaMirror}

Let us start from the $B$-side and 
assume that the quantum differential system 
$${\cal Q}^{B}=(M^{B}, {\cal G}^{B}, \nabla^{B} ,S^{B}, n)$$
is produced, as in section \ref{sec:QDSregular}, by a function $F$ on $U\times M^{B}$ ($n=\dim_{\cit}U$) such that
\begin{itemize}
\item $F(\bullet ,x)$ is a tame regular function on $U$ for all $x\in M^{B}$,
\item the global Milnor number of the function $F(\bullet ,x)$ does not depend on $x\in M^{B}$ (we will denote by $\mu$ this constant value),
\item its Brieskorn lattice $G_{0}$ is free of rank $\mu$ over ${\cal O}_{M^{B}}(M^{B})[\theta]$
\end{itemize}
\noindent Typically, $M^{B}=\cit^{*}$, $F(\bullet ,x)$ is a convenient and nondegenerate Laurent polynomial for all $x\in M^{B}$ 
(with the same Newton polyhedron at infinity for all $x\in M^{B}$) and $G_{0}$ is a free
$\cit [x,x^{-1},\theta ]$-module.
We will consider this situation in section \ref{subsec:RatWPS}.

\subsubsection{Oscillating integrals}
We will denote by $\omega =(\omega_{0},\cdots \omega_{\mu -1})$ 
the basis of the Brieskorn lattice $G_{0}$, adapted to $S$ (see formula (\ref{eq:Sadaptee})), 
in which the connection $\nabla^{B}$ takes the form (\ref{eq:CarRat}).\\ 

On the $B$-side, the relation between the basis $\omega$ and the rational structure is
given by the oscillating integrals
$$I_{\Gamma}^{(i)}(\tau ,x)=\int_{\Gamma}e^{\tau F}\omega_i$$
where $\Gamma$ is a cycle with support on a ``family of supports'' $\Phi$ as in \cite[Section 1]{Ph1}. The integral
depends only on the homology class of $\Gamma$ in the nth homology group (with integral coefficients)
with support in $\Phi$. Let us be more precise about that: fix  $(\tau ,x)\in \cit^{*}\times M^{B}$. The homology group alluded to is $H_{n}^{\Phi_{\tau ,x}}(U, \zit )$ 
where $\Phi_{\tau ,x}$ 
is a family of supports
$A\subset U$ such that
$$Re (\tau F(\bullet , x))_{|A}\rightarrow -\infty$$
as $u\rightarrow +\infty$ or $0$. We have
$$H_{n}^{\Phi_{\tau ,x}}(U, \zit )=H_{n}(U, Re(\tau F)<C; \zit )$$
for $C<<0$ (see \cite[Formula (1.0), p. 13]{Ph}). Because $F(\bullet ,x)$ is tame, this is a free $\zit$-module of rank $\mu$, the global Milnor number of $F$.
If the critical points of $F(\bullet ,x)$, $x\in M$, are nondegenerate 
and the critical values are distinct, the cycles $\Gamma$ are called Lefschetz thimbles \cite[1.5]{Ph1}.
We will denote by $H_{n}^{\Phi_{\tau ,x}}(U, \qit )$ ({\em resp.} $H_{n}^{\Phi_{\tau ,x}}(U, \cit )$)
the $\qit$-({\em resp.} $\cit$)vector space generated by the linear combinations with rational coefficients of such cycles and
we will assume that these vector spaces are organized into a local
system $H^{\Phi}_{n}(\qit )$ ({\em resp.} $H_{n}^{\Phi}(\cit )$) of $\qit$-({\em resp.} $\cit$)vector spaces on $\cit^{*}\times M^{B}$: 
this follows from the assumptions above and this will be the case in our situation, see section \ref{subsubsec:Bside} below but also {\em f.i} \cite[1.5]{Ph1}, 
\cite[4.1]{Ph} and \cite[Proposition 3.12]{Ir}.

\subsubsection{Flat sections}
We will denote by $A^{\intercal}$ the transposed of a matrix $A$. 
If $\Omega (x,\tau)$ is the matrix of the connection $\nabla^{B}$ in the basis $\omega$, $\nabla^{B,*}$ 
will denote the connection with matrix $-\Omega^{\intercal}(x, -\tau )$ in 
the same basis\footnote{The twist by the minus sign is explained by the fact that we consider the kernel $e^{\tau f}$ instead of $e^{-\tau f}$.}.

\begin{lemma}\label{lemma:RealStructureLT}
The local system $H^{\Phi}_{n}(\cit )$ is identified with ${\cal G}^{\nabla^{B,*}}:=\ker \nabla^{B,*}$ via the map 
$\Psi^{*}$ defined by 
\begin{equation}\label{eq:vector}
\Psi^{*}(\Gamma )(\tau ,x )=\sum_{i=0}^{\mu -1}I_{\Gamma}^{(i)}(\tau ,x)\omega_{i}.
\end{equation}
\end{lemma}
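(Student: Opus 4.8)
The plan is to prove the two halves of the statement separately: first that the period (oscillating‑integral) vector is a horizontal section of the dual connection $\nabla^{B,*}$, and then that the resulting morphism of local systems is an isomorphism by a rank count together with the nondegeneracy of the period pairing. Conceptually this is the classical fact that periods of locally constant cycles are flat for the dual Gauss--Manin connection, transported to the twisted (exponential) setting; I would follow the formalism of \cite{Ph1}, \cite{Ph} and \cite{Sab1}. Write $\mathbf{I}_{\Gamma}(\tau ,x)=(I_{\Gamma}^{(0)}(\tau ,x),\dots ,I_{\Gamma}^{(\mu -1)}(\tau ,x))^{\intercal}$ for the column of coefficients of $\Psi^{*}(\Gamma)$ in the basis $\omega$, and let $\Omega(x,\tau )$ denote the matrix of $\nabla^{B}$ in $\omega$. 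Since $\nabla^{B,*}$ has matrix $-\Omega^{\intercal}(x,-\tau )$, the assertion $\nabla^{B,*}\Psi^{*}(\Gamma)=0$ is equivalent to the linear system
$$d\,\mathbf{I}_{\Gamma}=\Omega^{\intercal}(x,-\tau )\,\mathbf{I}_{\Gamma},$$
and this is the identity I would establish.

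First I would check that $\Psi^{*}(\Gamma)$ is well defined, i.e. that each $I_{\Gamma}^{(i)}$ depends only on the class of $\omega_{i}$ in the Gauss--Manin system $G$. Writing the twisted differential $D\eta :=d_{u}\eta +\tau\, d_{u}F\wedge\eta$, one has $e^{\tau F}D\eta =d_{u}(e^{\tau F}\eta )$, so $\int_{\Gamma}e^{\tau F}D\eta =0$ by Stokes' theorem, using the rapid decay of $e^{\tau F}$ along the family of supports $\Phi_{\tau ,x}$; hence the integral annihilates $D$‑exact forms, which are precisely the relations defining $G$ (up to the sign corresponding to the kernel $e^{\tau F}$ rather than $e^{-\tau F}$). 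The same decay estimates, guaranteed by tameness of $F(\bullet ,x)$, justify differentiating under the integral sign, yielding $\partial_{\tau}I_{\Gamma}^{(i)}=\int_{\Gamma}e^{\tau F}F\,\omega_{i}$ and $x\partial_{x}I_{\Gamma}^{(i)}=\int_{\Gamma}e^{\tau F}(\tau x\partial_{x}F)\,\omega_{i}$.

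The core step is then to read off the right‑hand sides from the connection matrix. Multiplication by $F$ (resp. by $\tau x\partial_{x}F$) sends $[\omega_{i}]$ to a class whose expansion in the basis $\omega$ is governed by the $\tfrac{d\tau}{\tau}$‑component (resp. the $\tfrac{dx}{x}$‑component) of the Gauss--Manin connection matrix. Because the paper's $\Omega(x,\tau )$ is written for the kernel $e^{-\tau F}$, passing to the kernel $e^{\tau F}$ amounts to the substitution $\tau\mapsto -\tau$; as $\Omega$ is expressed through the logarithmic forms $\tfrac{dx}{x}$ and $\tfrac{d\tau}{\tau}$ (see (\ref{eq:CarRat})), which are invariant under $\tau\mapsto -\tau$, the connection matrix for our kernel is $\Omega(x,-\tau )$. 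Pairing against the \emph{locally constant} cycle $\Gamma$ transposes this matrix, and assembling the $\tau$‑ and $x$‑derivatives gives $d\,\mathbf{I}_{\Gamma}=\Omega^{\intercal}(x,-\tau )\,\mathbf{I}_{\Gamma}$, that is $\nabla^{B,*}\Psi^{*}(\Gamma)=0$. Thus $\Psi^{*}$ maps $H_{n}^{\Phi}(\cit )$ into ${\cal G}^{\nabla^{B,*}}$, and it is a morphism of local systems because the $I_{\Gamma}^{(i)}$ are flat.

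Finally I would conclude that $\Psi^{*}$ is an isomorphism by comparing ranks. Both local systems have rank $\mu$: $H_{n}^{\Phi}(\cit )$ because the homology with supports is free of rank $\mu$ (the global Milnor number), and ${\cal G}^{\nabla^{B,*}}$ because it is the sheaf of horizontal sections of a flat connection on a bundle of rank $\mu$. Injectivity of $\Psi^{*}$ follows from the nondegeneracy of the period pairing between the twisted de Rham cohomology and $H_{n}^{\Phi}$, a standard duality in this context; equality of ranks then forces an isomorphism. The main obstacle, and the point demanding genuine care, is the bookkeeping of signs and conventions in the core step — verifying that the combination of the duality (transpose) and the kernel‑induced substitution $\tau\mapsto -\tau$ reproduces exactly the matrix $-\Omega^{\intercal}(x,-\tau )$ of $\nabla^{B,*}$ — together with the analytic justification (rapid decay, differentiation under the integral) which, although standard given tameness, underlies every formal manipulation.
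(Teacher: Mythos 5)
Your proposal is correct and takes essentially the same route as the paper: the key step in both is the first-order system $d\,\mathbf{I}_{\Gamma}=\Omega^{\intercal}(x,-\tau)\,\mathbf{I}_{\Gamma}$ (which the paper simply quotes from Pham, while you sketch its derivation via Stokes and differentiation under the integral), and the isomorphism is obtained in both cases from the rank-$\mu$ count together with the identification, due to tameness, of the solution local system with $H_{n}^{\Phi}(\cit)$ (equivalently, the nondegeneracy of the period pairing you invoke). The extra care you take with the sign and transpose bookkeeping matches the conventions of the paper.
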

\begin{proof} Write $I_{\Gamma}(\tau ,x)=(I_{\Gamma}^{(0)}(\tau ,x),\cdots , I_{\Gamma}^{(\mu -1)}(\tau ,x ))$.
We have 
\begin{equation} 
dI_{\Gamma}(\tau ,x)^{\intercal}=\Omega^{\intercal}(x, -\tau )I_{\Gamma}(\tau ,x )^{\intercal},
\end{equation}
where $\Omega (x, \tau)$ is the matrix of $\nabla^{B}$ in the basis $\omega$ (this is precisely what $\nabla^{B}$ is made for, see \cite[1\`ere partie, 6]{Ph}) 
and this shows that the map is well defined. The fact that it is an isomorphism follows from a dimension argument: the assumption on the Brieskorn lattice shows 
that ${\cal G}^{B}_{|\cit^{*}\times M^{B}}$ is a connection and its solutions are therefore organized  
in a local system on $\cit^{*}\times M$ whose fiber at $(\tau ,x)$ is, thanks to the tameness,  $H_{n}^{\Phi_{\tau ,x}}(U, \cit )$ (see \cite{Sab2}, \cite{Sab3}).   
\end{proof}

\noindent In some cases, this general construction will yield an identification between $H^{\Phi}_{n}(\cit)$ and ${\cal G}^{\nabla^{B}}$:  

\begin{corollary}\label{coro:LT} 
Assume that the basis $\omega$ is adapted to the bilinear form $S^{B}$ 
and let $\omega^{j}$ be the dual of $\omega_{j}$ with respect to $S^{B}$ (see remark \ref{rem:metglob}).
The map $\Psi :H^{\Phi}_{n}(\cit)\rightarrow {\cal G}^{\nabla^{B}}$ defined by
\begin{equation}\label{eq:LT}
\Psi (\Gamma )(\tau ,x )=(-\tau )^{n}\sum_{i=0}^{\mu -1}I_{\Gamma}^{(i)}(\tau ,x )\omega^{i}
\end{equation}
is an isomorphism. In particular, ${\cal O}_{\cit^{*}\times M^{B}}\otimes H^{\Phi}_{n}(\cit)\stackrel{\sim}{\rightarrow}{\cal G}_{|\cit^{*}\times M^{B}}$.
\end{corollary}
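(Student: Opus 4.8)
The plan is to deduce the corollary from Lemma \ref{lemma:RealStructureLT} by converting a flat section of $\nabla^{B,*}$ into a flat section of $\nabla^{B}$ via the bilinear form $S^{B}$. First I would recall the precise relation between the two connections: by definition $\nabla^{B,*}$ has matrix $-\Omega^{\intercal}(x,-\tau)$ in the basis $\omega$, while $\nabla^{B}$ has matrix $\Omega(x,\tau)$. The flatness of $S^{B}$ (it is $\nabla$-flat, see Definition \ref{def:SDQ} and remark \ref{rem:metglob}) means, at the level of the induced form $g$ on $E$ together with the homogeneity recorded in formula (\ref{formeGinfty}), that $S^{B}$ pairs a $\nabla^{B}$-flat section with a $\nabla^{B,*}$-flat section into a \emph{constant} (times the appropriate power of $\tau$). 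Concretely, the adjunction $\Omega^{*}=-\Omega$ with respect to $S^{B}$ is exactly what makes the map ``pair against $S^{B}$'' send $\ker\nabla^{B,*}$ isomorphically onto $\ker\nabla^{B}$.

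The key computational step is to verify that the vector $\Psi(\Gamma)(\tau,x)=(-\tau)^{n}\sum_{i}I_{\Gamma}^{(i)}(\tau,x)\,\omega^{i}$ is indeed $\nabla^{B}$-flat. Since $\omega$ is adapted to $S^{B}$, for each $i$ the dual $\omega^{i}$ is (a scalar multiple of) the unique basis element $\omega_{\overline{i}}$ that pairs nontrivially with $\omega_{i}$, and $S^{B}(\omega_{i},\omega^{j})=\delta_{ij}\theta^{d}=\delta_{ij}\tau^{-n}$ by the normalization in remark \ref{rem:metglob}. The factor $(-\tau)^{n}$ is placed precisely to cancel the $\tau^{-n}$ coming from the target $\theta^{d}{\cal O}$ of $S^{B}$ and to absorb the sign twist $\tau\mapsto-\tau$ relating $\nabla^{B}$ and $\nabla^{B,*}$. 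I would therefore compute $d\,\Psi(\Gamma)$ and, using Lemma \ref{lemma:RealStructureLT} (which gives $dI_{\Gamma}^{\intercal}=\Omega^{\intercal}(x,-\tau)I_{\Gamma}^{\intercal}$), check term by term that the connection matrix $\Omega(x,\tau)$ reproduces the derivative, so that $\nabla^{B}\Psi(\Gamma)=0$. This amounts to transporting the relation $-\Omega^{\intercal}(x,-\tau)$ satisfied by the $I_{\Gamma}^{(i)}$ through the duality $\omega_{i}\mapsto\omega^{i}$, where the self-adjointness $\Phi^{*}=\Phi$, $V_{0}^{*}=V_{0}$, $V_{\infty}+V_{\infty}^{*}=dI$ of (\ref{eq:sym}) is what turns $-\Omega^{\intercal}(x,-\tau)$ into $\Omega(x,\tau)$ after accounting for the factor $(-\tau)^{n}$.

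It remains to argue that $\Psi$ is an isomorphism and to extract the last sentence. Bijectivity is immediate: $\Psi$ is the composition of the isomorphism $\Psi^{*}$ of Lemma \ref{lemma:RealStructureLT} with the ${\cal O}$-linear, invertible ``raise the index by $S^{B}$'' map $\omega_{i}\mapsto(-\tau)^{n}\omega^{i}$, which is invertible precisely because $\omega$ is adapted to the nondegenerate form $S^{B}$ (the change-of-basis matrix is the anti-diagonal pairing matrix, invertible by nondegeneracy). Finally, the statement ${\cal O}_{\cit^{*}\times M^{B}}\otimes H^{\Phi}_{n}(\cit)\stackrel{\sim}{\rightarrow}{\cal G}_{|\cit^{*}\times M^{B}}$ follows by tensoring the local-system isomorphism with ${\cal O}$: over $\cit^{*}\times M^{B}$ the bundle ${\cal G}$ restricts to a flat connection (the tameness hypothesis on $F$ guarantees the Brieskorn lattice is a free module of the correct rank, so there is no pole away from $\{\theta=0\}\cup\{\theta=\infty\}$), hence ${\cal G}_{|\cit^{*}\times M^{B}}\simeq{\cal O}\otimes\ker\nabla^{B}$, and $\ker\nabla^{B}\simeq H^{\Phi}_{n}(\cit)$ by $\Psi$. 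The main obstacle I anticipate is the bookkeeping in the flatness verification: one must track the sign twist $\tau\mapsto-\tau$, the homogeneity weight $\theta^{n}=\tau^{-n}$, and the duality $\omega_{i}\leftrightarrow\omega^{i}$ simultaneously, and confirm that the prefactor $(-\tau)^{n}$ is exactly the combination that makes all three cancel so that the adjoint relation (\ref{eq:sym}) converts $\nabla^{B,*}$-flatness into $\nabla^{B}$-flatness; everything else is formal.
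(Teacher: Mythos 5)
Your proposal is correct and follows essentially the same route as the paper: the paper's own proof is a one-line appeal to the adjunction relations $A_{\infty}+A_{\infty}^{*}=nI$ and $A_{0}(x)^{*}=A_{0}(x)$ from remark \ref{rem:metglob}, which is precisely the mechanism you spell out (raising indices by $S^{B}$ and inserting the prefactor $(-\tau)^{n}$ to convert $\nabla^{B,*}$-flatness from lemma \ref{lemma:RealStructureLT} into $\nabla^{B}$-flatness). Your version simply makes the sign/weight bookkeeping and the final tensoring step explicit, which the paper leaves to the reader.
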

\begin{proof}
Because the matrices $A_{\infty}$ and $A_0 (x)$ in equality (\ref{eq:CarRat}) satisfy $A_{\infty}+A_{\infty}^{*}=nI$ and $A_0 (x)^{*}=A_0 (x)$ where $^{*}$ 
denotes the adjoint with respect to $S^{B}$, see remark \ref{rem:metglob}.

\end{proof}

\begin{notation} From now on, we will write $\Psi_{\Gamma}(\tau ,x )$ instead of $\Psi (\Gamma )(\tau ,x)$.
\end{notation}

\subsubsection{Rational structures (A-side/ B-side)}
\label{sec:RatAB}

As announced, we define:

\begin{definition}\label{def:RealStructureLT}
The image $\Sigma^{B, quant}_{\qit}$ of 
$H^{\Phi}_{n}(\qit )$
in ${\cal G}^{\nabla^{B}}:=\ker \nabla^{B}$ under the isomorphism $\Psi$ is called {\em the rational structure} on ${\cal G}^{\nabla^{B}}$. 
\end{definition}

\noindent In other words, $\Sigma^{B, quant}_{\qit}$ is the $\qit$-lattice generated by the solutions obtained after integration over cycles which are linear 
combinations, with rational coefficients, of the Lefschetz thimbles.

\begin{corollary}
\label{cor:RealStructureLT}
\begin{enumerate}
\item The rational structure $\Sigma^{B, quant}_{\qit}$ on ${\cal G}^{\nabla^{B}}$ provides a  rational structure $\Sigma^{B, const}_{\qit}$ on ${\cal G}^{B, const}$ via the isomorphisms of proposition \ref{prop:classescar}. 
Moreover, $\Sigma^{B, quant}_{\qit}$ is completely determined by $\Sigma^{B, const}_{\qit}$.
\item Let ${\cal Q}^{A}=(M^{A}, {\cal G}^{A}, \nabla^{A} ,S^{A}, n)$ be a quantum differential system isomorphic to the quantum differential
system ${\cal Q}^{B}$ in the sense of definition \ref{def:IsoSDQ}. Then the rational structure 
$\Sigma^{B, quant}_{\qit}$ on ${\cal G}^{B}$ defines a rational structure $\Sigma^{A, quant}_{\qit}$ on 
${\cal G}^{\nabla^{A}}:=\ker \nabla^{A}$ via this isomorphism.
\end{enumerate}
\end{corollary}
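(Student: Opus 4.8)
The plan is to obtain both rational structures by transport of structure, pushing the $\qit$-lattice $\Sigma^{B,quant}_{\qit}$ through $\cit$-linear isomorphisms that are already available, and to check at each step that a $\qit$-structure (a $\qit$-subspace whose complexification is the ambient $\cit$-vector space of flat sections) is preserved. Since all the maps involved are $\cit$-linear bijections between finite-dimensional spaces of flat sections, the verifications are purely formal linear algebra.

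For item (1), I would first recall that proposition \ref{prop:classescar} provides $\cit$-linear isomorphisms
$${\cal G}^{\nabla^{B}}\stackrel{\sim}{\rightarrow}{\cal G}^{cl,\nabla^{cl}}\stackrel{\sim}{\rightarrow}{\cal G}^{B,const},$$
given on flat sections by $\Psi(\tau ,x)\mapsto\Psi^{cl}(\tau)\mapsto\Psi^{const}$. Writing $\Theta$ for the composite, I would simply set $\Sigma^{B,const}_{\qit}:=\Theta(\Sigma^{B,quant}_{\qit})$. As $\Theta$ is a $\cit$-linear isomorphism and $\Sigma^{B,quant}_{\qit}\otimes_{\qit}\cit={\cal G}^{\nabla^{B}}$, its image is a $\qit$-subspace of ${\cal G}^{B,const}$ satisfying $\Sigma^{B,const}_{\qit}\otimes_{\qit}\cit={\cal G}^{B,const}$, hence a rational structure. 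The ``completely determined'' clause follows at once from bijectivity: $\Sigma^{B,quant}_{\qit}=\Theta^{-1}(\Sigma^{B,const}_{\qit})$, the inverse being the explicit map $\Psi(\tau ,x)=H(\tau ,x)e^{-\tau M_{1}(0)\ln x}(-\tau)^{A_{\infty}}(-\tau)^{A_{0}(0)}\Psi^{const}$ furnished by proposition \ref{prop:classescar}.

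For item (2), the isomorphism of quantum differential systems $(id,\nu)$ together with $\gamma :{\cal G}^{A}\rightarrow (id,\nu)^{*}{\cal G}^{B}$ of definition \ref{def:IsoSDQ} is by hypothesis compatible with the connections. I would use this compatibility to restrict $\gamma$ to flat sections, obtaining a $\cit$-linear isomorphism $\ker\nabla^{A}\stackrel{\sim}{\rightarrow}\ker (id,\nu)^{*}\nabla^{B}$; since $\nu$ is an isomorphism of bases, pullback identifies the target with ${\cal G}^{\nabla^{B}}$. The composite is then a $\cit$-linear isomorphism ${\cal G}^{\nabla^{A}}\stackrel{\sim}{\rightarrow}{\cal G}^{\nabla^{B}}$, and I would define $\Sigma^{A,quant}_{\qit}$ as the preimage of $\Sigma^{B,quant}_{\qit}$ under it; the same linear-algebra argument as in (1) shows it is a rational structure on ${\cal G}^{\nabla^{A}}$.

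The only point that genuinely requires attention, and hence the main (mild) obstacle, is the compatibility step in (2): one must confirm that $\gamma$ really sends $\nabla^{A}$-flat sections to $(id,\nu)^{*}\nabla^{B}$-flat sections and that the base change by $\nu$ does not disturb the lattice. This is where the hypothesis that $\nu$ is a genuine isomorphism of the base (indeed $\nu=id$ in the weighted projective space situation of theorem \ref{theo:miroir}) is used, guaranteeing that pullback of flat sections is an isomorphism respecting the $\qit$-structure; everything else is formal transport of structure through $\cit$-linear bijections.
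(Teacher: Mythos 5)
Your proposal is correct and follows essentially the same route as the paper, which simply invokes the isomorphisms of Proposition \ref{prop:classescar} (and, implicitly for item (2), the isomorphism of Definition \ref{def:IsoSDQ}) to transport the $\qit$-lattice; your write-up just makes the formal transport-of-structure argument explicit.
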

\begin{proof}
Follows from proposition \ref{prop:classescar}.
\end{proof}

\noindent The challenge is then
\begin{itemize}
\item to understand $\Sigma^{B, const}_{\qit}$,
\item to describe the rational structure $\Sigma^{A, const}_{\qit}$ on the cohomology of the mirror partner using a mirror theorem and to get a rational 
structure $\Sigma^{A, quant}_{\qit}$ on $\ker \nabla^{A}$.
\end{itemize}

\section{Application: rational structure for weighted projective spaces and their Landau-Ginzburg models.}

\label{subsec:RatWPS}

We apply in this section the previous recipe for the weighted projective spaces $\ppit (1,w_{1},\cdots w_{n})$ and their Landau-Ginzburg models. 
The main results of this section are theorem \ref{theo:descPsiEll} (description of the rational structure on the $B$-side)
and its corollary, theorem \ref{theo:Rat} (description of the rational structure on the $A$-side). 

\subsection{$B$-side}

\label{subsubsec:Bside}
In what follows, we will use the notations of example \ref{exemplebasiqueB}.

\subsubsection{The setting. Combinatorics.}

Recall the function $F$ defined by
$$F(u_{1},\cdots ,u_{n},x)=u_{1}+\cdots +u_{n}+\frac{x}{u_{1}^{w_{1}}\cdots u_{n}^{w_{n}}}$$
on $U\times \cit^{*}$ where $U=(\cit^*)^{n}$ and $w_{1},\cdots ,w_{n}$ are positive integers. For each $x\in\cit^*$, the function
$$(u_{1},\cdots ,u_{n})\mapsto F(u_{1},\cdots ,u_{n},x)$$
has $\mu$ non-degenerate critical points\footnote{Recall that $\mu =1+w_{1}+\cdots +w_{n}$.}
with distinct critical values and satisfies the assumptions of the beginning of section
\ref{sec:RationalStructuresviaMirror}.\\

We will need the following combinatorial tools:
let 
\begin{equation}\label{eq:F}
{\cal F}=\{\frac{i}{w_{j}}|\ 0\leq i\leq w_{j}-1, \ 0\leq j\leq n\}=\{f_{0},\cdots ,f_{k}\}
\end{equation}
(we put $w_{0}=1$), the numbers $f_{\ell}$ satisfying $0=f_{0}<\cdots <f_{k}<1$. 
Define
\begin{equation}\label{eq:Il}
I_{\ell}=\{j\in [0,n], \ w_{j}f_{\ell}\in\zit \}
\end{equation}
and let $d_{\ell}$ be its cardinal. We will write 
\begin{equation}\label{eq:pell}
p_{\ell}=d_{0}+\cdots +d_{\ell}
\end{equation}
and $p_{-1}=0$. Last,
let $c_{0},c_{1},\cdots , c_{\mu -1}$ be the sequence
$$\underbrace{f_{0},\cdots ,f_{0}}_{d_{0}},\underbrace{f_{1},\cdots ,f_{1}}_{d_{1}},\cdots ,\underbrace{f_{k},\cdots ,f_{k}}_{d_{k}}$$
arranged in increasing order. This sequence can be described as follows (see \cite[p. 3]{DoSa2}): define inductively the 
sequence $(a(k), i(k))\in\nit^{n+1}\times \{0,\cdots ,n\}$ by $a(0)=(0,\cdots ,0)$ , $i(0)=0$ and
\begin{equation}\label{eq:a(i)}
a(k+1)=a(k)+ \mbox{\bf{1}}_{i(k)} \mbox{ where } i(k):=\min \{i|
a(k)_{i}/w_{i}=\min_{j} a(k)_{j}/w_{j}\}.
\end{equation} 
Then we have 
\begin{equation}\label{eq:ck}
c_{k}=a(k)_{i(k)}/w_{i(k)}.
\end{equation}
Notice that 
\begin{itemize}
\item $a(1)=(1,0,\cdots ,0)$, 
\item $a(n+1)=(1,\cdots ,1)$, 
\item $a(\mu )=(1, w_{1},\cdots , w_{n})$,
\item $\sum_{i=0}^{n}a(k)_{i}=k$. 
\end{itemize}

\subsubsection{A flat quantum differential system and a canonical fundamental solution}
\label{ex:PetiteCohQuantOrbi}

In order to apply the results of section \ref{sec:RationalStructure}, we need first a flat quantum differential system.
It is provided by section \ref{ex:PetiteCohQuantOrbi1}. Let us recall the setting:
example \ref{exemplebasiqueB} provides a basis 
$\omega =(\omega_0 ,\cdots ,\omega_{\mu -1})$
of the Brieskorn lattice in which the matrix 
of the Gauss-Manin connection takes the form (see equation (\ref{eq:CarEPP}))
\begin{equation}
(R-\frac{1}{\mu}A_0(x)\tau )\frac{dx}{x}-(A_{0}(x)\tau +A_{\infty})\frac{d\tau}{\tau}
\end{equation}
This basis is precisely defined as follows: 
we have, for $k=1,\cdots ,\mu-1$, 
\begin{equation}\label{eq:defOmega}
\omega_{k}=\frac{x}{w_{1}^{a(k)_{1}}\cdots w_{n}^{a(k)_{n}}}u_{1}^{a(k)_{1}-w_{1}}\cdots u_{n}^{a(k)_{n}-w_{n}}\omega_{0}
\end{equation}
where $g\omega_{0}$ denotes the class of 
$g\frac{du_{1}}{u_{1}}\wedge\cdots \wedge \frac{du_{n}}{u_{n}}$ in the Gauss-Manin system $G$ of $F$, see \cite{DoMa}. Notice that in this situation $G$ is a
free $\cit [x,x^{-1},\tau ,\tau^{-1}]$-module equipped with a connection $\partial_{\tau}$ and thus 
$H^{\Phi}_{n}(\cit )$ is a local system on $\cit^{*}\times M$, $M=(\cit )^{*}$. 
The flat quantum differential system alluded to is the following:
put 
\begin{equation}\label{eq:r}
r= \frac{1}{\lcm (w_{0},\cdots ,w_{n})} 
\end{equation}
and $\zeta = x^{r}$; the characteristic equation (\ref{eq:CarEPP}) takes the form (see equation (\ref{eq:carorb}))
\begin{equation}
\tau \tilde{M}_{1}(\zeta)\frac{d\zeta}{\zeta}-(\tilde{A}_{0}({\zeta})\tau +A_{\infty})\frac{d\tau}{\tau}
\end{equation}
in the basis $\widetilde{\omega}:=\omega x^{-R}$ of $G_{0}[\zeta ]$,
where\footnote{$w^{w}=w_{1}^{w_{1}}\cdots w_{n}^{w_{n}}$} 
$$\tilde{A}_{0}(\zeta)=\mu\left ( \begin{array}{cccccc}
0   & 0   & 0 & \cdots & 0   &  \frac{\zeta^{(1-c_{\mu -1})/r}}{w^{w}}\\
\zeta^{(c_{1}-c_{0})/r}   & 0   & 0 & \cdots & 0   & 0\\
0   &  \zeta^{(c_{2}-c_{1})/r} & 0 & \cdots & 0   & 0\\
..  & ... & . & \cdots & .   & .\\
..  & ... & . & \cdots & .   & .\\
0   & 0   & . & \cdots &  \zeta^{(c_{\mu -1}-c_{\mu -2})/r}  & 0
\end{array} \right )$$
and $\tilde{M}_{1}(\zeta)=-\frac{1}{r\mu}\tilde{A}_{0}(\zeta)$. 
Notice that the 
matrix $\tilde{M}_{1}(0)$ has $k+1$ Jordan blocks $B_{0},\cdots ,B_{k}$, all associated with the eigenvalue $0$, of respective size 
$d_{\ell}$ for $\ell =0, \cdots ,k$.

%The metric $S^{B}$ is defined, in the basis $\widetilde{\omega}$, by
%\footnote{Keeping in mind orbifold Poincar\'e duality, we choose the
%normalization
%$S^{B}(\omega_{0},\omega_{n})=\frac{1}{w_{0}\cdots w_{n}}\theta^{n}$.}
%\begin{equation}\label{eq:Sflat}
%S^{B}(\widetilde{\omega}_{k},\widetilde{\omega}_{\ell})=\left\{ \begin{array}{ll}
%\frac{1}{w_{0}\cdots w_{n}}\theta^{n} &  \mbox{if $0\leq k\leq n$ and $k+\ell =n$,}\\
%\frac{1}{w^{w}}\frac{1}{w_{0}\cdots w_{n}}\theta^{n} & \mbox{if  $n+1\leq k\leq \mu -1$ and $k+\ell =\mu +n$,}\\
%0 & \mbox{otherwise}
%\end{array}
%\right .
%\end{equation}

By proposition \ref{prop:sechorEPP}, the matrix
$P(\zeta ,\tau )=H(\zeta ,\tau )e^{-\tau \tilde{M}_{1}(0)\ln \zeta }$,
where $H(\zeta ,\tau )$ is the matrix associated with the characteristic equation (\ref{eq:carorb}) by lemma \ref{lemma:nonres},
is a canonical fundamental solution of the Dubrovin connection.
We are now able to apply the results of section \ref{sec:RationalStructure}.

\subsubsection{Definition of the rational structures $\Sigma^{B, const}_{\qit}$ and $\Sigma^{B, quant}_{\qit}$} 
Let us define 
$$I_{\Gamma}^{(i)}(\tau ,x )=\int_{\Gamma}e^{\tau F}x^{-c_{i}}\tilde{\omega}_{i}$$
where $\tilde{\omega}_{i}$ denotes a representative of $\omega_{i}$ in $\Omega^{n}((\cit^{*})^{n})[\tau ,\tau^{-1}]$ and $\Gamma\in H^{\Phi}_{n}(\qit )$.\\

\noindent We will denote by $\overline{i}$ the index defined by 
\begin{equation}\label{eq:overlinei}
\overline{i}=\left\{ \begin{array}{ll}
n-i &  \mbox{if $0\leq i\leq n$,}\\
\mu +n-i & \mbox{if  $n+1\leq i\leq \mu -1$ and $k+\ell =\mu +n$}
\end{array}
\right .
\end{equation}
and
\begin{equation}\label{eq:tildeI}
\tilde{I}^{(i)}(\tau ,x)=\left\{ \begin{array}{ll}
w I^{(\overline{i})}(\tau ,x ) &  \mbox{if $0\leq i\leq n$,}\\
w^{w+1}I^{(\overline{i})}(\tau ,x ) & \mbox{if  $n+1\leq i\leq \mu -1$}
\end{array}
\right .
\end{equation}
in order to take into account corollary \ref{coro:LT}, in the light of formula (\ref{eq:Sflat}). We put $w:=w_{1}\cdots w_{n}$ and 
$w^{w+1}:=w_{1}^{w_{1}+1}\cdots w_{n}^{w_{n}+1}$.

\begin{lemma}\label{lemma:SecHorOrbiCoteB} 
\begin{enumerate}
\item The section
$$(\omega_{0}x^{-c_{0}},\cdots , \omega_{\mu -1}x^{-c_{\mu -1}})\Psi_{\Gamma}(x,\tau )^{\intercal},$$
where 
\begin{equation}\label{eq:VectorPsi0}
\Psi_{\Gamma}(\tau ,x)=(-\tau )^{n}
(\tilde{I}_{\Gamma}^{(0)}(\tau ,x ),\cdots ,\tilde{I}_{\Gamma}^{(\mu -1)}(\tau ,x ))
\end{equation}
is a flat section of $\nabla^{B}$. 
\item We have
\begin{equation}\label{eq:VectorPsi}
\Psi_{\Gamma}(\tau ,\zeta )^{\intercal}=H(\zeta ,\tau )\exp(-\tau \tilde{M}_{1}(0)\ln \zeta)\Psi_{\Gamma}^{cl}(\tau )^{\intercal}
\end{equation}
where $H(0,\tau )=I$ and
\begin{equation}\label{eq:VectorPsicl}
\Psi_{\Gamma}^{cl}(\tau )^{\intercal}=(-\tau )^{A_{\infty}}(-\tau )^{\tilde{A}_{0}(0)}(\Psi_{\Gamma}^{const})^{\intercal},
\end{equation}
$\Psi_{\Gamma}^{const}$ being a constant vector, the superscript $^{\intercal}$ denoting the transpose vector. 
\end{enumerate}
\end{lemma}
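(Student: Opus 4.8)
The plan is to prove Lemma \ref{lemma:SecHorOrbiCoteB} by assembling two ingredients already established in the paper: the general oscillating-integral description of flat sections from Corollary \ref{coro:LT}, and the explicit form of flat sections of a flat logarithmic quantum differential system from Corollary \ref{coro:sechor} (equivalently Proposition \ref{prop:classescar}). First I would treat part (1). Recall from Corollary \ref{coro:LT} that for a basis $\omega$ adapted to $S^{B}$, the map $\Gamma\mapsto (-\tau)^{n}\sum_{i}I_{\Gamma}^{(i)}(\tau,x)\omega^{i}$ sends $H^{\Phi}_{n}(\cit)$ isomorphically onto $\ker\nabla^{B}$, where $\omega^{i}$ is the $S^{B}$-dual of $\omega_{i}$. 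The task is essentially bookkeeping: I must match the abstract dual basis $\omega^{i}$ against the concrete basis $\omega_{i}x^{-c_{i}}$ appearing here, and check that the combinatorial rescalings $\tilde I^{(i)}$ defined in (\ref{eq:tildeI}) are exactly what is needed to absorb the normalization constants $\tfrac{1}{w_{1}\cdots w_{n}}$ and $\tfrac{1}{w^{w}w_{1}\cdots w_{n}}$ from the pairing (\ref{eq:Sflat}).

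Concretely, I would unwind the definition of the dual: by formula (\ref{eq:Sflat}) the basis $\widetilde\omega=\omega x^{-R}$ (and hence, after clearing the $x^{-c_i}$ factors, the basis $\omega_{i}x^{-c_{i}}$) is orthogonal for $S^{B}$ with the pairing of $\omega_{i}$ and $\omega_{\overline{i}}$ equal to $\tfrac{1}{w_{1}\cdots w_{n}}\theta^{n}$ for $0\le i\le n$ and $\tfrac{1}{w^{w}}\tfrac{1}{w_{1}\cdots w_{n}}\theta^{n}$ for $n+1\le i\le\mu-1$, with $\overline i$ as in (\ref{eq:overlinei}). Therefore the dual section $\omega^{i}$ of $\omega_{i}x^{-c_i}$ is $w_{1}\cdots w_{n}$ (resp. $w^{w}w_{1}\cdots w_{n}$) times $\omega_{\overline i}x^{-c_{\overline i}}$. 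Substituting this into the formula of Corollary \ref{coro:LT} and relabelling the summation index $i\mapsto\overline i$ reproduces exactly $(\omega_{0}x^{-c_0},\dots,\omega_{\mu-1}x^{-c_{\mu-1}})\Psi_{\Gamma}(\tau,x)^{\intercal}$ with $\Psi_{\Gamma}$ given by (\ref{eq:VectorPsi0}), because the weights $w=w_{1}\cdots w_{n}$ and $w^{w+1}=w_{1}^{w_{1}+1}\cdots w_{n}^{w_{n}+1}$ in (\ref{eq:tildeI}) are precisely the reciprocals of the $S^{B}$-pairing constants times the factors $x^{-c_i}$ that convert $I^{(\overline i)}$ into $\tilde I^{(i)}$. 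I expect this matching of constants to be the most error-prone (though not conceptually deep) step, so I would lay out the pairing table carefully and verify the two index ranges separately, paying attention to the extra power of $w^{w}$ in the second range.

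For part (2), the section produced in (1) is a flat section of $\nabla^{B}$, and the quantum differential system is flat and logarithmic with the canonical fundamental solution $P(\zeta,\tau)=H(\zeta,\tau)e^{-\tau\tilde M_{1}(0)\ln\zeta}$ exhibited in Proposition \ref{prop:sechorEPP} (with $H(0,\tau)=I$). By Corollary \ref{coro:sechor}, every flat section of $\nabla$ has the form $\omega\,\Psi$ with $\Psi(\zeta,\tau)=H(\zeta,\tau)e^{-\tau\tilde M_{1}(0)\ln\zeta}\Psi^{cl}(\tau)$, and since $P$ is conformal, $\Psi^{cl}(\tau)=\tau^{-N_{0}(0)}\tau^{-N_{1}(0)}\Psi^{const}$; here $N_{0}(0)=-A_{\infty}$ and $N_{1}(0)=-\tilde A_{0}(0)$ in the notation $N(\zeta,\tau)=-(\tilde A_{0}(\zeta)\tau+A_{\infty})$ of the characteristic equation (\ref{eq:carorb}), so $\Psi^{cl}(\tau)=\tau^{A_{\infty}}\tau^{\tilde A_{0}(0)}\Psi^{const}$. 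Applying this decomposition to the particular flat section of part (1) gives (\ref{eq:VectorPsi}) and (\ref{eq:VectorPsicl}); the sign $(-\tau)$ rather than $\tau$ is accounted for by the twist $\tau\mapsto-\tau$ in the definition of $\nabla^{B,*}$ (footnote to $\nabla^{B,*}$, reflecting the kernel $e^{\tau F}$), exactly as the $(-\tau)^{n}$ prefactor already present in (\ref{eq:VectorPsi0}). I would close by noting that $\Psi^{const}_{\Gamma}$ is constant because it is the image of the fixed homology class $\Gamma$ under the composite isomorphisms $\ker\nabla^{B}\to{\cal G}^{cl,\nabla^{cl}}\to{\cal G}^{const}$ of Proposition \ref{prop:classescar}. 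The only genuine obstacle is ensuring the commutativity and the exact eigenvalue conventions (the identifications $N_{0}(0)=-A_{\infty}$, $N_{1}(0)=-\tilde A_{0}(0)$, and the block-commutation $[A_{\infty},\tilde M_{1}(0)]=\tilde M_{1}(0)$ that makes $P$ conformal), which all follow from the relations in Theorem \ref{cor:casvariete} and Proposition \ref{prop:sechorEPP}.
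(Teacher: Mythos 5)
Your proof is correct and follows exactly the paper's own route: part (1) is Corollary \ref{coro:LT} combined with the pairing table (\ref{eq:Sflat}) to identify the duals $\omega^{i}$ with the rescaled integrals $\tilde I^{(i)}$, and part (2) is Proposition \ref{prop:classescar} (equivalently Corollary \ref{coro:sechor}) applied to the canonical fundamental solution of Proposition \ref{prop:sechorEPP}. The paper's proof is just a two-line citation of these same ingredients, so your write-up is essentially an expanded version of it.
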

\begin{proof} 
The first assertion follows from corollary \ref{coro:LT} and formula (\ref{eq:Sflat}).
The second one is then a consequence of proposition \ref{prop:classescar}.
\end{proof}

\begin{corollary}\label{coro:Bconst}
The rational structure $\Sigma^{B,const}_{\qit}$ is the $\qit$-vector subspace of ${\cal G}^{B, const}$ generated by the vectors $\Psi_{\Gamma}^{const}$,
$\Gamma\in H^{\Phi}_{n}(\qit )$.
\end{corollary}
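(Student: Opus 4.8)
The plan is to read the statement off directly from the definitions by tracking a single flat section $\Psi_{\Gamma}$ through the chain of isomorphisms already assembled. The rational structure $\Sigma^{B,const}_{\qit}$ was \emph{defined} (Corollary \ref{cor:RealStructureLT} (1)) as the image of $\Sigma^{B,quant}_{\qit}$ under the composite isomorphism ${\cal G}^{\nabla^{B}}\to{\cal G}^{cl,\nabla^{cl}}\to{\cal G}^{B,const}$ of Proposition \ref{prop:classescar}, so the whole content is to exhibit generators of this image.

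First I would recall, from Definition \ref{def:RealStructureLT} together with the isomorphism $\Psi$ of Corollary \ref{coro:LT}, that $\Sigma^{B,quant}_{\qit}$ is precisely the $\qit$-vector subspace of $\ker\nabla^{B}$ spanned by the flat sections $\Psi_{\Gamma}$ as $\Gamma$ ranges over $H^{\Phi}_{n}(\qit)$. Next I would apply Lemma \ref{lemma:SecHorOrbiCoteB}: formulas (\ref{eq:VectorPsi}) and (\ref{eq:VectorPsicl}) are exactly the factorization of $\Psi_{\Gamma}$ prescribed by Proposition \ref{prop:classescar}, with $H(0,\tau)=I$, so the image of $\Psi_{\Gamma}$ under the composite isomorphism is the constant vector $\Psi_{\Gamma}^{const}$. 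Since the image of a spanning set under an isomorphism is again a spanning set, $\Sigma^{B,const}_{\qit}$ is spanned over $\qit$ by $\{\Psi_{\Gamma}^{const}\mid\Gamma\in H^{\Phi}_{n}(\qit)\}$, which is the claim.

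The one point that deserves care — and which I expect to be the only genuine obstacle — is the $\qit$-linearity of the assignment $\Gamma\mapsto\Psi_{\Gamma}^{const}$, since otherwise the $\qit$-span of the generators need not coincide with the image of the $\qit$-lattice $\Sigma^{B,quant}_{\qit}$. This linearity holds because the oscillating integrals $I^{(i)}_{\Gamma}$ are additive in the homology class $\Gamma$ and homogeneous under rational rescaling of $\Gamma$, so $\Gamma\mapsto\Psi_{\Gamma}$ is $\qit$-linear; the subsequent passage to $\Psi_{\Gamma}^{const}$ is induced by the fixed $\cit$-linear isomorphisms of Proposition \ref{prop:classescar} (extraction of the constant term after the explicit $\tau$- and $x$-twists $H$, $e^{-\tau\tilde{M}_{1}(0)\ln\zeta}$, $(-\tau)^{A_{\infty}}(-\tau)^{\tilde{A}_{0}(0)}$), which preserve $\qit$-linear combinations. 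I would therefore only need to note that these twists do not mix the rational coefficients, after which the corollary is immediate from Lemma \ref{lemma:SecHorOrbiCoteB}.
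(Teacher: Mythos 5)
Your proof is correct and follows essentially the same route as the paper, which simply invokes Definition \ref{def:RealStructureLT} and Corollary \ref{cor:RealStructureLT}: the rational structure $\Sigma^{B,const}_{\qit}$ is by definition the image under the isomorphisms of Proposition \ref{prop:classescar} of the $\qit$-span of the $\Psi_{\Gamma}$, and Lemma \ref{lemma:SecHorOrbiCoteB} identifies that image with the span of the $\Psi_{\Gamma}^{const}$. Your extra remark on the $\qit$-linearity of $\Gamma\mapsto\Psi_{\Gamma}^{const}$ is a sound (if implicit in the paper) point of care, not a deviation.
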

\begin{proof}
Use definition \ref{def:RealStructureLT} and corollary \ref{cor:RealStructureLT}.
\end{proof}

\noindent Recall that $\Sigma^{B,const}_{\qit}$ determines the rational structure $\Sigma^{B, quant}_{\qit}$ of definition \ref{def:RealStructureLT}.

\subsubsection{Description of $\Sigma^{B, const}_{\qit}$} 
In order to get an explicit description of $\Sigma^{B, const}_{\qit}$,
 we thus have to compute the vectors $\Psi_{\Gamma}^{const}$.
First, the identification of the Lefschetz thimbles can be done as in \cite[1.5, p.323]{Ph1}:
let
$$\Gamma_{0} =\rit^{*}_{+}\times\cdots \times \rit^{*}_{+}\subset (\cit^{*})^{n}.$$
This is a Lefschetz thimble \footnote{Strictly speaking, a section over $\rit^{*}_{-}\times\rit^{*}_{+}$ of such a cycle : notice that, for $x>0$, $\Gamma_{0}$ contains one critical point, namely 
$(\frac{x}{w^{w}})^{1/\mu}(w_{1},\cdots ,w_{n})$ and $F|_{\Gamma_{0}}$ is proper and takes values in $[\mu(\frac{x}{w^{w}})^{1/\mu}, +\infty[$.}
and other such cycles
are $\Gamma_{\ell}:=\epsilon^{\ell}\Gamma_{0}$ where, as above, $\epsilon$ is a $\mu$-th primitive root of $1$.
According to lemma \ref{lemma:SecHorOrbiCoteB}, 
the section $\Gamma_{\ell}$ determines a classical vector 
$$\Psi_{\Gamma_{\ell}}^{cl}:=(\Psi_{1,\ell }^{cl,0}, \cdots , \Psi_{n+1 ,\ell}^{cl,0},\Psi_{1,\ell }^{cl, 1},\cdots , \Psi_{d_{1} ,\ell}^{cl,1},
\cdots,\Psi_{1,\ell }^{cl,k},\cdots , \Psi_{d_{k} ,\ell}^{cl,k} )$$
and a constant vector
$$\Psi_{\Gamma_{\ell}}^{const}:=(\Psi_{1,\ell }^{0}, \cdots , \Psi_{n+1 ,\ell}^{0},\Psi_{1,\ell }^{1},\cdots , \Psi_{d_{1} ,\ell}^{1},
\cdots,\Psi_{1,\ell }^{k},\cdots , \Psi_{d_{k} ,\ell}^{k} ).$$
In both cases, the superscripts recall the Jordan blocks. The description of $\Sigma^{B, const}_{\qit}$ is then given by the following theorem: 
the first and the second part say that it is enough to compute the $\Psi_{i,0}^{j}$'s and this is done in the third part using a Mellin transform 
and a trick already used in \cite{KKP}.

\begin{theorem}\label{theo:descPsiEll}
 \begin{enumerate}
\item The rational structure $\Sigma^{B,const}_{\qit}$ is the $\qit$-vector subspace of ${\cal G}^{B, const}$ generated by the vectors 
\begin{equation}\label{eq:VecteursConstants}
\Psi_{\Gamma_{\ell}}^{const}:=(\Psi_{1,\ell }^{0}, \cdots , \Psi_{n+1 ,\ell}^{0},\Psi_{1,\ell }^{1},\cdots , \Psi_{d_{1} ,\ell}^{1},
\cdots,\Psi_{1,\ell }^{k},\cdots , \Psi_{d_{k} ,\ell}^{k} ),
\end{equation}
$\ell =0,\cdots ,\mu -1$.
\item We have
\begin{equation}\label{eq:PsiEll}
\Psi_{i,\ell }^{j}=e^{2i\pi \ell f_{j}}\sum_{m=0}^{i-1}\frac{(-2i\pi \ell)^{m}}{m!}\Psi_{i-m,0}^{j}
\end{equation}
for $j=0,\cdots, k$, $i=1,\cdots ,d_{j}$ and $\ell =0,\cdots,\mu -1$. 
\item We have, for $j=1,\cdots ,k$,
\begin{eqnarray}\label{eq:classecarorbifinal}\nonumber
b_{j}\Gamma (rs+f_{j})\prod_{m=1}^{n}\Gamma (w_{m}(rs-(1-f_{j}))+a(\mu +n-p_{j}+1)_{m})\\
=\sum_{m=0}^{d_{j}-1}(rs)^{-m-1}\Psi_{d_{j}-m,0}^{j}+O(1)
\end{eqnarray}
as $s\rightarrow 0$, where the sequence $(a(k))$ is defined by formula (\ref{eq:a(i)}), the number $r$ by formula (\ref{eq:r}) and
\begin{equation}\label{eq:bj}
 b_{j}=\frac{w_{1}^{w_{1}+1}\cdots w_{n}^{w_{n}+1}}{w_{1}^{a(\mu +n-p_{j}+1)_{1}}\cdots w_{n}^{a(\mu +n-p_{j}+1)_{n}}}
\end{equation}
For $j=0$, we have 
\begin{equation}\label{eq:classecarorbifinal0}
w_{1}\cdots w_{n} \prod_{m=0}^{n}\Gamma (w_{m}rs)
=\sum_{m=0}^{n}(rs)^{-m-1}\Psi_{n+1-m,0}^{0}+O(1)
\end{equation}
as $s\rightarrow 0$.
\end{enumerate}
\end{theorem}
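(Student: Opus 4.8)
The plan is to establish the three assertions in turn, the first two being essentially formal and the third requiring the Mellin computation. For (1), I would first note that the thimbles $\Gamma_{\ell}=\epsilon^{\ell}\Gamma_{0}$, $\ell=0,\dots,\mu-1$, generate $H^{\Phi}_{n}(\qit)$ over $\qit$: this group is free of rank $\mu$ by tameness, and multiplication by the primitive $\mu$-th root $\epsilon$ permutes cyclically the $\mu$ nondegenerate critical points (with distinct critical values) of $F(\bullet,x)$, hence the associated thimbles span. Since $\Gamma\mapsto\Psi^{const}_{\Gamma}$ is $\qit$-linear (corollary \ref{coro:Bconst}), the images of this generating family generate $\Sigma^{B,const}_{\qit}$, which is the claim.

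For (2), the crucial point is the quasi-homogeneity $F(\epsilon^{\ell}u,x)=\epsilon^{\ell}F(u,x)$, valid because $w_{1}+\cdots+w_{n}=\mu-1$ and $\epsilon^{\mu}=1$. Performing the change of variables $u=\epsilon^{\ell}v$ in the integral $\tilde{I}^{(i)}_{\Gamma_{\ell}}$ turns the scaling of $F$ into a rescaling of $\tau$ and extracts a monomial factor from $\tilde{\omega}_{i}$; tracking these factors shows that passing from $\Gamma_{0}$ to $\Gamma_{\ell}$ is realized on the constant vectors by the $\ell$-th power of the monodromy around $x=0$. On the block $j$ this monodromy is $e^{2i\pi f_{j}}$ times the unipotent $\exp(-2i\pi\ell N_{j})$, where $N_{j}$ is the nilpotent shift recorded by $\tilde{M}_{1}(0)$ (whose $j$-th Jordan block has size $d_{j}$). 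Expanding the unipotent exponential produces exactly the finite sum of formula (\ref{eq:PsiEll}), the semisimple eigenvalue $e^{2i\pi f_{j}}$ appearing as the global prefactor.

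For (3), take $x>0$ and $\tau$ such that $e^{\tau F}$ decays on $\Gamma_{0}=(\rit^{*}_{+})^{n}$, and compute the Mellin transform of $\tilde{I}^{(i)}_{\Gamma_{0}}$. Using the quasi-homogeneity to isolate the overall scaling variable and the factorization of the multiplicative measure on $(\rit^{*}_{+})^{n}$, each of the $n+1$ monomials of $F$ — the $n$ linear terms $u_{m}$ together with the single term $x\,u_{1}^{-w_{1}}\cdots u_{n}^{-w_{n}}$ — contributes one Gamma factor through the elementary formula $\int_{0}^{\infty}e^{-t}t^{s}\,dt/t=\Gamma(s)$. The exponents are fixed by the monomial $u^{a(i)-w}$ in $\tilde{\omega}_{i}$ (see formula (\ref{eq:defOmega})) and by the normalizations of formula (\ref{eq:tildeI}), and the product attached to block $j$ is precisely the left-hand side of (\ref{eq:classecarorbifinal}). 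One then reads off the behaviour as $s\to 0$: exactly $d_{j}$ of these factors are singular there (namely those indexed by $m\in I_{j}$, the index $m=0$ corresponding to the factor $\Gamma(rs+f_{j})$, which is singular precisely when $f_{j}=0$), so the product has a pole of order $d_{j}$. Matching its Laurent expansion in $rs$ against the expansion dictated by $\Psi^{cl}(\tau)=(-\tau)^{A_{\infty}}(-\tau)^{\tilde{A}_{0}(0)}\Psi^{const}$ (formula (\ref{eq:VectorPsicl})), in which the nilpotent $(-\tau)^{\tilde{A}_{0}(0)}$ produces the powers of $\log(-\tau)$ dual to the poles $(rs)^{-m-1}$, identifies the coefficients with the $\Psi^{j}_{d_{j}-m,0}$. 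The block $j=0$, where $I_{0}=\{0,\dots,n\}$ and all $n+1$ factors degenerate to $\Gamma(w_{m}rs)$, yields (\ref{eq:classecarorbifinal0}) in the same way.

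The main obstacle is step (3), and within it the precise matching between the poles of the Gamma product and the constant-vector components through the normalization $(-\tau)^{A_{\infty}}(-\tau)^{\tilde{A}_{0}(0)}$. Pinning down the combinatorial exponents $a(\mu+n-p_{j}+1)_{m}$, the shift by $1-f_{j}$, and the constant $b_{j}$ of (\ref{eq:bj}), while keeping track of the normalizing factors $w$ and $w^{w+1}$, is where the real effort lies; the rest of the argument is formal.
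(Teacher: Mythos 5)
Your proposal is correct and follows essentially the same route as the paper's proof: part (1) by combining corollary \ref{coro:Bconst} with the fact that the thimbles $\Gamma_{\ell}=\epsilon^{\ell}\Gamma_{0}$ generate $H^{\Phi}_{n}(\qit)$, part (2) by the quasi-homogeneity of $F$ (the change of variables $u\mapsto\epsilon^{\ell}u$ rescaling $\tau$) read against the explicit form $\Psi^{cl}(\tau)=(-\tau)^{A_{\infty}}(-\tau)^{\tilde{A}_{0}(0)}\Psi^{const}$, and part (3) by the Mellin transform in the parameter, which factors the oscillating integral over $\Gamma_{0}=(\rit^{*}_{+})^{n}$ into the product of Gamma functions whose Laurent expansion at $s=0$ is matched against the logarithmic terms generated by the nilpotent block. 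The only cosmetic difference is that you gloss the part (2) computation as the $x$-monodromy acting on constant vectors, whereas the paper keeps the rotation in the $\tau$-variable; the underlying calculation is identical.
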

\begin{proof} 1. This is corollary \ref{coro:Bconst}.\\
2. Let $\epsilon$ be a $\mu$-th primitive root of $1$. From the homogeneity condition
$\epsilon^{-\ell}F(u,\zeta )=F(\epsilon^{-\ell}u,\zeta )$,
we first get, using formula (\ref{eq:defOmega}) and the fact that $\sum_{i=0}^{n}a(s)_{i}=s$, 
$$\int_{\epsilon^{\ell}\Gamma_{0}(\epsilon^{-\ell}\tau ,\zeta )}e^{\epsilon^{-\ell}\tau F (u,\zeta )}\tilde{\omega}_{s}=
\epsilon^{\ell s}\int_{\Gamma_{0}(\tau ,\zeta)}e^{\tau F(u, \zeta)}\tilde{\omega}_{s}$$
for $s=0,\cdots ,\mu -1$.
Let $\imath =p_{j-1}+i-1$, for $j\in \{0,\cdots, k\}$ and $i\in\{1,\cdots ,d_{j}\}$. We thus have, using moreover 
equations (\ref{eq:VectorPsi0}) and (\ref{eq:VectorPsi}), 
$$ \sum_{m=0}^{i-1}\Psi_{i-m,\ell}^{cl,j}(\epsilon^{-\ell}\tau)\frac{\ln^{m} \zeta}{m!}(\frac{\epsilon^{-\ell}\tau}{r})^{m}=
\epsilon^{-\ell n}\epsilon^{\ell\overline{\imath}}\sum_{m=0}^{i-1}\Psi_{i-m,0}^{cl,j}(\tau)\frac{\ln^{m} \zeta}{m!}(\frac{\tau}{r})^{m},$$
see equation (\ref{eq:overlinei}) for the definition of $\overline{\imath}$.
It follows that
$$\Psi_{i,\ell}^{cl,j}(\epsilon^{-\ell}\tau)=\epsilon^{-\ell n}\epsilon^{\ell\overline{\imath}}\Psi_{i,0}^{cl,j}(\tau)$$
Now, the eigenvalue $\alpha_{i}$ of $A_{\infty}$ satisfies 
$$\alpha_{\imath}+\overline{\imath}-n=\alpha_{\imath}-\imath =-\mu f_{j}$$
if $0\leq i\leq n$ and 
$$\alpha_{\imath}+\overline{\imath}-n=\alpha_{\imath}-\imath +\mu  =\mu (1-f_{j})$$
otherwise.
We deduce, using equation (\ref{eq:VectorPsicl}) and the fact that $\tilde{A}_{0}(0)=-\mu \tilde{M}_{1}(0)$ (see section \ref{ex:PetiteCohQuantOrbi}),
$$ \sum_{m=0}^{i-1}\Psi_{i-m,\ell}^{j}\frac{(\mu\ln (-\tau ) +2i\pi\ell )^{m}}{m!}=
e^{2i\pi \ell f_{j}}\sum_{m=0}^{i-1}\Psi_{i-m,0}^{j}\frac{\mu^{m}\ln^{m} (-\tau )}{m!}$$
3. We have, for fixed indices $i$ and $v$, using again formula (\ref{eq:defOmega}),
\begin{equation}\nonumber
(-\tau) ^{n}\int_{0}^{+\infty}\int_{\Gamma_{0}}e^{\tau F}\zeta^{-f_{v}/r}\tilde{\omega}_{i}\zeta^{s}\frac{d\zeta}{\zeta}
\end{equation}
\begin{equation}\nonumber
=\frac{1}{w_{1}^{a(i)_{1}}\cdots w_{n}^{a(i)_{n}}}
(-\tau)^{-\mu (rs-f_{v})-i+n}
r\Gamma (rs+1-f_{v})\prod_{m=1}^{n}\Gamma (w_{m}(rs-f_{v})+a(i)_{m}).
\end{equation}
and we thus get, taking into account formulas (\ref{eq:tildeI}), (\ref{eq:VectorPsi0}), (\ref{eq:VectorPsi}) and using regularization,
$$b_{j}(-\tau)^{-\mu (rs+1-f_{j})+p_{j}-1}
\Gamma (rs+f_{j})\prod_{m=1}^{n}\Gamma (w_{m}(rs-(1-f_{j}))+a(\mu +n-p_{j}+1)_{m})$$
$$=\sum_{m=0}^{d_{j}-1}(rs)^{-m-1}(-\tau)^{m} \Psi_{d_{j }-m,0}^{j}(\tau )+O(1)$$
The expected equality follows putting $\tau =-1$. Similar computations for the case $j=0$. Notice that we have used here the fact that the good differential forms 
to consider are the $\zeta^{-f_{v}/r}\tilde{\omega}_{i}$'s (and not only the $\tilde{\omega}_{i}$'s).
\end{proof}

\noindent  Using an expansion in power series\footnote{Notice that, by the very definition,
$-w_{m}(1-f_{j})+a(\mu +n-p_{j}+1)_{m}\geq 0$.},
we see that the numbers $\Psi_{m,0}^{j}$, $j=1,\cdots ,k$, $m=1,\cdots ,d_{j}$, are determined by equation (\ref{eq:classecarorbifinal})
while the numbers $\Psi_{m,0}^{0}$, $j=0,\cdots ,k$, $m=1,\cdots ,n+1$, are determined by equation (\ref{eq:classecarorbifinal0}).\\

We have the following closed formula for the $\Psi_{1,\ell}^{j}$'s: let
\begin{equation}\label{eq:Cj}
C_j =\{m\in [1,n],\ w_m (1-f_j ) = a(\mu +n-p_{j}+1)_{m}\}
\end{equation}
for $j=0,\cdots ,k$.

\begin{corollary}\label{coro:psiNonNuls}
We have 
\begin{equation}
\Psi_{1,0}^{j}=b_{j}\prod_{m\in C_j}\frac{1}{w_{m}}\prod_{m=0}^{n}\Gamma (1-\{w_{m}f_{j}\})
\end{equation}
for $j=1,\cdots ,k$ where the $b_{j}$'s are defined by formula (\ref{eq:bj}) and $\{r\}=\lceil r\rceil -r$.
We have also $\Psi_{1,0}^{0}=1$ . In particular $\Psi_{1,\ell}^{j}\neq 0$ for $j =0,\cdots ,k$ and $\ell =0,\cdots ,\mu -1$.
\end{corollary}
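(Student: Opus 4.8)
The plan is to read off $\Psi_{1,0}^{j}$ as the most singular Laurent coefficient of the left hand side of (\ref{eq:classecarorbifinal}) at $s=0$, since on the right hand side the monomial $(rs)^{-d_{j}}$ (obtained for $m=d_{j}-1$) carries exactly the coefficient $\Psi_{1,0}^{j}$. First I would expand each Gamma factor near $s=0$ using $\Gamma(z)=z^{-1}+O(1)$: a factor $\Gamma(w_{m}(rs-(1-f_{j}))+a(\mu+n-p_{j}+1)_{m})$ is singular at $s=0$ precisely when its value there, namely $a(\mu+n-p_{j}+1)_{m}-w_{m}(1-f_{j})$, is a non-positive integer; by the footnote this quantity is $\geq 0$, so the factor is singular if and only if it vanishes, i.e. if and only if $m\in C_{j}$ (see (\ref{eq:Cj})). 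Since $f_{j}\in(0,1)$ for $j\geq 1$, the leading factor $\Gamma(rs+f_{j})$ is regular, so the pole order of the left hand side equals $|C_{j}|$.

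The main combinatorial input, which I would isolate as a lemma, is the identity $a(\mu+n-p_{j}+1)_{m}=w_{m}-\lfloor w_{m}f_{j}\rfloor$ for $1\leq m\leq n$, proved directly from the recursion (\ref{eq:a(i)})--(\ref{eq:ck}) defining the sequence $a(k)$. Granting it and setting $b_{m}:=a(\mu+n-p_{j}+1)_{m}-w_{m}(1-f_{j})=w_{m}f_{j}-\lfloor w_{m}f_{j}\rfloor$, one sees that $b_{m}=0$ exactly when $w_{m}f_{j}\in\zit$, i.e. exactly when $m\in I_{j}$ (see (\ref{eq:Il})); hence $C_{j}=I_{j}$ and $|C_{j}|=d_{j}$, matching the pole order $d_{j}$ carried by the right hand side. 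This simultaneously guarantees that the coefficient of $(rs)^{-d_{j}}$ is genuinely nonzero and not merely the top slot of a shorter principal part.

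Collecting the leading terms, each $m\in C_{j}$ contributes $\Gamma(w_{m}rs)\sim (w_{m}rs)^{-1}$, so their product yields $\big(\prod_{m\in C_{j}}w_{m}^{-1}\big)(rs)^{-d_{j}}$, while the remaining factors are regular and evaluated at $s=0$. Thus the coefficient of $(rs)^{-d_{j}}$ is $b_{j}\prod_{m\in C_{j}}w_{m}^{-1}$ times $\Gamma(f_{j})\prod_{m\in[1,n]\setminus C_{j}}\Gamma(b_{m})$. Using $b_{m}=w_{m}f_{j}-\lfloor w_{m}f_{j}\rfloor=1-\{w_{m}f_{j}\}$ for $m\notin C_{j}$, the equality $\Gamma(f_{j})=\Gamma(1-\{w_{0}f_{j}\})$ coming from $w_{0}=1$ and $f_{j}\in(0,1)$, and the fact that $\Gamma(1-\{w_{m}f_{j}\})=\Gamma(1)=1$ for $m\in C_{j}$, this regular part is exactly $\prod_{m=0}^{n}\Gamma(1-\{w_{m}f_{j}\})$, which gives the stated formula for $j=1,\dots,k$. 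The case $j=0$ is handled identically from (\ref{eq:classecarorbifinal0}): the left hand side behaves like $w_{1}\cdots w_{n}\prod_{m=0}^{n}(w_{m}rs)^{-1}=(rs)^{-(n+1)}$, so $\Psi_{1,0}^{0}=1$.

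Finally, nonvanishing is immediate: for every $m$ one has $1-\{w_{m}f_{j}\}\in(0,1]$, a range on which $\Gamma$ is finite and strictly positive, and $b_{j}$ together with $\prod_{m\in C_{j}}w_{m}^{-1}$ are positive, so $\Psi_{1,0}^{j}\neq 0$ (indeed $>0$) for all $j$. The general case follows from (\ref{eq:PsiEll}) with $i=1$, which collapses to $\Psi_{1,\ell}^{j}=e^{2i\pi\ell f_{j}}\Psi_{1,0}^{j}$; since $|e^{2i\pi\ell f_{j}}|=1$, this gives $\Psi_{1,\ell}^{j}\neq 0$ for all $\ell=0,\dots,\mu-1$. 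I expect the only real obstacle to be the combinatorial identity $a(\mu+n-p_{j}+1)_{m}=w_{m}-\lfloor w_{m}f_{j}\rfloor$: everything else is Gamma-function bookkeeping, but this identity is what pins down both the exact pole order and the values of the surviving Gamma factors, and it must be extracted carefully from the recursive definition of the $a(k)$ and the ordering conventions relating the $c_{k}$ to the $f_{\ell}$.
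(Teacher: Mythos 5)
Your proposal is correct and follows essentially the same route as the paper: both read off $\Psi_{1,0}^{j}$ as the coefficient of $(rs)^{-d_j}$ in the expansion of (\ref{eq:classecarorbifinal}) at $s=0$, identify the singular Gamma factors with the set $C_j$ of cardinality $d_j$, extract $\prod_{m\in C_j}w_m^{-1}$ from their residues, and convert the remaining regular factors via the floor/ceiling identities $a(\mu+n-p_j+1)_m=w_m-\lfloor w_mf_j\rfloor$ (stated in the paper as the equivalent pair of identities for $m\in C_j$ and $m\notin C_j$). The only difference is cosmetic: you isolate that combinatorial identity as a lemma to be checked from the recursion (\ref{eq:a(i)}), while the paper invokes it as following from the definitions.
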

\begin{proof} The first formula follows from formula (\ref{eq:classecarorbifinal}): because the cardinal 
of $C_j$ is precisely equal to $d_{j}$ for $j=1,\cdots ,k$, by the very definition of $d_{j}$ and formula (\ref{eq:ck}), we first deduce that
\begin{equation}
\Psi_{1,0}^{j}=b_{j}\Gamma (f_{j})\prod_{m\notin C_j}\Gamma (w_{m}f_{j}-w_{m}+a(\mu +n-p_{j}+1)_{m})
\prod_{m\in C_j}\frac{1}{w_{m}}
\end{equation}
Now, and by definition, we have 
\begin{itemize}
 \item $\lceil f_{j}w_{m} \rceil= w_{m}+1- a(\mu +n-p_{j}+1)_{m}$ if $m\notin C_{j}$,
 \item $f_{j}w_{m} = w_{m}- a(\mu +n-p_{j}+1)_{m}$ if $m\in C_{j}$.
\end{itemize}
The assertion follows.
For $j=0$, use formula  (\ref{eq:classecarorbifinal0}). 
Last, $\Psi_{1,\ell}^{j}\neq 0$ because $\Psi_{1,\ell}^{j}=e^{2i\pi\ell f_{j}}\Psi_{1,0}^{j}$.
\end{proof}

\begin{example} Assume that $w_{1}=\cdots =w_{n}=1$. Equation (\ref{eq:classecarorbifinal0}) is
$$\Gamma (s+1)^{n+1}=\sum_{m=0}^{n}s^{m}\Psi_{m+1, 0}^{0}+O(s^{n+1})$$
and we get
$$\Psi_{m+1, 0}^{0}=\frac{1}{m!}[\frac{d^{m}}{ds^{m}}\Gamma (s+1)^{n+1}]_{|s=0}.$$
In particular $\Psi_{1, 0}^{0}=1$ and $\Psi_{2, 0}^{0}=-(n+1)\gamma$ where $\gamma$ is the Euler 
constant.
\end{example}

\begin{example}\label{ex:P123P122}
%(1) Let $n=2$ and $(w_0 , w_1 , w_2 )=(1,1,2)$. Then $\mu =4$, $f_{0}=0$ and $d_{0}=3$, $f_{1}=\frac{1}{2}$ and $d_{1}=1$. We have
%$$a(0)=(0,0,0),\ a(1)=(1,0,0),\ a(2)=(1,1,0),\ \mbox{and}\ a(3)=(1,1,1).$$ 
%Thus 
%$$\Psi_{1, 0}^{1}=2\ \Gamma (\frac{1}{2})^{2}\ \mbox{and}\ \Psi_{1,0}^{0}=1.$$
(1)  Let $n=2$ and $(w_0 , w_1 , w_2 )=(1,2,3)$. Then $\mu =6$, $f_{0}=0$ and $d_{0}=3$, $f_{1}=\frac{1}{3}$ and $d_{1}=1$,
$f_{2}=\frac{1}{2}$ and $d_{2}=1$, $f_{3}=\frac{2}{3}$ and $d_{3}=1$. We have
$$a(0)=(0,0,0),\ a(1)=(1,0,0),\ a(2)=(1,1,0),\ a(3)=(1,1,1),\ a(4)=(1,1,2),\ a(5)=(1,2,2)$$ 
$$C_{1}=C_{3}=\{2\},\ C_{2}=\{1\}$$
$$b_{1}=18,\ b_{2}=36,\ b_{3}=108$$
and
$$\Psi_{1, 0}^{1}=6\ \Gamma (\frac{1}{3})\Gamma (\frac{2}{3}),\ 
\Psi_{1, 0}^{2}=18\ \Gamma (\frac{1}{2})^{2},\ \Psi_{1, 0}^{3}=36\ \Gamma (\frac{1}{3})\Gamma (\frac{2}{3}).$$
%(3)  Let $n=2$ and $(w_0 , w_1 , w_2 )=(1,2,4)$. Then $\mu =7$, $f_{0}=0$ and $d_{0}=3$, $f_{1}=\frac{1}{4}$ and $d_{1}=1$,
%$f_{2}=\frac{1}{2}$ and $d_{2}=2$, $f_{3}=\frac{3}{4}$ and $d_{3}=1$. We have
%$$a(3)=(1,1,1),\ a(4)=(1,1,2),\ a(5)=(1,2,2),\ a(6)=(1,2,3),\ a(7)=(1,2,4)$$ 
%and
%$$\Psi_{1, 0}^{1}=8\ \Gamma (\frac{1}{4})\Gamma (\frac{1}{2}),\ 
%\Psi_{1, 0}^{2}=32\ \Gamma (\frac{1}{2})^{2},\ \Psi_{1, 0}^{3}=256\ \Gamma (\frac{1}{2})\Gamma (\frac{1}{4}).$$
(2) Let $n=2$ and $(w_0 , w_1 , w_2 )=(1,2,2)$. Then $\mu =5$, $f_{0}=0$ and $d_{0}=3$, $f_{1}=\frac{1}{2}$ and $d_{1}=2$. We have
$$a(0)=(0,0,0),\ a(1)=(1,0,0),\ a(2)=(1,1,0),\ a(3)=(1,1,1),\ a(4)=(1,2,1),\ a(5)=(1,2,2)$$ 
$$C_{1}=\{1,2\}$$
and
$$\Psi_{1, 0}^{1}=4\ \Gamma (\frac{1}{2}).$$
\end{example}

\subsubsection{Conjugation}
\label{subsec:ConjugationB}
We now describe the conjugation on ${\cal G}^{B, const}$ defined by the 
rational structure $\Sigma^{B,const}_{\qit}$. We will denote by $\overline{\eta}$ the conjugate of $\eta$.
Recall the set ${\cal F}$ defined by formula (\ref{eq:F}). Notice first that $1-f_{j}\in {\cal F}$ if $f_{j}\in {\cal F}$, $j\neq 0$. 
For $j=1,\cdots ,k$, let $c(j)$ be the index such that $1-f_{j}=f_{c(j)}$. For $j=0$,
we define $c(0)=0$. We have $d_{c(j)}=d_j$ for $j=0,\cdots ,k$.

\begin{corollary}
We have, for $j=0,\cdots ,k$ and $m=0,\cdots , d_{j}-1$, 
\begin{equation}\label{eq:conjugB}
\sum_{i-1+m\leq d_j -1}\Psi^{j}_{i,0}\overline{\omega}_{p_{j-1}+i-1+m}=(-1)^{m}\sum_{i-1+m\leq d_j -1}\Psi^{c(j)}_{i,0}\overline{\omega}_{p_{c(j)-1}+i-1+m}
\end{equation}
In particular, the Jordan blocks $B_{j}$ and $B_{c(j)}$ are conjugate.
\end{corollary}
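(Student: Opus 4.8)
The plan is to read the conjugation straight off the defining property of a rational structure: by definition, the generators $\Psi^{const}_{\Gamma_{\ell}}$, $\ell=0,\dots,\mu-1$, of $\Sigma^{B,const}_{\qit}$ (corollary \ref{coro:Bconst}) are fixed by the conjugation, so $\overline{\Psi^{const}_{\Gamma_{\ell}}}=\Psi^{const}_{\Gamma_{\ell}}$. First I would expand $\Psi^{const}_{\Gamma_{\ell}}=\sum_{j,i}\Psi^{j}_{i,\ell}\,\omega_{p_{j-1}+i-1}$ and apply the antilinear conjugation to both sides, obtaining for every $\ell$ the identity
\[
\sum_{j,i}\Psi^{j}_{i,\ell}\,\omega_{p_{j-1}+i-1}=\sum_{j,i}\overline{\Psi^{j}_{i,\ell}}\,\overline{\omega}_{p_{j-1}+i-1}.
\]
Everything then comes from making the right-hand side explicit and separating it.

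Next I would record the two inputs that control the right-hand side. On the one hand the numbers $\Psi^{j}_{i,0}$ are real, being the Laurent coefficients at $s=0$ of the products of $\Gamma$-factors appearing in (\ref{eq:classecarorbifinal}) and (\ref{eq:classecarorbifinal0}); such products are real on the real axis, so their expansions have real coefficients. On the other hand, using (\ref{eq:PsiEll}) together with $\ell\in\zit$,
\[
\overline{\Psi^{j}_{i,\ell}}=e^{-2i\pi\ell f_{j}}\sum_{m=0}^{i-1}\frac{(2i\pi\ell)^{m}}{m!}\Psi^{j}_{i-m,0},\qquad e^{-2i\pi\ell f_{j}}=e^{2i\pi\ell f_{c(j)}},
\]
the last equality being precisely the definition $1-f_{j}=f_{c(j)}$ (with $f_{c(0)}=f_{0}=0$). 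This is the conceptual heart of the statement: complex conjugation sends the character $\ell\mapsto e^{2i\pi\ell f_{j}}$ carried by the block $B_{j}$ to the character carried by $B_{c(j)}$, which is why the relation pairs $B_{j}$ with $B_{c(j)}$.

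The main step is then a separation by characters. As functions of $\ell\in\{0,\dots,\mu-1\}$, both sides of the displayed identity are combinations of the $\mu$ functions $\ell\mapsto \ell^{m}e^{2i\pi\ell f_{j}}$ ($0\le j\le k$, $0\le m\le d_{j}-1$, with $\sum_{j}d_{j}=\mu$). Since the $f_{j}$ are pairwise distinct in $[0,1)$, the frequencies $e^{2i\pi f_{j}}$ are pairwise distinct, so the associated confluent Vandermonde matrix on the $\mu$ nodes $\ell=0,\dots,\mu-1$ is invertible; hence these functions are linearly independent and I may match coefficients character by character. On the left the character $e^{2i\pi\ell f_{j}}$ occurs only in the block $B_{j}$, while on the right, because of the conjugated phase, it occurs only in $B_{c(j)}$. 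Equating the two contributions and comparing the coefficient of $(2i\pi\ell)^{m}$ — the shift $i\mapsto i-m$ producing the factor $(-1)^{m}$ from the sign difference between $(-2i\pi\ell)^{m}$ and $(2i\pi\ell)^{m}$ — yields relation (\ref{eq:conjugB}) for each $j$ and each $m=0,\dots,d_{j}-1$. That $B_{j}$ and $B_{c(j)}$ are conjugate is then immediate, $c$ being an involution with $d_{c(j)}=d_{j}$.

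The only genuine obstacle I anticipate is the justification of this separation: one must verify that the confluent Vandermonde system built from the $k+1$ distinct frequencies $e^{2i\pi f_{j}}$ with multiplicities $d_{j}$ and the $\mu=\sum_{j}d_{j}$ sample points $\ell=0,\dots,\mu-1$ is nonsingular, so that matching by character and by power of $\ell$ is legitimate. The nonvanishing of the leading coefficients $\Psi^{j}_{1,0}$ established in corollary \ref{coro:psiNonNuls} ensures that each block genuinely contributes its full multiplicity $d_{j}$, so no degeneration occurs. The remaining index bookkeeping (the offsets $p_{j-1}+i-1$, the range $i-1+m\le d_{j}-1$, and the alternating sign) is routine and I would not spell it out.
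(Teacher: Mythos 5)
Your argument is correct and is exactly the route the paper takes (its proof is the one-line instruction to combine $\overline{\Psi}_{\Gamma_{\ell}}^{const}=\Psi_{\Gamma_{\ell}}^{const}$ for $\ell=0,\dots,\mu-1$ with Theorem \ref{theo:descPsiEll}); you have merely made explicit the three ingredients left implicit there, namely the reality of the $\Psi^{j}_{i,0}$, the identity $e^{-2i\pi\ell f_{j}}=e^{2i\pi\ell f_{c(j)}}$, and the confluent-Vandermonde separation of the characters $\ell\mapsto\ell^{m}e^{2i\pi\ell f_{j}}$ over the $\mu$ values of $\ell$ (for which the nonvanishing of the $\Psi^{j}_{1,0}$ is not actually needed, only the distinctness of the $f_{j}$). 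Note that your derivation correctly produces the relation with the conjugation bar on exactly one side, in agreement with Example \ref{ex:P123P122Conj}, which is how the displayed formula (\ref{eq:conjugB}) must be read.
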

\begin{proof}Use the relations $\overline{\Psi}_{\Gamma_{\ell}}^{const}=\Psi_{\Gamma_{\ell}}^{const}$ for $\ell =0,\cdots ,\mu -1$ together 
with  theorem \ref{theo:descPsiEll}.
\end{proof}

\begin{example}\label{ex:P123P122Conj} (Example \ref{ex:P123P122} (1) continued)\\
  Let $n=2$ and $(w_0 , w_1 , w_2 )=(1,2,3)$. Recall that $\mu =6$, $f_{0}=0$ and $d_{0}=3$, $f_{1}=\frac{1}{3}$ and $d_{1}=1$,
$f_{2}=\frac{1}{2}$ and $d_{2}=1$, $f_{3}=\frac{2}{3}$ and $d_{3}=1$. 
We have
\begin{itemize}
\item $\Psi_{1, 0}^{1}\overline{\omega}_{3}=\Psi_{1, 0}^{3}\omega_{5}$,
\item $\overline{\omega}_{4}=\omega_{4}$,
%\item $\Psi_{1, 0}^{3}\overline{\omega}_{5}=\Psi_{1, 0}^{1}\omega_{3}$,
\item $\overline{\omega}_{2}=\omega_{2}$,
\item $\overline{\omega}_{1}=-\omega_{1}-2\Psi^{0}_{2,0}\omega_{2}$,
\item $\overline{\omega}_{0}=\omega_{0}+2\Psi^{0}_{2,0}\omega_{1}+2(\Psi^{0}_{2,0})^2 \omega_{2}$
\end{itemize}

\end{example}

\subsection{$A$-side}

\label{section:Aside}

We give here a description of the rational structure $\Sigma^{A, const}_{\qit}$ on $H^{*}_{orb}(\ppit (w), \cit)$ 
defined by the rational structure $\Sigma^{B, const}_{\qit}$ and the mirror theorem \ref{theo:miroir} and, as a by-product, 
a description 
of the rational structure $\Sigma^{A, quant}_{\qit}$ on ${\cal G}^{\nabla^{A}}$ given by corollary \ref{cor:RealStructureLT}.\\

\subsubsection{The rational structure}

For any subset $I=\{i_{1},\cdots ,i_{r}\}\subset \{0,\cdots ,n\}$, we put 
$$\ppit (w_{I}):=\ppit (w_{i_{1}},\cdots , w_{i_{r}}).$$
 Recall that we have the decomposition (as vector spaces)
$$H^{*}_{orb}(\ppit (w),\cit )=\oplus_{j =0}^{k}H^{*}(|\ppit (w_{I_{j}})|,\cit )$$
where $I_{j}$ is defined by formula (\ref{eq:Il}). Each $H^{*}(|\ppit (w_{I_{j}})|,\cit )$
has a basis of the form
$$1_{f_{j}},1_{f_{j}}p,\cdots , 1_{f_{j}}p^{d_{j}-1}$$
where 
$p\in H^{2}(|\ppit (w)|, \cit)\subset H^{2}_{orb}(\ppit (w),\cit )$ is the Chern class of $O(1)$
and 
$1_{f_{j}}\in H^{0}(\ppit (|w_{I_{j}})|,\cit )\subset H^{*}_{orb}(\ppit (w),\cit )$.
As usual, we will denote by $1, p,\cdots ,p^{n}$ the corresponding basis of $H^{*}(|\ppit (w_{I_{0}})|,\cit )$.\\

According to the discussion in section \ref{sec:RatAB}, we define:

\begin{definition}
The rational structure $\Sigma^{A, const}_{\qit}$ on the orbifold cohomology of weighted projective spaces is the image of $\Sigma^{B, const}_{\qit}$ under
 the mirror isomorphism of theorem \ref{theo:miroir}. 
\end{definition}

We then  have the following explicit description of the rational structure $\Sigma^{A, const}_{\qit}$:
define the rational numbers
\begin{equation}\label{def:si}
s_{j}=\prod_{r=0}^{n}w_{r}^{-\lceil c_{j}w_{r}\rceil} 
\end{equation}
for $j=0,\cdots ,k$.

\begin{theorem}\label{theo:Rat}
The rational structure $\Sigma^{A, const}_{\qit}$ on the orbifold cohomology is the $\qit$-vector space generated by the vectors 
\begin{equation}\label{eq:Phi}
\Psi_{\Gamma_{\ell}}^{const}=\sum_{j=0}^{k}s_{j}\sum_{i=1}^{d_{j}}\Psi_{i,\ell}^{j} 1_{f_{j}}p^{i-1}
\end{equation}
$\ell =0,\cdots ,\mu -1$, where the numbers $\Psi_{i,\ell}^{j}$ are determined by equations (\ref{eq:PsiEll}), (\ref{eq:classecarorbifinal})
and (\ref{eq:classecarorbifinal0}) and the numbers $s_{i}$ are defined in (\ref{def:si}).
\end{theorem}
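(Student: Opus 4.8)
The plan is to transport the description of $\Sigma^{B, const}_{\qit}$ obtained on the $B$-side across the mirror isomorphism of theorem \ref{theo:miroir}, and to read off the components in the standard orbifold basis $\{1_{f_j}p^{i-1}\}$. By corollary \ref{coro:Bconst}, $\Sigma^{B,const}_{\qit}$ is the $\qit$-span of the vectors $\Psi_{\Gamma_\ell}^{const}$, $\ell=0,\dots,\mu-1$, whose components are taken in the classical basis of ${\cal G}^{B,const}$ induced by $\omega$ and organized by Jordan blocks exactly as in theorem \ref{theo:descPsiEll}. Since $\Sigma^{A,const}_{\qit}$ is \emph{defined} as the image of $\Sigma^{B,const}_{\qit}$ under the mirror isomorphism, the whole statement reduces to computing, once and for all, the image in $H^{*}_{orb}(\ppit(w),\cit)$ of the classical basis vector sitting in the $j$-th Jordan block at position $i$.

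First I would make the identification of bases explicit. Theorem \ref{theo:miroir} sends $\omega_m$ onto the quantum power $(p^{\circ_{tp}})^m$, and passing to the classical limit $x\to 0$ the quantum product degenerates to the orbifold cup product, so that the block structure of theorem \ref{theo:descPsiEll} matches the decomposition $H^{*}_{orb}(\ppit(w),\cit)=\oplus_{j=0}^{k}H^{*}(|\ppit(w_{I_j})|,\cit)$. The precise dictionary is furnished by \cite[Theorem 5.1.1]{DoMa}: the $i$-th classical basis vector of the $j$-th block is identified with $s_j\,1_{f_j}p^{i-1}$, the constant $s_j$ being common to the whole block. I would recover the value of $s_j$ from the normalization of the \emph{initial} vector of the block: by formula (\ref{eq:defOmega}) its coefficient is $w_{1}^{-a(p_{j-1})_1}\cdots w_{n}^{-a(p_{j-1})_n}$, and the combinatorial recursion (\ref{eq:a(i)})--(\ref{eq:ck}) forces $a(p_{j-1})_r=\lceil c_j w_r\rceil$ for every $r$, so that this coefficient is exactly $s_j=\prod_{r=0}^{n}w_{r}^{-\lceil c_j w_r\rceil}$ (recall $w_0=1$, which contributes trivially).

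Granting this identification, I would substitute it termwise into the expression of $\Psi_{\Gamma_\ell}^{const}$ read off from theorem \ref{theo:descPsiEll}, namely the block-indexed vector with entries $\Psi_{i,\ell}^{j}$. This yields immediately
$$\Psi_{\Gamma_\ell}^{const}=\sum_{j=0}^{k}s_j\sum_{i=1}^{d_j}\Psi_{i,\ell}^{j}\,1_{f_j}p^{i-1},$$
which is formula (\ref{eq:Phi}); the numbers $\Psi_{i,\ell}^{j}$ require no new computation, being those already determined on the $B$-side through (\ref{eq:PsiEll}), (\ref{eq:classecarorbifinal}) and (\ref{eq:classecarorbifinal0}), and no further analytic input (oscillating integrals) is needed beyond what was evaluated there.

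The main obstacle is the normalization bookkeeping in the second paragraph: one must match the exponents $a(k)_r$ produced by the inductive construction (\ref{eq:a(i)})--(\ref{eq:ck}) with the ceilings $\lceil c_j w_r\rceil$, check that this coincidence survives passage within each block (where successive vectors correspond to cup-multiplication by $p$ yet keep the \emph{same} overall factor $s_j$), and verify that neither the rescaling $\widetilde\omega=\omega x^{-R}$ nor the passage to nearby cycles defining the classical limit introduces factors beyond $s_j$. A secondary subtlety is to confirm that the mirror isomorphism of theorem \ref{theo:miroir}, stated at the level of the full quantum differential systems, restricts compatibly to the classical limits, so that the rational lattice is genuinely transported; this is where the flatness and conformality established in proposition \ref{prop:sechorEPP} enter.
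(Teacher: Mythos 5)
Your proposal is correct and follows essentially the same route as the paper: the proof there simply invokes the dictionary of \cite[Theorem 5.1.1 and Remark 5.1.3]{DoMa}, under which $1_{f_j}p^{i}$ corresponds to $s_j^{-1}[\![\omega_{p_{j-1}+i}]\!]$, and then substitutes into theorem \ref{theo:descPsiEll}(1) exactly as you do. Your extra combinatorial check that $a(p_{j-1})_r=\lceil f_j w_r\rceil$ reproduces the normalization $s_j$ is a useful sanity check on the cited correspondence (and incidentally confirms that the $c_j$ in (\ref{def:si}) should be read as $f_j$), but it is not an independent ingredient of the argument.
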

\begin{proof}
We use the following mirror correspondence, see \cite[Theorem 5.1.1 and Remark 5.1.3]{DoMa}:
under the mirror theorem \ref{theo:miroir}, the basis $(1_{f_{j}}p^{i})$ of orbifold cohomology of $\ppit (w)$ corresponds to the basis 
$$[\![\omega ]\!]=([\![\omega_{0}]\!],\cdots ,[\![\omega_{\mu -1}]\!])$$
of ${\cal G}^{B, const}$ 
induced by 
$(\omega_{0}x^{-c_{0}},\cdots , \omega_{\mu -1}x^{-c_{\mu -1}})$ as follows: 
the image of $1_{f_{j}}p^{i}$, $j=0,\cdots ,k$, $i=0,\cdots ,d_{j}-1$ under this correspondence is 
$s_{j}^{-1}[\![\omega_{p_{j-1}+i}]\!]$.
Now, the theorem follows from theorem \ref{theo:descPsiEll} (1).
\end{proof}

\subsubsection{A description via characteristic classes}

Inspired by \cite{KKP} and \cite{Ir}, we now rewrite theorem \ref{theo:Rat} with the help of some ´´characteristic´´ classes. Among other things we will see that 
the constants $s_{j}$ and $b_{j}$ (see equation (\ref{eq:classecarorbifinal})) in formula (\ref{eq:Phi}) miraculously disappear.
We use the notations of section \ref{subsubsec:Bside}.\\

Let us define, after formula (\ref{eq:classecarorbifinal}) and (\ref{eq:classecarorbifinal0}),

\begin{itemize}
\item for $j =1,\cdots ,k$, the cohomology classes
\begin{equation}
\widehat{\Gamma}_{j}=\prod_{m=0}^{n}\Gamma (rw_{m}1_{f_{j}}p+1-\{w_{m}f_{j}\})
\end{equation}
where $\{x\}=\lceil x\rceil -x$ and $r$ is defined by formula (\ref{eq:r}),

\item for $j=0$, the cohomology class
$$\widehat{\Gamma}_{0}:=\prod_{m=0}^{n}\Gamma (1+rw_{m}p).$$
\end{itemize}

\noindent These definitions have to be understood in the following way: in order to calculate $\Gamma (a1_{f_{j}}p+b)$ ($b>0$) we expand in power series the function 
$$s\mapsto \Gamma (as+b)$$
and we replace in this expansion $s^{k}$ by  
$1_{f_{j}}p^{k}$ keeping in mind that $1_{f_{j}}p^{d_{j}}=0$.

\begin{corollary} 
\label{coro:descriptionPsiconst}
The rational structure $\Sigma^{A, const}_{\qit}$ is the $\qit$-vector space generated in the orbifold cohomology by the vectors 
\begin{equation}\label{eq:descriptionPsiconst}
\Psi_{\Gamma_{\ell}}^{const}=\sum_{j=0}^{k}e^{2i\pi \ell f_j }\exp (-2i\pi\ell p)\cup\widehat{\Gamma}_{j}
\end{equation}
for $\ell =0,\cdots ,\mu -1$. Here $\cup$ denotes the cup-product on $H^{*}_{orb}(\ppit (w),\cit)$.
\end{corollary}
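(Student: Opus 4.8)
The plan is to start from the closed formula for the rational structure given in Theorem \ref{theo:Rat}, namely
$$\Psi_{\Gamma_{\ell}}^{const}=\sum_{j=0}^{k}s_{j}\sum_{i=1}^{d_{j}}\Psi_{i,\ell}^{j}\, 1_{f_{j}}p^{i-1},$$
and show that this equals the expression $\sum_{j=0}^{k}e^{2i\pi\ell f_j}\exp(-2i\pi\ell p)\cup\widehat{\Gamma}_{j}$ block by block in the Jordan decomposition. Since both sides are sums over $j=0,\cdots,k$ of contributions supported in the component $H^{*}(|\ppit(w_{I_{j}})|,\cit)$ (generated by $1_{f_{j}},\cdots,1_{f_{j}}p^{d_{j}-1}$), it suffices to match the coefficient of $1_{f_{j}}p^{i-1}$ for each fixed $j$ and each $i=1,\cdots,d_{j}$. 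First I would handle the $\ell$-dependence: by formula (\ref{eq:PsiEll}), $\Psi_{i,\ell}^{j}=e^{2i\pi\ell f_{j}}\sum_{m=0}^{i-1}\frac{(-2i\pi\ell)^{m}}{m!}\Psi_{i-m,0}^{j}$, so the scalar prefactor $e^{2i\pi\ell f_j}$ comes out, and the remaining sum $\sum_{m}\frac{(-2i\pi\ell p)^{m}}{m!}$ is exactly the Taylor expansion of $\exp(-2i\pi\ell p)$ acting by cup-product (truncated at $p^{d_{j}}=0$ on this block). Thus the whole $\ell$-dependence reduces to the factor $e^{2i\pi\ell f_j}\exp(-2i\pi\ell p)\cup$, and the problem collapses to the case $\ell=0$: I must show
$$s_{j}\sum_{i=1}^{d_{j}}\Psi_{i,0}^{j}\,1_{f_{j}}p^{i-1}=\widehat{\Gamma}_{j}$$
for each $j$.

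Next I would verify this $\ell=0$ identity using the generating-function description of the $\Psi_{i,0}^{j}$ from part (3) of Theorem \ref{theo:descPsiEll}. For $j\geq 1$, equation (\ref{eq:classecarorbifinal}) asserts that as $s\to 0$,
$$b_{j}\Gamma(rs+f_{j})\prod_{m=1}^{n}\Gamma\bigl(w_{m}(rs-(1-f_{j}))+a(\mu+n-p_{j}+1)_{m}\bigr)=\sum_{m=0}^{d_{j}-1}(rs)^{-m-1}\Psi_{d_{j}-m,0}^{j}+O(1).$$
The strategy is to reindex this Laurent expansion in $rs$ so that it becomes an honest Taylor series, then substitute $s\mapsto 1_{f_{j}}p$ with the nilpotence relation $1_{f_{j}}p^{d_{j}}=0$. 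The key is to recognize that, after extracting the poles (the factors $\Gamma$ whose argument tends to a non-positive integer, governed by the set $C_j$ of formula (\ref{eq:Cj})), the regular part of the product reorganizes into $\prod_{m=0}^{n}\Gamma(rw_{m}\cdot(\text{variable})+1-\{w_{m}f_{j}\})$, which is precisely $\widehat{\Gamma}_{j}$. The combinatorial bookkeeping here—relating $a(\mu+n-p_{j}+1)_m$, the ceilings $\lceil w_m f_j\rceil$, the fractional parts $\{w_m f_j\}$, and the constants $b_j$ and $s_j$—is exactly the content of the two bulleted identities in the proof of Corollary \ref{coro:psiNonNuls}, which I would invoke: namely $\lceil f_j w_m\rceil=w_m+1-a(\mu+n-p_j+1)_m$ for $m\notin C_j$ and $f_j w_m=w_m-a(\mu+n-p_j+1)_m$ for $m\in C_j$. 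These let me convert the shift in each $\Gamma$-argument into the form $1-\{w_m f_j\}$.

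The main obstacle, I expect, will be precisely this bookkeeping step: showing that the product of constants $s_j$ (from the mirror map of Theorem \ref{theo:miroir}, equation (\ref{def:si})) and $b_j$ (from equation (\ref{eq:bj})) cancels the $w_m$-powers generated when one normalizes each $\Gamma$-factor to have argument of the form $rw_m\cdot p+1-\{w_m f_j\}$. Concretely, pulling a factor $w_m$ out of $\Gamma(w_m(rs-(1-f_j))+a_m)$ to rewrite it in the variable $rw_m s$ produces a Jacobian-type power of $w_m$ for each shift, and the miracle asserted in the introduction is that $\prod s_j^{-1}b_j\prod_{m\in C_j}w_m^{-1}$ absorbs all these powers, leaving a constant-free expression. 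I would carry this out by a direct comparison of exponents of each prime $w_m$ on both sides, using $s_j=\prod_{r=0}^{n}w_r^{-\lceil c_j w_r\rceil}$ and $c_j=f_j$ on this block. Finally, the case $j=0$ is treated identically starting from equation (\ref{eq:classecarorbifinal0}): there $\{w_m f_0\}=0$ for all $m$, so $\widehat{\Gamma}_0=\prod_{m=0}^{n}\Gamma(1+rw_m p)$ matches directly with $s_0=1$ and $\Psi_{1,0}^{0}=1$, giving the degenerate base case and completing the identification of $\Sigma^{A,const}_{\qit}$.
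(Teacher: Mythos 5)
Your proposal is correct and follows essentially the same route as the paper: the paper likewise extracts the factor $e^{2i\pi\ell f_j}\exp(-2i\pi\ell p)\cup$ from formula (\ref{eq:PsiEll}), identifies the $\ell=0$ block $\sum_i\Psi^j_{i,0}1_{f_j}p^{i-1}$ with the Gamma-product $\Pi_j$ evaluated at $1_{f_j}p$ via equations (\ref{eq:classecarorbifinal})--(\ref{eq:classecarorbifinal0}), and then uses the two ceiling/fractional-part identities from the proof of Corollary \ref{coro:psiNonNuls} to check that $s_jb_j\prod_{m\in C_j}w_m^{-1}=1$ so that $s_j\Pi_j=\widehat{\Gamma}_j$. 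The constant bookkeeping you flag as the main obstacle is exactly the cancellation the paper carries out, and your exponent-by-exponent check of the powers of $w_m$ does verify it.
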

\begin{proof}
From theorem \ref{theo:descPsiEll} and theorem \ref{theo:Rat} it first follows that 
$$\Psi_{\Gamma_{\ell}}^{const}=\sum_{j=0}^{k}s_{j}e^{2i\pi \ell f_j }\exp (-2i\pi\ell p)\cup\Pi_{j}$$
where
\begin{eqnarray}\nonumber
\Pi_{j} &= & b_{j}\Gamma (r1_{f_{j}}p+f_{j})\prod_{m \in C_j}\frac{1}{w_{m}}\Gamma (rw_{m}1_{f_{j}}p+1)\\ \nonumber
&&\times\prod_{m \notin C_j}\Gamma (rw_{m}1_{f_{j}}p+w_{m}f_{j}-w_{m}+a(\mu +n-p_{j}+1)_{m}).
\end{eqnarray}
As already noticed, we have
$$\lceil f_{j}w_{m} \rceil= w_{m}+1- a(\mu +n-p_{j}+1)_{m}\ \mbox{if}\ m\notin C_{j},$$
$$f_{j}w_{m} = w_{m}- a(\mu +n-p_{j}+1)_{m}\ \mbox{if}\ m\in C_{j}.$$
It follows that $s_{j}b_{j}\prod_{m\in C_{j}}\frac{1}{w_{m}}=1$ and we get first 
\begin{eqnarray}\nonumber
\widehat{\Gamma}_{j} & = &
\Gamma (r1_{f_{j}}p+f_{j})\prod_{m \in C_j}\Gamma (rw_{m}1_{f_{j}}p+1)\\ \nonumber
&&\times\prod_{m \notin C_j}\Gamma (rw_{m}1_{f_{j}}p+w_{m}f_{j}-w_{m}+a(\mu +n-p_{j}+1)_{m})
\end{eqnarray}
The assertion follows using again the formula for $\lceil f_{j}w_{m} \rceil$ above.
\end{proof}

\noindent Up to the factor $r$, formula (\ref{eq:descriptionPsiconst}) agrees with \cite[Theorem 4.11]{Ir}.

\begin{remark}
In the case of $\ppit^n$, that is if $w_{0}=w_{1}=\cdots =w_{n}=1$, we have 
\begin{equation}\label{eq:descriptionSigmaConstPn}
\Sigma^{A, const}_{\qit}=\widehat{\Gamma}_{0}\cup\delta (H^{*}(\ppit^{n},\qit ))
\end{equation}
where $\delta (p^{m})=(2i\pi )^{m}p^{m}$ and $\widehat{\Gamma}_{0}=\Gamma (1+p)^{n+1}$.
Indeed, by corollary \ref{coro:descriptionPsiconst}, the vectors 
$$\Psi_{\Gamma_{\ell}}^{const}=\widehat{\Gamma}_{0}\cup\delta (1)-\ell \widehat{\Gamma}_{0}\cup\delta (p)+\cdots +
\frac{(-\ell )^n}{n!}\widehat{\Gamma}_{0}\cup\delta (p^n),$$
$\ell =0,\cdots ,n$ generate $\Sigma^{A, const}_{\qit}$ over $\qit$. It follows that the vectors $\widehat{\Gamma}_{0}\cup\delta (p^{i})$, $i=0,\cdots ,n$, 
also generate $\Sigma^{A, const}_{\qit}$ over $\qit$.
This result can already be found in \cite[Proposition 3.1]{KKP}.
\end{remark}

\subsubsection{Conjugation}
We now describe the conjugation on $H^{*}_{orb}(\ppit (w),\cit )$ defined by the 
rational structure $\Sigma^{A,const}_{\qit}$. We will denote by $\overline{\eta}$ the conjugate of $\eta\in H^{*}_{orb}(\ppit (w),\cit )$. 
From corollary \ref{coro:descriptionPsiconst} we get, keeping the notations of section \ref{subsec:ConjugationB},

\begin{corollary}
We have, for $j=0,\cdots ,k$ and $m=0,\cdots , d_{j}-1$, 
\begin{equation}\label{eq:conjug}
\overline{\widehat{\Gamma}_{j}\cup 1_{f_{j}}p^{m}}=(-1)^{m}\widehat{\Gamma}_{c(j)}\cup 1_{f_{c(j)}}p^{m} 
\end{equation}
In particular, the Jordan blocks $B_{j}$ and $B_{c(j)}$ are conjugate.
\end{corollary}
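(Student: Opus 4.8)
The conjugation attached to the rational structure $\Sigma^{A,const}_{\qit}$ is by definition the unique antilinear involution of ${\cal G}^{A,const}=H^{*}_{orb}(\ppit (w),\cit )$ whose fixed locus is $\Sigma^{A,const}_{\qit}\otimes_{\qit}\rit$; in particular it fixes each generator $\Psi^{const}_{\Gamma_{\ell}}$, $\ell =0,\cdots ,\mu -1$, and these $\mu$ vectors form a $\cit$-basis since $\Sigma^{A,const}_{\qit}$ is a $\qit$-form of full rank. The plan is therefore to introduce the antilinear map $\sigma$ defined on the basis $\{\widehat{\Gamma}_{j}\cup 1_{f_{j}}p^{m}\}_{0\leq j\leq k,\,0\leq m\leq d_{j}-1}$ by the right-hand side of (\ref{eq:conjug}), to check that it is a well-defined antilinear involution, and then to prove that it fixes every $\Psi^{const}_{\Gamma_{\ell}}$. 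Once this is done, $\sigma$ must coincide with the conjugation, and (\ref{eq:conjug}) follows at once.

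That $\sigma$ is well defined is clear because the classes $\widehat{\Gamma}_{j}\cup 1_{f_{j}}p^{m}$ form a basis (the leading coefficient of $\widehat{\Gamma}_{j}$ is nonzero, so $\widehat{\Gamma}_{j}$ is invertible in each block $H^{*}(|\ppit (w_{I_{j}})|,\cit )$). It is an involution because $c$ is an involution of $\{0,\cdots ,k\}$ with $d_{c(j)}=d_{j}$, so that applying $\sigma$ twice produces the factor $(-1)^{m}(-1)^{m}=1$ and sends $j$ to $c(c(j))=j$. The heart of the argument is the verification $\sigma(\Psi^{const}_{\Gamma_{\ell}})=\Psi^{const}_{\Gamma_{\ell}}$. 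Starting from corollary \ref{coro:descriptionPsiconst}, I would expand the exponential cup product block by block to write
$$\Psi^{const}_{\Gamma_{\ell}}=\sum_{j=0}^{k}\sum_{m=0}^{d_{j}-1}e^{2i\pi\ell f_{j}}\frac{(-2i\pi\ell )^{m}}{m!}\,\widehat{\Gamma}_{j}\cup 1_{f_{j}}p^{m},$$
which is a genuinely finite sum since $1_{f_{j}}p^{d_{j}}=0$.

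Applying $\sigma$ and using antilinearity, the scalar $e^{2i\pi\ell f_{j}}$ becomes $e^{-2i\pi\ell f_{j}}$ and $(-2i\pi\ell )^{m}$ becomes $(2i\pi\ell )^{m}=(-1)^{m}(-2i\pi\ell )^{m}$, while the definition of $\sigma$ contributes a further factor $(-1)^{m}$ together with the block swap $j\mapsto c(j)$; the two signs cancel. Reindexing the sum by $j\mapsto c(j)$ and invoking $f_{c(j)}=1-f_{j}$ together with $e^{-2i\pi\ell}=1$ (here $\ell\in\zit$) turns $e^{-2i\pi\ell f_{j}}$ back into $e^{2i\pi\ell f_{c(j)}}$, and since $d_{c(j)}=d_{j}$ the summation ranges match; one recovers exactly the expansion above, so $\sigma(\Psi^{const}_{\Gamma_{\ell}})=\Psi^{const}_{\Gamma_{\ell}}$ for all $\ell$. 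This proves (\ref{eq:conjug}), and the fact that $\sigma$ carries the block $j$ to the block $c(j)$ gives the conjugacy of the Jordan blocks $B_{j}$ and $B_{c(j)}$. The only real obstacle is the sign bookkeeping — tracking the two independent sources of $(-1)^{m}$ and the reindexing $j\mapsto c(j)$ — together with checking that the self-conjugate block $j=0$, where $c(0)=0$ and $f_{0}=0$, is handled consistently; everything else is formal.
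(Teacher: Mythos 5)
Your proposal is correct and follows essentially the same route as the paper: the conjugation is pinned down by the requirement that the generators $\Psi^{const}_{\Gamma_{\ell}}$ of corollary \ref{coro:descriptionPsiconst} be fixed, and the block-by-block expansion of $e^{2i\pi\ell f_{j}}\exp(-2i\pi\ell p)\cup\widehat{\Gamma}_{j}$ together with $f_{c(j)}=1-f_{j}$, $e^{-2i\pi\ell}=1$ and the cancellation of the two factors $(-1)^{m}$ is exactly the intended verification (mirroring the B-side corollary). The only addition you make — checking explicitly that the $\widehat{\Gamma}_{j}\cup 1_{f_{j}}p^{m}$ form a basis because the constant term of $\widehat{\Gamma}_{j}$ is nonzero — is a harmless and correct elaboration of what the paper leaves implicit.
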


%\begin{example} \label{ex:Conjugation}(Example \ref{ex:P123P122} continued)\\
%\noindent Let $n=2$ and $(w_{0},w_{1},w_{2})=(1,2,2)$. We have two Jordan blocks $B_{0}$ and $B_{1}$: the first one is generated by $1$, $p$, $p^{2}$ and 
%the second one by $1_{\frac{1}{2}}$ and $1_{\frac{1}{2}}p$. We have
%$$\overline{p^{2}}=p^{2},\ \overline{p}=-p-5\ \Gamma '(1)\ p^{2}\ \mbox{and}\ \overline{1}=1+[5\ \Gamma '(1)+2\ \Gamma '(1)^{2}]\ p+
%[\frac{25}{2}\ \Gamma '(1)^2 +5\ \Gamma '(1)^3]\ p^{2}$$
%\noindent and 
%$$\overline{1_{\frac{1}{2}}p}=-1_{\frac{1}{2}}p\ \mbox{and}\ \overline{1_{\frac{1}{2}}}=1_{\frac{1}{2}}+
%[\frac{\Gamma'(\frac{1}{2})}{\Gamma (\frac{1}{2})}+4\ \Gamma '(1)]\ 1_{\frac{1}{2}}p$$

%\noindent (1) Let $n=2$ and $(w_{0},w_{1},w_{2})=(1,2,3)$. We have four Jordan blocks $B_{0}$, $B_{1}$, $B_{2}$ and $B_{3}$: $B_{0}$ 
%is generated by $1$, $p$, $p^{2}$, $B_{1}$ by $1_{\frac{1}{3}}$, $B_{2}$ by $1_{\frac{1}{2}}$ and $B_{3}$ by $1_{\frac{2}{3}}$. 
%$B_{0}$ and $B_{2}$ are self-conjugate while $B_{1}$ is conjugate to $B_{3}$: we have
%$$\overline{1_{\frac{1}{3}}}=1_{\frac{2}{3}}\ \mbox{and}\ \overline{1_{\frac{1}{2}}}=1_{\frac{1}{2}}.$$

%\end{example}

\section{Correlators of a logarithmic quantum differential system}

\label{sec:correlateurs}

The aim of this section is to consider, in the light of quantum differential systems, the following question: how to compute the gravitational correlators from Picard-Fuchs equations? The correlators alluded to are defined on the $A$-side for instance in \cite[Definition 10.1.1]{CK}. 
In general, we define the gravitational correlators (two points, genus $0$) of a flat logarithmic quantum differential system to be the coefficients 
of the matrix $H$ defined in lemma \ref{lemma:nonres} (the fact that we can define only two points correlators from a flat logarithmic quantum differential 
system is not so surprising because such systems correspond on the A-side to the small quantum cohomology).\\

Three remarks are in order:\\

$\bullet$ the correlators defined in this way by the quantum differential system of example \ref{exemplebasiqueA} ($A$-side) are precisely the ones of algebraic geometry,\\

$\bullet$ given a flat quantum differential system, the matrix of correlators $H$ is calculated solving the recursion equations (\ref{eq:DiffxHd}),\\

$\bullet$ the quantum differential system associated on the $B$-side with a regular tame function gives directly 
({\em i.e} without any reference to correlator) the matrix $H$ we are looking for.\\

\noindent In practise, a mirror theorem will thus give a way to compute the correlators of the mirror partner of a regular tame function. 
We apply the recipe in this section and we illustrate this by some simple examples.

\subsection{Gravitational two-points correlators of a flat logarithmic quantum differential system}

Let ${\cal Q}$ be a flat logarithmic quantum differential system on $M=\cit$ and
\begin{equation}
M(x,\tau )\frac{dx}{x}+N(x,\tau )\frac{d\tau}{\tau}
\end{equation}
be the matrix of the connection $\nabla$ in the basis $\omega =(\omega_{0},\cdots ,\omega_{\mu -1})$. 
By lemma \ref{lemma:nonres}, there exists a unique matrix $H(x,\tau )=I+\sum_{d\geq 1}H^{d}(\tau )x^{d}$ of holomorphic functions  
such that, after the base change of matrix $H(x,\tau )$, the matrix of the connection $\nabla$ takes the form
\begin{equation}
M(0,\tau )\frac{dx}{x}+N(0, \tau )\frac{d\tau}{\tau}
\end{equation}
Recall that the matrices  $H^{d}$ (and thus the matrix $H$) are defined by the equations (\ref{eq:DiffxHd}), that is
$$dH^{d}(\tau )=H^{d}(\tau )M(0,\tau )-M(0,\tau )H^{d}(\tau )-\sum_{i=1}^{d}M^{i}(\tau)H^{d-i}(\tau)$$
for $d\geq 1$ and $M(x,\tau)=M(0,\tau)+\sum_{i\geq 1}M^{i}(\tau)x^{i}$.\\
 
We will 
$H^{d}(\tau )=\sum_{r\geq 0}H^{d,r}\tau^{r}$
and $H^{d,r}=(H^{d,r}_{i,j})_{ij}$.

\begin{definition}\label{def:correl}
We will call the numbers
$$<\tau_{r}\omega_{a},\omega_{j}>_{0,2,d}:=g(\omega_{j},\omega_{\overline{j}})H^{d,r+1}_{\overline{j}+1,a+1},$$
$r\geq 0$, $d\geq 1$ and $a,j=0,\cdots ,\mu -1$
(the integers $\overline{j}$ are defined in remark \ref{rem:metglob}), {\em gravitational two-points correlators in genus 0} of the quantum differential system ${\cal Q}$. The matrix $H(x,\tau)$
is called the
{\em correlator matrix} of ${\cal Q}$.
\end{definition}

\noindent The previous definition can be extended to the case $d=0$: keeping in mind that $H^{(0)}(\tau )=I$, 
we define $<\tau_{r}\omega_{a},\omega_{j}>_{0,2,0}:=0$ for all $r$.\\

The link with the usual correlators $<\tau_{r}\phi_{a},\phi_{j}>_{0,2,d}$ of algebraic geometry as 
defined for instance in \cite[Definition 10.1.1]{CK} is given by the following lemma, which explains the terminology:

\begin{lemma} 
Let ${\cal Q}$ be the quantum differential system associated with the small quantum cohomology of a projective manifold $X$ by example \ref{exemplebasiqueA}. Then we have
\begin{equation}\label{eq:correlateurs}
<\tau_{r}\phi_{a},\phi_{j}>_{0,2,d}=g(\phi_{j},\phi_{\overline{j}})H^{d,r+1}_{\overline{j}+1,a+1}
\end{equation}
for all $a,j=0,\cdots ,\mu -1$, $r\geq 0$ and $d\geq 1$. 
The Gromov-Witten invariants $<\tau_{0}\phi_{i},\phi_{j}>_{0,2,d}$, $d\geq 1$, are described by the coefficients of $\tau$ in the matrix $H^{d}(\tau)$.
\end{lemma}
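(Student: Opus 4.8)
The plan is to unwind both sides of equation (\ref{eq:correlateurs}) and to identify them through the fundamental solution machinery established for flat logarithmic quantum differential systems. First I would recall the $A$-side setup of example \ref{exemplebasiqueA}: the matrix $M(x,\tau)=\tau M_1(x)$ of the Dubrovin part is, by the divisor axiom, the matrix of small quantum multiplication $p\circ_{\xi}$ expressed in the homogeneous basis $\{\phi_k\}$, with $q=e^{t}$ playing the role of $x$. Since the system is flat and logarithmic (proposition in section \ref{ex:PetiteCohQuant}), lemma \ref{lemma:nonres} produces the unique $H(x,\tau)=I+\sum_{d\geq 1}H^d(\tau)x^d$ satisfying the recursion (\ref{eq:DiffxHd}), and theorem \ref{cor:casvariete} identifies $P=H(x,\tau)e^{-\tau M_1(0)\ln x}$ as the canonical fundamental solution. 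The classical limit $M_1(0)$ is precisely the classical cup-product $p\cup$, nilpotent by lemma \ref{lemma:vpconst}.

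The heart of the argument is to match $H(x,\tau)$ with Givental's $J$-function / the fundamental solution of the quantum differential equation, whose expansion in $\tau$ and $q^d$ is the standard generating series of gravitational descendant correlators. Concretely I would invoke corollary \ref{coro:DLJ}, which gives $J^{P,\omega_0}_{\cal Q}=e^{\tau M_1(0)\ln x}(\omega_0+O(\tau))$, and proposition \ref{prop:fonctionJg}, expressing the $J$-function through the metric: $J^{P,\omega_0}_{\cal Q}=\sum_j g(P(\omega_j),\omega_0)\omega^j$. Writing $P=H e^{-\tau M_1(0)\ln x}$ and extracting the coefficient of $x^d\tau^{r+1}$ in $g(P(\omega_a),\omega_{\overline j})$, one reads off exactly $g(\phi_j,\phi_{\overline j})H^{d,r+1}_{\overline j+1,a+1}$, up to the normalization encoded in the dual basis $\omega^j$ of remark \ref{rem:metglob}. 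On the geometric side, the definition in \cite[Definition 10.1.1]{CK} expresses $\langle\tau_r\phi_a,\phi_j\rangle_{0,2,d}$ as the corresponding coefficient of the $S$-matrix (the fundamental solution of the quantum connection) paired against the Poincar\'e metric; this is the classical statement that the end-point-constrained two-point descendant potential assembles into the fundamental solution. Identifying the two sets of coefficients then yields (\ref{eq:correlateurs}).

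I expect the main obstacle to be bookkeeping the precise normalizations and index conventions: reconciling the sign/degree shift $\mu$ in $\nabla^A_{\theta\partial_\theta}$, the substitution $\theta\leftrightarrow\tau=\theta^{-1}$, the factor $g(\phi_j,\phi_{\overline j})$ coming from raising/lowering indices with the Poincar\'e pairing, and the exact normalization of the descendant insertion $\tau_r$ (the power of $\psi$-classes translating to powers of $\tau$). One must verify that the divisor axiom's identification $q_k\partial_{q_k}=t_k$-derivative matches the $x\partial_x$ derivatives governing the recursion (\ref{eq:DiffxHd}), and that the convention $\langle\tau_r\phi_a,\phi_j\rangle_{0,2,0}:=0$ for $d=0$ is consistent with $H^{(0)}(\tau)=I$ having no positive-degree term. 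Once these conventions are pinned down, the matching is term-by-term and essentially formal.

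For the final sentence of the lemma, I would simply specialize $r=0$: by definition $\langle\tau_0\phi_i,\phi_j\rangle_{0,2,d}=g(\phi_j,\phi_{\overline j})H^{d,1}_{\overline j+1,i+1}$, so the genuine (non-descendant) Gromov--Witten invariants of degree $d$ are exactly the coefficients of the linear term in $\tau$ of $H^d(\tau)$, which is what the statement asserts. This requires no new input beyond the $r=0$ case of the identification already established.
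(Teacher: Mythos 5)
Your proposal is correct and follows essentially the same route as the paper: the decisive step in both is that $H$ and the generating series $\tilde{s}$ of two-point descendant correlators from \cite[Section 10.2]{CK} are both fundamental solutions of the Dubrovin connection (up to the factor $e^{-\tau M_{1}(0)\ln x}$) with the same initial condition $H(0,\tau )=I$, hence coincide under the correspondence $x\leftrightarrow q$. The paper simply expands $H(\phi_{a})$ directly in the dual basis $\phi^{j}$ rather than detouring through the $J$-function and proposition \ref{prop:fonctionJg}, but that difference is cosmetic.
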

\begin{proof}
We have 
$$H(\phi_{a})=\phi_{a}+\sum_{d\geq 1}\sum_{j=0}^{\mu -1}\sum_{r\geq 0}H_{j+1,a+1}^{d,r+1}\phi_{j}\tau^{r+1}x^{d}
=\phi_{a}+\sum_{d\geq 1}\sum_{j=0}^{\mu -1}\sum_{r\geq 0}H_{\overline{j}+1,a+1}^{d,r+1}\phi_{\overline{j}}\tau^{r+1}x^{d}$$
$$
=\phi_{a}+\sum_{d\geq 1}\sum_{j=0}^{\mu -1}\sum_{r\geq 0}H_{\overline{j}+1,a+1}^{d,r+1}g(\phi_{j},\phi_{\overline{j}})\phi^{j}\tau^{r+1}x^{d}
$$
because $\phi_{\overline{j}}=g(\phi_{j},\phi_{\overline{j}})\phi^{j}$ by definition and because $H(0,\tau)=I$ and $H^{d,0}_{j+1,a+1}=0$ for $d\geq 1$
(this follows from (\ref{eq:DiffxHd}) because ${\cal Q}$ is flat).
Define now, as in \cite[section 10.2]{CK},
$$\tilde{s}(\phi_{a})=\phi_{a}+\sum_{d\geq 1}\sum_{j=0}^{\mu -1}\sum_{r\geq 0}\tau^{r+1}<\tau_{r}\phi_{a},\phi_{j}>_{0,2,d}\phi^{j}q^{d}.$$
Under the correspondence $x\leftrightarrow q$, we have $H=\tilde{s}$: this follows from the unicity, once given the initial condition $H(0,\tau )=I$, because, up to the factor $e^{-\tau M_{1}(0)\ln x}$, $H$ and $\tilde{s}$ yield fundamental solutions of the Dubrovin connection.
\end{proof}

\noindent Let us emphasize once again that these correlators can be computed using the recursion relations (\ref{eq:DiffxHd}).

\subsection{Examples}

We discuss here some very simple examples.

\subsubsection{Projective space}
\label{ex:cohquantique}

Let us consider the quantum differential system of example \ref{exemplebasiqueB}, with $w_1 =\cdots =w_n =1$, the mirror \footnote{Strictly speaking, one should take into account the coordinate $x_{0}$ of $H^{0}(X,\cit )$: the relative part of the quantum differential system is
$$-\tau I\frac{dx_{0}}{x_{0}}-\tau M_{1}(x_{1})\frac{dx_{1}}{x_{1}}$$
and one has to twist the following results by $x_{0}^{-\tau I}$.} of the small quantum cohomology of 
$X=\ppit^{n}$. We have $M(x,\tau )=\tau M_{1}(x)$ where
$$M_{1}(x)=-\left ( \begin{array}{cccccc}
0  & 0  & 0 & 0  & 0 & x\\
1  & 0  & 0 & 0  & 0 & 0\\
0  & 1  & 0 & 0  & 0 & 0\\
0  & 0  & 1 & 0  & 0 & 0\\
0  & 0  & 0 & .. & 0 & 0\\
0  & 0  & 0 & 0  & 1 & 0
\end{array}
\right )$$
\noindent which is a $(n+1)\times (n+1)$ matrix. According to example \ref{ex:PetiteCohQuant}, 
the correlator matrix is 
$$H(x,\tau )=I+\sum_{d\geq 1}H^{d}(\tau)x^{d}$$
where the matrices $H^{d}(\tau)$ are defined by the relations
\begin{equation}\label{eq:recuManifold}
dH^{d}(\tau )=\tau [H^{d}(\tau ) M_{1}(0)-M_{1}(0)H^{d}(\tau )+NH^{(d-1)}(\tau )]
\end{equation}
for all $d\geq 1$ where $N_{1,n+1}=1$ and $N_{i,j}=0$ otherwise. Using definition \ref{def:correl}, this gives some very well known results (see {\em f.i} \cite[Example 10.1.3.1]{CK} and the references therein): 

\begin{example}
Let us assume that $n=1$. We have, for $d\geq 1$,
\begin{itemize}
\item $<\tau_{2d-1}\omega_{0},\omega_{1}>_{0,2,d}=\frac{1}{(d!)^{2}}-\frac{2d}{(d!)^{2}}(1+\cdots +\frac{1}{d})$ and $<\tau_{r}\omega_{0},\omega_{1}>_{0,2,d}=0$ otherwise,
\item $<\tau_{2d-2}\omega_{1},\omega_{1}>_{0,2,d}=\frac{d}{(d!)^{2}}$ and $<\tau_{r}\omega_{1},\omega_{1}>_{0,2,d}=0$ otherwise,
\item $<\tau_{2d}\omega_{0},\omega_{0}>_{0,2,d}=\frac{-2}{(d!)^{2}}(1+\cdots +\frac{1}{d})$ and $<\tau_{r}\omega_{0},\omega_{0}>_{0,2,d}=0$ otherwise,
\item $<\tau_{2d-1}\omega_{1},\omega_{0}>_{0,2,d}=\frac{1}{(d!)^{2}}$ and $<\tau_{r}\omega_{1},\omega_{0}>_{0,2,d}=0$ otherwise.
\end{itemize}

\noindent Indeed, the recursion relation (\ref{eq:recuManifold}) gives
\begin{itemize}
\item $H_{11}^{d}(\tau)=\frac{\tau^{2d}}{(d!)^{2}}-\frac{2d\tau^{2d}}{(d!)^{2}}(1+\cdots +\frac{1}{d})$ for $d\geq1$, $1$ if $d=0$,
\item $H_{12}^{d}(\tau)=\frac{d\tau^{2d-1}}{(d!)^{2}}$ for $d\geq1$, $0$ if $d=0$,
\item $H_{21}^{d}(\tau)=\frac{-2}{(d!)^{2}}\tau^{2d+1}(1+\cdots +\frac{1}{d})$ for $d\geq1$, $0$ if $d=0$,
\item $H_{22}^{d}(\tau)=\frac{\tau^{2d}}{(d!)^{2}}$ for $d\geq1$, $1$ if $d=0$
\end{itemize}
\end{example}

\begin{remark}
Let us put $\deg \omega_{0}=0$ and $\deg\omega_{1}=2$. Then $<\tau_{r}\omega_{i},\omega_{j}>_{0,2,d}=0$ if
$$2r+\deg\omega_{i}+\deg\omega_{j}\neq 4d$$
as predicted by the ``degree axiom'' \cite[page 192]{CK}.
\end{remark}

\subsubsection{Weighted projective space}

\label{sec:cohomologieorbifolde}
Let us continue with example \ref{exemplebasiqueB}, but now for general integers $w_{0},\cdots ,w_{n}$, the mirror of the weighted projective spaces $\ppit (w_{0},\cdots ,w_{n})$. 
Recall the canonical fundamental solution 
$$P(\zeta ,\tau )=H(\zeta ,\tau )e^{-\tau \tilde{M}_{1}(0)\ln \zeta }$$
defined in example \ref{ex:PetiteCohQuantOrbi1}. We write
$$H(\zeta ,\tau )=I+\sum_{d\geq 1}H^{d}(\tau)\zeta^{d}$$
and $H^{d}(\tau)=\sum_{r\geq 0} H^{d,r}\tau^{r}$. We then get the (orbifold) correlators
$$<\tau_{r}\widetilde{\omega}_{a},\widetilde{\omega}_{j}>_{0,2,d}:= g(\widetilde{\omega}_{j}, \widetilde{\omega}_{\overline{j}})H_{\overline{j}+1, a+1}^{d,r+1}$$
where the sections $\widetilde{\omega}_{i}$ are defined in example \ref{ex:PetiteCohQuantOrbi1}. Once again, these correlators can be computed in practise using formula (\ref{eq:DiffxHd}).

\begin{example}\label{petitexemple}
Let 
$n=1$, $w_{0}=1$ and $w_{1}=2$. We have, with the notations of section \ref{ex:PetiteCohQuantOrbi}, $\mu =3$, $c_{0}=c_{1}=0$, $c_{2}=\frac{1}{2}$ and
$$\tilde{M}_{1}(\zeta )=-2\left ( \begin{array}{ccc}
0  & 0 & \zeta /4\\
1  & 0 & 0\\
0  & \zeta & 0
\end{array}
\right )$$
We have $\overline{0}=1$ and $g(\widetilde{\omega}_{0},\widetilde{\omega}_{1})=\frac{1}{2}$, $\overline{2}=2$ and $g(\widetilde{\omega}_{2},\widetilde{\omega}_{2})=\frac{1}{8}$.
The matrix $H(\zeta ,\tau )$ is determined by the relations
$$dH^{d}(\tau )=
2\tau [
\left ( \begin{array}{ccccc}
0  & 0  & 0\\
1  & 0  & 0\\
0  & 0  & 0
\end{array}
\right )
H^{d}(\tau)
-
H^{d}(\tau)
\left ( \begin{array}{ccccc}
0  & 0  & 0 \\
1  & 0  & 0 \\
0  & 0  & 0
\end{array}
\right )]$$
$$+2\tau
\left ( \begin{array}{ccccc}
0  & 0  & 1/4\\
0  & 0  & 0\\
0  & 1  & 0
\end{array}
\right )
H^{(d-1)}(\tau)$$
for $d\geq 1$ and $H^{(0)}(\tau )=I$. We have for instance
$$H^{(1)}(\tau )=
\left ( \begin{array}{ccccc}
0  & 0  & \frac{\tau}{2}\\
0  & 0  & \tau^{2}\\
-4\tau^{2}  & 2\tau  & 0
\end{array}
\right )\ \mbox{and}\
H^{(2)}(\tau )=
\left ( \begin{array}{ccccc}
-\frac{3}{2}\tau^{3}  & \frac{\tau^{2}}{2}  & 0\\
-2\tau^{4}  & \frac{\tau^{3}}{2}  & 0\\
0  & 0  & \tau^{3}
\end{array}
\right).$$
\noindent This gives\\

$\bullet$ $<\tau_{2}\widetilde{\omega}_{0},\widetilde{\omega}_{1}>_{0,2,2}=-3/4$,
$<\tau_{1}\widetilde{\omega}_{1},\widetilde{\omega}_{1}>_{0,2,2}=1/4,$
$<\tau_{3}\widetilde{\omega}_{0},\widetilde{\omega}_{0}>_{0,2,2}=-1,$
$<\tau_{2}\widetilde{\omega}_{1},\widetilde{\omega}_{0}>_{0,2,2}=1/4$,
$<\tau_{2}\widetilde{\omega}_{2},\widetilde{\omega}_{2}>_{0,2,2}=1/8$
and $<\tau_{r}\widetilde{\omega}_{i},\widetilde{\omega}_{j}>_{0,2}=0$ otherwise,\\

$\bullet$
$<\tau_{0}\widetilde{\omega}_{2},\widetilde{\omega}_{1}>_{0,2,1}=1/4$,
$<\tau_{1}\widetilde{\omega}_{0},\widetilde{\omega}_{2}>_{0,2,1}=-1/2$,
$<\tau_{0}\widetilde{\omega}_{1},\widetilde{\omega}_{2}>_{0,2,1}=1/4$,
$<\tau_{1}\widetilde{\omega}_{2},\widetilde{\omega}_{0}>_{0,2,1}=1/2$
and $<\tau_{r}\widetilde{\omega}_{i},\widetilde{\omega}_{j}>_{0,2,1}=0$ otherwise.
\end{example}

\begin{remark} In the previous example we have $<\tau_{r}\widetilde{\omega}_{i},\widetilde{\omega}_{j}>_{0,2,d}=0$ if
$$2r+\deg\widetilde{\omega}_{i}+\deg\widetilde{\omega}_{j}\neq 3d$$
where $\deg\widetilde{\omega}_{0}=0$, $\deg\widetilde{\omega}_{1}=2$ and $\deg\widetilde{\omega}_{2}=1$. More generally set 
now ${\cal X}=\ppit (w_{0},\cdots ,w_{n})$. If 
$<\tau_{k_{0}}\phi_{0},\cdots , \tau_{k_{\ell -1}}\phi_{\ell -1}>_{0,\ell ,d}$ is not equal to zero then
\begin{equation}\label{eq:degre}
\sum_{i=0}^{\ell -1}(\deg^{orb}\phi_{i}+2k_{i})=2n+2<c_{1}(T{\cal X}),d>+2\ell -6.
\end{equation}
We have $c_{1}(T{\cal X})=\mu p$ where 
$p:=\frac{1}{pgcd(w_{0},\cdots ,w_{n})}c_{1}({\cal O}_{{\cal X}}(pgcd(w_{0},\cdots ,w_{n})))\in H^{2}(|{\cal X}|,\cit )$.   
If moreover the numbers $\mu :=w_{0}+\cdots +w_{n}$ and $\frac{1}{r}:=lcm (w_{0},\cdots ,w_{n})$ are prime there exists 
a generator $D_{w}$ of $H_{2}(|{\cal X}|,\zit )$ such that $\int_{D_{w}}p=r$. In these conditions, equation (\ref{eq:degre}) becomes
\begin{equation}\label{eq:degrebis}
\sum_{i=0}^{\ell -1}(\alpha_{i}+k_{i})=n+\mu rd+\ell -3
\end{equation}
if $R_{\infty}(\tilde{\omega}_{i})=\alpha_{i}\tilde{\omega}_{i}$.
This justifies the twist by $r$ above.
\end{remark}

\section{A mirror partner of the Hirzebruch surface $\fit_{2}$ {\em via } quantum differential systems and its classical limit}

\label{sec:hirzebruch}

In this section we compute a mirror partner of the small quantum cohomology of the Hirzebruch surface $\fit_{2}$ using quantum differential systems, 
as explained in section \ref{subsec:Mir}. 
This provides a concrete mirror theorem for a non Fano variety {\em via} Gauss-Manin systems and Brieskorn lattices. 
The explicit construction of the quantum differential system ${\cal Q}^B$ associated with the Landau-Ginzburg model of $\fit_2$ is interesting for several reasons. First, it brings to light some new phenomena on the $B$-side, in comparison with the Fano situations considered until now: for instance, and for tameness reasons, the base space (which is two dimensional in this situation because the Picard group of $\fit_2$ is so)
$M^B$ is not the whole torus $(\cit^{*})^{2}$. Second, 
the description of the mirror map $\nu$ (see definition \ref{def:IsoSDQ})  in terms of flat coordinates
is very transparent in this setting. Flatness has to be understood with respect 
to a residual connection which is explicitely produced by the quantum differential system ${\cal Q}^B$. Notice that several normalizations of such flat coordinates are possible, and this is essentially due to the fact that the rank
of the Picard group is greater than one: keeping in mind mirror symmetry, this ambiguity is at the end set by the metric.
Last, and independently of the mirror theorem, we construct a logarithmic Frobenius manifold starting from a restricted set of data, using the reconstruction results of \cite{R} and \cite{HeMa}.

We describe briefly the setting in section \ref{sec:Setting} and we calculate, on the $B$-side, the mirror flat quantum differential 
system in section \ref{sec:MirSDQF2}.  
The mirror theorem is stated in section \ref{sec:MirF2} and we check 
in section \ref{sec:ResP112} that specialization of the previous results at suitable values of the parameters $q_1$ et $q_2$ gives 
the small quantum orbifold cohomology of $\ppit (1,1,2)$, an aspect of Ruan's conjecture 
(this has been done first in \cite{CIT}, in a slightly different setting). 
On the way, 
we use our computations in order to construct a logarithmic Frobenius manifold in section \ref{sec:LogFrob}.

\subsection{A Landau-Ginzburg model for the Hirzebruch surface $\fit_{2}$}
We will denote by $X$ the Hirzebruch surface $\fit_{2}$: it is a compact and smooth toric variety such that
$Pic (X)=\zit f\oplus \zit H$ where $f$ is the class of a fiber and $H$ is  
the zero section (the section at infinity is $H-2f$). The cohomology algebra of $X$ is $\cit [f, H]/ <f^{2}, H^{2}-2Hf>$ and we have intersection numbers
\begin{equation}\label{eq:NombreIntersection}
f^2=0,\  H^2=2\ \mbox{and}\ (H-2f)^2 =-2
\end{equation}

\noindent We consider the fan $\Sigma$ of $X$ with one dimensional cones 
$$\Sigma (1)=\{(1,0), (0,1), (-1,2), (0,-1)\}$$
where we identify elements of $\Sigma (1)$
with their primitive generators $v_{1}=(1,0)$, $v_{2}=(0,1)$, $v_{3}=(-1,2)$ and $v_{4}=(0,-1)$. We denote by
$D_{1}$, $D_{2}$, $D_{3}$ and $D_{4}$ the corresponding divisors,
with intersection numbers 
$$D_{1}^2=0,\  D_{2}^2=-2,\ D_{3}^2 =0,\ D_{4}^2=2$$  
\noindent We have

\begin{equation}\label{eq:Divisors}
D_{1}=f,\ D_{2}=H-2f,\ D_{3}=f, \ D_{4}=H
\end{equation}

\noindent and thus $D_{1}+D_{2}+D_{3}+D_{4}=2H$. In particular $X$ is not Fano.

Recall also the exact sequence 

\begin{equation}\label{eq:SuiteExacteH2}
0\longrightarrow Pic(X)\stackrel{\psi}{\longrightarrow} \zit^{4}\stackrel{\varphi}{\longrightarrow}\zit^{2}\longrightarrow 0 
\end{equation}

\noindent where $\varphi (e_{i})=v_{i}$ for $i=1,\cdots ,4$, 
$(e_{i})$ denoting the canonical basis of $\zit^{4}$, and 
$$\psi (af+bH)=\sum_{i=1}^{4}<D_{i},af+bH>e_{i}.$$
The matrix of $\varphi$ is 
$$\left ( \begin{array}{cccc}
1 & 0  & -1 & 0  \\
0  & 1  & 2 & -1  
\end{array}
\right )$$

\noindent while the matrix of $\psi$ in the basis $(f,H)$ of $Pic(X)$ is

$$\left ( \begin{array}{cc}
 1  & 0  \\
-2 & 1    \\
1   & 0    \\
0   & 1    
\end{array}
\right )$$
\noindent (the two column vectors of the latter matrix generate the linear relations between the $v_{i}$'s).\\

 Applying the functor $Hom_{\zit}(--, \cit^{*})$ to the exact sequence (\ref{eq:SuiteExacteH2}), we get the Landau-Ginzburg model for $X$: 
it is the function $F$ defined by
$$u_{1}+u_{2}+u_{3}+u_{4}$$
restricted to 
$$U=\{ (u_{1},u_{2},u_{3},u_{4})\in (\cit^{*})^{4}|\ u_{1}u_2^{-2}u_{3}=q_{1}\ \mbox{and}\ u_{2}u_4 =q_{2}\}$$
Throughout this paper, we will consider the following presentation of $F$:

\begin{definition}
The Landau-Ginzburg model of the Hirzebruch surface $\fit_2$ is the function $F$ 
defined  by 
\begin{equation}\label{eq:LGModel}
F(u_1 ,u_2, q_1 ,q_2)=u_1+u_2+q_{1}\frac{u_{2}^{2}}{u_1}+q_{2}\frac{1}{u_2}
\end{equation}
on $(\cit^{*})^2\times\cit^2$.
\end{definition}

\subsection{Tameness properties of the Landau-Ginzburg model and its Brieskorn lattice}
\label{sec:Setting}

\subsubsection{Tameness}

The function $F$ has some tameness properties, depending on the position of the parameters $(q_1 ,q_2)$. In order to see this, let $\Gamma$ be the 
convex hull of $(1,0)$, $(0,1)$, $(-1,2)$, $(0,-1)$ in $\rit^{2}$.

\begin{lemma}\label{lemma:NDC}
The Laurent polynomial function $f:(u_1,u_2)\mapsto F(u_1,u_2, q_1 ,q_2)$ is convenient and non-degenerate with respect to $\Gamma$
in the sense of \cite{K} for all
$$(q_{1},q_{2})\in M:=\{(q_{1},q_{2})\in (\cit^{*})^{2}| q_{1}\neq \frac{1}{4}\}.$$ 
For $(q_1 ,q_2 )\in M$, the function $f$ has four non-degenerate critical points (and four distinct critical values)
and its global Milnor number is equal to $4$.
\end{lemma}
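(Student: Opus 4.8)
The plan is to prove the two assertions separately: first convenience and non-degeneracy (which yield tameness), and then the count of critical points together with the value of the global Milnor number. Throughout I work with $f(u_1,u_2)=u_1+u_2+q_1 u_2^2/u_1+q_2/u_2$ as a Laurent polynomial on $(\cit^*)^2$, and I use the Newton polyhedron $\Gamma$ at infinity together with its faces $\Gamma_0,\Gamma_1,\Gamma_2$ introduced just above. Convenience is essentially immediate: the four monomials $u_1$, $u_2$, $u_2^2/u_1$, $1/u_2$ have exponent vectors $(1,0)$, $(0,1)$, $(-1,2)$, $(0,-1)$, which are exactly the vertices of $\Gamma$, so the origin lies in the interior of $\Gamma$ and every coordinate direction is covered; hence $f$ is convenient in the sense of Kouchnirenko \cite{K} for every $(q_1,q_2)\in(\cit^*)^2$.

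For non-degeneracy I would check, face by face, that for each proper face $\sigma$ of $\Gamma$ the restricted (face) polynomial $f_\sigma$ has no critical point on $(\cit^*)^2$, i.e.\ that the system $u_1\partial_{u_1}f_\sigma=u_2\partial_{u_2}f_\sigma=0$ has no solution with $u_1,u_2\neq 0$. The vertices give monomials, which never vanish on the torus, so only the one-dimensional faces $\Gamma_0,\Gamma_1,\Gamma_2$ need attention. For $\Gamma_0$ (equation $x-y=1$) the face polynomial is $u_1+q_1 u_2^2/u_1$, whose logarithmic partials are $u_1-q_1u_2^2/u_1$ and $2q_1u_2^2/u_1$; the second never vanishes on the torus, so $\Gamma_0$ is non-degenerate. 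The face $\Gamma_2$ (equation $x+y=1$) carries $u_1+u_2$, again with a nowhere-vanishing logarithmic derivative. The only delicate face is $\Gamma_1$ (equation $-3x-y=1$), whose face polynomial is $q_1u_2^2/u_1+q_2/u_2$; computing $u_1\partial_{u_1}=-q_1u_2^2/u_1$ (nonzero on the torus) shows this face is non-degenerate as well. The remaining condition is non-degeneracy of $f$ itself, equivalently the requirement that the principal part $f$ has only non-degenerate critical points; this is where the excluded locus $q_1=\tfrac14$ enters.

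Thus the heart of the argument, and the main obstacle, is the analysis at $q_1=\tfrac14$: I expect that solving $u_1\partial_{u_1}f=u_2\partial_{u_2}f=0$ explicitly produces a Hessian whose determinant vanishes precisely when $q_1=\tfrac14$, so that for $q_1\neq\tfrac14$ all critical points are non-degenerate (and for $q_1=\tfrac14$ two of them collide). Concretely, $u_1\partial_{u_1}f=u_1-q_1u_2^2/u_1=0$ gives $u_1^2=q_1u_2^2$, i.e.\ $u_1=\pm q_1^{1/2}u_2$, while $u_2\partial_{u_2}f=u_2+2q_1u_2^2/u_1-q_2/u_2=0$ gives a second relation; substituting the first into the second reduces the problem to a single polynomial equation in one variable whose degree I expect to be four, yielding generically four simple roots, each lying in $(\cit^*)^2$ by convenience. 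The discriminant of that one-variable equation should factor with $q_1-\tfrac14$ as the only relevant vanishing factor on $M$, which both confirms non-degeneracy for $(q_1,q_2)\in M$ and accounts for the exclusion.

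Finally, once convenience and non-degeneracy are established, Kouchnirenko's theorem \cite{K} identifies the global Milnor number with the normalized volume $n!\,\mathrm{Vol}(\Gamma)=2\,\mathrm{Area}(\Gamma)$; computing the area of the quadrilateral with vertices $(1,0),(0,1),(-1,2),(0,-1)$ gives $\mathrm{Area}(\Gamma)=2$, hence global Milnor number $4$, in agreement with the explicit four-point count from the critical-point analysis. I would present the area computation (a one-line shoelace calculation) and the root count as the two independent confirmations of $\mu=4$, emphasizing that the degeneration at $q_1=\tfrac14$ is exactly the failure of tameness that forces the restriction of the base space noted at the start of section \ref{sec:hirzebruch}.
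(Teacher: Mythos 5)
Your overall strategy (convenience from $0$ being interior to $\Gamma$, non-degeneracy by a face-by-face check, and the Milnor number from Kouchnirenko's volume formula $2!\,\mathrm{Vol}(\Gamma)=4$) is the same as the paper's, and the convenience and volume parts are fine. But the face analysis, which is the heart of the lemma, is carried out on the wrong faces. The exponents $(1,0)$, $(0,1)$, $(-1,2)$ are collinear — all three lie on the line $x+y=1$ — so $\Gamma$ is a \emph{triangle} with vertices $(1,0)$, $(-1,2)$, $(0,-1)$, and the face $\Gamma_2$ carries the polynomial $f_{|\Gamma_2}=u_1+u_2+q_1u_2^2/u_1$, not just $u_1+u_2$. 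Likewise $\Gamma_0$ (equation $x-y=1$) joins $(1,0)$ to $(0,-1)$ and carries $u_1+q_2/u_2$, not $u_1+q_1u_2^2/u_1$: the exponent $(-1,2)$ has $x-y=-3$, so it does not lie on that face. Because you dropped the monomial $q_1u_2^2/u_1$ from $\Gamma_2$, you conclude that every face is non-degenerate for all $q_1\in\cit^*$, and you are then forced to relocate the condition $q_1\neq\frac14$ into a Morse/Hessian condition on the critical points of $f$ itself. That is not what non-degeneracy in the sense of Kouchnirenko means — it is a condition purely on the face polynomials of $\Gamma$ — so with your reading the first assertion of the lemma would remain unproved.

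The condition $q_1\neq\frac14$ in fact comes precisely from the face $\Gamma_2$: one has $u_1\partial_{u_1}f_{|\Gamma_2}=u_1-q_1u_2^2/u_1$ and $u_2\partial_{u_2}f_{|\Gamma_2}=u_2+2q_1u_2^2/u_1$, and these vanish simultaneously at a point of the torus if and only if $u_1^2=q_1u_2^2$ and $u_1=-2q_1u_2$, i.e.\ $4q_1^2=q_1$, i.e.\ $q_1=\frac14$. This is exactly the paper's condition (\ref{eq:NDC}). Your explicit critical-point count for $f$ itself (two quadratics $u_2^2=q_2/(1\pm 2\sqrt{q_1})$, four solutions when $4q_1\neq 1$) is a correct and useful confirmation of $\mu=4$, but note that at $q_1=\frac14$ the two missing critical points do not collide: they escape to infinity, which is why the global Milnor number drops to $2$ there and why tameness fails.
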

\begin{proof}
The function $f$ is convenient for $q_{1}q_{2}\neq 0$ because $0$ belongs to the interior of $\Gamma$. 
 Let us denote
\begin{itemize}
\item $\Gamma_{0}$ the face of $\Gamma$ whose equation is  $x-y=1$,
\item $\Gamma_{1}$ the face whose equation is $-3x-y=1$,
\item $\Gamma_{2}$ the face whose equation is $x+y=1$
\end{itemize}
We define 
\begin{equation}\nonumber
f_{|\Gamma _{0}}=u_1+q_{2}\frac{1}{u_2},\  f_{|\Gamma _{1}}=q_{1}\frac{u_{2}^{2}}{u_1}+q_{2}\frac{1}{u_2},\ 
f_{|\Gamma _{2}}=u_1+u_2+q_{1}\frac{u_{2}^{2}}{u_1},
\end{equation}
the restrictions of $f$ to the boundary of $\Gamma$. It is easily seen that 
\begin{equation}\nonumber
 u_{1}\frac{\partial f_{|\Gamma _{j}}}{\partial u_{1}}= u_{2}\frac{\partial f_{|\Gamma _{j}}}{\partial u_{2}}=0 \Longrightarrow u_{1}u_{2}=0
\end{equation}
(this condition means precisely that $f$ is non-degenerate) if and only if moreover $q_{1}\neq \frac{1}{4}$.  The assertion about the global Milnor number
then follows for instance from \cite{K}, but it can also be directly checked.
\end{proof}

%\footnote{Namley 
%$(\frac{q_{1}^{1/2}q_{2}^{1/2}}{(1+2q_{1}^{1/2})^{1/2}},\frac{q_{2}^{1/2}}{(1+2q_{1}^{1/2})^{1/2}})$,
%$(-\frac{q_{1}^{1/2}q_{2}^{1/2}}{(1+2q_{1}^{1/2})^{1/2}},-\frac{q_{2}^{1/2}}{(1+2q_{1}^{1/2})^{1/2}})$,
%$(\frac{q_{1}^{1/2}q_{2}^{1/2}}{(1-2q_{1}^{1/2})^{1/2}},-\frac{q_{2}^{1/2}}{(1-2q_{1}^{1/2})^{1/2}})$ and
%$(-\frac{q_{1}^{1/2}q_{2}^{1/2}}{(1-2q_{1}^{1/2})^{1/2}},\frac{q_{2}^{1/2}}{(1-2q_{1}^{1/2})^{1/2}})$ with respective (distinct) critical values 
%$2q_{2}^{1/2}(1+2q_{1}^{1/2})^{1/2}$, $-2q_{2}^{1/2}(1+2q_{1}^{1/2})^{1/2}$, $-2q_{2}^{1/2}(1-2q_{1}^{1/2})^{1/2}$, $2q_{2}^{1/2}(1-2q_{1}^{1/2})^{1/2}$.}

\noindent Notice that the restriction of $F$ at $q_{1}=\frac{1}{4}$ has two non-degenerate critical points and it follows that its (global) Milnor number is equal to two. This ``jump'' of Milnor numbers is not so surprising:   
the restriction of $f$ at $q_{1}=\frac{1}{4}$ is indeed degenerate.
 Notice also that there are no critical points in $M$ disappearing at infinity in the sense of 
\cite[section 2]{DoSa1}.

\begin{remark}\label{rem:discriminant}
The discriminant $D$ 
(image of the singular locus)
of $(F(u, q_1 , q_2 ), q_1 , q_2 )=(t,q_1 , q_2 )$ has the equation
$$t^4 -8q_2 t^2 -16 q_2^2 (4q_1 -1)=0$$
and is therefore smooth on $M$. The projection $\pi :D\rightarrow M$ is finite and the cardinal of any fiber is equal to four.
\end{remark}

\subsubsection{The Brieskorn lattice}

We refer to \cite{DoSa1} for the definition of the (Fourier-Laplace tranform of the) Brieskorn lattice $G_0$ and the Gauss-Manin system $G$ of $F$.
Recall the following facts: put 
$$\cit [M,\theta ]:=\cit [q_{1}, q_{2}, q_{1}^{-1}, q_{2}^{-1}, (4q_1 -1)^{-1},\theta ]$$ 
where $\theta$ is a new variable and $U=(\cit^{*})^2$. Then
\begin{itemize}
\item $G_{0}=\Omega^{n}(U)[M,\theta ]/ 
(\theta d-dF\wedge)\Omega^{n-1}(U)[M,\theta ]$,
\item $G_{0}/\theta G_{0}=\Omega^{n}(U)[M,\theta ]/ 
dF\wedge\Omega^{n-1}(U)[M,\theta ]$.
\end{itemize}
\noindent where $d$ is a relative differential: the derivation is taken with respect to $(u_1 ,u_2)$ only. $G_{0}$ is naturally a $\cit [M,\theta ]$-module and is equipped with a action of $\theta^{2} \nabla_{\partial_{\theta}}$ which is induced by the multiplication by $F$. We will also write $\tau$ for $\theta^{-1}$; in particular $\nabla_{\partial_{\tau}}=-\theta^{2} \nabla_{\partial_{\theta}}$.\\

\begin{proposition}
\label{prop:basisG0}
The classes 
\begin{equation}\label{eq:Triangle}
\triangle :=(\vartriangle_0 ,\vartriangle_1 ,\vartriangle_2 ,\vartriangle_3 ):=
([\frac{du_1}{u_1}\wedge\frac{du_2}{u_2}], [\frac{1}{u_2}\frac{du_1}{u_1}\wedge\frac{du_2}{u_2}], 
[\frac{u_2}{u_1}\frac{du_1}{u_1}\wedge\frac{du_2}{u_2}], [\frac{u_2^2}{u_1}\frac{du_1}{u_1}\wedge\frac{du_2}{u_2}])
\end{equation}
yield a basis of $G_{0}$ over $\cit [M,\theta ]$. 
\end{proposition}
\begin{proof}
Notice first that the classes of 
 $$\frac{du_1}{u_1}\wedge\frac{du_2}{u_2}, \frac{1}{u_2}\frac{du_1}{u_1}\wedge\frac{du_2}{u_2}, 
\frac{u_2}{u_1}\frac{du_1}{u_1}\wedge\frac{du_2}{u_2}, \frac{u_2^2}{u_1}\frac{du_1}{u_1}\wedge\frac{du_2}{u_2}$$
are linearly independant in $G_{0}/\theta G_{0}$ because we have
 \begin{itemize}
\item $[F du_{1}\wedge du_2 ]= q_2 [\frac{1}{u_2}du_1\wedge du_2]$
\item $[F^{2}du_{1}\wedge du_2 ] = q_{2}[du_1\wedge du_2 ]+2q_1 q_2[\frac{u_2}{u_1}du_1\wedge du_2 ]$
\item $[F^{3}du_1 \wedge du_2] = 2 q_1(1-4q_{1})[\frac{u_2^{2}}{u_1}du_1\wedge du_2 ]+q_2 (1+4q_1 ) [\frac{1}{u_2}du_1\wedge du_2 ]$
\end{itemize}
in $G_{0} /\theta G_{0}$ and the classes 
 $$[du_1 \wedge du_2 ], [F du_{1}\wedge du_2 ], [F^{2}du_{1}\wedge du_2 ], [F^{3}du_1 \wedge du_2 ]$$
 are linearly independant in $G_{0}/\theta G_{0}$ (see lemma \ref{lemma:NDC} and remark \ref{rem:discriminant}).
 Now, if $\omega_{1},\cdots ,\omega_{\mu}\in G_{0}$ are such that there is no non-trivial relation between their classes 
in $G_{0}/\theta G_{0}$ then there are no non-trivial relations between $\omega_{1},\cdots ,\omega_{\mu}$ in $G_{0}$.
Indeed, assume that
$$\sum_{i=1}^{\mu}a_{i}(\theta , q_{1}, q_{2})\omega_{i}=0$$
in $G_{0}$. Using the assumption, we first get $a_{i}(0, q_{1}, q_{2})=0$ for all $i$ and, because $G_{0}$ has no $\theta$-torsion (see for instance \cite{D1}), 
we get by induction that the 
coefficients of the monomials $\theta^{k}$ in the $a_{i}(\theta , q_{1}, q_{2})$'s are all equal to $0$.
We conclude in particular that the classes $\vartriangle_0 ,\vartriangle_1 ,\vartriangle_2 ,\vartriangle_3 $ are linearly independant in $G_{0}$. 
In particular they generate a free module $H_{0}$ of rank $4$, contained in $G_{0}$. 
The module $H:=\cit [M, \theta ,\theta^{-1}]\otimes H_{0}$ is free over $\cit [M, \theta ,\theta^{-1}]$, equipped with a connection 
(see the second part of theorem \ref{theo:baseomega} below, which is independent of the first one): it follows that $G/H$ is also free
over $\cit [M, \theta ,\theta^{-1}]$, 
because of finite type and equipped with a connection. 
Now, and because of lemma \ref{lemma:NDC}, $G$ is free of rank $4$ over $\cit [M, \theta ,\theta^{-1}]$ (see for instance \cite{DoSa1}). 
We thus have $G=H$. In particular, $H_{0}$ is a lattice in $G$ and we finally get $G_{0}=H_{0}$ because $G_{0}$ is also a lattice in $G$, see {\em loc. cit.} 
\end{proof}

\subsection{A quantum differential system for the Landau-Ginzburg model}
\label{sec:MirSDQF2}

We solve here the Birkhoff problem for the Brieskorn lattice of $F$ (see the Appendix). In other words,
we describe a basis of $G_{0}$ yielding a differential system on $\ppit^{1}\times M$, with logarithmic poles along  $\{ \tau =0\}\times M$ and with poles of Poincare rank less or equal to $1$ along 
$\{ \theta =0\}\times M$. We ask moreover that this basis provides 
 (canonical) logarithmic extensions of the Brieskorn lattice along  $q_2 =0$ and $q_1 =0$. Here, the adjective canonical refers to canonical Deligne's extensions: we require that 
the eigenvalues of the residue matrices along $q_2 =0$ and $q_1 =0$ do not differ from non-zero integers. This explains why we work with a modified 
version of the basis $\triangle$.

\subsubsection{A (non-resonant, logarithmic) differential system}

 Define
\begin{equation}
\omega =(\omega_{0},\omega_{1},\omega_{2}, \omega_{3}):=
(\omega_{0}, -\theta q_{2}\nabla_{\partial_{q_{2}}}\omega_{0},  \theta q_{2}\nabla_{\partial_{q_{2}}}\theta q_{1}\nabla_{\partial_{q_{1}}}\omega_{0}, 
-\theta q_{1}\nabla_{\partial_{q_{1}}}\omega_{0})
\end{equation}
\noindent We will make a constant use 
of the following result which describes the matrix of the connection $\nabla$:

\begin{theorem}\label{theo:baseomega}
\begin{enumerate}
\item $\omega$ is a basis of $G_{0}$ over $\cit [M,\theta ]$.
\item The matrix of $\tau\nabla_{\partial_{\tau}}$ in the basis $\omega$ is
$$-\tau \left (
\begin{array}{cccc}
0     & 2q_2     & 0         & 0\\
2 & 0     & 4q_1 q_2     & 0\\
0     & 4 & 0         & 2\\
0     & 0     & 2q_2 (1-4q_1 ) & 0
\end{array}
\right )
-
\left ( \begin{array}{cccc}
0 & 0  & 0  & 0\\
0 & 1  & 0   & 0\\
0 & 0  & 2  & 0\\
0 & 0  & 0  & 1
\end{array}
\right ),$$
the one of $q_{2}\nabla_{q_{2}}$ is
$$-\tau \left (
\begin{array}{cccc}
0     & q_2       &  0                                   & 0\\
1     & 0           & 2 q_1 q_2                      & 0\\
0     & 2           & 0                                     & 1\\
0     &      0      & q_2 (1-4q_1 )                  & 0
\end{array}
\right )$$

\noindent and the one of $q_1 \nabla_{q_{1}}$ is

$$-\tau \left (
\begin{array}{cccc}
0                   & 0          &  0                     & \frac{q_2  q_1}{1- 4q_1 }\\
0                   & 0          & q_1 q_2            & 0\\
0                   & 1        & 0                     & \frac{2q_1 }{4q_1 -1}\\
1                 & 0          & -2q_1 q_2         & 0
\end{array}
\right )
+
\left ( \begin{array}{cccc}
0 & 0 & 0 & 0\\
0 & 0 & 0           & \frac{q_1}{4q_1 -1}\\
0 & 0 & 0  & 0\\
0 & 0 & 0           & -\frac{2q_1}{4q_1 -1}
\end{array}
\right )$$
\end{enumerate}
\end{theorem}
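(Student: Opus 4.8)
The plan is to compute all three matrices by reducing every derivative of a basis vector back to the basis $\triangle=(\vartriangle_0,\vartriangle_1,\vartriangle_2,\vartriangle_3)$ of Corollary \ref{coro:basisG0}, using two inputs: the basic formula (\ref{eq:formulebase}) for the $\tau$-direction and the relative Gauss--Manin derivation (with $\partial_{q_1}F=q_1^{-1}\cdot q_1 u_2^2/u_1$, $\partial_{q_2}F=1/u_2$) for the $q$-directions, together with the division relations of Lemma \ref{lemme:relations}. First I would record, via Remark \ref{rem:basetriangle}, that $\omega=(\vartriangle_0,\,q_2\vartriangle_1,\,q_1q_2\vartriangle_2,\,q_1\vartriangle_3)$; since $q_1,q_2$ are invertible on $M$ this shows at once that $\omega$ is again a basis of $G_0$, and it records how the scalar prefactors interact with the connection through the Leibniz rule. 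It also gives three columns of the $q$-matrices for free: from the very definition of $\omega$ one reads $q_2\nabla_{\partial_{q_2}}\omega_0=-\tau\omega_1$, $q_1\nabla_{\partial_{q_1}}\omega_0=-\tau\omega_3$ and $q_2\nabla_{\partial_{q_2}}\omega_3=-\tau\omega_2$ (using $\theta q_i\nabla_{\partial_{q_i}}=\mp\omega_{\bullet}$ and $\tau=\theta^{-1}$), which already match the first columns of both matrices and the last column of the $q_2$-matrix in item (2).

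For the $\tau\nabla_{\partial_\tau}$ matrix I would apply (\ref{eq:formulebase}) to each $\vartriangle_k$, choosing for the monomial $g_k\in\{1,\,u_2^{-1},\,u_2/u_1,\,u_2^2/u_1\}$ the face $\Gamma_j$ on which its Newton degree $\phi(g_k)\in\{0,1,2,1\}$ is attained. The term $-\phi(g_k)\vartriangle_k$ produces the constant diagonal $\mathrm{diag}(0,-1,-2,-1)$, while $\tau h_{\Gamma_j}\vartriangle_k$ produces the $\tau$-linear part; whenever $h_{\Gamma_j}g_k$ leaves the four basis monomials (this happens for $\vartriangle_2$, where $h_{\Gamma_1}=-4u_1-2u_2$ creates a class outside $\triangle$), I reduce it with Lemma \ref{lemme:relations}, multiplied through by the monomial needed to match the $\frac{du_1}{u_1}\wedge\frac{du_2}{u_2}$ normalization, and rewrite each $dF\wedge\beta$ as $\theta\,d\beta$ in $G_0$; multiplying by the $\tau$ in front and using $\tau\theta=1$ converts those $\theta$-terms into the matrix entries. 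Passing from $\triangle$ to $\omega$ by the scalar factors above then yields the claimed matrix, including the off-diagonal numerator $2q_2(4q_1-1)$ coming from the fifth relation of Lemma \ref{lemme:relations}.

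The remaining columns of the two $q$-matrices I would obtain the same way: compute $\nabla_{\partial_{q_i}}\vartriangle_k=[\partial_{q_i}\vartriangle_k]+\tau[\partial_{q_i}F\cdot\vartriangle_k]$, reduce the resulting higher monomials through Lemma \ref{lemme:relations} (the third and fourth relations feed the $q_2$-direction, the fourth and fifth feed the $q_1$-direction and are the source of the $(4q_1-1)^{-1}$ denominators), and finally restore the $q_1,q_2$ prefactors together with the Leibniz correction to express everything in $\omega$. As an independent check I would impose the integrability conditions $[\nabla_{\partial_{q_1}},\nabla_{\partial_{q_2}}]=0$ and $[\nabla_{\partial_\tau},\nabla_{\partial_{q_i}}]=0$, available because $\nabla$ is flat; these overdetermine the entries and confirm that the three matrices are mutually consistent.

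The computational heart, and the main obstacle, is the reduction bookkeeping in Lemma \ref{lemme:relations}: one must track the $dF\wedge\beta=\theta\,d\beta$ contributions carefully, since after multiplication by the $\tau$ prefactor they collapse (via $\tau\theta=1$) into the constant and $\tau$-linear entries, and one must verify that the spurious factors $(4q_1-1)$ introduced by the fourth and fifth relations cancel wherever the stated matrices have none (in particular throughout the $\tau\nabla_{\partial_\tau}$ and $q_2\nabla_{\partial_{q_2}}$ matrices), surviving only in the denominators of the $q_1\nabla_{\partial_{q_1}}$ matrix as dictated by the coefficient ring of Corollary \ref{coro:basisG0}. Keeping the sign conventions attached to the kernel $e^{\tau F}$ consistent between (\ref{eq:formulebase}), the Gauss--Manin derivation and the Brieskorn relation is the principal source of possible error.
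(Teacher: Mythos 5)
Your proposal follows essentially the same route as the paper: the paper likewise computes the matrices of $\tau\nabla_{\partial_\tau}$, $\nabla_{q_1}$ and $\nabla_{q_2}$ first in the basis $\triangle$ by combining formula (\ref{eq:formulebase}) with the division relations of Lemma \ref{lemme:relations}, and then performs the base change to $\omega=(\triangle_0,q_2\triangle_1,q_1q_2\triangle_2,q_1\triangle_3)$. Your bookkeeping of the Newton degrees, the $dF\wedge\beta=\theta\,d\beta$ reductions and the Leibniz corrections is consistent with the stated matrices, and the integrability cross-check is a sensible extra safeguard not present in the paper.
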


\begin{proof} 
We have 
$$\triangle =(\omega_{0}, -\theta \nabla_{\partial_{q_{2}}}\omega_{0},  \theta\nabla_{\partial_{q_{2}}}\theta \nabla_{\partial_{q_{1}}}\omega_{0}, 
-\theta\nabla_{\partial_{q_{1}}}\omega_{0})$$
where $\omega_{0}:=[\frac{du_{1}}{u_{1}}\wedge\frac{du_{2}}{u_{2}}]$, by the very definition of the relative connection.
We thus have
$$\omega =(\vartriangle_0 ,q_{2}\vartriangle_1 , q_{1}q_{2}\vartriangle_2  ,
q_{1}\vartriangle_3 )$$ where the basis $\triangle$ is defined in proposition \ref{prop:basisG0} and this shows the first point.
We first describe the matrix of the connection in the basis $\triangle$. In order to do so, we need
the following data:
let us define, for $j=0,1,2$, 
\begin{itemize}
\item $h_{\Gamma_j} =a^j_1 u_1\frac{\partial F}{\partial u_1}+a^j_2 u_2\frac{\partial F}{\partial u_2}-F$ if $\Gamma_j$ has equation $a^j_1 x+a^j_1 y=1$,
\item $\phi_{\Gamma_j}(u_1^{\alpha}u_{2}^{\beta})=
a^j_1 \alpha +a^j_1 \beta$ and $\phi (u_1^{\alpha}u_2^{\beta})= max_j \phi_{\Gamma_j}(u_1^{\alpha}u_2^{\beta})$.
\end{itemize}

\noindent where the faces $\Gamma_{j}$ of $\Gamma$ are defined in the proof of lemma \ref{lemma:NDC}.
One has
$$h_{\Gamma_{0}}=-2u_2 -4q_1\frac{u_{2}^2}{u_1},\ h_{\Gamma_{1}}=-4u_1 -2u_2 ,\ h_{\Gamma_{2}}=-2q_{2}\frac{1}{u_2}$$
and, for instance,
$$\phi (1)=\phi_{\Gamma_2}(1)=0,\ \phi (\frac{1}{u_2})=\phi_{\Gamma_0}(\frac{1}{u_2})=1, \ \phi (\frac{u_2}{u_1})=\phi_{\Gamma_1}(\frac{u_2}{u_1})=2,\ \phi (\frac{u_2^2}{u_1})=\phi_{\Gamma_2}(\frac{u_2^2}{u_1})=1.$$
The map $\phi$ is the ¨Newton degree¨, giving rise to the Newton filtration, closely related with the $V$-filtration and the spectrum at infinity of 
the function $f$ for $(q_{1},q_{2})\in M$, see \cite[Section 4]{DoSa1}.
We then have, in the Gauss-Manin system $G$ of $F$ (keeping in mind that the action of $\nabla_{\partial_{\tau}}$ is induced by the multiplication by $-F$),
\begin{equation}\label{eq:formulebase}
(\tau\nabla_{\partial_{\tau}}+\phi_{\Gamma_{j}}(g))[g\frac{du_1}{u_1}\wedge \frac{du_2}{u_2}]=\tau [h_{\Gamma_{j}}g\frac{du_1}{u_1}\wedge \frac{du_2}{u_2}]
\end{equation}
for any monomial $g=u_1^{a_1}u_2^{a_2}$, $(a_1 ,a_2)\in\zit^2$, where $[\ ]$ denotes the class in $G$.
This formula easily follows from the computation rules in the Gauss-Manin system.
We also have
\begin{itemize}
\item $u_1du_1\wedge du_2=q_1 \frac{u_2^2}{u_1}du_1\wedge du_2+dF\wedge u_1du_2$
\item $u_2du_1\wedge du_2=q_2 \frac{1}{u_2}du_1\wedge du_2-2q_1 \frac{u_2^{2}}{u_1}du_1\wedge du_2+dF\wedge -u_2du_1$
\item $\frac{1}{u_2^{2}}du_1\wedge du_2=\frac{1}{q_{2}}du_1\wedge du_2+2\frac{q_1}{q_2}\frac{u_2}{u_1}du_1\wedge du_2+dF\wedge\frac{1}{q_{2}}du_1$
\item $q_1 \frac{1}{u_1}du_1\wedge du_2=\frac{q_1}{q_{2}}(1-4q_{1})\frac{u_2^{2}}{u_1}du_1\wedge du_2+2q_1 \frac{1}{u_2}du_1\wedge du_2+
dF\wedge [\frac{u_1}{q_2}-\frac{u_1}{u_2^{2}}]du_2
+dF\wedge [\frac{u_1}{q_2}-\frac{2q_1}{q_2}u_2]du_1$
\item $q_1 (4q_1 -1)\frac{u_2^4}{u_1^2}du_1\wedge du_2=2q_1 q_2\frac{u_2}{u_1}du_1\wedge du_2 -q_2 du_1\wedge du_2 +
dF\wedge [-2q_1\frac{u_2^3}{u_1}+u_2^{2}]du_1
+dF\wedge u_2^2 du_2$
\end{itemize}
where $d$ denotes the relative differential $d_{(\cit^* )^2 \times  / M}$,
because  $dF =(1-q_1 \frac{u_2^2}{u_1^2})du_1 + (1+2q_1 \frac{u_2}{u_1}-q_2 \frac{1}{u_2^2})du_2$.
From this we get, using formula (\ref{eq:formulebase}), 
that the matrix of $\tau\nabla_{\partial_{\tau}}$ in the basis $\triangle$ is
$$\tau \left (
\begin{array}{cccc}
0     & -2    & 0         & 0\\
-2q_2 & 0     & -4q_2     & 0\\
0     & -4q_1 & 0         & -2q_2\\
0     & 0     & 2(4q_1-1) & 0
\end{array}
\right )
+
\left ( \begin{array}{cccc}
0 & 0 & 0 & 0\\
0 & -1 & 0 & 0\\
0 & 0 & -2 & 0\\
0 & 0 & 0 & -1
\end{array}
\right )$$
\noindent Similarly, and using the fact that $\theta\nabla_{q_{2}}\omega$  ({\em resp.} $\nabla_{q_{1}}\omega$) is 
$-\frac{\partial F}{\partial q_{2}}\omega$ ({\em resp.} $-\frac{\partial F}{\partial q_{1}}\omega$) for a section $\omega\in\Omega^{n}(U)$, we find that
the matrix of $\nabla_{q_{2}}$ is
$$\tau \left (
\begin{array}{cccc}
0     & -\frac{1}{q_2}      &  0                    & 0\\
-1    & 0                   & -2                    & 0\\
0     & -2\frac{q_1}{q_{2}} & 0                     & -1\\
0     &      0              & \frac{(4q_1-1)}{q_{2}} & 0
\end{array}
\right )
+
\left ( \begin{array}{cccc}
0 & 0                & 0                 & 0\\
0 & -\frac{1}{q_{2}} & 0                 & 0\\
0 & 0                & -\frac{1}{q_{2}}  & 0\\
0 & 0                & 0                 & 0
\end{array}
\right )$$

\noindent (notice that the residue matrix of $\nabla_{q_{2}}$ along $q_{2}=0$ is  resonant that is the difference of two of its eigenvalues is a non-zero integer) 
and that the one of $\nabla_{q_{1}}$ is

$$\tau \left (
\begin{array}{cccc}
0     & 0      &  0                     & \frac{q_2 }{q_1 (4q_1 -1)}\\
0     & 0      & -\frac{q_2}{q_1}         & 0\\
0     & -1     & 0                      & -2\frac{q_2}{4q_1 -1}\\
-1    & 0      & 2                      & 0
\end{array}
\right )
+
\left ( \begin{array}{cccc}
0 & 0 & 0                & 0\\
0 & 0 & 0                & \frac{q_2}{q_1 (4q_1 -1)}\\
0 & 0 & -\frac{1}{q_{1}} & 0\\
0 & 0 & 0                & -\frac{(6q_1 -1)}{q_1 (4q_1 -1)}
\end{array}
\right )$$

\noindent It follows that in the basis $\omega$ the connection has the expected form\footnote{ 
Put $v_{1}=q_{2}^{-1/2}u_1$ and $v_{2}=q_{2}^{-1/2}u_2$. Then
$$F(v_1 ,v_2 )=q_2^{1/2}L(v_1 ,v_2 ,q_1 )$$
where $L(v_1 ,v_2 ,q_1 )= v_1 +v_2 +q_1 \frac{v_2^2}{v_1}+\frac{1}{v_2}$. 
The function $F$ is thus a ``rescaling'' of the function $L$, and these kind of functions yield naturally logarithmic degenerations along $q_2 =0$.
A connected result is that $\tau\nabla_{\partial_{\tau}}=2q_{2}\nabla_{\partial_{q_{2}}}$
(see also remark \ref{rem:EulerVectorField} below).}. 
%: in terms of mirror symmetry, 
%this agrees with the fact that the anticanonical class of the Hirzebruch surface %is $-2H$ by equations (\ref{eq:Divisors})
\end{proof}

\begin{corollary}\label{coro:ConnectionNabla}
Put $\theta :=\tau^{-1}$ and $E:=G_{0}/\theta G_{0}$.
The connection $\nabla$ takes the form
$$\nabla =\bigtriangledown +\frac{\Phi}{\theta} +(\frac{V_{0}}{\theta}+V_{\infty})\frac{d\theta}{\theta}$$
where
\begin{itemize}
\item $\bigtriangledown$  is a flat connection on $E$,
\item $\Phi$ is an ${\cal O}_{M}$-linear map $\Phi: E\rightarrow  E\otimes\Omega_{M}^{1}$, such that $\Phi\wedge\Phi=0$,
\item $V_{0}$ and $V_{\infty}$ are two ${\cal O}_{M}$-linear endorphisms of $E$
\end{itemize} 
\end{corollary}\qed\\

We will call $\bigtriangledown$ the {\em residual} connection. It will play a central role in our perception of mirror symmetry.  
By theorem \ref{theo:baseomega},
the matrix of $\bigtriangledown$ is 
\begin{equation}\label{eq:MatTriangledown}
\left ( \begin{array}{cccc}
0 & 0 & 0 & 0\\
0 & 0 & 0           & 1\\
0 & 0 & 0  & 0\\
0 & 0 & 0           & -2
\end{array}
\right )\frac{dq_1}{4q_1 -1}
\end{equation}
\noindent in the basis of $E$ induced by $\omega$.

\begin{remark} 
\label{rem:RealStructureF2}
The monodromy matrices of the connection $\nabla$ around $\tau =0$, $q_{2} =0$, $q_{1} =0$  in the basis $\omega$ are 
$$M_{\tau }:=\exp (2i\pi  \left (
\begin{array}{cccc}
0     & 0     & 0    & 0\\
-2   & 0     & 0    & 0\\
0     & -4    & 0    & -2\\
0     & 0     & 0    & 0
\end{array}
\right )),\  M_{q_2 }:=\exp (2i\pi \tau  \left (
\begin{array}{cccc}
0     & 0     & 0    & 0\\
-1   & 0     & 0    & 0\\
0     & -2    & 0    & -1\\
0     & 0     & 0    & 0
\end{array}
\right ))$$
and
$$\ M_{q_1}:=\exp (2i\pi \tau  \left (
\begin{array}{cccc}
0     & 0     & 0    & 0\\
0     & 0     & 0    & 0\\
0     & -1    & 0    & 0\\
-1    & 0     & 0    & 0
\end{array}
\right ))$$
respectively\footnote{Notice that $M_{\{q_{2}=0\}}^{2}=M_{\{\tau =0\}}$.}.
Indeed, theorem \ref{theo:baseomega} gives explicit residue matrices: for the two last assertions we can use the non-resonance condition and for the first one 
the fact that $[B_{\infty},B_{0}(0)]=-B_{0}(0)$ if we write the matrix of $\tau\partial_{\tau}$ as $\tau B_{0}(q) +B_{\infty}$. 
Notice that these monodromies are not cyclic.
\end{remark}

\subsubsection{Flattening: the $\bigtriangledown$-flat basis $\varepsilon$}
\label{sec:flat}

In order to get a quantum differential system, we are still looking for a $\nabla$-flat bilinear form $S$, the {\em metric}.  
We first define flat bases with respect to the flat residual connection $\bigtriangledown$: 
indeed, the bilinear we are looking for should be constant 
in such bases, and therefore easier to describe. 
It turns out that these flat bases depend on some choices. A key point is that these choices will be set by the metric. More precisely, using equation (\ref{eq:MatTriangledown}), we get a $\triangledown$-flat basis from $\omega$ {\em via} a multivalued base change, whose matrix is
$${\cal P}(c)=\left (
\begin{array}{cccc}
1  &  0    &  0     & 0\\
0 &   1  &  0 & -\frac{1}{2}(1- 4q_1 )^{1/2}+c\\
0                   & 0    & 1                    & 0\\
0  &  0   &  0 & (1-4q_1 )^{1/2}
\end{array}
\right )
$$

\noindent where\footnote{The parameter $c$ is after all natural: it corresponds to the choice of a basis of $H^{2}(\fit_{2})$, 
see section \ref{sec:MirF2} below.} $c\in\cit$.
Let us define 
$$
\begin{array}{lll}
\varepsilon    & =(\varepsilon_{0},\varepsilon_{1}, \varepsilon_{2}, \varepsilon_{3})         & :=(\omega_{0},\omega_{1},\omega_{2},\omega_{3}){\cal P}(-\frac{1}{2})
\end{array},
$$
see remark \ref{rem:whyc} for an explanation of this choice.

\begin{lemma}\label{lemma:ConnexionBasePlate}

 In the basis $\varepsilon$, the matrix $\tau\nabla_{\partial_{\tau}}$ is
$$-\tau \left (
\begin{array}{cccc}
0       &   2 q_2  & 0                                     & -q_{2}-q_2(1-4q_1)^{1/2}\\
2      &   0       & q_2 +q_2(1-4q_1)^{1/2}               & 0\\
0       &   4      & 0                                     & -2\\
0       &   0       & 2q_2 (1-4q_1)^{1/2}                  & 0
\end{array}
\right )
-
\left ( \begin{array}{cccc}
0 & 0  & 0  & 0\\
0 & 1 & 0  & 0\\
0 & 0  & 2 & 0\\
0 & 0  & 0  & 1
\end{array}
\right ),$$
\noindent the one of $q_2 \nabla_{q_{2}}$ is
$$-\tau \left (
\begin{array}{cccc}
0                 & q_2               &  0                                        & -\frac{1}{2}q_2 (1 -(1-4q_1)^{1/2})\\
1    & 0                & \frac{1}{2}q_2 (1 +(1-4q_1)^{1/2})           & 0\\
0                 & 2   & 0                                         & -1\\
0                 & 0                & q_2 (1-4q_1)^{1/2}                           & 0
\end{array}
\right )$$

\noindent and the one of $q_1 \nabla_{q_{1}}$ is

$$\tau \left (
\begin{array}{cccc}
0                                                                            & 0                          &  0                                             & -q_1 q_2 (1-4q_1 )^{-1/2}\\
-\frac{1}{2}-\frac{1}{2}(1-4q_1)^{-1/2}    & 0                          & q_1  q_2 (1-4q_1 )^{-1/2}             & 0\\
0                                                                            & -1     &  0                                            & \frac{1}{2}( (1-4q_1 )^{-1/2}+1)\\
-(1-4q_{1})^{-1/2}                           & 0                           &  2q_1 q_2 (1-4q_1 )^{-1/2}            & 0
\end{array}
\right )
$$

\end{lemma}
\begin{proof}
Follows from theorem \ref{theo:baseomega} and the definition of the matrix ${\cal P}(-\frac{1}{2})$.  
\end{proof}

\begin{remark}\label{rem:FlatDegrees} 
We define naturally the degree of $\varepsilon_0$ (resp. $\varepsilon_1$, $\varepsilon_2$, 
$\varepsilon_3$) to be $0$ 
(resp. $1$, $2$, $1$).
\end{remark}

\subsubsection{Flat metric} 
\label{sec:DefMetrique}
Let us define, for $a\in\cit^{*}$,
\begin{equation}
\label{eq:MetriquePlateCl}
S(\varepsilon_{0},\varepsilon_{2}):=a,\ S(\varepsilon_{1},\varepsilon_{1})=2a,\ S(\varepsilon_{3},\varepsilon_{1})=-a
\end{equation}
and $S(\varepsilon_{i},\varepsilon_{j})=0$ otherwise.

\begin{lemma} Formulas (\ref{eq:MetriquePlateCl}) provide a bilinear form $S$ on $G_{0}$ by
\begin{equation}
 \label{eq:Metrique}
S(\omega_{0},\omega_{2}):=a,\ S(\omega_{1},\omega_{1})=2a,\ S(\omega_{3},\omega_{3})=\frac{2q_1}{4q_1 -1}a,\ S(\omega_{1},\omega_{3})=a,
\ S(\omega_{2},\omega_{2})=2aq_1 q_2
\end{equation}
\noindent and $S(\omega_{i},\omega_{j})=0$ otherwise, these formulas being extended by 
$\cit [M,\theta ]$-
sequilinearity keeping in mind the involution $j$ alluded to in the introduction. The form $S$ is non-degenerate and $\nabla$-flat.
\end{lemma}
\begin{proof} This result is directly checked: flatness
follows from the symmetry properties of the matrices involved in lemma \ref{lemma:ConnexionBasePlate}.
\end{proof}

\begin{remark}\label{rem:whyc} 
Formula (\ref{eq:MetriquePlateCl}) explains why the normalization $c=-\frac{1}{2}$ in the base change ${\cal P}(c)$ is 
the good one to consider in the mirror symmetry framework, see theorem \ref{theo:miroirF2}.
 For instance, one would have
\begin{equation}
S(\eta_{0},\eta_{2}):=a,\ S(\eta_{1},\eta_{1})=2a,\ S(\eta_{3},\eta_{3})=-\frac{a}{2}
\end{equation}
if $(\eta_{0},\eta_{1}, \eta_{2}, \eta_{3})$ denotes the basis obtained from $\omega$ using the matrix ${\cal P}(0)$.
\end{remark}

\noindent Unless otherwise stated, we will choose $a=1$ in the sequel.

\subsubsection{Flat coordinates}
\label{subsub:FlatCoordinates}

In order to get a precise mirror theorem, we first search for flat coordinates on $M$. 
Define the period map 
\begin{equation}\label{eq:PeriodMap}
\varphi_{\omega_{0}}:\Theta_M \rightarrow \frac{G_{0}}{\theta G_{0}}
\end{equation}
 by 
\begin{equation}
\varphi_{\omega_{0}}(X)=-\Phi_{X}(\omega_{0})
\end{equation}
where $\Phi$ is the Higgs field defined in corollary \ref{coro:ConnectionNabla}.
Notice that $\varphi_{\omega_{0}}$ is injective, see theorem \ref{theo:baseomega}.
%$$\varphi_{\omega_{0}}(q_{1}\partial_{q_{1}})=-\omega_{3}\ \mbox{et}\ %\varphi_{\omega_{0}}(q_{2}\partial_{q_{2}})=-\omega_{1}$$ 
We use this map to shift the connection $\bigtriangledown$ to a flat connection 
$\triangledown^{\omega_{0}}$ on $\Theta_{M}$ putting
\begin{equation}
\varphi_{\omega_{0}}(\bigtriangledown^{\omega_{0}}(X))=\bigtriangledown \varphi_{\omega_{0}}(X)
\end{equation}
The flat coordinates $\varphi_{1}$ and $\varphi_{2}$ alluded to are coordinates such that the vector fields $\partial_{\varphi_{1}}$
and $\partial_{\varphi_{2}}$ defined by $\partial_{\varphi_{i}}(d\varphi_{j})=\delta_{ij}$
are $\triangledown^{\omega_{0}}$-flat ($\delta_{ij}$ denotes the Kronecker symbol)\footnote{One could also shift shift the $\triangledown$-flat bilinear form $S$ to a $\triangledown^{\omega_{0}}$-flat bilinear form $S^{\omega_{0}}$ on $\Theta_{M}$ putting
$S^{\omega_{0}}(X, Y)=S(\varphi_{\omega_{0}}(X), \varphi_{\omega_{0}}(Y))$.}.\\

%We have 
%$$S^{\omega_{0}}(q_2\partial_{q_{2}}, q_2\partial_{q_{2}})=2a,\  S^{\omega_{0}}
%(q_1\partial_{q_{1}}, q_2\partial_{q_{2}})=a\ 
%\mbox{and}\ S^{\omega_{0}}(q_1\partial_{q_{1}}, q_1\partial_{q_{1}}) = 2q_1 a/ %(4q_1 -1)$$
%as it follows from 

Let us define the vector fields
\begin{itemize}
\item 
$\xi_1 =-(1-4q_{1})^{1/2}q_{1}\partial_{q_{1}}+(\frac{1}{2}(1-4q_{1})^{1/2}
+\frac{1}{2})q_{2}\partial_{q_{2}}$
\item $\xi_2 =q_2 \partial_{q_{2}}$
\end{itemize}

%\begin{itemize}
%\item $\xi_{1}=-(1-4q_{1})^{1/2}q_{1}\partial_{q_{1}}+(\frac{1}{2}(1-4q_{1})^{1/2}+)q_{2}\partial_{q_{2}}$
%\item $\xi_{2}=q_2 \partial_{q_{2}}$.
%\end{itemize}

\begin{theorem} 
\label{theo:FlatCoordinate}
The vector fields $\xi_1$ and $\xi_2$ are $\triangledown^{\omega_{0}}$-flat and 
the functions $\varphi_{1}$ and $\varphi_{2}$ defined by 
$$q_{1}=\frac{e^{\varphi_{1}}}{(1+e^{\varphi_{1}})^{2}}\ \mbox{and}\ q_{2}=e^{\varphi_{2}}(e^{\varphi_{1}}+1)$$
are flat coordinates\footnote{Compare with \cite[formula (11.94) p. 394]{CK}}.
\end{theorem}
\begin{proof} 
By the very definition we have 
\begin{equation}\nonumber
\varphi_{\omega_{0}}(\triangledown^{\omega_{0}}q_{1}\partial_{q_{1}})=-\triangledown\omega_{3}\ \mbox{and}\  
\varphi_{\omega_{0}}(\triangledown^{\omega_{0}}q_{2}\partial_{q_{2}})=-\triangledown\omega_{1}
\end{equation}
It follows from theorem \ref{theo:baseomega} and the injectivity of the period map $\varphi_{\omega_{0}}$ that
$$\triangledown^{\omega_{0}}_{\partial_{q_{1}}} q_1\partial_{q_{1}} = (4q_1 -1)^{-1}q_2\partial_{q_{2}} -2(4q_1 -1)^{-1}q_1\partial_{q_{1}}
\ \mbox{and}\ \triangledown^{\omega_{0}}_{\partial_{q_{2}}} q_1\partial_{q_{1}} =0,$$
$$\triangledown^{\omega_{0}}_{\partial_{q_{1}}} q_2\partial_{q_{2}} =0\ \mbox{and}\ \triangledown^{\omega_{0}}_{\partial_{q_{2}}} q_2\partial_{q_{2}}=0.$$
The vector fields $\xi_1$ and $\xi_2$ are thus $\triangledown^{\omega_{0}}$-flat. 
We also have $\partial_{\varphi_{1}}=\xi_{1}$ and $\partial_{\varphi_{2}}=\xi_{2}$ and this gives the second assertion. 
\end{proof}

\begin{remark}
\label{rem:diversplat}
 After theorem \ref{theo:FlatCoordinate} , it is natural to define 
\begin{equation}\label{eq:ri}
r_{1}:=e^{\varphi_{1}}\ \mbox{and}\ r_{2}:=e^{\varphi_{2}}
\end{equation}
We thus have $r_i\partial_{r_i}=\xi_{i}$. We will also call $r_1$ et $r_2$ flat coordinates (a small misuse of language). Using these coordinates,
we can rewrite lemma \ref{lemma:ConnexionBasePlate} as follows :
in the basis $\varepsilon$, the matrix $\tau\nabla_{\partial_{\tau}}$ is
$$-\tau \left (
\begin{array}{cccc}
0       &   2 r_{2}(1+r_{1})  & 0                   & -2r_{1}r_{2}\\
2      &   0       & 2r_{1}r_{2}              & 0\\
0       &   4      & 0                   & -2\\
0       &   0       & 2(r_{1}-1)r_{2} & 0
\end{array}
\right )
-
\left ( \begin{array}{cccc}
0 & 0  & 0  & 0\\
0 & 1 & 0  & 0\\
0 & 0  & 2 & 0\\
0 & 0  & 0  & 1
\end{array}
\right ),$$
\noindent the one of $r_{2}\nabla_{r_{2}}$ is

$$-\tau \left (
\begin{array}{cccc}
0                   &    r_{2}(1+r_{1})                        &  0                            & -r_{2}r_{1}\\
1                   & 0                                             &  r_{1}r_{2}                & 0\\
0                   & 2                                             &  0                            & -1\\
0                   & 0                                             &  (r_{1}-1)r_{2}           & 0
\end{array}
\right )$$

\noindent and the one of $r_{1}\nabla_{r_{1}}$ is

$$-\tau \left (
\begin{array}{cccc}
0                   &    r_{1}r_{2}                    &  0                                     & -r_{2}r_{1}\\
0                   & 0                                                   &  r_{1}r_{2}         & 0\\
0                   & 1                                                 &  0                                     & 0\\
-1                   & 0                                                   &  r_{1}r_{2}         & 0
\end{array}
\right )$$
\end{remark}

\subsubsection{Summary: quantum differential systems}
\label{sec:ResumeSDQ}

The first part of this section yields a trivial bundle ${\cal G}$ on $\ppit^{1}\times M$, 
equipped with a meromorphic connection $\nabla$ with the expected poles and the second part yields a $\nabla$-flat metric $S$, 
where we choose the normalization $a=1$ in formulas (\ref{eq:MetriquePlateCl}). In other words,
the tuple 
$${\cal Q}^{B}_{F}=(M, {\cal G}, \nabla , S, 2)$$
is a quantum differential system on $M$.
  In the same way, using remark \ref{rem:diversplat} and the metric defined in section  \ref{sec:DefMetrique}, we also define 
a quantum differential system ${\cal Q}^{B, qc}_{F}$ on the universal covering of $M$.

\subsection{Applications: a logarithmic Frobenius manifold and a mirror theorem}
\subsubsection{First application: a logarithmic Frobenius manifold}
\label{sec:LogFrob}
We show here how the datum ${\cal Q}^{B}_{F}$ (the {\em initial condition}) provides a logarithmic Frobenius manifold in the sense of \cite{R}. Let
$${\cal L}:=\sum_{i=0}^{3}\cit [q_{1}, q_{2}, (4q_{1}-1)^{-1}, \theta]\omega_{i}$$ 
which is an extension 
of $G_{0}$ along $D:=\{(q_{1},q_{2})\in\cit^{2}|\ q_{1}q_{2}=0\}$ for which the eigenvalues
of the residue matrices are equal to zero, see theorem \ref{theo:baseomega}. 
${\cal L}$ is equipped with a bilinear form $S$ 
 defined by formula (\ref{eq:Metrique}) and naturally  extended by (sesqui-) linearity.
Define $E^{\log}:={\cal L}/\theta {\cal L}$ and the logarithmic version of the period map (\ref{eq:PeriodMap})
\begin{equation} 
\varphi_{\omega_{0}}^{\log}:Der(\log D) \rightarrow E^{\log}
\end{equation} 
by 
\begin{equation}
\varphi_{\omega_{0}}^{\log}(X)=-\Phi_{X}(\omega_{0})
\end{equation}
where $Der(\log D)$ denotes the module of the logarithmic vector fields along $D$.
% (see corollary \ref{coro:ConnectionNabla} for the definition of 
%$\Phi$). 
%We have $\varphi_{\omega_{0}}^{\log}(q_{1}\partial_{q_{1}})=-\omega_{3}$ and %$\varphi_{\omega_{0}}^{\log}(q_{2}\partial_{q_{2}})=-\omega_{1}$.

\begin{lemma}\label{lemma:GCcondition}
Let us denote by $^{o}$ the fiber at $(q_{1},q_{2})=(0,0)$. Then:
\begin{enumerate}
\item the map $\varphi_{\omega_{0}}^{\log ,o}$ is injective,
\item the vector $\omega_{0}^{o}$ of $E^{\log, o}$ and its images under iterations of the maps $\Phi_{X}:E^{\log, o}\rightarrow E^{\log, o}$ generate 
$E^{\log, o}$,
\item the section $\omega_{0}$ is $\triangledown$-flat and homogeneous\footnote{With the notation of corollary \ref{coro:ConnectionNabla}, $\omega_{0}$ is an eigenvector of
$V_{\infty}$ for the eigenvalue $0$.}.
\end{enumerate}
 \end{lemma}
\begin{proof} Use theorem \ref{theo:baseomega}. 
For the two first assertions, notice that $\omega_{3}^{o}$ and $\omega_{1}^{o}$ are linearly independent in $E^{\log, o}$, 
$\Phi_{q_{1}\partial_{q_{1}}}\omega_{0}^{o}=\omega_{3}^{o}$, $\Phi_{q_{2}\partial_{q_{2}}}\omega_{0}^{o}=\omega_{1}^{o}$ and
$\Phi_{q_{1}\partial_{q_{1}}}\circ \Phi_{q_{2}\partial_{q_{2}}}\omega_{0}^{o}=\omega_{2}^{o}$.
The last one is clear.
\end{proof}

%\subsection{Construction}

Denote by ${\cal G}^{\log}$ the extension of ${\cal L}$ at $\ppit^{1}\times N$ where $N:=(\cit^{2},0)$. Logarithmic quantum differential systems are naturally defined, see for instance \cite[Definition 1.8]{R} where they are called, after \cite{HeMa}, logD-trTLEP structures.
Unfoldings and universal unfoldings of such objects are defined in \cite[definition 1.9]{R}.

\begin{theorem}
The tuple ${\cal Q}^{B, \log}=({\cal G}^{\log}, \nabla , S, N, D)$ 
is a logarithmic quantum differential system. It has a universal unfolding which defines, together
with the $\bigtriangledown$-flat section $\omega_{0}$, a logarithmic Frobenius manifold at the origin of $\cit^{2}$. 
\end{theorem}

\begin{proof}
The first assertion follows from the definition because $S$ is nondegenerate on $N$ (because $S(\omega_1 ,\omega_3 )\in\cit^*$) and the second from 
\cite[Theorem 1.12]{R}, together with lemma \ref{lemma:GCcondition}
which gives the required generation condition in {\em loc. cit.} 
\end{proof}

\noindent To conclude, let us notice that
this construction of logarithmic Frobenius manifold gives in some sense an intermediate step between the one associated with projective space and the one associated with
weighted projective spaces as described in \cite{DoMa}: the monodromies are not cyclic, 
as it is the case for weighted
projective spaces, and the bilinear form $S$ is nondegenerate, as it is the case for projective spaces.

\subsubsection{Second application: a mirror theorem for the small quantum cohomology of $\fit_2$}
\label{sec:MirF2}

The goal of this section is to describe a mirror partner of the small quantum cohomology of $\fit_2$ using quantum differential systems. 
On the quantum cohomology side we keep the notations of  \cite[section 11.2]{CK}.
Recall the quantum differential system ${\cal Q}_{F}^{B, qc}$ (on $M$) defined in section \ref{sec:ResumeSDQ}, using the flat coordinates $(r_{1}, r_{2})$, 
and let ${\cal Q}^{A}_{\fit_{2}}$ (on $M^A$) be the one associated with the small quantum cohomology of $\fit_2$, see \cite{CK}. Let us define the map  
$$\gamma : {\cal Q}^{A}_{\fit_{2}}\rightarrow (id, \nu )^{*}{\cal Q}_{F}^{B, qc}$$
of quantum differential systems in the following way:
\begin{itemize}
 \item  the map $\nu : M\rightarrow M^A$ is the identity,
\item the map $\gamma$ is defined by 
\begin{equation}\nonumber
\gamma (1)=\varepsilon_0,\  \gamma (f)=\theta r_1 \nabla_{r_1}\varepsilon_0 ,
\ \gamma (H)=\theta r_2 \nabla_{r_2}\varepsilon_0 ,
\end{equation}
\begin{equation} \nonumber
\gamma (H\circ f)=(\theta r_2 \nabla_{r_2})(\theta r_1 \nabla_{r_1})\varepsilon_0
\end{equation}
\end{itemize}

\noindent A central point is that, in the original coordinates $(q_{1},q_{2})$, that is if we consider
${\cal Q}^{B}_{F}$ instead of ${\cal Q}_{F}^{B, qc}$,
the map $\nu$ is given by $\nu (q_1,q_2)=(r_1 , r_2 )$.

\begin{theorem}
\label{theo:miroirF2}
The map $\gamma$ is an isomorphism for which
\begin{enumerate}
\item the small quantum poduct is given by \footnote{We can check directly the Frobenius property 
$g_{\fit_{2}}(a\circ b, c)=g_{\fit_{2}}(b, a\circ c)$ 
for any cohomology classes $a$, $b$, $c$ in $H^{*}(\fit_{2})$, using the definition of $\gamma$ and the properties of $S$.}
$$f\circ c=\gamma^{-1}((\theta r_1 \nabla_{r_1 }\gamma (c))_{|\theta =0})\ 
\mbox{and}\ H\circ c=\gamma^{-1}((\theta r_2 \nabla_{r_2}\gamma (c))_{|\theta =0})$$
for any cohomology class $c$ in $H^{*}(\fit_{2})$,
\item the metric $g_{\fit_{2}}$ of ${\cal Q}^{A}_{\fit_{2}}$ is given by
$$g_{\fit_{2}}(a,b)=S(\gamma (a), \gamma (b))$$
for any cohomology classes $a$ and $b$ in $H^{*}(\fit_{2})$ where $S$ is defined by formula (\ref{eq:MetriquePlateCl}).
\end{enumerate}
\end{theorem}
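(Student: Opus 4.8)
The plan is to establish that $\gamma$ is an isomorphism of quantum differential systems in the sense of definition \ref{def:IsoSDQ}, and then extract the two asserted consequences about the quantum product and the metric. Since the map $\nu$ is the identity, I only need to produce an isomorphism of vector bundles $\gamma$ compatible with connections and metrics. First I would verify that $\gamma$ is an isomorphism of bundles. By formulas (\ref{eq:GammaIso}), $\gamma$ sends the cohomology basis $(1, f, H, H\circ f)$ to $(\omega_0^{can}, \omega_3^{can}, -\omega_1^{can}, \omega_2^{can}+r_1^{can}r_2^{can}\omega_0^{can})$, which is a triangular (hence invertible) change of basis of the frame $\omega^{can}$ of ${\cal G}_F^{can}$; thus $\gamma$ is a bundle isomorphism.

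Next I would check compatibility with the connections. The essential point is that $\gamma$ is defined precisely so that it intertwines the Dubrovin connection of ${\cal Q}_F^{can}$ with the one of ${\cal Q}_{\fit_2}$: by construction $\gamma(\phi_k) = \theta\nabla_{r_k^{can}}\gamma(1)$ realizes the basis $(1,f,H,H\circ f)$ as the images of $\omega_0^{can}$ under iterated $\Phi$-operators, exactly as in proposition \ref{prop:prodquant} and the definition of the period map. Concretely, I would compare the matrix of $\nabla$ of ${\cal Q}_F^{can}$ in the basis $\gamma(1,f,H,H\circ f)$, computed from lemma \ref{lemma:diversplat}, with the matrix of $\nabla^A$ for the small quantum cohomology of $\fit_2$ from example \ref{exemplebasiqueA}; since $\omega_0$ is a canonical pre-primitive section and $r_1^{can}, r_2^{can}$ are flat coordinates (so $\nabla^A_{q_k\partial_{q_k}} = r_k^{can}\partial_{r_k^{can}} + \tfrac1\theta\phi_k\circ$ matches the $\Phi$-action), the two connection matrices coincide. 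This simultaneously yields item (1): applying $\theta r_k^{can}\nabla_{r_k^{can}}$ and restricting to $\theta=0$ computes the action of $\Phi_{r_k^{can}}$ on $E=G_0/\theta G_0$, which under $\gamma^{-1}$ is by definition the quantum multiplication by $f$ (resp. $H$).

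Then I would verify compatibility with the metrics, giving item (2). Using the images (\ref{eq:GammaIso}) together with the flat metric values in formula (\ref{eq:MetriquePlateCl}), I would compute $S(\gamma(a),\gamma(b))$ for $a,b$ ranging over the basis $(1,f,H,H\circ f)$: for instance $S(\gamma(1),\gamma(H\circ f)) = S(\omega_0^{can}, \omega_2^{can}+r_1^{can}r_2^{can}\omega_0^{can}) = S(\omega_0^{can},\omega_2^{can}) = 1$, and $S(\gamma(H),\gamma(H)) = S(\omega_1^{can},\omega_1^{can}) = 2$, and so forth, the cross terms involving $\omega_0^{can}$ vanishing because $S(\omega_0^{can},\omega_0^{can})=0$ and $S(\omega_0^{can},\omega_1^{can})=0$. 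These must be matched against the Poincaré pairing $g_{\fit_2}$ on $H^*(\fit_2)$, whose values are fixed by the intersection numbers (\ref{eq:NombreIntersection}); in particular $\int_{\fit_2} H^2 = 2$, $\int_{\fit_2} H\cup f = 1$, $\int_{\fit_2} f^2 = 0$, and the top pairings with $1$ and $H\circ f$. The compatibility $g_{\fit_2}(a,b) = S(\gamma(a),\gamma(b))$ is then a finite check that these numbers agree, which is exactly the reason the normalization $p_{24}=-\tfrac12$ (yielding (\ref{eq:MetriquePlateCl})) was selected, as noted in remark \ref{rem:FlatMetric}.

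The main obstacle I anticipate is the connection-compatibility step: one must confirm that in the flat coordinates $(r_1^{can}, r_2^{can})$ the full matrix of $\nabla$ from lemma \ref{lemma:diversplat} reproduces, term by term, the structure constants of the known small quantum product of $\fit_2$ recorded in \cite[section 11.2]{CK}, including the non-Fano corrections hidden in the dependence of the entries on $r_1^{can}r_2^{can}$ and $(r_1^{can}-1)r_2^{con}$. The subtlety is that $H\circ f$ does not map simply to $\omega_2^{can}$ but acquires the extra term $r_1^{can}r_2^{can}\omega_0^{can}$, reflecting that $H\circ f$ is not a pure iterated derivative of $\omega_0^{can}$; tracking this correction consistently through both the connection and the metric computations is where the care is needed, but it is a bounded algebraic verification rather than a conceptual difficulty, since everything is reduced, as emphasized in the introduction, to linear algebra in the explicit basis $\omega^{can}$.
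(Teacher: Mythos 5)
Your proposal is correct and follows essentially the same route as the paper: both arguments reduce the statement to a finite matrix comparison, writing the structure constants of $f\circ$ and $H\circ$ and the Poincar\'e pairing in the basis $(1,f,H,H\circ f)$ and matching them against the connection matrices of lemma \ref{lemma:diversplat} and the metric values (\ref{eq:MetriquePlateCl}), with the extra term $r_1^{can}r_2^{can}\omega_0^{can}$ in $\gamma(H\circ f)$ handled exactly as you describe. The only cosmetic difference is that you phrase the computation as verifying that $\gamma$ intertwines the Dubrovin connections, while the paper states the A-side multiplication and metric matrices directly and then invokes lemma \ref{lemma:diversplat}; these are the same check.
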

\begin{proof} By remark \ref{rem:diversplat} we have
\begin{equation}\label{eq:GammaIso}
\gamma (1)=\varepsilon_0 ,\ \gamma (f)=\varepsilon_3 ,\ \gamma (H)=-\varepsilon_1 \ \mbox{and}\ 
\gamma (H\circ f)=\varepsilon_2 +r_{1}r_{2}\varepsilon_{0}.
\end{equation}
It follows that the map $\gamma$ is indeed an isomorphism.
The following facts can be found in \cite[section 11.2]{CK} for instance:
\begin{itemize}
\item The matrix of the quantum mutiplication by $f$ in the basis $(1, f, H, f\circ H)$ is 
$$\left (
\begin{array}{cccc}
0     & r_{1}r_{2}    & 0         & 0 \\
1     & 0                       & 0         & 0\\
0     & 0                       & 0         & r_{1}r_{2}\\
0     & 0                       & 1         & 0
\end{array}
\right )$$
\item  the one of the quantum multiplication by $H$ is
$$\left (
\begin{array}{cccc}
0     & 0                       & r_{2}(1-r_{1})         & 0\\
0     & 0                       & 0                                & r_{2}(1-r_{1})\\
1     & 0                       & 0                                & 2r_{1}r_{2}\\
0     & 1                       & 2                                & 0
\end{array}
\right )$$

\item last, the matrix of $g_{\fit_{2}}$ in the same  basis is
$$\left (
\begin{array}{cccc}
0     & 0                       & 0                                & 1\\
0     & 0                       & 1                                & 0\\
0     & 1                       & 2                                & 0\\
1     & 0                       & 0                                & 2r_{1}r_{2}
\end{array}
\right )$$
\end{itemize}
Thus, the assertions follow using the definition of $S$ and remark \ref{rem:diversplat}.

\end{proof}

\begin{remark} 
 1. It follows from theorem \ref{theo:baseomega} (2) that
\begin{equation}\nonumber
\theta q_2 \nabla_{q_2} (\theta q_2 \nabla_{q_2}-2\theta q_1 \nabla_{q_1})\omega_0 =q_2 \omega_0
\end{equation}
This is precisely the differential equation satisfyed by Givental's $I_{\fit_2}$-function, see f.i \cite[page 394, formula 11.96]{CK}.
This gives
\begin{equation}\nonumber
\theta r_2 \nabla_{r_2} ( \theta r_2 \nabla_{r_2} -2\theta r_1 \nabla_{r_1})\varepsilon_0 =
r_2 (1-r_1 )\varepsilon_0
\end{equation}
in flat coordinates and thus
$$H\circ (H -2f) = r_2 (1-r_1)1$$ 
using the isomorphism $\gamma$.\\
2. \label{rem:EulerVectorField}
The restriction of the Euler vector field $E$ at $H^2 (\fit_2 )$ is equal to $2H$. Together with the mirror correspondence, 
this is consistent with the fact that $\tau\nabla_{\partial_{\tau}}=2q_{2}\nabla_{\partial_{q_{2}}}$, see theorem 
\ref{theo:baseomega}.
\end{remark}

\subsubsection{By way of conclusion: rational structures}
\label{sec:Rat}
Let us emphasize that one of the interest of the mirror isomorphism is that it shifts the structures from the $B$-side to the $A$-side and this includes the 
rational structures, see section \ref{sec:RationalStructure}. A rational structure (on the fiber at $\theta =-1$ of the classical limit) 
is a $\qit$-vector space generated by a basis in which the monodromy matrices around $q_{1}=0$ and
$q_{2}=0$ have rational coefficients.
Let us come back for instance to the setting of this paper and
recall the monodromy matrices of remark \ref{rem:RealStructureF2}.
We have,
in the basis $W:=(W_{0},W_{1},W_{2},W_{3})=(\omega_{0}, 2i\pi \tau\omega_{1}, (2i\pi)^{2}\tau^{2}\omega_{2}, 2i\pi\tau \omega_{3})$, 
$$M_{q_{2} }=
\left (\begin{array}{cccc}
1           & 0     & 0     & 0\\
-1          & 1     & 0     & 0\\
1           & -2    & 1     & -1\\
0           & 0     & 0     & 1
\end{array}
\right )\ \mbox{and}\  M_{q_{1} }=
\left( \begin{array}{cccc}
1           & 0     & 0     & 0\\
0           & 1     & 0     & 0\\
0           & -1    & 1     & 0\\
-1          & 0     & 0     & 1
\end{array}
\right)$$
\noindent so that we can eventually define a rational structure\footnote{Probably not a good one; it has for the moment no geometric meaning.} as the 
$\qit$-vector space generated by the basis $W$ for which we have for instance the following conjugation relations:
 $\overline{\omega}_{0}=\omega_{0}$, $\overline{\omega}_{1}=-\omega_{1}$, $\overline{\omega}_{2}=\omega_{2}$ 
and $\overline{\omega}_{3}=-\omega_{3}$.
The mirror isomorphism shifts this rational structure on the cohomology of the Hirzebruch surface and provides the following conjugation relations:
$\overline{1}=1$, $\overline{H}=-H$, $\overline{f}=-f$, $\overline{H\cup f}=H\cup f$. 

\noindent {\em Problem}: describe the rational structure given on the $B$-side by the Lefschetz thimbles on the flat sections of the Gauss-Manin connection 
as in section \ref{sec:RationalStructure} and shift it on the $A$-side.

\subsubsection{By way of conclusion (bis): the quantum cohomology of the weighted projective space $\ppit (1,1,2)$ as a limit, after \cite{CIT}}
\label{sec:ResP112}
We check here, using our framework, that specialization of the previous results at suitable values of the parameters $q_1$ et $q_2$ gives 
the small quantum orbifold cohomology of $\ppit (1,1,2)$, an aspect of Ruan's conjecture \cite{Ru}. 
This has been first done in \cite{CIT}, in a slightly different setting.
 Let us make the following observation on the B-side: put $v_{1}=u_{1}$ and $v_{2}=\frac{q_{2}}{u_{2}}$; our Landau-Ginzburg model becomes
$$v_{1}+v_{2}+\frac{q_{2}}{v_{2}}+\frac{q_{1}q_{2}^{2}}{v_{1}v_{2}^{2}}$$
and thus, in the flat coordinates $(r_{1},r_{2})$
defined by
$$(q_1,q_2)=(\frac{r_1 }{(1+r_1 )^2}, r_2 (1+r_1))$$
(see section \ref{subsub:FlatCoordinates}),
the Landau-Ginzburg model is
$$v_{1}+v_{2}+\frac{r_{2}(1+r_{1})}{v_{2}}+\frac{r_{1}(r_{2})^{2}}{v_{1}v_{2}^{2}}$$
If we set $r_{1}=-1$ and $r_{1}(r_{2})^{2}=q$ we get the usual Landau-Ginzburg model for $\ppit (1,1,2)$, see for instance \cite{DoMa}.
Thus, using the mirror theorem \ref{theo:miroirF2}, we can think the small quantum cohomology of the weighted projective space $\ppit (1,1,2)$ as a limit
of the one of $\fit_2$.

Let us be now more precise. 
Let $g(v_{1},v_{2}, q)=v_{1}+v_{2}+\frac{q}{v_{1}v_{2}^{2}}$
be the Landau-Ginzburg model for $\ppit (1,1,2)$, $\omega^{orb}=(\omega^{orb}_{0},\omega^{orb}_{1},\omega^{orb}_{2},\omega^{orb}_{3})$ be the basis of
its (twisted) Brieskorn lattice defined in \cite[section 4.3.2]{DoMa} : under mirror isomorphism, $\omega^{orb}$ corresponds to the
standard basis $(1,p,p^2 ,1_{1/2})$ of the orbifold 
cohomology $H^{*}_{orb}(\ppit (1,1,2))$ of the weighted projective space $\ppit (1,1,2)$, see {\em loc. cit.}. 
The matrix of $-\theta q\nabla_{\partial_{q}}$ in this basis is
$$\left (
\begin{array}{cccc}
0     &   0    &  0 & \frac{1}{2}q^{1/2}\\
1     &   0    &  0 & 0\\
0     &   1    &  0 & 0\\
0     &   0    &   \frac{1}{2}q^{1/2} & 0
\end{array}
\right ),$$
(on the $A$-side, this is also the matrix of the small quantum multiplication by $p$).
Recall the flat basis $\varepsilon =(\varepsilon_{0},\varepsilon_{1},\varepsilon_{2},\varepsilon_{3})$ in section \ref{sec:flat}.

\begin{proposition}\label{prop:crepant}
The matrix of $-\theta q\nabla_{\partial_{q}}$ in the basis $\omega^{orb}$ is obtained from the matrix
 of $-\frac{1}{2}\theta r_{2}\nabla_{r_{2}}$ in the basis 
$$(\varepsilon_{0},\frac{1}{2}\varepsilon_{1},\frac{1}{2}\varepsilon_{2}, i\varepsilon_{3}+\frac{i}{2}\varepsilon_{1})$$ 
after the transformation
$r_{1}=-1$ and $r_{2}=-iq^{1/2}$. 
\end{proposition}
\begin{proof}By remark \ref{rem:diversplat},
the matrix of $-\frac{1}{2}\theta r_{2}\nabla_{r_{2}}$ 
in the basis 
$(\varepsilon_{0},\frac{1}{2}\varepsilon_{1},\frac{1}{2}\varepsilon_{2}, i\varepsilon_{3}+\frac{i}{2}\varepsilon_{1})$
takes the form

$$\frac{1}{2}\left (
\begin{array}{cccc}
0     &  \frac{1}{2}r_{2}(1+r_{1})                               &  0                                    & \frac{i}{2}r_{2}(1-r_{1})\\
2     &    0                                                               &  \frac{1}{2}r_{2}(r_{1}+1)  & 0\\
0     &   2                                                                & 0                                     & 0\\
0     &   0                                                                & \frac{i}{2}r_{2}(1-r_{1})   & 0
\end{array}
\right )$$

\noindent and the result follows.
\end{proof}

\begin{corollary}\label{coro:crepantlimite}
The matrix of 
the small quantum multiplication $p \circ$ in the basis $(1,p,p^2 ,1_{1/2})$ of $H^{*}_{orb}(\ppit (1,1,2))$ is obtained from the matrix
 of $-\frac{1}{2}H \circ$ in the basis 
$$1, -\frac{1}{2}H, \frac{1}{2}H\circ f -\frac{i}{2}q^{1/2} , if-\frac{i}{2}H$$ 
after the transformation
$r_{1}=-1$ and $r_{2}=-iq^{1/2}$. 
\end{corollary}

\appendix

\section{Appendix: construction of the quantum differential systems associated with regular tame functions (B-side)}
\label{sub:SDQcoteB}

The Laplace transform of the Gauss-Manin connection of a tame regular function on an affine manifold yields quantum differential systems, 
see \cite{DoSa1}, \cite{Sab3}, \cite{D1}, \cite{D2}. We outline here the construction. 

Let $f:U\rightarrow\cit$ be a regular fonction on an affine manifold $U$\footnote{Prototypes : $U=\cit^{n}$ and $U=(\cit^{*})^{n}$}, equipped with coordinates $ \underline{u}=(u_{1},\cdots ,u_{n})$. We consider the differential system (rather than its solutions) satisfied by the Laplace integrals 
$\int_{\Gamma}e^{-f/\theta}\omega$
where $\omega\in\Omega^{n}(U)$ and $\Gamma$ is a Lefschetz thimble \cite{Ph}. This differential system is a meromorphic connection on $\ppit^{1}$ with poles at $\theta =0$ and $\theta =\infty$, that is a free $\cit [\theta , \theta^{-1}]$-module $G$ of finite rank  $\mu$, equipped with a flat connection $\nabla$, the Gauss-Manin connection. We have
$$G=\frac{\Omega^{n}(U)[\theta ,\theta^{-1}]}{(d-\theta^{-1}df)\wedge\Omega^{n-1}(U)[\theta ,\theta^{-1}]}$$
(in other words, we work modulo the exacts forms $d(e^{-\tau f}\omega)$) and the connection  $\nabla$ is defined by 
$$\theta^{2}\nabla_{\partial_{\theta}}(\sum_{i}\omega_{i}\theta^{i})=\sum_{i}f\omega_{i}\theta^{i}+\sum_{i}i\omega_{i}\theta^{i+1},$$
taking into account the kernel $e^{-f/\theta }$.\\

\noindent {\em Step $1$ : construction of a trivial (algebraic) bundle on $\ppit^{1}$.} We need
a free $\cit [\theta ]$-submodule $G_{0}$ in $G$ of maximal rank (in other words, a {\em lattice} in $G$, which gives an extension of $G$ at $\theta =0$) 
and a module {\em opposite} to $G_0$, that is a free $\cit [\tau]$-submodule $G_{\infty}$ (an extension of $G$ at $\theta =\infty$) such that
\begin{equation}\label{eq:oppose}
G_{0}=G_{0}\cap G_{\infty}\oplus \theta G_{0}
\end{equation}
Indeed, we have $G=G_{0}[\tau ]=G_{\infty}[\theta]$: the pair $(G_{0}, G_{\infty})$ defines a bundle ${\cal G}$ on $\ppit^{1}$ 
and the decomposition (\ref{eq:oppose}) shows that this bundle is trivial, see \cite[Chapitre IV, paragraphe 5]{Sab1}.  
It follows from equation (\ref{eq:oppose}) that the restrictions of ${\cal G}$ at $\theta =0$ and $\theta =\infty$ are 
isomorphic {\em via} the global sections $G_{0}\cap G_{\infty}$. 

A natural candidate for $G_{0}$ is
$$G_{0}:=\frac{\Omega^{n}(U)[\theta]}{(\theta d-df)\wedge \Omega^{n-1}(U)[\theta]}.$$
the {\em Brieskorn lattice} of $f$, which is the image of $\Omega^{n}(U)[\theta]$ in $G$. Notice the following important two points:
by definition we have 
$G_{0}/\theta G_{0}=\Omega^{n}(U)/df\wedge\Omega^{n-1}(U)$ and $\theta^{2}\nabla_{\partial_{\theta}}G_{0}\subset G_{0}$. However,
$G_{0}$ is not always {\em free} over $\cit [\theta]$: it will be the case if $f$ is assumed to be {\em tame} \cite{Sab3}, \cite{DoSa1}. 
A basic example of such tame functions are the ( Laurent)
polynomials which are convenient and non-degenerate with respect to their Newton polygons at infinity, 
for which the freeness follows from a division theorem (essentially due to Kouchnirenko \cite{K}).\\

\noindent {\em Step $2$: adding a connection with prescribed poles.} We still need a connection on the trivial bundle ${\cal G}$  
with poles of order less or equal to $2$ at $\theta =0$ and  logarithmic poles at $\theta =\infty$. 
In other words, the matrix of this connection in a basis of global sections should take the form 
\begin{equation}\label{eq:Bir}
(\frac{A_{0}}{\theta}+A_{\infty})\frac{d\theta}{\theta}
\end{equation}
This is the so-called {\em Birkhoff problem} for $G_{0}$. A canonical solution is provided by Hodge theory as follows: first, 
the general statement is {\em The solutions of the Birkhoff problem are in one-to-one correspondence with the opposite filtrations, stable under the
 action of the monodromy, to the Hodge filtration defined on the nearby cycles}, see \cite{DoSa1}, \cite{Sai}. 
In brief, the oppositness gives decomposition (\ref{eq:oppose}) and the stability with respect to the monodromy gives formula (\ref{eq:Bir}). 
Here we use also the classical correspondence between logarithmic lattices and decreasing filtrations, see {\em f.i} \cite[Theorem III.1.1]{Sab1}.
To be precise, let $V_{\bullet}$ be the Kashiwara-Malgrange filtration of $G$ at $\tau =0$ and $H_{\alpha}:=V_{\alpha}G/ V_{<\alpha}G$. 
For $\alpha\in\qit\cap [0,1[$, we define the (Hodge) filtration $F_{\bullet}$ by 
$$F_{p}H_{\alpha}:=(V_{\alpha}G\cap\tau^{p}G_{0}+V_{<\alpha}G)/V_{<\alpha}G$$
where $p\in\zit$.
Because $F_{\bullet}$ is the Hodge filtration of a mixed Hodge structure (see \cite{Sab2}), there exists a decreasing filtration $U^{\bullet}$ of $H_{\alpha}$ 
such that:
\begin{itemize}
 \item for all $p\in\zit$, $N(U^{p}H_{\alpha})\subset U^{p}H_{\alpha}$ where $N$ denotes the nilpotent endomorphism induced by $\tau\partial_{\tau}+\alpha$ on 
$H_{\alpha}$,
\item the filtration $U^{\bullet}$ is a filtration opposite to the filtration $F_{\bullet}$, {\em i.e} $H_{\alpha}=\oplus_{q}F_{q}H_{\alpha}\cap U^{q}H_{\alpha}$.
\end{itemize}
As observed in \cite[Lemma 2.8]{Sai} (a game with Deligne's $I^{pq}$), we can even choose the filtration $U^{\bullet}$ such that $N(U^{p}H_{\alpha})\subset U^{p+1}H_{\alpha}$. 
In this case,the matrix $A_{\infty}$ in equation (\ref{eq:Bir}) is semi-simple, with the expected eigenvalues.
This opposite filtration, built using M. Saito's method, provides a canonical solution of the 
Birkhoff problem, see \cite[Appendix B]{DoSa1}.\\

\noindent {\em Step $3$: the metric.} The Gauss-Manin system $G$ of a {\em tame}, regular function, is self-dual (microlocal Poincar\'e duality, see \cite{Sab3}):
if
$$ G^*=Hom_{\cit [\theta , \theta ^{-1}]} (G,  \cit [\theta ,\theta^ {-1}])\ \mbox{and}\  G_0^*=Hom_{\cit [\theta]}(G_0, \cit [\theta])$$
we have an isomorphism of connections
$ G^*\rightarrow j^{*}G$ which sends $G_0^*$ onto $\theta^{n} j^{*}G_0$. We thus get a non-degenerate bilinear form 
$$ S : G\times j^{*}G\rightarrow\cit [\theta ,\theta^{-1}]$$
such that $S : G_0\times j^{*}G_0\rightarrow \theta^n\cit [\theta ]$. Let us write
$S=\sum_{k\geq n}S_k\theta^k$
on $G_0$: the pairings $S_k$ are called {\em higher residue pairings} (after K. Saito) and $S_n$ is precisely the Grothendieck residue defined on $G_0/\theta G_0$. 
The form
$S$ extends to ${\cal G}$ if there exists a basis $\omega$ of global sections which is {\em adapted} to $S$, {\em i.e} 
\begin{equation}\label{eq:Sadaptee}
S(\omega_{i},\omega_{j})\in\cit \theta^{n}
\end{equation} for all $i,j$. This will be the case if the lattice $G_{\infty}$ alluded to in step 1 and constructed in step 2 is choosen such that
$S : G_{\infty}\times j^{*}G_{\infty}\rightarrow \tau^{-n}\cit [\tau ]$. But this is again provided by the canonical opposite filtration.\\

\noindent {\bf R\'esum\'e of steps 1-3}: we attach a quantum differential system (on a point) $({\cal G}, \nabla ,S , n)$ to any regular, tame
function on $U$.\\

\noindent {\em Step $4$ : adding parameters.} In order to get a bundle on  $\ppit^{1}\times M$, we have to extend the previous situation to a situation ``with  parameters''. We will denote by $\underline{x}=(x_{1},\cdots ,x_{r})$ the coordinates on $M$.\\

\noindent {\em Method 1:} one can
repeat the previous construction, starting with the Gauss-Manin system of an unfolding $F$ of $f$ (see for instance \cite{DoSa1}) and 
taking into account (and in addition) the covariant derivative of the Gauss-Manin connection with respect to the parameters. 
Due to the ``critical points vanishing at infinity'' (see \cite[Examples 2.5]{DoSa1}), this method is in general transcendental, in the parameter axis (always) but also in the $\theta$-axis. The coherence of $G_0^F$ (which is a central point), follows in this setting from standard results in analytic geometry, as in the local ({\em i.e} germ) case. Notice that $\theta\nabla_{\partial_{x_{i}}}G_{0}^{F}\subset G_{0}^{F}$: in a basis of $G_0^F$ is {\em a priori}
\begin{equation}\label{MatCoRel}
\frac{C^{(i)}(x)}{\theta}+ D^{(i)}(x)+\sum_{r=1}^{p}D_{r}^{(i)}(x)\theta^{r}
\end{equation}
and we want the formula $D^{(i)}(x)+\frac{C^{(i)}(x)}{\theta}$ in order to get a quantum differential system. As before, we have $\theta^{2}\partial_{\theta}G_{0}^{F}\subset G_{0}^{F}$.\\ 

\noindent {\em Method 2:} one can use, as in \cite{D1}, \cite{D2} for instance, the Dubrovin-Malgrange-Hertling-Manin reconstruction theorem, see theorem \ref{theo:reconstruction}. The idea is to start with a deformation of $f$ that doesn't produce vanishing critical points at infinity: this is actually what is done for "subdiagram deformations" of a convenient and non degenerate polynomial in \cite{D1}, \cite{D2}.
In some cases these deformations (the ``initial data'') suffice in order to understand universal ones, thanks to the reconstruction theorem quoted above. The advantage now is that we can work algebrically in the variable $\theta$.\\

\noindent {\bf R\'esum\'e of steps 1-4} : summarizing, one associates a 
quantum differential system on $M$ to a regular, tame, function on the affine manifold $U$.

\end{document}